\raggedcolumns\setlength{\columnsep}{3em}\begin{multicols}{2},
\setlist[enumerate]{format=\textnormal}
\newlist{itemproof}{itemize}{1}
\setlist[itemproof]{label=\textbullet, wide=0pt}
\theoremstyle{plain}
\newtheorem{theorem}{Theorem}[section]
\newtheorem{lemma}[theorem]{Lemma}
\newtheorem{corollary}[theorem]{Corollary}
\newtheorem{proposition}[theorem]{Proposition}
\theoremstyle{definition}
\newtheorem{definition}[theorem]{Definition}
\theoremstyle{remark}
\newtheorem{remark}[theorem]{Remark}
\newtheorem{example}[theorem]{Example}
\newtheorem{thm}{\bfseries\upshape Theorem}
\newtheorem{cor}[thm]{\bfseries\upshape Corollary}
\newtheorem{lem}[thm]{\bfseries\upshape Lemma}
\newcommand{\m}[1]{{\mathbf{#1}}}   
\newcommand\cls[1]{{\mathcal{#1}}}   
\newcommand{\Rl}{\ensuremath{\mathcal{RL}}\xspace}
\newcommand{\N}{{\mathbb N}}       
\newcommand{\Z}{{\mathbb Z}}        
\renewcommand\iff{\Leftrightarrow}                   
\newcommand\da{\mathop{\downarrow}}          
\newcommand\ua{\mathop{\uparrow}}               
\newcommand{\Ra}{\Rightarrow}                       
\newcommand\rest{\mathnormal{\upharpoonright}}           
\newcommand\Con{\operatorname{Con}}              
\newcommand\cg[1]{\operatorname{Cg}_{\scriptscriptstyle#1}}   
\newcommand\Crow{\operatorname{{\mathcal C}^{\infty}}}   
\newcommand\Diag{\operatorname{Diag}}             
\newcommand\dom{\operatorname{dom}}              
\newcommand\End{\operatorname{End}}                
\newcommand\F{\operatorname{\mathbf F}}           
\renewcommand\Join[1][\empty]{\bigvee\nolimits^{#1}}       
\newcommand\Low{\operatorname{{\mathcal L}}}                 
\newcommand\McN[1]{\if#1*\operatorname{{\mathcal N}*}\else\operatorname{{\mathcal N}}#1\fi}  
\newcommand\Meet[1][\empty]{\bigwedge\nolimits^{\!#1}}  
\let\wp\power							 
\newcommand*\conj{\mathbin{\;\&\;}}   
\newcommand{\jn}{\lor}                        
\newcommand{\mt}{\land}                     
\newcommand\ut{{\mathrm{e}}}            
\newcommand{\ld}{\backslash}                           
\newcommand{\rd}{/}                                          
\newcommand{\ras}{\rightsquigarrow}
\newcommand{\rds}[1]{\rd_{_{\!\!#1}}}                
\newcommand{\lds}[1]{\ld_{_{#1}}}                                            
\newcommand{\biglds}[1]{\big\ld_{\scriptscriptstyle#1}}          
\newcommand{\Biglds}[1]{\Big\ld_{\scriptscriptstyle#1}}          
\newcommand{\tos}[1]{\rightarrow_{\scriptscriptstyle#1}}  
\newcommand\ds[1]{_{_{#1}}}			     
\newcommand\eq{\approx}                                 
\renewcommand{\L}{\ensuremath{\mathcal{L}}}   
\newcommand{\ol}{\overline}                               
\newcommand\pair[1]{\langle #1\rangle}             
\newcommand\textss{\textsuperscript}                
\newcommand{\X}{{\mathbb{X}}}                          
\newcommand{\lan}{\langle}                               
\newcommand{\ran}{\rangle}                              
\newcommand\leqc{\preccurlyeq}		    
\newcommand\nleqc{\not\leqc}			    
\newcommand\nbd[1]{\protect\nobreakdash-\hspace{0pt}}		
\definecolor{grey}{rgb}{0.6,0.6,0.6}
\newcounter{commento}
\begin{document}

\title{Join-Completions of Ordered Algebras}

\author{Jos\'e Gil-F\'erez}
\address{
University of Bern, Switzerland}
\email{jose.gil-ferez@math.unibe.ch}

\author{Luca Spada}
\address{
University of Salerno, Italy}
\email{lspada@unisa.it}

\author{Constantine Tsinakis}
\address{
Vanderbilt University, U.S.A}
\email{constantine.tsinakis@vanderbilt.edu}

\author{Hongjun Zhou}
\address[xian]{
Shaanxi Normal University,
China}
\email{sdzhjun@gmail.com}

\keywords{
Finite embeddability property,  join-completion, nucleus, ordered algebra, residuated lattice, lattice-ordered group}
\subjclass{
06F05,  06F15, 03G10,  03B47, 08B15
}

\maketitle

\begin{abstract}
We present a systematic study of join-extensions and join-completions of ordered algebras, which naturally leads to a refined and simplified treatment of  fundamental results and constructions in the theory of ordered structures ranging from properties of the Dedekind-MacNeille completion to the proof of the finite embeddability property for a number of varieties of ordered algebras.
\end{abstract}

\section{Introduction}\label{s:intro}
This work presents a systematic study of join-extensions and join-com\-ple\-tions of ordered algebras, which provides a uniform and refined treatment of  fundamental results and constructions ranging from properties of the Dedekind-MacNeille completion to the proof of the finite embeddability property for a number of varieties of ordered algebras.

Given two ordered algebras $\m P$ and $\m L$ of the same signature, we say that $\m{L}$ is a \emph{join-extension} of $\m P$ or that $\m P$ is \emph{join-dense} in $\m{L}$ if the order of $\m  L$ restricts to that of $\m P$ and, moreover, every element of $\m{L}$ is a join of elements of $\m P$. The term \emph{join-completion} is used for a join-extension whose partial order is a complete lattice. By an ordered algebra we understand a structure in the sense of model theory in which one of the relations is a partial order. In all cases the structures contain one or two monoidal operations that are compatible with, or even residuated with respect to, the partial order.   In general, we do not assume that the algebra reduct of $\m P$ is a subalgebra of that of $\m L$. The concepts of a \emph{meet-extension} and a \emph{meet-completion} are defined dually. 

Here is a summary of the contents of the article. 
In Section \ref{section:preliminaries}, we dispatch some preliminaries on partially ordered monoids, residuated partially ordered monoids, residuated lattices, nucleus-systems and nuclei.  Section~\ref{section<<joinextensions} explores the following question: Under what conditions a join-completion of a partially ordered monoid $\m P$ is a residuated lattice with respect to a (necessarily unique) multiplication that extends the multiplication of $\m P$?   The answer is provided by Theorem A below. {Before stating the theorem, we note that for a given partially ordered monoid $\m P$, there is a unique up to isomorphism largest join-completion $\Low(\m P)$ of $\m P$ whose multiplication is residuated and extends the multiplication of $\m P$. We use the same symbol $\Low(\m P)$ to denote the structure with the residuals of the multiplication added. (See Section~\ref{section:preliminaries} for details.)}

\begin{thm}[See Theorem~\ref{thm:completions-mainresult}.]
\emph{Let $\m P=\pair{P,\leq,\cdot,\ut}$ be a partially ordered monoid and let $\m L$ be a join-completion of the partially ordered set $\pair{P,\leq}$. The following statements are equivalent:
\begin{enumerate}[(i)]
\item $\m{L}$ can be given a structure of a residuated lattice whose multiplication extends the multiplication of $\m P$. 
\item For all $a \in P$ and $b \in L$, the residuals $a \lds{\Low(\m P)} b$ and $b \rds{\Low(\m P)} a$ are in $L$. 
\item $L$ is a nucleus-system of $\Low(\m P)$. 
\item The closure operator $\gamma_L$ associated with $L$ is a nucleus on $\Low(\m P)$. 
\end{enumerate}
Furthermore, whenever the preceding conditions are satisfied, the multiplication on $\m L$ is uniquely determined and the inclusion map $\m{P}\hookrightarrow \m{L}$ preserves, in addition to multiplication, all existing residuals and meets.}
\end{thm}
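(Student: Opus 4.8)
\emph{Strategy.} The plan is to work from the concrete description of $\Low(\m P)$ given in Section~\ref{section:preliminaries}, realizing it as the complete residuated lattice of down-sets of $\pair{P,\leq}$ under the complex product $A\cdot B=\da\{ab: a\in A,\ b\in B\}$, with canonical embedding $p\mapsto\da p$. I would record the residual formula $a\lds{\Low(\m P)}b=\{c\in P: ac\in b\}$ for $a\in P$ and a down-set $b$ (and dually for the other residual). A join-completion $\m L$ of $\pair{P,\leq}$ is then identified with the family $\{\{p\in P: p\le x\}: x\in L\}$ of down-sets; join-density makes this an order-embedding, principal down-sets belong to it, and its meets are intersections, so $L$ is \emph{automatically} an intersection-closed subset (a closure system) of $\Low(\m P)$ with associated closure operator $\gamma_L$. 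On this basis I would prove the cycle $(i)\Ra(ii)\iff(iii)\iff(iv)\Ra(i)$ and then the final assertions.

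\emph{The equivalences among $(ii)$, $(iii)$, $(iv)$.} The correspondence between nuclei and nucleus-systems recalled in Section~\ref{section:preliminaries} gives $(iii)\iff(iv)$ directly, since $\gamma_L$ is by construction the closure operator whose closed elements are $L$. For $(ii)\iff(iii)$ I would use the characterization of a nucleus-system as an intersection-closed subset $S$ with $a\lds{\Low(\m P)}s\in S$ and $s\rds{\Low(\m P)}a\in S$ for all $a\in\Low(\m P)$ and $s\in S$. As $L$ is already intersection-closed, only residual-closure is at stake, and the point is to reduce the quantifier over all $a\in\Low(\m P)$ to one over $a\in P$: every $c\in\Low(\m P)$ is the join $\bigvee\{\da a: a\in c\}$ of principal down-sets, and residuation turns this join in the denominator into a meet, $c\lds{\Low(\m P)}b=\bigwedge_{a\in c}(a\lds{\Low(\m P)}b)$, which lies in $L$ once each $a\lds{\Low(\m P)}b$ does, by intersection-closure. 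Thus $(ii)$ is exactly residual-closure, i.e. $(iii)$.

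\emph{The key step $(i)\Ra(ii)$.} Assume $\m L$ carries a residuated multiplication $*$ extending $\cdot$, and fix $a\in P$ and $b\in L$. Since $a\in P\subseteq L$, the residual $a\backslash^{\m L}b$ exists in $\m L$, and I claim it equals $a\lds{\Low(\m P)}b$, which places the latter in $L$. Computing the down-set represented by $a\backslash^{\m L}b$: for $p\in P$ one has $p\le a\backslash^{\m L}b\iff a*p\le b\iff ap\in b$, using residuation in $\m L$ and that $*$ extends $\cdot$; hence $a\backslash^{\m L}b=\{p\in P: ap\in b\}$, precisely the Section~\ref{section:preliminaries} formula for $a\lds{\Low(\m P)}b$, and the symmetric computation handles $b\rds{\Low(\m P)}a$. \emph{This will be the main obstacle}: an order-embedding need not preserve residuals, so the entire content is that the abstract residual in $\m L$ and the concrete residual in $\Low(\m P)$ coincide whenever one argument comes from the join-dense submonoid $P$.

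\emph{The implication $(iv)\Ra(i)$ and the final assertions.} Given that $\gamma_L$ is a nucleus, the general theory of Section~\ref{section:preliminaries} endows $L=\Low(\m P)_{\gamma_L}$ with a residuated-lattice structure in which $x*_Ly=\gamma_L(x\cdot y)$, meets and residuals are inherited from $\Low(\m P)$, and the unit is $\da\ut$; this multiplication extends $\cdot$ because $\gamma_L(\da p\cdot\da q)=\gamma_L(\da(pq))=\da(pq)$, the principal down-sets being closed. Uniqueness follows since any residuated extension preserves joins, whence $x*y=\bigvee\{pq: p,q\in P,\ p\le x,\ q\le y\}$ is forced by the order and $\cdot$ alone. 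Finally, preservation of existing residuals and meets is read off the same down-set computations: if $a\lds{\m P}b$ exists in $\m P$ it is the greatest element of the down-set $\{c\in P: ac\le b\}=a\backslash^{\m L}b$, and if $a\wedge^{\m P}b$ exists then $\da a\cap\da b=\da(a\wedge^{\m P}b)$, so meets are preserved because meets in $\m L$ are intersections.
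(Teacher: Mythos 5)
Your proof is correct and follows essentially the same route as the paper's: the same cycle (i)$\Ra$(ii)$\Ra$(iii)$\iff$(iv)$\Ra$(i), with residuation turning joins in the denominator into meets, meet-faithfulness of join-extensions, and the nucleus/nucleus-system correspondence carrying the weight, and the same join-preservation argument for uniqueness. The only difference is presentational: where you compute the canonical down-set image of $a\backslash^{\m L}b$ directly, the paper isolates exactly that observation as a standalone lemma on preservation of residuals along join-extensions and cites it both for (i)$\Ra$(ii) and for the final preservation claim.
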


The preceding theorem provides a simple proof of the fact that the De\-de\-kind-MacNeille completion of a residuated partially ordered monoid is a residuated lattice and also of the fact that the Dedekind-MacNeille completion of an implicative semilattice is a Heyting algebra. More importantly, it implies the following result which will play a key role in the proofs of the finite embeddability results of Section~\ref{sec:FEP:HRL}.  

\begin{cor}[See Corollary~\ref{completions-RL-Heyting}.]
\emph{Let $\m P$ be an integral meet-semilattice monoid and $\m{L}$ a join-completion of $\pair{P,\leq}$. Then, the following statements are equivalent:
\begin{enumerate}[(i)]
\item {$\m{L}$ can be given both the structure of a residuated lattice whose multiplication extends the multiplication of $\m P$ and of a Heyting algebra with respect to the lattice reduct of $\m{L}$.}
\item For all $a \in P$ and $b \in L$, the residuals $a \lds{\Low(\m P)} b$, $b \rds{\Low(\m P)} a$ and the Heyting implication $a \tos{\Low(\m P)} b$ are in $L$. 
\item $L$ is a nucleus-system of the algebras $ \pair{\Low(\m P),\mt,\jn,\cdot,\lds{\Low(\m P)},\rds{\Low(\m P)}, \ut}$ and \break $ \pair{\Low(\m P),\mt,\jn,\mt,\tos{\Low(\m P)},\ut}$. 
\item The closure operator $\gamma_L$ associated with $L$ is a nucleus on   the algebras $\pair{\Low(\m P),\mt,\jn,\cdot,\lds{\Low(\m P)},\rds{\Low(\m P)}, \ut}$ and $ \pair{\Low(\m P),\mt,\jn,\mt,\tos{\Low(\m P)},\ut}$. 
\end{enumerate}
Furthermore, whenever the preceding conditions are satisfied, the two structures are uniquely determined and the inclusion map $\m{P}\hookrightarrow \m{L}$ preserves multiplication, all existing residuals (including Heyting implication) and meets.}
\end{cor}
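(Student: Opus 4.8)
The plan is to apply Theorem~A twice, to the two partially ordered monoid structures that $\m{P}$ carries on the common poset $\pair{P,\leq}$. Since $\m{P}$ is an integral meet-semilattice monoid, $\pair{P,\leq}$ is a meet-semilattice, and integrality ($\ut=\top$) makes the meet $\mt$ an order-compatible, associative, commutative, idempotent monoid operation with identity $\ut$; thus alongside $\m{P}=\pair{P,\leq,\cdot,\ut}$ we also have $\m{P}'=\pair{P,\leq,\mt,\ut}$, and any join-completion $\m{L}$ of $\pair{P,\leq}$ is simultaneously a join-completion of both. The underlying complete lattice of $\Low(\m{P})$ is the largest join-completion of $\pair{P,\leq}$ and so depends only on $\pair{P,\leq}$, not on the multiplication (see Section~\ref{section:preliminaries}); hence $\Low(\m{P})$ and $\Low(\m{P}')$ share it, which is what justifies writing the single symbol $\Low(\m{P})$ for the lattice reduct of both algebras named in (iii) and (iv). Applying Theorem~A to $\m{P}$ yields the equivalence of the four ``residuated-lattice'' conditions, and applying it to $\m{P}'$ yields the equivalence of the four ``Heyting'' conditions; the corollary's conditions are the respective pairwise conjunctions, and conjoining two chains of equivalences gives (i)$\iff$(ii)$\iff$(iii)$\iff$(iv).

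The step requiring genuine work is matching Theorem~A's output for $\m{P}'$ with the Heyting-algebra statements of the corollary. The claim is that, for a join-completion $\m{L}$, the assertion ``$\m{L}$ can be given a residuated lattice structure whose multiplication extends $\mt$'' is equivalent to ``the lattice reduct of $\m{L}$ is a Heyting algebra'', and that in this case the residual of $\mt$ is exactly the Heyting implication $\tos{\Low(\m{P})}$. First, the meet of any join-completion restricts to the meet of $\pair{P,\leq}$: if $a,b\in P$ then join-density gives $a\mt_L b=\bigvee\{c\in P: c\leq a,\ c\leq b\}=a\mt_P b$. For the forward direction, suppose $*$ is a residuated multiplication on $\m{L}$ extending $\mt$; since residuated multiplication preserves arbitrary joins in each argument and $P$ is join-dense, for $x,y\in L$ we obtain $x*y=\bigvee\{a*b: a,b\in P,\ a\leq x,\ b\leq y\}=\bigvee\{a\mt_L b:\dots\}=x\mt_L y$, so $*$ is the lattice meet and $\m{L}$ is a complete Heyting algebra. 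The converse is immediate from the preceding remark, since then $\mt_L$ is itself a residuated operation extending $\mt$. Finally, because $\mt$ is commutative its two residuals coincide, so condition (ii) of Theorem~A applied to $\m{P}'$ collapses to the single requirement $a\tos{\Low(\m{P})}b\in L$, matching the corollary.

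With these identifications in hand the remainder is bookkeeping. Conjoining the equivalence coming from $\m{P}$ (namely: $\m{L}$ is a residuated lattice extending $\cdot$; $a\lds{\Low(\m{P})}b,\ b\rds{\Low(\m{P})}a\in L$; $L$ is a nucleus-system of the first algebra; $\gamma_L$ is a nucleus on it) with the equivalence coming from $\m{P}'$ (just shown to be the Heyting statements) yields the four-fold equivalence of the corollary. The ``furthermore'' clause follows by combining the two uniqueness-and-preservation assertions of Theorem~A: the residuated-lattice multiplication and the Heyting implication are each uniquely determined, so both structures are unique, and the inclusion $\m{P}\hookrightarrow\m{L}$ preserves $\cdot$, the residuals $\lds{\Low(\m{P})}$ and $\rds{\Low(\m{P})}$, the Heyting implication $\tos{\Low(\m{P})}$, and all existing meets. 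I expect the only real obstacle to be the verification in the second paragraph that a residuated extension of $\mt$ is forced to be the lattice meet (equivalently, that ``residuated lattice extending $\mt$'' and ``Heyting algebra'' coincide here); the rest is conjunction-chasing and two appeals to Theorem~A.
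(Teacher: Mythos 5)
Your proposal is correct and follows the route the paper intends: the paper states this corollary as an immediate consequence of Theorem~\ref{thm:completions-mainresult}, obtained by applying that theorem once to $\pair{P,\leq,\cdot,\ut}$ and once to the meet-semilattice monoid $\pair{P,\leq,\mt,\ut}$ (exactly as in its Corollary~\ref{completions-heyting}) and conjoining the resulting equivalences. The only substantive detail you add --- that any residuated multiplication on a join-completion extending $\mt$ must coincide with the lattice meet, so that the second application really does produce the Heyting conditions --- is the verification the paper leaves implicit, and your argument for it (join-density plus preservation of joins) is sound.
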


Section~\ref{s:DM-completion} explores join-completions of involutive residuated partially ordered monoids, in particular, involutive residuated lattices. It is convenient to think of involutive residuated partially ordered monoid $\m P$ as residuated partially ordered monoids endowed with a cyclic dualizing element $d$. A \emph{cyclic} element $d\in P$ is one satisfying  $d\rd x = x\ld d$, for all $x\in P$. Denoting the common value $d\rd x = x\ld d$ by $x \ras d$, a \emph{cyclic dualizing} element is a cyclic element $d$ satisfying $(x\ras d)\ras d=x$, for all $x\in P$. It is straightforward to show that  the map $\gamma_d\colon x\mapsto (x \ras d)\ras d$ is a nucleus whenever $d$ is a cyclic element. The following result shows how cyclic elements give rise to involutive residuated partially ordered monoids.

\begin{lem}[See Lemma~\ref{lem:involuted1}.]
Let $\m P$ be a residuated partially ordered monoid and $\gamma$ a nucleus on $\m P$. Then the nucleus-system $\m P_\gamma$ is an involutive residuated partially ordered monoid if and only if there exists a cyclic element $d$ of $\m P$ such that $\gamma = \gamma_d$. 
\end{lem}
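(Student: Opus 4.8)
The plan is to show that in both directions the relevant datum is the (cyclic, dualizing) element itself, regarded once as an element of $\m P$ and once as an element of $\m P_\gamma$. Throughout I shall use two standard facts about a nucleus-system: the residuals of $\m P_\gamma$ coincide with those of $\m P$ on $\gamma$-closed elements (because the residual of a $\gamma$-closed element is again $\gamma$-closed), and a closure operator on a poset is determined by its set of fixpoints (as $\gamma(x)$ is the least fixpoint above $x$).

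For the implication $(\Leftarrow)$, suppose $d$ is cyclic in $\m P$ and $\gamma = \gamma_d$; by the remark preceding the lemma $\gamma_d$ is a nucleus, so $\m P_\gamma = \m P_{\gamma_d}$ is a residuated partially ordered monoid, and I claim $d$ is a cyclic dualizing element of it. First, $d$ is $\gamma_d$-closed, since $d = \ut \ras d$ lies in the image of the map $x \mapsto x \ras d$; equivalently $\gamma_d(d)=d$ by the residuation identity $((x \ras d)\ras d)\ras d = x \ras d$ applied at $x=\ut$. Next, since the residuals of $\m P_\gamma$ agree with those of $\m P$ on closed elements and $d$ is cyclic in $\m P$, the element $d$ is cyclic in $\m P_\gamma$. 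Finally, for $\gamma$-closed $x$ the dualizing identity in $\m P_\gamma$ unwinds (again using the coincidence of residuals) to $(x \ras d)\ras d = \gamma_d(x) = x$, which holds because $x$ is closed. Thus $\m P_\gamma$ is involutive.

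For the converse $(\Rightarrow)$, let $d$ be a cyclic dualizing element of $\m P_\gamma$, viewed as an element of $P$; I must show that $d$ is cyclic in $\m P$ and that $\gamma = \gamma_d$. The crux, and the step I expect to be the main obstacle, is transferring the cyclic and dualizing behaviour of $d$ from the closed fragment $P_\gamma$ to all of $P$. The enabling computation is that $d \rd x = d \rd \gamma(x)$ and $x \ld d = \gamma(x) \ld d$ for every $x \in P$: the inequalities $\le$ follow from antitonicity of the residuals together with $x \le \gamma(x)$, and the reverse inequalities from the nucleus property applied to the $\gamma$-closed element $d \rd x$, giving $(d \rd x)\cdot\gamma(x) \le \gamma((d \rd x)\cdot x) \le \gamma(d) = d$ and symmetrically on the other side. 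Since $\gamma(x)$ is closed and $d$ is cyclic in $\m P_\gamma$, we obtain $d \rd x = d \rd \gamma(x) = \gamma(x) \ld d = x \ld d$, so $d$ is cyclic in $\m P$ and $\gamma_d$ is a nucleus.

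It then remains to identify $\gamma$ with $\gamma_d$, which I would do by comparing fixpoints. If $x$ is $\gamma$-closed, the dualizing identity for $d$ in $\m P_\gamma$ reads $\gamma_d(x) = (x \ras d)\ras d = x$, so $x$ is $\gamma_d$-closed. Conversely, if $x$ is $\gamma_d$-closed then $x = \gamma_d(x) = d \rd (x \ras d)$ is a residual of the $\gamma$-closed element $d$, hence $\gamma$-closed. The two closure operators therefore have the same fixpoints and so coincide, completing the proof.
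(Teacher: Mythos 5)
Your proof is correct and follows essentially the same route as the paper's: the forward direction is identical, and in the converse you transfer cyclicity of $d$ from $\m P_\gamma$ to $\m P$ via the identities $x\ld d=\gamma(x)\ld d$ and $d\rd x = d\rd\gamma(x)$, exactly as the paper does by citing Lemma~\ref{lem:nucleus}(iv) (which you re-derive by hand from the nucleus inequality). The only cosmetic difference is at the end, where you identify $\gamma$ with $\gamma_d$ by comparing fixpoint sets rather than by the paper's one-line computation $\gamma_d(x)=(\gamma(x)\ras d)\ras d=\gamma(x)$.
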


The next result generalizes Theorem~4.3 of~\cite{Sch77} and may be viewed as a natural extension of the Glivenko-Stone Theorem (\cite{Gli29},~\cite{Sto36}), which states that the  Dedekind-MacNeille  completion of a Boolean algebra is a Boolean algebra. More specifically, we have:

\begin{thm}[See Theorem~\ref{thm:involuted2}.]
Let $\m P$ be a residuated partially ordered monoid and let $\m{L}$ be  a join-completion of $\m P$ which is a residuated lattice with  respect to a multiplication that extends the multiplication  of $\m P$. Then for every cyclic dualizing element $d \in P$,  $\m{L}_{\gamma_d}$ is the Dedekind-MacNeille completion of $\m{P}_{\gamma_d}$. 
\end{thm}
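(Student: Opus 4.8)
The plan is to identify $\m{L}_{\gamma_d}$ with the Dedekind-MacNeille completion of $\m{P}_{\gamma_d}$ by way of the standard characterization of the latter as the unique, up to isomorphism, completion in which the base structure is \emph{simultaneously} join-dense and meet-dense. Thus the goal splits into four tasks: that $\m{P}_{\gamma_d}$ sits inside $\m{L}_{\gamma_d}$ as an ordered subalgebra, that $\m{L}_{\gamma_d}$ is a complete lattice, and that $\m{P}_{\gamma_d}$ is both join-dense and meet-dense in it. The uniqueness of the residuated structure on a join-dense completion (Theorem~A) then upgrades this to an identification of ordered algebras, not merely of posets.

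First I would carry out some preliminary reductions. Since $\m{P}$ is join-dense in $\m{L}$ and, by Theorem~A, the inclusion $\m{P}\hookrightarrow\m{L}$ preserves all existing residuals, the cyclicity of $d$ transfers from $\m{P}$ to $\m{L}$: writing an arbitrary $x\in L$ as $x=\bigvee\{a\in P: a\le x\}$ and using that residuals turn joins in the denominator into meets, one gets $d\rd x=\bigwedge\{d\rd a\}=\bigwedge\{a\ld d\}=x\ld d$, because $d\rd a = a\ld d$ for $a\in P$. Hence $d$ is a cyclic element of $\m{L}$ (though not necessarily dualizing there), so $\gamma_d$ is a nucleus on $\m{L}$ and, $\m{L}$ being complete, $\m{L}_{\gamma_d}$ is a complete involutive residuated lattice by Lemma~\ref{lem:involuted1}. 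The same residual-preservation shows that $\gamma_d$ computed in $\m{L}$ agrees on $P$ with the nucleus computed in $\m{P}$; consequently $P_{\gamma_d}=P\cap L_{\gamma_d}$, the operations match, and $\m{P}_{\gamma_d}$ is an ordered subalgebra of $\m{L}_{\gamma_d}$.

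Next comes the heart of the argument, join-density. Fix $y\in L_{\gamma_d}$. Join-density of $\m{P}$ in $\m{L}$ gives $y=\bigvee\{p\in P:p\le y\}$, but the witnesses $p$ need not be $\gamma_d$-closed. The key observation is that for each such $p$ the closed element $\gamma_d(p)$ lies in $P_{\gamma_d}$ and still sits below $y$, since $p\le y$ and $\gamma_d(y)=y$ force $\gamma_d(p)\le\gamma_d(y)=y$. As $p\le\gamma_d(p)$, the family $\{a\in P_{\gamma_d}:a\le y\}$ has the same join in $\m{L}$ as $\{p\in P:p\le y\}$, namely $y$; applying $\gamma_d$ shows this is also the join in $\m{L}_{\gamma_d}$. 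Hence $\m{P}_{\gamma_d}$ is join-dense in $\m{L}_{\gamma_d}$. I expect this to be the main obstacle, precisely because the closure operator could a priori collapse the witnesses below $y$; the point is that it pushes each $p$ up to $\gamma_d(p)$ without passing above $y$.

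Finally, meet-density cannot be extracted from $\m{P}$ directly, as $\m{P}$ is only assumed join-dense, not meet-dense, in $\m{L}$. Instead I would exploit the involution $\partial\colon x\mapsto x\ras d$ on $\m{L}_{\gamma_d}$: it is an order-anti-automorphism (order-reversing with $\partial^2=\mathrm{id}$ on closed elements) that restricts to a bijection of $P_{\gamma_d}$ onto itself, since $\partial(a)=a\ld d\in P$ is closed whenever $a\in P_{\gamma_d}$. Because an order-anti-automorphism carries join-dense subsets to meet-dense ones, meet-density of $\m{P}_{\gamma_d}=\partial(P_{\gamma_d})$ follows immediately from the join-density just established. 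With $\m{L}_{\gamma_d}$ a complete lattice containing $\m{P}_{\gamma_d}$ both join-densely and meet-densely, the cited characterization identifies it with the Dedekind-MacNeille completion of $\m{P}_{\gamma_d}$, while Theorem~A fixes the residuated structure as the intended one.
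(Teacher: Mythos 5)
Your proof is correct and follows the same overall strategy as the paper's: transfer the cyclicity of $d$ from $\m P$ to $\m L$ via join-density and the preservation of residuals, invoke Banaschewski's characterization of the Dedekind-MacNeille completion as the unique join- and meet-completion, and use the involution $x\mapsto x\ras d$ to deduce one density from the other. The only real divergence is which half you do by hand: you establish join-density directly (pushing each witness $p\le y$ up to $\gamma_d(p)\in P_{\gamma_d}$, which stays below $y$ because $y$ is closed) and then obtain meet-density from the involution, whereas the paper goes the other way, getting meet-density in one line from Lemma~\ref{lem:specialnuclei} --- every $a\in L_{\gamma_d}$ is $b\ras d$ for some $b=\Join X$ with $X\subseteq P$, hence $a=\Meet_{x\in X}(x\ras_{\m P}d)$ --- and then appealing to the involution for join-density. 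The paper's direction is marginally slicker because the description $L_{\gamma_d}=\{x\ras d\mid x\in L\}$ hands you meets over $P_{\gamma_d}$ for free, while your direction requires the (correct and clearly justified) observation that $\gamma_d$ does not push witnesses above a closed element; both arguments are sound and of comparable length.
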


An interesting application of the preceding result is a  succinct and com\-pu\-ta\-tion-free proof of the fact that the Dedekind-MacNeile completion of an Archimedean partially ordered group is a conditionally complete partially ordered group (See Theorem~\ref{thm:integrallyclosed}).

In Section~\ref{sec:FEP:HRL}, we make use of the results of Section~\ref{section<<joinextensions} to produce refined algebraic proofs of existing and new results on the finite embeddability property (FEP). The standard process of establishing this property for a variety of ordered algebras usually consists of producing a ``potentially'' finite extension of a finite partial algebra and then proving that this extension is finite. In our approach, the theory of join-extensions is employed in the construction of the extension, while a modification of the fundamental ideas of Blok and van Alten in~\cite{BvA05} establishes its finiteness. Our approach is illustrated in the proof of the FEP for  the variety $\cls{HRL}$ of Heyting residuated lattices (see Lemma~\ref{lem:embedding}), but it easily applies to the results in ~\cite{BvA05}. This variety consists of all algebras $\m A =\pair{A,\mt,\jn,\cdot,\ld,\rd,\to,\ut}$ that combine compatible structures of a residuated lattice and a Heyting algebra on the same underlying lattice. Even though the variety $\cls{HRL}$ has not received much attention in the literature, the introduction of the Heyting arrow guarantees that the construction maintains lattice-distributivity. In particular, it implies the FEP for the variety of distributive integral  residuated lattices, a result that has been obtained independently in~\cite{Bus11} and~\cite{GaJi} by alternative means. 
Thus we have:

\begin{thm}[See Theorem~\ref{thm:HRL:FEP}.]\label{hrl}
The variety $\cls{HRL}$ of Heyting residuated lattices has the finite embeddability property.
\end{thm}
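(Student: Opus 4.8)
The plan is to reduce the theorem to a single embedding statement and then build the required finite algebra with the join-completion machinery of Section~\ref{section<<joinextensions}, invoking a Blok--van Alten style argument only to control finiteness. By definition, $\cls{HRL}$ has the FEP precisely when every finite partial subalgebra $\m B$ of every $\m A\in\cls{HRL}$ embeds, as a partial algebra, into some finite $\m D\in\cls{HRL}$. So I fix $\m A=\pair{A,\mt,\jn,\cdot,\ld,\rd,\to,\ut}$ and a finite $B\subseteq A$; since $\cls{HRL}$ is integral (the Heyting reduct forces $\ut$ to be the greatest element), I may assume $\ut\in B$.

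First I would produce the base partially ordered monoid. Let $\m P$ be the integral meet-semilattice monoid generated by $B$ inside $\m A$, i.e.\ the closure of $B$ under $\cdot$ and $\mt$ with identity and top $\ut$; note that $\m P$ is typically infinite. Passing to $\Low(\m P)$, the largest residuated join-completion, yields a single complete lattice that simultaneously carries the residuated-lattice structure $\pair{\Low(\m P),\mt,\jn,\cdot,\lds{\Low(\m P)},\rds{\Low(\m P)},\ut}$ and, because $\pair{P,\mt,\ut}$ is itself an integral commutative monoid, the Heyting structure $\pair{\Low(\m P),\mt,\jn,\mt,\tos{\Low(\m P)},\ut}$. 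These are exactly the two algebras appearing in Corollary~\ref{completions-RL-Heyting}.

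Next I would cut $\Low(\m P)$ down to a finite nucleus-system. Following Blok and van Alten, I would define a closure operator $\gamma$ on $\Low(\m P)$ generated by the finitely many principal downsets $\da b$ ($b\in B$) together with closure under the residual maps $x\mapsto a\ld x$, $x\mapsto x\rd a$, and $x\mapsto a\to x$ determined by elements $a\in P$ with targets drawn from $B$. By design each $\da b$ with $b\in B$ is $\gamma$-closed, so $\gamma$ separates the points of $B$, and $\gamma$ is simultaneously a nucleus for the $\cdot$-structure and for the $\mt$-structure. Condition (iv) of Corollary~\ref{completions-RL-Heyting} then hands me, for free, that $\m L:=\Low(\m P)_\gamma$ is at once a residuated lattice and a Heyting algebra on a single lattice, i.e.\ $\m L\in\cls{HRL}$, and that the inclusion $\m P\hookrightarrow\m L$ preserves $\cdot$, all residuals including $\to$, and all meets.

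The crux --- and the step I expect to be the main obstacle --- is showing that $\m L$ is finite. Here the completion framework contributes nothing; finiteness is forced only by integrality together with the finiteness of $B$. Adapting the Blok--van Alten combinatorial argument, I would show that every $\gamma$-closed downset is an intersection of basic sets of the form $u\ld(\da b)\rd v$, together with their Heyting analogues $u\to\da b$, where $u,v\in P$ and $b\in B$; that, by integrality, only finitely many such basic sets can occur for each fixed $b$; and hence that only finitely many intersections are possible, via a Dickson-style well-quasi-ordering argument. The presence of the Heyting arrow keeps the lattice distributive throughout, which is what makes this count close cleanly. Finally I would verify that $b\mapsto\gamma(\da b)$ is a partial-algebra embedding of $\m B$ into the finite $\m L$: injectivity and preservation of $\mt$, $\cdot$, and of those residuals (including $\to$) whose $\m A$-values already lie in $B$ follow from the ``furthermore'' clause of the Corollary, while preservation of existing joins uses distributivity. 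This exhibits the desired finite extension and establishes the FEP; distributivity of $\m L$ moreover yields the FEP for distributive integral residuated lattices.
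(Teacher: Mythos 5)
Your construction is essentially the paper's: the same $\{\mt,\cdot,\ut\}$-subreduct $\m P$ generated by $B$, the same completion $\Low(\m P)$, the same nucleus-system $\ol{D}$ generated under meets by the iterated residuals of elements of $B$ by elements of $P$, the same appeal to Corollary~\ref{completions-RL-Heyting}, and a Blok--van Alten count for finiteness (this is exactly the division into Lemmas~\ref{lem:embedding} and~\ref{l:finiteness}). The serious gap is your treatment of the preservation of existing joins, which is precisely the part of the embedding that does \emph{not} come from the ``furthermore'' clause of Corollary~\ref{completions-RL-Heyting} --- and it does not come from distributivity either. For $X\subseteq B$ with $\Join[\m A]X\in B$, the inequality $\Join[\ol{\m D}]X\leq\Join[\m A]X$ is easy; the converse requires showing that every upper bound of $X$ in $\ol D$, hence every generator $\rho^{\Low(\m P)}(a_1,\dots,a_n,b)$ lying above all of $X$, already lies above $\Join[\m A]X$. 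The paper's proof of Lemma~\ref{lem:embedding} does this by trading the residual term $\rho$ for its adjoint multiplicative term $\lambda$ (Lemma~\ref{lem:backandforth}), observing that $\lambda^{\Low(\m P)}(a_1,\dots,a_n,\Join[\m A]X)$ lands in $P$ and that $\lambda$ preserves the join computed in $\m A$ (Corollary~\ref{cor:preservation:joins:meets}), and then trading back. Distributivity of the ambient lattice plays no role in this step and cannot replace it; as written, your verification stops exactly where the real work begins.

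On finiteness, your sketch points at the wrong combinatorial tool and inverts the logic. Since multiplication in $\cls{HRL}$ is not assumed commutative and $\cdot$ and $\mt$ interleave in the basic closed sets (which are iterated residuals of arbitrary depth, not only sets of the form $u\ld(\da b)\rd v$ or $u\to\da b$), the relevant well-quasi-order is not a Dickson-style order on exponent vectors but the divisibility order $\leqc$ on the $\{\cdot,\mt\}$-term algebra $\m F$, whose dual well-orderedness is Higman's Lemma (Corollary~\ref{c:dualwellorder}). Moreover, the wqo is what proves that for fixed $b$ only finitely many distinct sets $\rho^{\Low(\m P)}(a_1,\dots,a_n,b)$ arise at all --- by pulling back along the order-preserving evaluation $\varphi\colon F\to P$ and bounding the preimages via the finite antichain $U_b$ and the finite up-set $\ua U_b$; once that is established, the finiteness of $\ol D$ as a family of meets is automatic and needs no further counting. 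The Heyting arrow's contribution is to keep the construction inside $\cls{HRL}$ (hence distributive, which is what yields Corollary~\ref{DIRL:FEP}); it is not what makes the count close.
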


\begin{cor}[See Corollary~\ref{DIRL:FEP}.]
The variety of distributive integral residuated lattices has the finite embeddability property.
\end{cor}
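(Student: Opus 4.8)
The plan is to deduce the corollary from the finite embeddability property for $\cls{HRL}$ (Theorem~\ref{thm:HRL:FEP}), exploiting the fact that a finite distributive lattice is automatically a Heyting algebra. Thus every finite distributive integral residuated lattice is the residuated-lattice reduct of a finite member of $\cls{HRL}$, and conversely the residuated-lattice reduct of any finite \emph{integral} member of $\cls{HRL}$ is a finite distributive integral residuated lattice. Accordingly, given a distributive integral residuated lattice $\m A$ and a finite partial subalgebra $\m B$, I would (i) embed $\m A$ into a Heyting residuated lattice, (ii) apply the finite embeddability property for $\cls{HRL}$ to a finite partial $\cls{HRL}$-subalgebra containing $\m B$, and (iii) pass to the negative cone of the resulting finite member of $\cls{HRL}$ to recover integrality.

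For step (i), I would embed $\m A$ into the ideal completion $\m A^{+}$ of its lattice reduct. This is a join-completion, and---crucially, unlike a general join-completion---the assignment $a\mapsto{\da}a$ preserves all existing finite meets and joins, since ${\da}a\jn{\da}b={\da}(a\jn b)$ and ${\da}a\mt{\da}b={\da}(a\mt b)$. Because $\m A$ is distributive, $\m A^{+}$ is a frame, hence a complete Heyting algebra; and, viewing $\m A$ as a residuated integral meet-semilattice monoid, Corollary~\ref{completions-RL-Heyting} applies with $\m P=\m A$ (its conditions being readily verified for the ideal completion) to endow $\m A^{+}$ with a residuated-lattice structure extending the multiplication of $\m A$ and compatible with its Heyting structure. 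Thus $\m A^{+}\in\cls{HRL}$, and the inclusion $\m A\hookrightarrow\m A^{+}$ preserves $\mt,\jn,\cdot,\ld,\rd$ and $\ut$. Consequently $\m B$ is a finite partial residuated-lattice subalgebra of $\m A^{+}$, and enlarging it by the finitely many values $a\to b$ (computed in $\m A^{+}$) for $a,b\in B$ yields a finite partial $\cls{HRL}$-subalgebra $\hat{\m B}\supseteq\m B$.

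Applying Theorem~\ref{thm:HRL:FEP} to $\hat{\m B}$ produces a finite $\m D\in\cls{HRL}$ together with an embedding $\hat{\m B}\hookrightarrow\m D$ of partial algebras, and in particular an embedding of the partial residuated-lattice algebra $\m B$. To repair integrality I would pass to the negative cone $\m D^{-}=\{x\in D:x\leq\ut\}$, with multiplication and lattice operations inherited from $\m D$ and residuals $x\ld^{-}y=(x\ld y)\mt\ut$ and $x\rd^{-}y=(x\rd y)\mt\ut$. Since $x,y\leq\ut$ forces $xy\leq\ut$, this is a finite integral residuated lattice; and since ${\da}\ut$ is a sublattice of the distributive lattice $\m D$ it is distributive, so $\m D^{-}$ is a finite distributive integral residuated lattice. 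Every element of $\m B$ lies below $\ut$ (as $\m A$ is integral), so its image lies in $\m D^{-}$, and the partial operations of $\m B$ agree with those of $\m D^{-}$: for the residuals this uses that $a\ld_{\m A}b\leq\ut$, so the truncation by $\ut$ is vacuous on the values coming from $\m B$. Hence $\m B$ embeds into the finite distributive integral residuated lattice $\m D^{-}$.

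The main obstacle is step (i): arranging a single completion of $\m A$ that is simultaneously a complete Heyting algebra and a residuated lattice while preserving the finite joins of $\m B$. This is exactly the point at which lattice-distributivity is indispensable---it is what makes the ideal completion a frame---and where Corollary~\ref{completions-RL-Heyting} does the real work; the remaining steps, including the verification that the negative cone returns the correct partial operations on $\m B$, are routine.
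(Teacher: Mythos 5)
Your argument is essentially the paper's: embed $\m A$ into its ideal completion, which (via Corollary~\ref{completions-RL-Heyting}; the paper invokes Corollary~\ref{completions-heyting}) becomes a Heyting residuated lattice extending $\m A$, and then apply Theorem~\ref{thm:HRL:FEP} to the finite partial subalgebra. The only real difference is your final negative-cone step, which is vacuous rather than wrong: Heyting residuated lattices are integral by definition, so the finite member of $\cls{HRL}$ produced by Theorem~\ref{thm:HRL:FEP} coincides with its negative cone and is already a finite distributive integral residuated lattice.
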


Further, combining the results of Section~\ref{s:DM-completion} with the approach used in the proof of Theorem~\ref{hrl}, we have:

\begin{thm}[See Theorem~\ref{thm:INVIRL:FEP}.]
The variety $\cls{I}nv\cls{IRL}$ of involutive integral residuated lattices has the finite embeddability property.
\end{thm}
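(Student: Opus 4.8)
\emph{Proof plan.} The plan is to follow the construction behind Theorem~\ref{thm:HRL:FEP} (and Lemma~\ref{lem:embedding}) almost verbatim, but to drive the completion by the double-negation nucleus $\gamma_d$ attached to the cyclic dualizing element rather than by the nucleus that enforces distributivity there, so that the resulting finite completion lands in $\cls{I}nv\cls{IRL}$. Let $\m A\in\cls{I}nv\cls{IRL}$ have cyclic dualizing element $d$, and let $\m B_0$ be a finite partial subalgebra with carrier $B_0$. First I would replace $B_0$ by a finite meet-subsemilattice $B$ of $\m A$ containing $\ut$ (which is the top element, by integrality), the element $d$, the set $B_0$, and the involutes $b\ras d$ of the elements $b\in B_0$; this stays finite because meet-semilattices are locally finite, and one should resist closing under joins, which may generate an infinite sublattice in the absence of distributivity. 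Then I would form the integral partially ordered monoid $\m M=\langle B\rangle$ generated by $B$ inside $\m A$ (still integral, as $\ut$ remains the top) and pass to its residuated-lattice join-completion $\Low(\m M)$.

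On $\Low(\m M)$ I would work with $\gamma_d\colon x\mapsto(x\ras d)\ras d$. One first checks that $d$ remains a cyclic element of $\Low(\m M)$: for $m\in M$ both residuals $d\rd m$ and $m\ld d$ are computed in $\Low(\m M)$ as the downset $\da((m\ras d)^{\m A})\cap M$, since $d$ is cyclic in $\m A$, so they coincide, and cyclicity for arbitrary elements then follows by join-density and the fact that residuals turn joins into meets. Lemma~\ref{lem:involuted1} now yields that the nucleus-system $(\Low(\m M))_{\gamma_d}$ is an involutive residuated lattice; it is integral because $\ut$ is the top element and is $\gamma_d$-closed, and, being a nucleus-system of a complete lattice, it is a complete member of $\cls{I}nv\cls{IRL}$. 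For the embedding I would invoke Theorem~\ref{thm:completions-mainresult}, which guarantees that the inclusion $\m M\hookrightarrow\Low(\m M)$ preserves multiplication, existing residuals and meets. Since every element of $\m A$ equals its own double negation, the elements of $B_0$ and their selected residuals $b\ras d$ are $\gamma_d$-closed and hence survive into the nucleus-system; moreover $b\ras d$ is a residual by $d\in M$, so the involution is preserved on $B_0$. The only lattice operation not handled directly is the join, but here the involutive structure pays off: because meets and the involution are preserved, the recorded joins are preserved automatically through the De Morgan identity $b_1\jn b_2=((b_1\ras d)\mt(b_2\ras d))\ras d$, the meet-closure of $B$ ensuring that the intermediate meet lies in $M$.

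It remains to show that $(\Low(\m M))_{\gamma_d}$ is finite, and this is where I expect the real work to lie. The strategy is to transplant the modification of the Blok--van Alten argument from the proof of Theorem~\ref{thm:HRL:FEP}: by integrality and the finiteness of $B$, each $\gamma_d$-closed element of $\Low(\m M)$ should be determined by its residual behaviour over the finite set $B$, and this ``residual trace'' takes only finitely many values. The delicate points are to verify that the double-negation closure does not escape the finite family of $B$-definable downsets used in the Heyting case, and that the finiteness bound, which in the $\cls{HRL}$ setting could lean on distributivity, still goes through for a possibly non-distributive lattice; the saving feature is that, $d$ being cyclic, the single order-reversing map $(-)\ras d$ governs the entire closure, so the count depends only on $|B|$. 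Once finiteness is secured, $(\Low(\m M))_{\gamma_d}$ is the desired finite algebra of $\cls{I}nv\cls{IRL}$ into which $\m B_0$ embeds; and, in the spirit of Theorem~\ref{thm:involuted2} and the Glivenko--Stone theorem, this finite algebra can be recognized as a Dedekind--MacNeille-type completion, giving a cleaner description of the construction.
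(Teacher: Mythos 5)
Your plan is essentially the paper's proof of Theorem~\ref{thm:INVIRL:FEP}: pass to the monoid subreduct $\m M$ generated by a finite set closed under $({-})\ras d$, embed it in $\Low(\m M)$, check that $d$ remains cyclic there, and take the image under the nucleus $\gamma_d$. Your verification of cyclicity (both residuals of $d$ by $m\in M$ are the trace $\da(m\ras_{\m A}d)\cap M$ of the $\m A$-residual) is in fact a slightly cleaner route than the paper's induction on product decompositions, and your De Morgan treatment of the recorded joins, enabled by closing $B$ under meets, is a sound way to handle the one operation that the preservation lemmas do not cover directly.

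The one place you stop short is finiteness, and your worries there (that the bound might lean on distributivity, or that the double-negation closure might escape the $B$-definable downsets) are unfounded: Lemma~\ref{l:finiteness} nowhere uses distributivity, only Higman's lemma for the divisibility order on the term monoid. Concretely, by Lemma~\ref{lem:specialnuclei} every $\gamma_d$-closed element of $\Low(\m M)$ has the form $x\ras d$ with $x=\Join X$ for some $X\subseteq M$, hence equals $\Meet_{m\in X}(m\ras d)$; so $(\Low(\m M))_{\gamma_d}$ consists exactly of the meets of subsets of $\{m\ras d\mid m\in M\}$, and the latter is contained in the set $D_d$ of Lemma~\ref{l:finiteness} (residual terms with central element $d\in B$), hence finite. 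The paper sidesteps the issue by applying $\gamma_d$ not to $\Low(\m M)$ but to the already-finite nucleus-system $\ol{\m D}$ of Lemmas~\ref{lem:embedding} and~\ref{l:finiteness}, so that finiteness of the involutive quotient is automatic; the two constructions produce the same algebra, since every $x\ras d$ lies in $\ol D$ and residuation by $d$ in $\ol{\m D}$ agrees with that in $\Low(\m M)$. With that observation supplied, your argument is complete.
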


Lastly, the aim of Section~\ref{s:fep} is to provide a survey of the finite embeddability property by clarifying relationships among several related notions -- such as finitely presented algebras, finite model property, residual finiteness, and the word problem -- and reviewing general theorems with detailed proofs that remedy some gaps in the literature of ordered structures. It appears to us that there is no reference in the literature of ordered algebras where these interrelationships are discussed in detail, in particular how the partial order of such a structure affects the notion of finite embeddability. 

\section{Preliminaries}\label{section:preliminaries}

In this section we review the notions of a partially ordered monoid, residuated partially ordered monoid, residuated lattice, nucleus-system and nucleus. These concepts and their properties will play a key role in the remainder of this article.

Let $\m P=\lan P, \leq \ran$ and $\m P'=\lan P', \leq' \ran$ be partially ordered sets. A map $\varphi \colon P \to P'$ is said to be an \emph{order-homomorphism}, or \emph{order-preserving}, if for all $p,q\in P$, $p\leq q$ implies $f(p)\leq'f(q)$; an \emph{order-embedding} if for all $p,q\in P$, $p\leq q$ if and only if $f(p)\leq'f(q)$; and an \emph{order-isomorphism} if it is bijective and an order-embedding. A subset $X\subseteq P$, the \emph{lower set} of $X$ is the subset $\da X = \{p\in P \mid p\leq x, \text{ for some } x\in X\}$ of $P$; dually, the \emph{upper set} of $X$ is the subset $\ua X = \{p\in P \mid x\leq p, \text{ for some } x\in X\}$ of $P$. In what follows,  we use the abbreviations $\da a$ for $\da \{a\}$ and $\ua a$ for $\ua \{a\}$, whenever $a\in P$. An \emph{order-ideal} of $\m P$ is a subset $I$ of $P$ satisfying $I = \da I$. A~\emph{principal} order-ideal is one of the form $\da a$, for some $a\in P$. \emph{Order-filters} and \emph{principal} order-filters are defined dually.

A \emph{closure operator} on $\m P$ is a map $\gamma \colon P\to P$ with the usual properties of being an order-homomorphism (that is, order-preserving), enlarging ($x \leq \gamma(x)$), and idempotent ($\gamma(x) = \gamma(\gamma(x))$). It is completely determined by its image
\begin{equation}\label{eq:1.1}
  P_\gamma = \{\gamma(x) \mid x\in P\} = \{x\in P \mid x = \gamma(x)\},
\end{equation} 
by virtue of the formula
\begin{equation}\label{eq:1.2} 
\gamma(x) = \min \{ p \in P_\gamma \mid x \leq p\}.
\end{equation}
 
 A \emph{closure system} of $\m P$ is a subset $C\subseteq P$ such that for all $x\in P$, $\min\{ p \in C \mid x \leq p\}$ exists. Conditions \eqref{eq:1.1} and \eqref{eq:1.2} establish a bijective correspondence between closure operators on and closure systems of $\m P$. In what follows, we use $\gamma_C$ to denote the closure operator associated to a closure system $C$. Every closure system $C$ of $\m P$ inherits from $\m P$ the structure of a partially ordered set $\m C$. It can be readily seen that if $X\subseteq C$ is such that $\Join[\m P] X$ exists, then $\Join[\m C] X$ exists and $\Join[\m C] X = \gamma_C\big(\Join[\m P] X \big)$. Also, if $X\subseteq C$ is such that $\Meet[\m P] X$ exists, then $\Meet[\m C] X$ exists and $\Meet[\m C] X = \Meet[\m P] X$. In particular, the closure systems of a complete lattice $\m P$ are the nonempty subsets of $P$ that are closed with respect to arbitrary meets in $\m P$ of their elements. In this case, for each such $C$, the poset $\m C$ is also a complete lattice in which arbitrary meets, but not joins in general, are preserved in $\m P$.

Any partially ordered set gives rise to a concrete situation of  the concepts described in the preceding paragraph. Consider any partially ordered set $\m P$ and let  $\wp(P)=\pair{\wp(P),\subseteq}$ be the partially ordered set of all subsets of $P$ under set\nbd-inclusion and let $\Low(\m P)=\pair{\Low(P),\subseteq}$ be the partially ordered set of all order-ideals of $\m P$. The latter two are complete lattices in which arbitrary joins and meets are just unions and intersections, respectively. Further, the map $\gamma_{\da} \colon \wp(P)\to\wp(P)$, defined by $\gamma_{\da} (X) = \da X$, is a closure operator on $\wp(P)$ whose associated closure system is  $\Low(P)$.

A \emph{partially ordered monoid}, or \emph{pomonoid}, is a structure $\m{P} = \pair{P,\leq, \cdot, \ut}$ consisting of a partial order and a monoidal structure such that the product is compatible with the order, meaning that the product is order-preserving in both coordinates. As is customary, we use juxtaposition $xy$ instead of $x\cdot y$, when there is no danger of confusion. A partially ordered monoid is called \emph{integral} if the identity of the monoid is also the top element of the order.

Given a partially ordered monoid $\m P$ and two elements $a, b\in P$, the \emph{left} and \emph{right residuals} of $b$ by $a$, if they exist, are the elements
\begin{equation}\label{d:rsiduals}
a\ld b = \max\{x \in P \mid ax\leq b\}  \quad\text{and}\quad  b\rd a  = \max\{x \in P \mid xa\leq b\}.
\end{equation}
Thus, if the left residual of $b$ by $a$ exists, then for every $x\in P$, $ax\leq b$ if and only if $x\leq a\ld b$, and analogously for the right residual. A \emph{residuated partially ordered monoid} is a partially ordered monoid in which all residuals exist. We will view it as a structure $\pair{P,\leq,\cdot,\ld,\rd,\ut}$ satisfying the equivalences 
\begin{equation}\label{d:rsiduals-eq}
xy\leq z \quad\iff\quad y\leq x\ld z \quad\iff\quad x\leq z\rd y,
\end{equation}
for all $x, y, z\in P$, and refer to the two operations $\ld$ and $\rd$ as the \emph{left residual} and \emph{right residual} of multiplication. Finally, a \emph{residuated lattice} is a structure $\pair{P,\jn,\mt,\cdot, \ld, \rd, \ut}$, which is both a lattice and a residuated partially ordered monoid with respect to the induced order. 

The class $\Rl$ of residuated lattices is a finitely based variety.  The defining equations of $\Rl$ consist of the defining  equations for lattices and monoids together with the equations below.
\begin{enumerate}[(RL1), twocol, noitemsep, align=right, leftmargin=3\parindent]
 \item $x(y\jn z) \eq xy\jn xz$
 \item $(y\jn z)x \eq yx\jn zx$
 \item $x\ld y\leq x\ld (y\jn z)$
 \item $y\rd x\leq  (y\jn z)\rd x$
 \item $x(x\ld y) \leq y \leq x\ld xy$
 \item $(y\rd x)x\leq y \leq yx\rd x$
\end{enumerate}

Next lemma is a well-known result, and we will use it in what follows without an explicit mention to it.

\begin{lemma}\label{lem:residuation:joins:meets}
If $\pair{P, \leq, \cdot, \ld,\rd, \ut}$ is a residuated partially ordered monoid, then
\begin{enumerate}[(i)]
 \item the product preserves all existing joins in each argument; and
 \item the residuals preserve all existing meets in the numerator and convert all existing joins in the denominator into meets.
\end{enumerate}
\end{lemma}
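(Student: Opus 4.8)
The plan is to derive the entire lemma from the residuation equivalences~\eqref{d:rsiduals-eq}, which say that for each fixed $a$ the translation $x\mapsto ax$ is left adjoint to $x\mapsto a\ld x$ and, symmetrically, $x\mapsto xa$ is left adjoint to $x\mapsto x\rd a$. Abstractly the statement is an instance of the facts that left adjoints preserve existing joins, right adjoints preserve existing meets, and antitone Galois connections turn joins into meets. Rather than invoke these formally, I would verify each clause directly through an upper/lower-bound characterization; the point of doing it this way is that the existence of the relevant join or meet on the image side is not assumed but falls out of the argument.

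For part~(i), fix $a\in P$ and a subset $S\subseteq P$ for which $\bigvee S$ exists, and put $t=\bigvee S$. The key chain is: an element $z$ is an upper bound of $\{as:s\in S\}$ iff $as\leq z$ for all $s$, iff (by residuation) $s\leq a\ld z$ for all $s$, iff $a\ld z$ is an upper bound of $S$, iff $t\leq a\ld z$, iff $at\leq z$. Thus $z$ bounds $\{as:s\in S\}$ from above exactly when $at\leq z$, which says precisely that $\bigvee\{as:s\in S\}$ exists and equals $at=a\bigvee S$. The case of the right translation $x\mapsto xa$ is identical, using $\rd$ in place of $\ld$.

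For the numerator clause of part~(ii), fix $a$ and a subset $T$ with $\bigwedge T$ existing, writing $m=\bigwedge T$. The dual chain reads: $w$ is a lower bound of $\{a\ld t:t\in T\}$ iff $w\leq a\ld t$ for all $t$, iff $aw\leq t$ for all $t$, iff $aw$ is a lower bound of $T$, iff $aw\leq m$, iff $w\leq a\ld m$. Hence $a\ld(\bigwedge T)=\bigwedge\{a\ld t:t\in T\}$, and symmetrically $(\bigwedge T)\rd a=\bigwedge\{t\rd a:t\in T\}$.

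The denominator clause is the only step I expect to need a little thought, since $x\mapsto x\ld b$ is antitone and must send a join to a meet. Fixing $b$ and $S$ with $t=\bigvee S$: $w$ is a lower bound of $\{s\ld b:s\in S\}$ iff $sw\leq b$ for all $s$, and here I would feed in part~(i) to rewrite $\bigvee\{sw:s\in S\}=(\bigvee S)w=tw$; thus ``$sw\leq b$ for all $s$'' is equivalent to $tw\leq b$, i.e.\ to $w\leq t\ld b$. This yields $(\bigvee S)\ld b=\bigwedge\{s\ld b:s\in S\}$, with the symmetric statement for $b\rd(\bigvee S)$. The main (mild) obstacle throughout is the bookkeeping about existence: each bound-characterization is arranged so that the existence of the image join or meet is a consequence rather than a hypothesis, and the denominator case is the one place where an earlier clause of the lemma (part~(i)) must be invoked inside the proof of another.
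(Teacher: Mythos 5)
Your proof is correct: each clause is established by the standard upper/lower-bound characterization coming from the residuation equivalences, the existence of the image join or meet is correctly obtained as a conclusion rather than assumed, and the appeal to part~(i) inside the denominator clause is legitimate (one could also argue that case directly via $sw\leq b \iff s\leq b\rd w$, but your route is equally valid). The paper itself states this lemma as well known and offers no proof, so there is nothing to compare against; your argument is exactly the standard one that would be supplied.
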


A \emph{nucleus} on a partially ordered monoid $\m P$ is a closure operator $\gamma$ on $\m P$ satisfying the inequality 
\[
\gamma (a) \gamma(b) \leq \gamma(ab),
\]
for all $a,b \in P$. A \emph{nucleus-system} of $\m P$ is a closure system $C$ of $\m P$ 
satisfying 
\begin{equation}\label{eq:nuclear}
  x \ld a \in C \text{ and } a \rd x \in C,  \text{ for all } x\in P \text{ and } a\in C.
\end{equation}

The next result describes the relationship between nuclei and nucleus-systems (see~\cite{Ros90b}*{p.~31} or~\cite{GT05}*{Lemma 3.1}, and~\cite{Sch77}*{Corollary~3.7} for an earlier result in the setting of Brouwerian meet-semilattices). 

\begin{lemma}\label{lem:nucleus} 
Let $\gamma$ be a closure operator on a residuated pomonoid ${\m P}$, and let $P_{\gamma}$ be the closure system associated with $\gamma$. The following statements are equivalent:
\begin{enumerate}[(i)]
\item $\gamma$ is a nucleus.

\item $\gamma(a) \rd b,\;b\ld\gamma(a)\in P_{\gamma}$ for all $a,b\in P$.

\item $P_{\gamma}$ is a nucleus-system of ${\m P}$.

\item $\gamma(a) \rd b=\gamma(a) \rd \gamma(b)$ and $b\ld\gamma(a)=\gamma(b)\ld\gamma(a)$ for all $a,b\in P$.
\end{enumerate}
In particular, equations~(\ref{eq:1.1}) and~(\ref{eq:1.2}) establish a bijective correspondence between nuclei and nucleus-systems of $\m P$.
\end{lemma}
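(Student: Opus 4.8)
My plan is to establish the four equivalences through the cycle (i)$\Rightarrow$(ii)$\Leftrightarrow$(iii)$\Rightarrow$(iv)$\Rightarrow$(i), treating (ii)$\Leftrightarrow$(iii) as a pure reformulation. Indeed, by \eqref{eq:1.1} we have $P_\gamma=\{\gamma(a)\mid a\in P\}$, so as $a$ ranges over $P$ the two memberships in (ii) range over exactly the elements of $P_\gamma$. Thus (ii) asserts precisely that $x\ld c$ and $c\rd x$ lie in $P_\gamma$ for every $x\in P$ and every $c\in P_\gamma$, which is literally the defining condition \eqref{eq:nuclear} of a nucleus-system; hence (ii) and (iii) say the same thing once one recalls that $P_\gamma$ is automatically a closure system.

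For (i)$\Rightarrow$(ii) I would fix $a,b\in P$, set $w=\gamma(a)\rd b$, and show that $w$ is closed. From $wb\leq\gamma(a)$ and the nucleus inequality, $\gamma(w)\gamma(b)\leq\gamma(wb)\leq\gamma(\gamma(a))=\gamma(a)$; since $b\leq\gamma(b)$ this gives $\gamma(w)b\leq\gamma(a)$, that is, $\gamma(w)\leq\gamma(a)\rd b=w$, so $w=\gamma(w)\in P_\gamma$. The claim for $b\ld\gamma(a)$ is handled symmetrically. The two remaining implications rest on the same two elementary facts, used repeatedly: a residual whose numerator is closed is itself closed (under the hypothesis in force), and, by the characterization \eqref{eq:1.2}, whenever $c$ is closed and $x\leq c$ one has $\gamma(x)\leq c$. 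For (iii)$\Rightarrow$(iv), monotonicity of the residual in its denominator gives $\gamma(a)\rd\gamma(b)\leq\gamma(a)\rd b$ for free; conversely, writing $w=\gamma(a)\rd b$ one has $b\leq w\ld\gamma(a)\in P_\gamma$, hence $\gamma(b)\leq w\ld\gamma(a)$, i.e. $w\gamma(b)\leq\gamma(a)$, i.e. $w\leq\gamma(a)\rd\gamma(b)$; the left-residual identity follows by the mirror argument.

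For (iv)$\Rightarrow$(i) the idea is to improve both arguments of the product to their closures in two stages. From $ab\leq\gamma(ab)$ one gets $a\leq\gamma(ab)\rd b$, and the right half of (iv) rewrites this as $a\leq\gamma(ab)\rd\gamma(b)$, so $a\gamma(b)\leq\gamma(ab)$. Reading the latter as $\gamma(b)\leq a\ld\gamma(ab)$ and applying the left half of (iv), $a\ld\gamma(ab)=\gamma(a)\ld\gamma(ab)$, yields $\gamma(b)\leq\gamma(a)\ld\gamma(ab)$, which is exactly $\gamma(a)\gamma(b)\leq\gamma(ab)$. I expect this step to be the main obstacle, and it is the reason the hypotheses must be stated for \emph{both} residuals: the two-stage improvement toggles between the right- and left-residual identities, and the only real danger is in the bookkeeping of which residual and which closure appear at each stage.

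Finally, the concluding bijection is immediate. The maps \eqref{eq:1.1}--\eqref{eq:1.2} already furnish a bijective correspondence between closure operators on $\m P$ and closure systems of $\m P$; the established equivalence (i)$\Leftrightarrow$(iii) says precisely that, under this correspondence, a closure operator is a nucleus if and only if the associated closure system is a nucleus-system. Restricting the bijection accordingly therefore identifies nuclei with nucleus-systems.
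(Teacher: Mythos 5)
Your proof is correct: each implication in the cycle (i)$\Rightarrow$(ii)$\Leftrightarrow$(iii)$\Rightarrow$(iv)$\Rightarrow$(i) checks out with the paper's residuation conventions, and the final bijection does follow by restricting the correspondence \eqref{eq:1.1}--\eqref{eq:1.2} along (i)$\Leftrightarrow$(iii). The paper itself states this lemma without proof, citing Rosenthal, Galatos--Tsinakis, and Schmidt--Tsinakis; your argument is the standard one found in those sources, so there is nothing substantive to contrast.
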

 
The next two lemmas, proved in~\cite{GT05}*{Lemma 3.3}, show that a nucleus-system $C$ of a residuated partially ordered  monoid $\m P$ inherits the structure of a residuated partially ordered monoid $\m C = \pair{C,\leq,\circ_{_\m C},\lds{\m C},\rds{\m C},\gamma_C(\ut)}$, where for any $x, y \in C$
\[
x \circ_{_\m C} y = \gamma_C (x \cdot y),\quad 
x \rds{\m C} y= x \rd y, \quad\text{and}\quad x \lds{\m C} y = x \ld y.
\]
Further, $\gamma_C \colon \m{P} \to {\m C}$ is an order and monoid homomorphism. However, $\gamma_C$ need not preserve the residuals in general. If, in addition, $\m P$ is a residuated lattice, then $\m C$ is a residuated lattice with respect to the operations defined above and
\[
x \jn_{_\m C} y = \gamma_C (x \jn y) \quad\text{and}\quad    x \mt_{_\m C} y  = x \mt y.
\]

\begin{lemma}
Given a nucleus-system $C$ of a residuated partially ordered monoid $\m P$, the structure ${\m C} = \pair{C,\leq, \circ_{_\m C},\ld,\rd,\gamma_C(\ut)}$ is a residuated partially ordered monoid. Furthermore, $\gamma_{C} \colon \m{P} \to {\m C}$ is an order and monoid homomorphism.
\end{lemma}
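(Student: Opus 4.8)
The plan is to verify the defining axioms of a residuated partially ordered monoid for $\m C$ one by one and then check the two homomorphism clauses, throughout exploiting the fact that, by Lemma~\ref{lem:nucleus}, the nucleus-system $C$ corresponds to a nucleus $\gamma_C$, so that $\gamma_C(a)\gamma_C(b)\leq\gamma_C(ab)$ for all $a,b\in P$. Note also that $\gamma_C$ is order-preserving, enlarging, and idempotent, and that $\gamma_C(c)=c$ precisely when $c\in C$. All orders involved are the restriction of the order of $\m P$, so no distinction needs to be made between $\leq$ in $\m P$ and in $\m C$.

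The workhorse of the whole argument is the following \emph{stability} identity, which I would establish first: for all $u,v\in P$,
\[
\gamma_C(\gamma_C(u)\cdot v)=\gamma_C(u\cdot v)=\gamma_C(u\cdot\gamma_C(v)).
\]
Each equality follows from two opposing inequalities. For the first equality, the direction $\gamma_C(uv)\leq\gamma_C(\gamma_C(u)v)$ comes from $uv\leq\gamma_C(u)v$ (monotonicity of the product together with $u\leq\gamma_C(u)$) followed by monotonicity of $\gamma_C$; the reverse direction uses $\gamma_C(u)v\leq\gamma_C(u)\gamma_C(v)\leq\gamma_C(uv)$ (the first step because $v\leq\gamma_C(v)$, the second by the nucleus inequality), followed by monotonicity and idempotency of $\gamma_C$. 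The second equality is symmetric. Iterating gives $\gamma_C(\gamma_C(u)\cdot\gamma_C(v))=\gamma_C(uv)$, which I will use repeatedly.

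With stability in hand the monoid structure is routine. Well-definedness of $\circ_{_\m C}$ is automatic, since $\gamma_C(xy)\in C$; compatibility with the order follows from monotonicity of $\cdot$ and of $\gamma_C$. For associativity, both $(x\circ_{_\m C}y)\circ_{_\m C}z=\gamma_C(\gamma_C(xy)\cdot z)$ and $x\circ_{_\m C}(y\circ_{_\m C}z)=\gamma_C(x\cdot\gamma_C(yz))$ collapse via stability to $\gamma_C(xyz)$. For the unit, stability yields $\gamma_C(\ut)\circ_{_\m C}x=\gamma_C(\gamma_C(\ut)\cdot x)=\gamma_C(\ut\cdot x)=\gamma_C(x)=x$ for $x\in C$, and symmetrically on the right. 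Hence $\m C$ is a partially ordered monoid with unit $\gamma_C(\ut)$.

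For residuation I would first observe that, since $C$ is a nucleus-system and $z\in C$, both $x\ld z$ and $z\rd y$ lie in $C$, so they are legitimate elements of $\m C$. The equivalences then reduce to the remark that for $z\in C$ one has $\gamma_C(xy)\leq z\iff xy\leq z$ (one direction by monotonicity of $\gamma_C$ together with $z=\gamma_C(z)$, the other by $xy\leq\gamma_C(xy)$). Consequently
\[
x\circ_{_\m C}y\leq z\iff xy\leq z\iff y\leq x\ld z\iff x\leq z\rd y,
\]
the last two equivalences being residuation in $\m P$; this identifies $x\ld z$ and $z\rd y$ as the residuals $x\lds{\m C}z$ and $z\rds{\m C}y$ in $\m C$. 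Finally, $\gamma_C$ is order-preserving as a closure operator, sends $\ut$ to the unit $\gamma_C(\ut)$ of $\m C$ by definition, and the iterated stability identity gives $\gamma_C(a)\circ_{_\m C}\gamma_C(b)=\gamma_C(\gamma_C(a)\cdot\gamma_C(b))=\gamma_C(ab)$, so $\gamma_C$ is an order and monoid homomorphism. The only genuinely delicate point, and the single place where the nucleus hypothesis is indispensable, is the stability identity; every remaining step is formal manipulation of monotonicity, idempotency, and the residuation adjunction already available in $\m P$.
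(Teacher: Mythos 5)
Your proof is correct. The paper itself does not prove this lemma but defers to \cite{GT05}*{Lemma 3.3}, and your argument — isolating the stability identity $\gamma_C(\gamma_C(u)\cdot v)=\gamma_C(u\cdot v)=\gamma_C(u\cdot\gamma_C(v))$ as the one place the nucleus inequality is needed, then deriving associativity, the unit law, the residuation equivalences (using that $x\ld z, z\rd y\in C$ by the definition of a nucleus-system, and that $\gamma_C(xy)\leq z\iff xy\leq z$ for $z\in C$), and the homomorphism claims as formal consequences — is essentially the standard argument given there.
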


\begin{lemma}\label{lem:nuclear:system}
Let $C$ be a nucleus-system of a residuated lattice $\m{L} = \lan L, \mt, \jn ,$ $\cdot ,\ld , \rd, \ut\ran$. Then the structure ${\m C} = \lan C, \mt, \jn_{_\m C} ,\circ_{_\m C}, \ld , \rd, \gamma_C(\ut) \ran$ is a residuated lattice.
\end{lemma}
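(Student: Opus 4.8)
The plan is to deduce Lemma~\ref{lem:nuclear:system} from the immediately preceding lemma together with the standard facts about closure systems recorded earlier in this section. The key observation is that almost all of the work has already been done: the preceding lemma establishes that $\m C = \pair{C,\leq,\circ_{_\m C},\ld,\rd,\gamma_C(\ut)}$ is a residuated partially ordered monoid, so what remains is only to verify that the induced partial order on $C$ is a \emph{lattice}, and that the lattice operations are given by the stated formulas $x \mt_{_\m C} y = x \mt y$ and $x \jn_{_\m C} y = \gamma_C(x \jn y)$.

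First I would recall that a nucleus-system is in particular a closure system of $\m L$, so the discussion following equations~\eqref{eq:1.1} and~\eqref{eq:1.2} applies directly. Since $\m L$ is a residuated lattice, its order reduct is a lattice, and the excerpt observes that for any closure system $C$ and any $X \subseteq C$ for which $\Meet[\m L] X$ exists we have $\Meet[\m C] X = \Meet[\m L] X$, while if $\Join[\m L] X$ exists then $\Join[\m C] X = \gamma_C(\Join[\m L] X)$. Applying these facts to the two-element set $X = \{x,y\} \subseteq C$ (for which binary meets and joins in $\m L$ certainly exist) yields at once that $x \mt_{_\m C} y = x \mt y$ and $x \jn_{_\m C} y = \gamma_C(x \jn y)$, and in particular that both operations exist in $\m C$. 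Thus $\m C$ is a lattice.

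Combining this with the preceding lemma, $\m C$ is simultaneously a lattice and a residuated partially ordered monoid with respect to the same induced order, which is precisely the definition of a residuated lattice given in Section~\ref{section:preliminaries}. Hence $\m C = \lan C,\mt,\jn_{_\m C},\circ_{_\m C},\ld,\rd,\gamma_C(\ut)\ran$ is a residuated lattice, as claimed.

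I do not expect a genuine obstacle here, since the statement is essentially a corollary of the prior lemma plus the general closure-system formulas. If there is any subtle point, it is to confirm that the residuals $x \ld y$ and $x \rd y$ computed in $\m L$ genuinely serve as the residuals of the \emph{new} product $\circ_{_\m C}$ with respect to the lattice order on $C$; but this is exactly the content of the preceding lemma (which treats $\m C$ as a residuated pomonoid), so no additional argument is needed beyond quoting it. The only care required is to make sure that the join used in the residuated-lattice equations is the join $\jn_{_\m C}$ internal to $\m C$ rather than the ambient join $\jn$, and to note that the residuation equivalences~\eqref{d:rsiduals-eq} are inherited because the order on $C$ is the restriction of the order on $L$.
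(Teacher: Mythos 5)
Your proposal is correct, and it is essentially the intended argument: the paper itself offers no proof of this lemma, citing \cite{GT05}*{Lemma 3.3} for both it and its pomonoid predecessor, and the derivation you give\,---\,quote the preceding lemma for the residuated pomonoid structure, then use the closure-system identities $\Meet[\m C] X = \Meet[\m L] X$ and $\Join[\m C] X = \gamma_C\big(\Join[\m L] X\big)$ on two-element subsets to see that $\m C$ is a lattice with $x \mt_{_\m C} y = x\mt y$ and $x \jn_{_\m C} y = \gamma_C(x\jn y)$\,---\,is exactly what the surrounding text (the paragraph introducing both lemmas) indicates. Your closing remark is also well taken: since a residuated lattice is by definition a lattice that is a residuated pomonoid for the induced order, nothing further needs to be verified about the residuals once the two structures are seen to live on the same order.
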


The following example will be used in a number of occasions.

\begin{example}\label{ex:L(P):RL}
If $\m P$ is a partially ordered monoid, then $\wp(\m{P}) = \lan \wp(P), \cap, \cup ,\cdot ,$ $\ld , \rd, \{\ut\}\ran $ is a residuated lattice, where: 
\begin{align*}
X\cdot Y &= \{ xy \mid x\in X, y\in Y\} \\ 
X\ld Y &= \{z \mid xz \in Y,\ \forall x \in X\} \\ 
Y\rd X &= \{z \mid zx \in Y,\ \forall x \in X\} 
\end{align*}
It is a simple matter to verify that $\gamma_{\da}$ is a nucleus on $\wp(\m{P})$, and hence, in light of Lemma~\ref{lem:nuclear:system}, $\Low(\m P)$ \label{definition:L(P)} has an induced structure of residuated lattice, which we call \emph{canonical}. The product of two order-ideals $X,Y\in \Low(\m P)$ is given by $X\circ_{\da} Y = \da (X\cdot Y)$. We also note, for future reference, that the product of two principal order-ideals $\da x$ and $\da y$ in $\Low(\m P)$ is the principal order-ideal $\da (xy)$. That is, $\da x \circ_{\da} \da y = \da (xy)$. If we think of $\m P$ as the partially ordered monoid  of principal order-ideals of $\m P$, the preceding observation states that the multiplication of $\Low(\m P)$ extends the multiplication of $\m P$.
\end{example}

\section{Join-extensions and join-completions of ordered algebras}
\label{section<<joinextensions}

The main result in this section is Theorem~\ref{thm:completions-mainresult}. It provides a description of any join-completion of a partially ordered monoid $\m P$ that is a residuated lattice with respect to a (necessarily unique) multiplication that extends the multiplication of $\m P$. Its proof will be preceded by the definition of relevant notions and proofs of auxiliary results.

Recall that a partially ordered set $\m{L}$ is said to be a \emph{join-extension} of a partially ordered set $\m P$, or that  $\m P$ is \emph{join-dense} in $\m{L}$, provided that $P$ is a subset of $L$, the order of $\m  L$ restricts to that of $\m P$, and every element of $\m{L}$ is a join of elements of $\m P$. A join-extension $\m L$ is a \emph{join-completion} if, in addition, $\m L$ is a complete lattice. The concepts of a \emph{meet-extension} and a \emph{meet-completion} are defined dually. Join-completions of partially ordered sets were introduced by B. Banaschewski~\cite{Ban56}, and were studied extensively by J.~Schmidt~\cites{Sch72a, Sch72b, Sch74}. They are intimately related to representations of complete lattices studied by J.R.~B\"uchi~\cite{Buc52}.

\begin{lemma}\label{lem:meet:faithful} 
 A join-extension $\m L$ of a partially ordered set $\m P$ preserves all existing meets in $\m P$. That is,  if $X \subseteq P$ and $\Meet[\m{P}] X$ exists, then $\Meet[\m{L}] X$ exists and $\Meet[\m{P}] X = \Meet[\m{L}] X$. Dually, a meet-extension of $\m P$ preserves all existing joins in $\m P$.
\end{lemma}
 
\begin{proof}
We prove the statement for join-extensions. Let $X \subseteq P$ be such that $\Meet[\m{P}] X$ exists. We need to prove that $\Meet[\m{L}] X$ exists and $\Meet[\m{P}] X = \Meet[\m{L}] X$.  Set $a=\Meet[\m{P}] X$ and let $b\in L$ be a lower bound of $X$. As $\m{L}$ is a join-extension of $\m P$, there exists $Y\subseteq P$, such that $b=\Join[\m{L}] Y$. Since $b$ is a lower bound of $X$ and at the same time an upper bound for $Y$, each $y\in Y$ is also a lower bound of $X$ in $\m P$. But $a$ is the greatest lower bound of $X$ in $\m P$, therefore $y\leq a$. Thus, $a$ is an upper bound for $Y$ and $b$ is the least such bound, hence $b\leq a$.  As $b$ is an arbitrary lower bound of $X$ in $\m{L}$, the latter inequality means that $a$ is the greatest lower bound of $X$ in $\m{L}$. We have shown that $\Meet[\m{L}] X =a = \Meet[\m{P}] X$.
 \end{proof}
 
\begin{proposition}\label{prop:closure-operators-systems}
Let $\m P$ be a partially ordered set, let $\m K$ be a join-completion of $\m P$, and let $L$ be a subset of $K$ that contains $P$. The partially ordered set $\m L$, with respect to the induced partial order from $\m K$, is a join-completion of $\m P$ if and only if it is a closure system of $\m K$. 
\end{proposition}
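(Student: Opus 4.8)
The plan is to reduce the statement to the description of the closure systems of a complete lattice recorded in Section~\ref{section:preliminaries}: since $\m K$ is a complete lattice, its closure systems are exactly the subsets $C \subseteq K$ closed under all meets formed in $\m K$, and for any such $C$ one has $\Join[\m C] X = \gamma_C\big(\Join[\m K] X\big)$ for every $X \subseteq C$. The one fact I would isolate at the outset and reuse in both implications is a \emph{canonical representation}: whenever $\m M$ is a join-completion of $\m P$ and $b \in M$, then $b = \Join[\m M]\{p \in P \mid p \leq b\}$. This is immediate from join-density, since writing $b = \Join[\m M] X$ with $X \subseteq P$ forces $X \subseteq \{p \in P \mid p \leq b\}$, while every element of the latter set is below $b$. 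Because the order of $\m L$ is induced from $\m K$ and both restrict to the order of $\m P$, for $b \in L$ the set $\{p \in P \mid p \leq b\}$ does not depend on whether $\leq$ is read in $\m L$ or in $\m K$; applying the representation in each of $\m K$ and $\m L$ then yields $b = \Join[\m K]\{p \in P \mid p \leq b\} = \Join[\m L]\{p \in P \mid p \leq b\}$ for every $b \in L$.

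For the implication that a closure system $L$ of $\m K$ is a join-completion of $\m P$, I would first invoke Section~\ref{section:preliminaries} to obtain that $\m L$ is a complete lattice, and observe that its order restricts to that of $\m P$ since $P \subseteq L \subseteq K$ and $\m K$ is a join-extension of $\m P$. Join-density is then the only point to verify: fixing $b \in L$ and setting $X = \{p \in P \mid p \leq b\} \subseteq L$, the representation applied in $\m K$ gives $\Join[\m K] X = b$, and the cited formula gives $\Join[\m L] X = \gamma_L\big(\Join[\m K] X\big) = \gamma_L(b) = b$, the last step because $b$ is a fixed point of $\gamma_L$. Hence $b$ is a join in $\m L$ of elements of $P$, as required.

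For the converse I would show that the join-completion $L$ is closed under arbitrary meets in $\m K$, which is precisely the condition for $L$ to be a closure system of the complete lattice $\m K$. Here I expect the crux, and the main obstacle, to be the identity $\Meet[\m K] S = \Meet[\m L] S$ for every $S \subseteq L$; granting it, the right-hand side lies in $L$ because $\m L$ is complete, so $L$ is meet-closed in $\m K$, and taking $S = \emptyset$ places the top element of $\m K$ in $L$, so $L$ is nonempty. To prove the identity, one inequality is free: $\Meet[\m L] S$ is a lower bound of $S$ in $\m K$, so $\Meet[\m L] S \leq \Meet[\m K] S$. For the reverse I would apply the canonical representation in $\m K$ to $m := \Meet[\m K] S$ (which need not lie in $L$), writing $m = \Join[\m K]\{p \in P \mid p \leq m\}$; each such $p$ satisfies $p \leq m \leq s$ for all $s \in S$ and hence, being an element of $P \subseteq L$, is a lower bound of $S$ in $\m L$, so $p \leq \Meet[\m L] S$. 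Thus $\Meet[\m L] S$ is an upper bound in $\m K$ of $\{p \in P \mid p \leq m\}$, whence $m = \Join[\m K]\{p \in P \mid p \leq m\} \leq \Meet[\m L] S$, which closes the identity and completes the argument.
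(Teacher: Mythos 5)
Your proof is correct and follows essentially the same route as the paper: one direction via the formula $\Join[\m L] X = \gamma_L\bigl(\Join[\m K] X\bigr)$ for closure systems, the other by showing $L$ is closed under arbitrary meets of $\m K$. The only difference is that the identity $\Meet[\m K] S = \Meet[\m L] S$, which you rightly identify as the crux and prove by hand, is exactly Lemma~\ref{lem:meet:faithful} applied to $\m K$ viewed as a join-extension of $\m L$ (every element of $K$ is a join of elements of $P\subseteq L$), which is how the paper dispatches that direction in two lines.
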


\begin{proof}
If $\m L$ is a join-completion of $\m P$, then $\m K$ is a join-completion of $\m L$ and, in view of Lemma~\ref{lem:meet:faithful}, arbitrary meets in $\m L$ are preserved in $\m K$. As moreover $\m L$ is complete, $L$ is a closure system of $\m K$.
Conversely, if $\m L$ is a closure system of $\m K$, then for every $X\subseteq L$, $\Join[\m L] X = \gamma_L\big(\Join[\m K] X\big)$. Therefore, if $a\in L\subseteq K$, then there exists $X\subseteq P$ such that $a = \Join[\m K]X$, whence $a = \gamma_L(a) = \gamma_L\big(\Join[\m K]X\big) = \Join[\m L]X$. Therefore, $\m L$ is a join-completion of $\m P$.
\end{proof}

If we take into account the order-isomorphism of $\m P$ with the partially ordered set $\dot{\m{P}} = \pair{\{ \da x \mid x \in P \},\subseteq}$ of its principal order-ideals, we see that $\Low(\m P)$ is a join-completion of $\m P$. Moreover, each join-extension of $\m P$ is isomorphic to a subpartially ordered set of $\Low(\m P)$. Indeed, if $\m L$ is a join-extension, then it is order-isomorphic to its \emph{canonical image} $\dot{\m L} = \pair{\{ \da x \cap P \mid x \in L \},\subseteq}$. Thus $\Low(\m P)$ is, up to isomorphism, the largest join-completion of $\m P$. It follows that every join-extension of $\m P$ can be embedded into $\Low(\m P)$ by an order-embedding that fixes the elements of $\m P$. Considering one realization of $\Low(\m P)$, the set of all intermediate partially ordered sets $\m P\subseteq \m L\subseteq \Low(\m P)$ contains an isomorphic copy of every join-extension of $\m P$. Therefore, the following result is an immediate consequence of Proposition~\ref{prop:closure-operators-systems}.

\begin{proposition}\label{proposition<<completions}
The join-completions of a partially ordered set $\m P$ are, up to isomorphism, the closure systems of $\Low(\m P)$ containing $P$, with the induced order.
\end{proposition}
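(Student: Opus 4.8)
The plan is to apply Proposition~\ref{prop:closure-operators-systems} with the choice $\m K = \Low(\m P)$. By the discussion immediately preceding the statement, once $\m P$ is identified with the partially ordered set $\dot{\m P} = \pair{\{\da x \mid x \in P\},\subseteq}$ of its principal order-ideals, the structure $\Low(\m P)$ is itself a join-completion of $\m P$; hence it is a legitimate choice for $\m K$. With this choice, Proposition~\ref{prop:closure-operators-systems} asserts precisely that a subset $L \subseteq \Low(P)$ containing $P$, endowed with the order induced from $\Low(\m P)$, is a join-completion of $\m P$ if and only if it is a closure system of $\Low(\m P)$. This already delivers one direction: every closure system of $\Low(\m P)$ containing $P$ is (literally, not merely up to isomorphism) a join-completion of $\m P$.

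For the converse I would invoke the embedding observation recorded before the statement. An arbitrary join-completion $\m L$ of $\m P$ is order-isomorphic, via the map $x \mapsto \da x \cap P$, to its canonical image $\dot{\m L} = \pair{\{\da x \cap P \mid x \in L\},\subseteq}$, which is an intermediate partially ordered set $\m P \subseteq \dot{\m L} \subseteq \Low(\m P)$, and this isomorphism fixes the elements of $\m P$. Since $\dot{\m L}$ is again a join-completion of $\m P$, the ``only if'' direction of Proposition~\ref{prop:closure-operators-systems} forces $\dot{\m L}$ to be a closure system of $\Low(\m P)$ containing $P$. Thus every join-completion of $\m P$ is isomorphic to a closure system of $\Low(\m P)$ containing $P$, and combining the two directions yields the claimed description up to isomorphism.

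The argument is essentially bookkeeping on top of Proposition~\ref{prop:closure-operators-systems}, so I do not anticipate a genuine obstacle. The only point requiring care is the identification of $\m P$ with $\dot{\m P}$ inside $\Low(\m P)$: one must check that ``contains $P$'' in Proposition~\ref{prop:closure-operators-systems} corresponds here to ``contains the principal order-ideals,'' that the canonical image $\dot{\m L}$ really sits between $\dot{\m P}$ and $\Low(\m P)$, and that it carries the order induced from $\Low(\m P)$. Once these identifications are made transparent, the result follows at once.
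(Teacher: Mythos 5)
Your proposal is correct and follows exactly the paper's route: the paper states the proposition as an immediate consequence of Proposition~\ref{prop:closure-operators-systems} applied with $\m K = \Low(\m P)$, relying on the same preceding observations that $\Low(\m P)$ is a join-completion of $\m P$ and that any join-extension is order-isomorphic, via a map fixing $P$, to its canonical image sitting between $\m P$ and $\Low(\m P)$. Your additional care about identifying $\m P$ with its principal order-ideals is exactly the bookkeeping the paper leaves implicit.
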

 
Although it is possible to describe all join-extensions of a partially ordered set $\m P$ as systems of order-ideals of $\m P$, it is often more convenient to use abstract descriptions of them\,---\,after all, as noted in~\cite{Sch72a}, it would be cumbersome to always view the reals as Dedekind cuts of the rationals. For example, $\Low(\m P)$\,---\,the largest join-completion of $\m P$\,---\,can be described abstractly as the unique algebraic and dually algebraic distributive lattice whose partially ordered set of completely join-prime elements is isomorphic to $\m P$. The smallest join-completion of $\m P$, the so-called \emph{Dedekind-MacNeille completion} $\McN(\m{P})$, has an equally satisfying abstract description due to Banaschewski~\cite{Ban56}: it is the only join- and meet-completion of $\m P$. This characterization is a direct consequence of the fact that the canonical image of $\McN(\m{P})$ consists of all intersections of principal order-ideals of $\m P$. Observe that, in light of Lemma~\ref{lem:meet:faithful}, any existing meets and joins in $\m P$ are preserved in $\McN(\m{P})$. However, the so called \emph{Crawley completion} $\Crow(P)$\,---\,consisting of all order-ideals that are closed with respect to any existing joins of their elements\,---\,is the largest join-completion with this property (see~\cite{Sch72a, Sch72b}). In general, the inclusion $\McN(\m{P}) \subseteq \Crow(\m{P})$ is proper.   

Most of the implications of Theorem~\ref{thm:completions-mainresult} follow directly from the connection between nuclei and nucleus-systems (Lemma~\ref{lem:nucleus}). A crucial ingredient of the proof is the fact that residuals are preserved as one moves up the ladder of join-extensions. More specifically, we have the following auxiliary result.

\begin{lemma}\label{residuals}
Let $\m P$ be a partially ordered monoid and let $\m{L}$ be a join-extension of $\m P$ which, in addition, is a partially ordered monoid with respect to a multiplication extending the multiplication of $\m P$. Then for all $a, b \in P$, if $a \lds{\m{P}} b$ exists (in $\m P$), then $a \lds{\m{L}} b$ exists (in $\m{L}$) and
\[
a \lds{\m{P}} b = a \lds{\m{L}} b = a \lds{\Low(\m P)} b = a \lds{\wp(\m{P})} b.
\]
Likewise for the other residual.
\end{lemma}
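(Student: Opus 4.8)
The plan is to prove that all four quantities coincide by producing a single candidate, the element $m = a \lds{\m{P}} b \in P$ (which exists by hypothesis), and checking that it serves as the relevant residual in each of the three ambient structures $\m{L}$, $\Low(\m P)$ and $\wp(\m{P})$, under the identification of $a,b,m \in P$ with their principal order-ideals $\da a, \da b, \da m$. Since $m \in P \subseteq L$ and both the order and the multiplication of $\m{L}$ extend those of $\m P$, the inequality $am \leq b$ that holds in $\m P$ holds in $\m{L}$ as well; thus $m$ always belongs to the set over which the respective residual is a maximum, and the entire content of the lemma is that $m$ is in fact that maximum.

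The main step, and the only one using join-density, is the identity $a \lds{\m{L}} b = m$. Here I would take an arbitrary $x \in L$ with $ax \leq b$ and show $x \leq m$. Writing $x = \Join[\m{L}] Y$ for some $Y \subseteq P$ (possible since $\m P$ is join-dense in $\m{L}$), each $y \in Y$ satisfies $y \leq x$, hence $ay \leq ax \leq b$ by compatibility of the product with the order. The crucial observation is that $a$, $y$, $ay$ and $b$ all lie in $P$, so that $ay$ is computed identically in $\m P$ and $\m{L}$ and the inequality $ay \leq b$, being an inequality between elements of $P$ whose order $\m{L}$ restricts, already holds in $\m P$; by definition of $m = a \lds{\m P} b$ this gives $y \leq m$. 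Thus $m$ is an upper bound of $Y$, and since $x$ is the least such bound we conclude $x \leq m$. Together with $am \leq b$ this shows $m = \max\{x \in L \mid ax \leq b\} = a \lds{\m{L}} b$, and in particular the latter exists.

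For the remaining two equalities I would compute directly in $\wp(\m{P})$. From the definition of the residual there, $\da a \ld \da b = \{z \in P \mid xz \in \da b \text{ for all } x \in \da a\}$; taking $x = a$ yields the inclusion into $\{z \in P \mid az \leq b\}$, and conversely if $az \leq b$ and $x \leq a$ then $xz \leq az \leq b$, so the two sets coincide. By the defining property of $m$ this set is exactly $\da m$, giving $a \lds{\wp(\m{P})} b = \da m$. Finally, since $\Low(\m P)$ is a nucleus-system of $\wp(\m{P})$ and residuals are inherited by nucleus-systems, $a \lds{\Low(\m P)} b = a \lds{\wp(\m{P})} b = \da m$ (and $\da m \in \Low(\m P)$, being principal); alternatively one may simply invoke the first part with $\m L = \Low(\m P)$, whose multiplication extends that of $\m P$ by the Example. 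Under the identification $m \leftrightarrow \da m$ all four residuals equal $m$, as claimed.

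The step I expect to be the real, if modest, obstacle is the bookkeeping in the main identity: one must be scrupulous that the descent from ``$ay \leq b$ in $\m{L}$'' to ``$y \leq m$'' is legitimate, i.e. that both the product and the order relation on elements of $P$ are unambiguous between $\m P$ and $\m{L}$. This is precisely the point at which the hypotheses that $\m{L}$ restricts the order of $\m P$ and extends its multiplication are indispensable; everything else is a direct unwinding of the definitions.
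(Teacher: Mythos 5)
Your proof is correct and follows essentially the same route as the paper: the key step in both is to write an arbitrary $x\in L$ with $ax\leq b$ as a join of elements of $P$, push $a$ through the join by monotonicity, and bound each joinand by $a\lds{\m P}b$. The only cosmetic difference is that you compute $a\lds{\wp(\m P)}b$ explicitly and then transfer to $\Low(\m P)$, whereas the paper gets $\Low(\m P)$ from the first part and then passes to $\wp(\m P)$ via the nucleus-system lemma; both orderings use the same facts.
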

 
\begin{proof}  Let $a, b \in P$ and assume that $a \lds{\m{P}} b$ exists. First, notice that $a\lds{\m P} b \in \{x\in L\mid ax\leq b\}$. We prove that $a \lds{\m{P}} b=\max\{x\in L \mid ax\leq b\}$, which clearly implies that $a \lds{\m{P}} b=a \lds{\m{L}} b$. To this end, let $x\in L$ such that $ax\leq b$. Since $\m{L}$ is a join-extension of $\m P$, there exists a subset $X\subseteq P$ such that $x = \Join[\m{L}]X$. Now, $ax \leq b$  implies that for every $p\in X$, $ap \leq a\Join[\m{L}]X\leq b$, and hence  $p \leq a \lds{\m P} b$. It follows that  $x = \Join[\m{L}]X \leq a \lds{\m P} b$, as was to be  shown.  Thus, indeed $a \lds{\m{P}} b = a \lds{\m{L}} b$, and also $a \lds{\m{P}} b = a \lds{\Low(\m P)} b$, since $\Low(\m P)$ is also a join-completion of $\m P$ whose multiplication extends that of $\m P$.

To complete the proof of the lemma, it will suffice to prove that $a \lds{\Low(\m P)} b = a \lds{\wp(\m{P})} b$. But this is an immediate consequence of Lemma~\ref{lem:nuclear:system}, as $\Low(\m P)$ is a nucleus-system of $\wp(\m{P})$ (see Example~\ref{ex:L(P):RL}).
\end{proof} 

\begin{theorem}\label{thm:completions-mainresult}
Let $\m P=\pair{P,\leq,\cdot,\ut}$ be a partially ordered monoid and let $\m L$ be a join-completion of the partially ordered set $\pair{P,\leq}$. The following statements are equivalent:
 \begin{enumerate}[(i)]
 \item\label{mainresult-item1}  $\m{L}$ can be given a structure of a residuated lattice whose multiplication extends the multiplication of $\m P$. 
\item\label{mainresult-item2} For all $a \in P$ and $b \in L$, $a \lds{\Low(\m P)} b \in L$ and $b \rds{\Low(\m P)} a \in L$. 
 \item\label{mainresult-item3} $L$ is a nucleus-system of $\Low(\m P)$. 
\item\label{mainresult-item4} $\gamma_L$ is a nucleus on $\Low(\m P)$. 
\end{enumerate}
Furthermore, whenever the preceding conditions are satisfied, the multiplication on $\m L$ is uniquely determined and the inclusion map $\m{P}\hookrightarrow \m{L}$ preserves, in addition to multiplication, all existing residuals and meets.
\end{theorem}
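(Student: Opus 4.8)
The plan is to close the cycle of implications by routing everything through the canonical residuated-lattice structure of $\Low(\m P)$ (Example~\ref{ex:L(P):RL}) and to isolate (i)$\Ra$(ii) as the single substantive step. At the outset I would record, via Proposition~\ref{proposition<<completions}, that the hypothesis that $\m L$ is a join-completion of $\pair{P,\leq}$ already forces $L$, under the canonical identification, to be a closure system of $\Low(\m P)$ containing $P$; this is what makes conditions (ii)--(iv) meaningful. The equivalence (iii)$\iff$(iv) is then immediate from Lemma~\ref{lem:nucleus} applied to the residuated lattice $\Low(\m P)$, since $\gamma_L$ is precisely the closure operator associated with the closure system $C=L$. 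The implication (iii)$\Ra$(ii) is trivial, as the clauses of (ii) are the instances of the nucleus-system condition~\eqref{eq:nuclear} with $x$ ranging over the principal ideals $\da a$, $a\in P$. And (iii)$\Ra$(i) follows from Lemma~\ref{lem:nuclear:system}: the induced operations make $\m L$ a residuated lattice, and one checks that the multiplication extends that of $\m P$ by the one-line computation $\da a\circ_{_\m L}\da b=\gamma_L(\da a\circ_{\da}\da b)=\gamma_L(\da(ab))=\da(ab)$, the last equality because $\da(ab)$ lies in $P\subseteq L$ and is therefore $\gamma_L$-closed.

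For (ii)$\Ra$(iii) I would upgrade the principal-ideal clauses of (ii) to the full condition~\eqref{eq:nuclear}. Given an arbitrary $x\in\Low(P)$, write it as a join of principal ideals $x=\Join[\Low(\m P)]\{\da a\mid a\in X\}$ with $X\subseteq P$. Since residuals convert existing joins in the denominator into meets (Lemma~\ref{lem:residuation:joins:meets}), $x\lds{\Low(\m P)}b=\Meet[\Low(\m P)]\{\da a\lds{\Low(\m P)}b\mid a\in X\}$, a meet of elements that belong to $L$ by (ii); as a closure system of a complete lattice, $L$ is closed under arbitrary meets, so $x\lds{\Low(\m P)}b\in L$. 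The argument for $b\rds{\Low(\m P)}x$ is symmetric, and together they give~\eqref{eq:nuclear}, i.e.\ (iii).

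The crux is (i)$\Ra$(ii), where the difficulty is that the residuals of the abstractly-given residuated lattice $\m L$ must be compared with those of $\Low(\m P)$ at arguments $b\in L$ that need not lie in $P$, so Lemma~\ref{residuals} does not apply verbatim. The plan is to pass to the canonical image, identifying each $c\in L$ with the order-ideal $\{z\in P\mid z\leq_{\m L}c\}$ and each $a\in P$ with $\da a$. On one side, a direct computation in $\Low(\m P)$ gives the concrete description $a\lds{\Low(\m P)}b=\{z\in P\mid az\in b\}$ (here $az$ is the product in $\m P$, and one uses that $b$ is a downset). On the other side, residuation in $\m L$ together with the hypothesis that the multiplication extends that of $\m P$ identifies the canonical image of $a\lds{\m L}b\in L$ with $\{z\in P\mid a\cdot_{\m L}z\leq_{\m L}b\}=\{z\in P\mid az\leq_{\m L}b\}=\{z\in P\mid az\in b\}$, the final equality because the order of $\m L$ restricts to that of $\m P$ and $az\in P$. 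These two sets coincide, so $a\lds{\Low(\m P)}b$ equals the canonical image of an element of $L$ and is therefore itself in $L$; the right residual is symmetric. I expect this identification of two a priori different residuals to be the main obstacle, the payoff being that it proves (ii) and simultaneously pins down the residuals of $\m L$.

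Finally, for the ``furthermore'' clause I would deduce uniqueness of the multiplication from join-density: in any residuated lattice structure on $\m L$ extending $\m P$, multiplication preserves joins in each argument, so for $x=\Join[\m L]X$ and $y=\Join[\m L]Y$ with $X,Y\subseteq P$ one has $x\cdot y=\Join[\m L]\{pq\mid p\in X,\,q\in Y\}$, which is determined by the multiplication of $\m P$. Preservation of meets by the inclusion $\m P\hookrightarrow\m L$ is Lemma~\ref{lem:meet:faithful}, preservation of multiplication is built into condition (i), and preservation of all existing residuals between elements of $P$ is exactly Lemma~\ref{residuals}.
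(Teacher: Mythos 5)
Your proposal is correct, and its overall architecture---the cycle (i)$\Ra$(ii)$\Ra$(iii)$\Ra$(i), the equivalence (iii)$\iff$(iv) via Lemma~\ref{lem:nucleus}, the join-density argument for uniqueness, and Lemmas~\ref{residuals} and~\ref{lem:meet:faithful} for the preservation claims---coincides with the paper's. The one step where you genuinely diverge is the one you rightly single out as the crux, (i)$\Ra$(ii). The paper handles it by applying Lemma~\ref{residuals} a second time, now to the pair $(\m L,\Low(\m P))$: it asserts that $\Low(\m P)$ is a join-extension of $\m L$ whose multiplication extends that of $\m L$, and concludes $a\lds{\m L}b=a\lds{\Low(\m P)}b$ for $a\in P$, $b\in L$. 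You instead pass to the canonical image and check directly that both $a\lds{\Low(\m P)}b$ and the canonical image of $a\lds{\m L}b$ equal the order-ideal $\{z\in P\mid az\in b\}$. The two arguments are morally the same (yours unwinds the lemma's proof in this special case), but yours is more self-contained at the delicate point: the paper's reapplication of the lemma presupposes compatibility of the canonical multiplication of $\Low(\m P)$ with the abstractly given one on $\m L$, whereas a priori one only has $x\cdot_{\m L}y=\gamma_L(x\cdot_{\Low(\m P)}y)$ for $x,y\in L$; your computation never needs this. The paper's route buys brevity and lemma reuse; yours buys transparency at that step. The remaining steps, including your remark that (iii)$\Ra$(ii) is just an instance of condition~\eqref{eq:nuclear} at principal ideals, all check out.
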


\begin{proof}
We first prove the equivalences. In light of Lemma~\ref{lem:nucleus}, (iii) and (iv) are equivalent. Thus, it will suffice to establish the  implications (i)${}\Ra{}$(ii)${}\Ra{}$(iii)${}\Ra{}$(i).
\begin{itemproof}
\item[(i)${}\Ra{}$(ii):] Suppose $\m{L}$ satisfies (i). Let  $a\in P$ and $b\in L$.  Since $\m L$ is a join-completion of $\m P$ whose multiplication extends that of $\m P$, we have that $\Low(\m P)$ is a join-extension of $\m L$ whose multiplication extends that of $\m L$, and hence Lemma~\ref{residuals} implies that  $a \lds{\Low(\m P)} b = a \lds{\m{L}} b$ and  $b \rds{\Low(\m P)} a = b \rds{\m{L}} a$. Hence,  $a \lds{\Low(\m P)} b \in L$ and  $b \rds{\Low(\m P)} a \in L$.
 
\item[(ii)${}\Ra{}$(iii):] Suppose $L$ satisfies (ii).  Let  $a\in \Low(\m P)$ and $b\in L$.  Since $\Low(\m P)$ is a join-extension of $\m P$, there exists $X\subseteq P$ such that $a = \Join[\Low(\m P)]X$. Hence,
\[
a \lds{\Low(\m P)} b 
  = \Bigl(\Join[\Low(\m P)] X \Bigr) \Biglds{\Low(\m P)} b
  = \Meet[\Low(\m P)]_{p\in X} (p\lds{\Low(\m P)} b)
  = \Meet[\m{L}]_{p\in X} (p\lds{\Low(\m P)} b).
\] 

Indeed, the second equality above follows from Lemma~\ref{lem:residuation:joins:meets}, while the third equality follows from~(ii) and Lemma~\ref{lem:meet:faithful}. Thus, $a \lds{\Low(\m P)} b \in L$, and likewise, $b \rds{\Low(\m P)} a \in L$.
 
\item[(iii)${}\Ra{}$(i):] Suppose that $L$ is a nucleus-system of $\Low(\m P)$.  In view of Lemma~\ref{lem:nuclear:system}, $\m{L}$ is a  residuated lattice.   We need to prove that the multiplication  of $\m{L}$ extends the multiplication of $\m P$. We have observed before that $\m P$ is a  submonoid of $\Low(\m P)$. Since $P\subseteq L$, it follows that for every $x,y\in P$, $x\circ_{\m{L}} y=\gamma_L(xy) = xy$. Thus, $\m P$ is also a submonoid of $\m{L}$, as was to be shown.

We next prove that the said multiplication of $\m L$ is uniquely determined.  Indeed, suppose $\m{L}$ is given the structure of a residuated lattice with respect to multiplications $\ast$ and $\star$ that extend the multiplication of $\m P$ (to be indicated in the proof below as a juxtaposition). As $\ast$ and $\star$ are residuated, they preserve arbitrary joins in $\m L$.  Consider elements $x, y\in L$. Then there exist nonempty subsets $X, Y$ of $P$ such that $x=\Join[\m L] X$ and $y=\Join[\m L] Y$. Then $x\ast y=\Join[\m L] X \ast \Join[\m L] Y=\Join[\m L] \{x\ast y \mid x\in X, y\in Y\}=\Join[\m L] \{xy \mid x\in X, y\in Y\}=\Join[\m L] \{x\star y \mid x\in X, y\in Y\}=\Join[\m L] X \star \Join[\m L] Y=x\star y.$ Lastly, that the inclusion map $\m{P}\hookrightarrow \m{L}$ preserves all existing residuals and meets follows from Lemma~\ref{residuals} and Lemma~\ref{lem:meet:faithful}, respectively. \qedhere
\end{itemproof}
\end{proof}

We note that Theorem~\ref{thm:completions-mainresult} provides a simple proof of the fact that the Dedekind-MacNeille completion of a residuated partially ordered monoid is a residuated lattice.

\begin{corollary}\label{Dedekind-MacNeille}
The Dedekind-MacNeille completion $\McN(\m{P})$ of a residuated partially ordered monoid $\m P$ can be uniquely endowed with the structure of a residuated lattice with respect to a unique multiplication that extends the multiplication of $\m P$.
\end{corollary}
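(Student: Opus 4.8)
The plan is to recognize $\McN(\m P)$ as a join-completion of $\m P$ and then invoke Theorem~\ref{thm:completions-mainresult}, so that the entire statement---existence of the structure, its being a residuated lattice, the extension of the multiplication, and uniqueness---follows at once, provided a single one of the equivalent conditions (ii)--(iv) is verified. Since $\McN(\m P)$ is the smallest join-completion of $\m P$ (indeed, as recalled above, its only join- and meet-completion), it is in particular a join-completion, so the hypotheses of Theorem~\ref{thm:completions-mainresult} are met with $\m L = \McN(\m P)$. I would verify condition (ii): that for all $a \in P$ and $b \in \McN(\m P)$, both $a \lds{\Low(\m P)} b$ and $b \rds{\Low(\m P)} a$ lie in $\McN(\m P)$.

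The key is to work inside the canonical realization of $\McN(\m P)$ within $\Low(\m P)$, where its elements are precisely the intersections of principal order-ideals of $\m P$. First I would fix $a \in P$, identified with $\da a$, and an arbitrary $b \in \McN(\m P)$, writing $b = \bigcap_{i \in I} \da c_i = \Meet[\Low(\m P)]_{i \in I} \da c_i$ with each $c_i \in P$. Since $\m P$ is residuated, every residual $a \ld c_i$ exists in $\m P$, and Lemma~\ref{residuals} gives $\da a \lds{\Low(\m P)} \da c_i = \da(a \ld c_i)$, a principal order-ideal. By Lemma~\ref{lem:residuation:joins:meets}(ii) the residual preserves existing meets in its numerator, so
\[
a \lds{\Low(\m P)} b = \Meet[\Low(\m P)]_{i \in I} \bigl(a \lds{\Low(\m P)} \da c_i\bigr) = \bigcap_{i \in I} \da(a \ld c_i),
\]
which is again an intersection of principal order-ideals, hence an element of $\McN(\m P)$. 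The symmetric computation, using the right residual and the conversion of joins in the denominator into meets, handles $b \rds{\Low(\m P)} a$, so condition (ii) holds.

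With (ii) established, Theorem~\ref{thm:completions-mainresult} immediately yields that $\McN(\m P)$ carries a residuated-lattice structure whose multiplication extends that of $\m P$, together with the uniqueness of that multiplication---which is exactly the assertion of the corollary. The only point demanding care---the ``hard part,'' such as it is---lies in the passage to the concrete description of $\McN(\m P)$ as the intersections of principal order-ideals and in the bookkeeping of which argument of the residual is the numerator; once the representation $b = \bigcap_i \da c_i$ is in place, the meet-preservation of residuals in the numerator, combined with the principal-by-principal computation of Lemma~\ref{residuals}, does all the work. Everything else, including uniqueness and the preservation of multiplication, existing residuals, and meets by the inclusion $\m P \hookrightarrow \McN(\m P)$, is inherited verbatim from the theorem.
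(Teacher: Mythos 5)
Your proposal is correct and follows essentially the same route as the paper: both verify condition (ii) of Theorem~\ref{thm:completions-mainresult} by writing $b$ as a meet of elements of $P$ (you via the canonical realization of $\McN(\m{P})$ as intersections of principal order-ideals, the paper via Banaschewski's characterization of $\McN(\m{P})$ as the unique join- and meet-completion) and then combining meet-preservation of residuals in the numerator with Lemma~\ref{residuals}. One cosmetic slip: in $b \rds{\Low(\m P)} a$ the denominator $a$ is a single element of $P$, so the symmetric computation again rests on meet-preservation in the numerator, not on the conversion of joins in the denominator into meets.
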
   

\begin{proof}
Let $a\in P$ and  $b\in \McN(\m{P})$. In view of Theorem~\ref{thm:completions-mainresult}(ii), it will suffice to show that $a \lds{\Low(\m P)} b \in \McN(\m{P})$ and $b \rds{\Low(\m P)} a \in \McN(\m{P})$. We just prove that $a \lds{\Low(\m P)} b \in \McN(\m{P})$. By Banaschewski's~\cite{Ban56} aforementioned result, $\McN(\m{P})$ is the only join- and meet-completion of $\m P$. Hence, there exist a subset $X$ of $P$ such that $b=\Meet[\McN(\m{P})] X$. Note that $b=\Meet[\McN(\m{P})] X=\Meet[\Low(\m P)] X$, by Lemma~\ref{lem:meet:faithful}. Invoking Lemmas~\ref{lem:residuation:joins:meets} and~\ref{residuals}, we have:
\begin{align*}
a \lds{\Low(\m P)} b &= a \Biglds{\Low(\m P)} \Meet[\Low(\m P)] X
  = \Meet[\Low(\m P)]_{x\in X} (a\lds{\Low(\m P)} x) 
  = \Meet[\McN(\m{P})]_{x\in X} (a\lds{\Low(\m P)} x) \\
  &= \Meet[\McN(\m{P})]_{x\in X} (a\lds{\m P} x)\in \McN(\m{P}).\qedhere
\end{align*} 
\end{proof}

As special case of Theorem~\ref{thm:completions-mainresult} occurs when $\m P$ is a meet-semilattice, that is, a partially ordered monoid whose multiplication is the meet operation. Then one can consider join-completions $\m L$ of $\m P$ that are Heyting algebras with respect to their lattice reducts. An example in point is the largest join-completion $\Low(\m P)$ of $\m P$. Denoting the Heyting implication of $\m L$ by $\tos{\m{L}}$, we have the following consequence of Theorem~\ref{thm:completions-mainresult}:

\begin{corollary}\label{completions-heyting}
Let $\m P$ be a meet-semilattice with top element $\ut$ and $\m{L}$ a join-completion of $\pair{P,\leq}$. Then, the following statements are equivalent:
\begin{enumerate}[(i)]
\item\label{cormainresult-item1}  $\m{L}$ is a Heyting algebra with respect to the lattice reduct of $\m{L}$ (more precisely, it can be made into a Heyting algebra by adding the Heyting implication). 
\item\label{cormainresult-item2} For all $a \in P$ and $b \in L$, $a \tos{\Low(\m P)} b \in L$. 
\item\label{cormainresult-item3} $L$ is a nucleus-system of  $ \pair{\Low(\m P),\mt,\jn,\mt,\tos{\Low(\m P)},\ut}$. 
\item\label{cormainresult-item4} $\gamma_L$ is a nucleus on $\Low(\m P)$. 
\end{enumerate}
Furthermore, whenever the preceding conditions are satisfied, the Heyting algebra structure on $\m L$ is uniquely determined and the inclusion map $\m{P}\hookrightarrow \m{L}$ preserves all existing meets and residuals.
\end{corollary}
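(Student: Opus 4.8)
The plan is to recognize Corollary~\ref{completions-heyting} as the special case of Theorem~\ref{thm:completions-mainresult} in which the pomonoid $\m P$ is taken to be the commutative, integral pomonoid $\pair{P,\leq,\mt,\ut}$ whose multiplication is the meet operation. I would begin by recording two identifications. First, that $\ut$, being the top of the meet-semilattice $\m P$, remains the top of every join-completion $\m L$ (each element of $L$ is a join of elements of $P$, all of which lie below $\ut$), so that any residuated lattice structure on $\m L$ extending $\mt_{\m P}$ is automatically integral, with $\ut$ serving as both identity and greatest element. Second, that the canonical multiplication $\circ_\da$ on $\Low(\m P)$ from Example~\ref{ex:L(P):RL} coincides with intersection: for order-ideals $X,Y$ one checks $\da\{x\mt y : x\in X,\ y\in Y\} = X\cap Y$. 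Hence $\Low(\m P)$, viewed as a residuated lattice, is precisely the Heyting algebra $\pair{\Low(\m P),\mt,\jn,\mt,\tos{\Low(\m P)},\ut}$, and its single residual is the Heyting implication $\tos{\Low(\m P)}$.

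With these identifications in hand, the equivalences (ii)$\iff$(iii)$\iff$(iv) of the corollary become literal instances of the corresponding equivalences in Theorem~\ref{thm:completions-mainresult}. Since $\mt$ is commutative, the two residuals of $\Low(\m P)$ agree, $a\lds{\Low(\m P)} b = b\rds{\Low(\m P)} a = a\tos{\Low(\m P)} b$; thus the single membership condition in Corollary~\ref{completions-heyting}(ii) is exactly Theorem~\ref{thm:completions-mainresult}(ii), and the nucleus-system and nucleus conditions in (iii) and (iv) specialize verbatim via Lemma~\ref{lem:nucleus}. No work beyond this translation is required for these three conditions.

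The crux is matching Corollary~\ref{completions-heyting}(i) with Theorem~\ref{thm:completions-mainresult}(i). One direction is immediate: if $\m L$ is a Heyting algebra on its lattice reduct, then it is a residuated lattice whose multiplication is $\mt_{\m L}$, and by Lemma~\ref{lem:meet:faithful} meets of elements of $P$ are preserved in $\m L$, so $\mt_{\m L}$ extends $\mt_{\m P}$, giving Theorem~\ref{thm:completions-mainresult}(i). The harder direction, which I expect to be the main obstacle, is to show that any residuated lattice multiplication $\cdot_{\m L}$ extending $\mt_{\m P}$ must coincide with the lattice meet $\mt_{\m L}$, so that $\m L$ is genuinely a Heyting algebra and not merely residuated. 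I would argue this through three observations, each passed from $P$ to $L$ using that $\cdot_{\m L}$ preserves joins in each argument (Lemma~\ref{lem:residuation:joins:meets}): commutativity (it holds on $P$ because $\mt_{\m P}$ is commutative), idempotency (for $x=\Join[\m L] X$ with $X\subseteq P$ one computes $x\cdot_{\m L} x = \Join[\m L]_{p,q\in X}(p\mt q)=x$), and integrality (established above). Integrality gives $x\cdot_{\m L} y\leq x\mt_{\m L} y$; conversely, idempotency together with order-preservation of $\cdot_{\m L}$ yields $x\mt_{\m L} y = (x\mt_{\m L} y)\cdot_{\m L}(x\mt_{\m L} y)\leq x\cdot_{\m L} y$. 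Hence $\cdot_{\m L}=\mt_{\m L}$ and $\m L$ is a Heyting algebra.

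Finally, the uniqueness and preservation clauses transfer directly from the ``furthermore'' part of Theorem~\ref{thm:completions-mainresult}: the residuated multiplication on $\m L$ is unique, we have identified it as $\mt_{\m L}$, and its residual is the Heyting implication, so the Heyting structure is uniquely determined; preservation of all existing meets and residuals (the implication included) is inherited from the theorem together with Lemma~\ref{lem:meet:faithful} and Lemma~\ref{residuals}.
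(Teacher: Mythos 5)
Your proposal is correct and follows exactly the route the paper intends: the paper offers no written proof, simply declaring the corollary a consequence of Theorem~\ref{thm:completions-mainresult} for the pomonoid $\pair{P,\leq,\mt,\ut}$. You additionally supply the one detail the paper leaves implicit\,---\,that any residuated multiplication on $\m L$ extending $\mt_{\m P}$ must coincide with $\mt_{\m L}$, which your integrality-plus-idempotency argument establishes correctly.
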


Using Corollary~\ref{completions-heyting} and arguing as in the proof of Corollary~\ref{Dedekind-MacNeille}, we get an alternative proof of the  well-known fact that the Dedekind-MacNeille completion of an implicative semilattice is a Heyting algebra. We use the term \emph{implicative semilattice} for a meet-semilattice whose meet operation is residuated.

\begin{corollary}
The Dedekind-MacNeille completion of an implicative semilattice is a Heyting algebra.
\end{corollary}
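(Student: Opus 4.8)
The plan is to deduce this corollary from Corollary~\ref{completions-heyting} in exactly the way that Corollary~\ref{Dedekind-MacNeille} was deduced from Theorem~\ref{thm:completions-mainresult}. An implicative semilattice is a meet-semilattice $\m P$ whose meet is residuated; in particular it has a top element $\ut$ (the empty meet, equivalently $x \to x$), so the hypotheses of Corollary~\ref{completions-heyting} are met with $\m L = \McN(\m P)$. To establish that $\McN(\m P)$ is a Heyting algebra, it therefore suffices to verify condition~(\ref{cormainresult-item2}): for all $a \in P$ and $b \in \McN(\m P)$, the Heyting implication $a \tos{\Low(\m P)} b$ computed in $\Low(\m P)$ lies in $\McN(\m P)$.

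The key step is to exploit Banaschewski's characterization: $\McN(\m P)$ is the unique join- and meet-completion of $\m P$, so every $b \in \McN(\m P)$ is a meet $b = \Meet[\McN(\m P)] X$ of a subset $X \subseteq P$, and by Lemma~\ref{lem:meet:faithful} this meet is preserved, giving $b = \Meet[\Low(\m P)] X$. First I would rewrite the implication using the fact that the Heyting arrow (being the residual of meet) converts meets in its numerator into meets, i.e.\ Lemma~\ref{lem:residuation:joins:meets}(ii) applied to the residuated pomonoid $\pair{\Low(\m P),\mt,\jn,\mt,\tos{\Low(\m P)},\ut}$, obtaining
\[
a \tos{\Low(\m P)} b
  = a \tos{\Low(\m P)} \Meet[\Low(\m P)] X
  = \Meet[\Low(\m P)]_{x\in X}\bigl(a \tos{\Low(\m P)} x\bigr).
\]
Then I would invoke the residual version of Lemma~\ref{residuals} for the meet operation: since $a, x \in P$ and the meet-semilattice $\m P$ is itself residuated, the residual $a \tos{\m P} x$ already exists in $\m P$ and equals $a \tos{\Low(\m P)} x$. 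Hence each summand lies in $P \subseteq \McN(\m P)$, so the displayed expression is a meet of elements of $P$, which again by preservation of meets equals $\Meet[\McN(\m P)]_{x\in X}(a \tos{\m P} x)$ and therefore belongs to $\McN(\m P)$.

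The only mild obstacle is making sure the hypotheses of Lemma~\ref{residuals} genuinely apply in the purely lattice-theoretic Heyting setting rather than the general pomonoid setting in which that lemma is phrased. This is handled by observing that a meet-semilattice is a commutative idempotent pomonoid whose multiplication is $\mt$, and that $\McN(\m P)$ (and $\Low(\m P)$) are join-completions whose induced multiplication is meet and hence extends that of $\m P$; thus Lemma~\ref{residuals}, read with $\cdot = \mt$ and $\ld = \to$, transfers verbatim. Everything else is a routine repetition of the computation in Corollary~\ref{Dedekind-MacNeille}, so no genuine difficulty remains once condition~(\ref{cormainresult-item2}) is checked.
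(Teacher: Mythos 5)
Your proposal is correct and follows exactly the route the paper intends: the paper's own ``proof'' is the one-line remark that one should apply Corollary~\ref{completions-heyting} and argue as in Corollary~\ref{Dedekind-MacNeille}, and you have simply carried out that computation in detail, including the worthwhile observation that Lemma~\ref{residuals} applies with $\cdot=\mt$ because a meet-semilattice with top is an integral commutative pomonoid. Nothing further is needed.
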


In Section~\ref{sec:FEP:HRL}, the proof of finite embeddability property for  a number of varieties of residuated lattices requires the combination of the settings described in Theorem~\ref{thm:completions-mainresult} and Corollary~\ref{completions-heyting}.  More specifically, consider an integral \emph{meet-semilattice ordered monoid} $\m P$. The pertinent question here is: Which join-completions $\m L$ of $\m P$ are both residuated lattices and also Heyting algebras with respect to their lattice reduct. An example in point is the largest join-completion $\Low(\m P)$ of $\m P$. In such a situation, one can extend the language of $\m L$ with the addition of a Heyting implication $\tos{\m{L}}$. The next result, which is an immediate consequence of Theorem~\ref{thm:completions-mainresult}, provides a description of such completions:

\begin{corollary}\label{completions-RL-Heyting}
Let $\m P$ be an integral meet-semilattice monoid and $\m{L}$ a join-completion of $\pair{P,\leq}$. Then, the following statements are equivalent:
\begin{enumerate}[(i)]
\item\label{RL-Heyting-item1} {$\m{L}$ can be given both a structure of a residuated lattice whose multiplication extends the multiplication of $\m P$ and of a Heyting algebra with respect to the lattice reduct of $\m{L}$. }
\item\label{RL-Heyting-item2} For all $a \in P$ and $b \in L$, $a \lds{\Low(\m P)} b \in L$, $b \rds{\Low(\m P)} a \in L$ and $a \tos{\Low(\m P)} b \in L$. 
\item\label{RL-Heyting-item3} $L$ is a nucleus-system of the algebras $ \pair{\Low(\m P),\mt,\jn,\cdot,\lds{\Low(\m P)},\rds{\Low(\m P)}, \ut}$ and \break $ \pair{\Low(\m P),\mt,\jn,\mt,\tos{\Low(\m P)},\ut}$. 
\item\label{RL-Heyting-item4} The closure operator $\gamma_L$ is a nucleus on   $\pair{\Low(\m P),\mt,\jn,\cdot,\lds{\Low(\m P)},\rds{\Low(\m P)}, \ut}$ and on $ \pair{\Low(\m P),\mt,\jn,\mt,\tos{\Low(\m P)},\ut}$. 
\end{enumerate}
Furthermore, whenever the preceding conditions are satisfied, the two structures are uniquely determined and the inclusion map $\m{P}\hookrightarrow \m{L}$ preserves multiplication, all existing residuals (including Heyting implication) and meets.
\end{corollary}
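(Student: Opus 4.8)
The plan is to apply Theorem~\ref{thm:completions-mainresult} twice, once to each of the two integral pomonoid structures carried by $\m P$. An integral meet-semilattice monoid can be regarded simultaneously as the pomonoid $\pair{P,\leq,\cdot,\ut}$ and as the pomonoid $\pair{P,\leq,\mt,\ut}$, since integrality forces $\ut$ to be the top element and hence the identity of the meet operation. Both structures share the same partial order, so they determine the \emph{same} underlying lattice $\Low(\m P)$ of order-ideals; only the canonical multiplication and its residuals differ.

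First I would record the elementary fact that the canonical multiplication on $\Low(\m P)$ attached to $\mt$ is just intersection: for order-ideals $X,Y$ one checks directly from Example~\ref{ex:L(P):RL} that $\da\{x\mt y\mid x\in X,\ y\in Y\}=X\cap Y$, using that order-ideals are downward closed and that $x\mt y\leq x,y$. Hence the residuated lattice obtained from the $\mt$-structure is precisely the Heyting algebra $\pair{\Low(\m P),\mt,\jn,\mt,\tos{\Low(\m P)},\ut}$, with $\tos{\Low(\m P)}$ the Heyting implication. This identification is what permits me to invoke Corollary~\ref{completions-heyting} for the meet part.

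Next I would split each of the four conditions of the corollary into a ``$\cdot$-part'' and a ``$\mt$-part''. Condition~(i) is the conjunction of ``$\m L$ carries a residuated-lattice structure extending $\cdot$'' and ``$\m L$ is a Heyting algebra on its lattice reduct''; condition~(ii) is the conjunction of the membership requirements for $a\lds{\Low(\m P)}b,\ b\rds{\Low(\m P)}a$ and for $a\tos{\Low(\m P)}b$; and conditions~(iii) and~(iv) are already displayed as statements about the two algebras separately. Applying Theorem~\ref{thm:completions-mainresult} to the $\cdot$-structure gives the equivalence of the four $\cdot$-parts, while applying Corollary~\ref{completions-heyting} to $\pair{P,\leq,\mt,\ut}$ gives the equivalence of the four $\mt$-parts. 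Conjoining these two chains of equivalences yields at once the four-way equivalence of~(i)--(iv).

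The \emph{furthermore} clause then follows by combining the corresponding clauses of the two results: uniqueness of the residuated-lattice multiplication comes from Theorem~\ref{thm:completions-mainresult} and uniqueness of the Heyting implication from Corollary~\ref{completions-heyting}; preservation of multiplication and of the residuals $\ld,\rd$ comes from the former, preservation of the Heyting implication (the residual of $\mt$) from the latter, and preservation of meets (which also covers $\mt$ itself) from Lemma~\ref{lem:meet:faithful}. I do not expect any genuine obstacle: the sole point requiring care is the bookkeeping identification of the canonical completion of the meet-semilattice $\pair{P,\leq,\mt,\ut}$ with the Heyting algebra on $\Low(\m P)$, after which the statement is a formal conjunction of two theorems already in hand.
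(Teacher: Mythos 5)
Your proposal is correct and matches the paper's intent: the paper states the corollary as an immediate consequence of Theorem~\ref{thm:completions-mainresult} (combined with the setting of Corollary~\ref{completions-heyting}), i.e.\ precisely the double application to the two pomonoid structures $\pair{P,\leq,\cdot,\ut}$ and $\pair{P,\leq,\mt,\ut}$ that you describe. Your explicit check that the canonical product on $\Low(\m P)$ induced by $\mt$ is intersection, so that the resulting residuated structure is the Heyting algebra on $\Low(\m P)$, is the one bookkeeping point the paper leaves tacit, and you handle it correctly.
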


\section{Join-completions of involutive ordered algebras}
\label{s:DM-completion}

This section focuses on a number of interesting applications in the setting of involutive ordered algebras.  Two results of particular interest are Theorem~\ref{thm:involuted2} and Theorem~\ref{thm:integrallyclosed}. The former is concerned with the nucleus-systems of a residuated partially ordered monoid $\m P$ that are involutive and provides a construction of their Dedekind-MacNeile completion within any join-completion of $P$. Theorem~\ref{thm:integrallyclosed} provides a  succinct and computation-free proof of the fact that the Dedekind-MacNeile completion of an Archimedean partially ordered group is a conditionally complete partially ordered group.

An element $d$ of a residuated partially ordered monoid $\m P$ is called \emph{cyclic} if for all $x \in P, d\rd x = x\ld d$. If $d$ is a cyclic element of $\m P$, we denote by $x \ras d$ both residuals of $d$ by $x$. We leave the proof of the following simple result to the reader.

\begin{lemma}\label{lem:specialnuclei}
If $d$ is a cyclic element of a residuated partially ordered monoid $\m P$, then the map $\gamma_d\colon x\mapsto (x \ras d)\ras d$ is a nucleus whose associated nucleus-system is $P_{\gamma_d} = \{x\ras d \mid x\in P\}$.
\end{lemma}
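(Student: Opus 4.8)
The plan is to treat the map $\delta\colon x\mapsto x\ras d$ as one half of an antitone Galois connection of $\m P$ with itself, so that $\gamma_d=\delta\circ\delta$ is the associated closure operator, and then to verify the multiplicative inequality of a nucleus by a separate \emph{closure-insertion} argument. The identification of the nucleus-system will fall out of the same computations.

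First I would record the adjunction. Since $d$ is cyclic, $x\ras d$ denotes the common value $d\rd x=x\ld d$, so residuation~\eqref{d:rsiduals-eq} gives, for all $x,y\in P$,
\[
xy\le d \iff y\le x\ras d \iff x\le y\ras d .
\]
In particular $\delta$ is antitone, $x(x\ras d)\le d$, and taking $y=x\ras d$ yields $x\le(x\ras d)\ras d=\gamma_d(x)$. Applying $\delta$ to this last inequality and using antitonicity gives $\delta^3=\delta$ in the usual way; hence $\gamma_d=\delta^2$ is order-preserving (a composite of two antitone maps), enlarging, and idempotent ($\gamma_d^2=\delta^4=\delta^2$), i.e.\ a closure operator. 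The identity $\delta^3=\delta$ also shows that every value $x\ras d$ is $\gamma_d$-closed, a fact I reuse below.

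The hard part is the nucleus inequality $\gamma_d(a)\gamma_d(b)\le\gamma_d(ab)$, and the key step is a closure-insertion lemma: if $ub\le d$ then $u\,\gamma_d(b)\le d$. Indeed, $ub\le d$ gives $b\le u\ras d$; since $u\ras d$ is $\gamma_d$-closed, the minimality formula~\eqref{eq:1.2} for $\gamma_d$ forces $\gamma_d(b)\le u\ras d$, and the adjunction translates this back to $u\,\gamma_d(b)\le d$. (Cyclicity makes the lemma symmetric, so one may equally enlarge the left factor.) This is exactly the delicate point: a naive comparison of $\gamma_d(ab)$ with $\gamma_d(a)\gamma_d(b)$ through antitonicity of $\delta$ alone runs in the \emph{wrong} direction, and it is the closure step that repairs the sign. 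To finish, I would start from $\bigl((ab)\ras d\bigr)\,ab\le d$, apply the lemma to enlarge $b$ to $\gamma_d(b)$, then use cyclicity to permute the three factors cyclically and apply the lemma again to enlarge $a$ to $\gamma_d(a)$, and finally permute back to reach $\bigl((ab)\ras d\bigr)\,\gamma_d(a)\gamma_d(b)\le d$; the adjunction then yields $\gamma_d(a)\gamma_d(b)\le\bigl((ab)\ras d\bigr)\ras d=\gamma_d(ab)$.

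For the associated nucleus-system I would compute the image $\{\gamma_d(x)\mid x\in P\}$, which by~\eqref{eq:1.1} equals $P_{\gamma_d}$. The inclusion $\{\gamma_d(x)\mid x\in P\}\subseteq\{y\ras d\mid y\in P\}$ is immediate from $\gamma_d=\delta^2$, while $\delta^3=\delta$ gives $x\ras d=\gamma_d(x\ras d)$, so every $x\ras d$ lies in the image of $\gamma_d$. Hence $P_{\gamma_d}=\{x\ras d\mid x\in P\}$, as claimed; that this set is a nucleus-system is then automatic from Lemma~\ref{lem:nucleus}. I expect the routine items (antitonicity, $\delta^3=\delta$, the cyclic permutations) to be quick, with the closure-insertion lemma and its correct deployment being the only genuinely delicate point.
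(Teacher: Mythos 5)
Your argument is correct; the paper itself gives no proof of this lemma (it is explicitly ``left to the reader''), and what you write is the standard Galois-connection argument one would expect there: $\delta\colon x\mapsto x\ras d$ is self-adjoint on the right by cyclicity, $\delta^3=\delta$ yields the closure operator and the identification $P_{\gamma_d}=\{x\ras d\mid x\in P\}$, and the nucleus inequality follows from the closure-insertion step $ub\le d\Rightarrow u\,\gamma_d(b)\le d$ together with cyclic permutation of factors under $d$. All steps check out, including the only delicate one you single out.
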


A \emph{cyclic dualizing} element of a residuated partially ordered monoid $\m P$ is a cyclic element $d$ that satisfies $\gamma_d(x)=x$, for all $x\in P$. An \emph{involutive residuated lattice}\footnote{In the literature the term \emph{cyclic involutive} is used for this notion, as \emph{noncyclic involutive} residuated lattices have also been studied.} is an algebra $\m{L} = \lan L, \mt, \jn, \cdot, \ld, \rd, \ut, d \ran$ such that $\m{L}' = \lan L, \mt, \jn, \cdot, \ld, \rd, \ut\ran$ is a residuated lattice and $d$ is a cyclic dualizing element of $\m L'$. If in the preceding definition we replace `lattice' by `partially ordered monoid,' we obtain the concept of an \emph{involutive residuated partially ordered monoid}. 

\begin{remark}\label{rem:char:involutive:RL}
 The choice of the term `involutive' reflects the fact that the map $x \mapsto x\ras d$ is an involution of the underlying order structure. In fact, it can be easily shown that an involutive residuated lattice is term equivalent to an algebra $\m{L} = \lan L, \mt, \jn, \cdot, \ut, \,' \ran$ such that 
\begin{enumerate} 
\item $\lan L, \cdot, \ut\ran$ is a monoid,
\item $\lan L, \mt, \jn,\,' \ran$ is an involutive lattice, and 
\item $xy \leq z \iff y \leq (z'x)' \iff x \leq (yz')'$, for all $x, y, z \in L$. 
\end{enumerate}
 \end{remark}
 
The following result is in the folklore of the subject and generalizes the classical Glivenko-Frink Theorem for Brouwerian lattices (\cite{Gli29} and~\cite{Fri62}; see also~\cite{Sch77}*{Theorem~4.1}, and~\cite{Ros90b}*{p.~142}). We make use of the notation of Lemma~\ref{lem:specialnuclei}.
 
\begin{lemma}\label{lem:involuted1}
Let $\m P$ be a residuated partially ordered monoid and $\gamma$ a nucleus on $\m P$. Then $\m P_\gamma$ is an involutive residuated partially ordered monoid if and only if there exists a cyclic element $d$ of $\m P$ such that $\gamma = \gamma_d$. \end{lemma}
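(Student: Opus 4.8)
The plan is to reduce everything to the fact, recalled before Lemma~\ref{lem:nuclear:system}, that the residuated structure on the nucleus-system $\m P_\gamma$ is inherited from $\m P$: for $a,b\in P_\gamma$ one has $a\rds{\m P_\gamma}b=a\rd b$ and $a\lds{\m P_\gamma}b=a\ld b$. Since cyclicity and the dualizing condition are formulated solely in terms of residuals, they can be transported between $\m P$ and $\m P_\gamma$ once one checks that the relevant residuals land in $P_\gamma$\,---\,and this is guaranteed by $P_\gamma$ being a nucleus-system (Lemma~\ref{lem:nucleus}). Throughout, ``$\m P_\gamma$ is involutive'' is unwound as ``$\m P_\gamma$ has a cyclic dualizing element $d$'': an element $d\in P_\gamma$ with $d\rds{\m P_\gamma}x=x\lds{\m P_\gamma}d$ and $(x\ras d)\ras d=x$ for all $x\in P_\gamma$, the residuals taken in $\m P_\gamma$.

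For the backward implication, assume $\gamma=\gamma_d$ with $d$ a cyclic element of $\m P$. First I would note $d\in P_\gamma$, since $d=\ut\ras d$ and $P_{\gamma_d}=\{x\ras d\mid x\in P\}$ by Lemma~\ref{lem:specialnuclei}. Cyclicity of $d$ in $\m P_\gamma$ is then immediate from its cyclicity in $\m P$ and the agreement of residuals. For the dualizing condition, for $x\in P_\gamma$ the inner residual $x\ras d$ again lies in $P_\gamma$, so the double residual computed in $\m P_\gamma$ coincides with $\gamma_d(x)$ computed in $\m P$; as $x\in P_\gamma=P_{\gamma_d}$, this is $x$. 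Hence $d$ is a cyclic dualizing element of $\m P_\gamma$.

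For the forward implication, let $d$ be a cyclic dualizing element of $\m P_\gamma$, so $d\in P_\gamma$. The key step is to promote cyclicity of $d$ from $\m P_\gamma$ to all of $\m P$: for arbitrary $x\in P$, I would apply the identities $d\rd x=d\rd\gamma(x)$ and $x\ld d=\gamma(x)\ld d$ of Lemma~\ref{lem:nucleus}(iv) (valid since $d=\gamma(d)$), and then use cyclicity of $d$ in $\m P_\gamma$ on $\gamma(x)\in P_\gamma$ to conclude $d\rd x=x\ld d$. Thus $d$ is cyclic in $\m P$ and $\gamma_d$ is a nucleus. It then remains to show $\gamma=\gamma_d$, which by the bijective correspondence between nuclei and nucleus-systems (Lemma~\ref{lem:nucleus}) amounts to $P_\gamma=P_{\gamma_d}$. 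The inclusion $P_{\gamma_d}\subseteq P_\gamma$ holds because each $x\ras d=d\rd x$ lies in the nucleus-system $P_\gamma$; the reverse inclusion $P_\gamma\subseteq P_{\gamma_d}$ follows from the dualizing condition exactly as in the backward direction, since for $x\in P_\gamma$ the double residual in $\m P_\gamma$ equals $\gamma_d(x)$ in $\m P$ and also equals $x$.

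I expect the single genuine obstacle to be this promotion of cyclicity to all of $\m P$ in the forward direction: a priori cyclicity in $\m P_\gamma$ only constrains $x\in P_\gamma$, and the gap is bridged by the absorption identities of Lemma~\ref{lem:nucleus}(iv), which let one replace a general $x$ by $\gamma(x)$ inside a residual whose other argument is the fixed point $d$. The rest is routine verification that the various residuals stay inside $P_\gamma$ and that $\m P_\gamma$-residuals restrict $\m P$-residuals, all of which is furnished by the cited lemmas.
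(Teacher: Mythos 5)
Your proposal is correct and follows essentially the same route as the paper: both directions hinge on the absorption identities of Lemma~\ref{lem:nucleus}(iv) to promote cyclicity of $d$ from $\m P_\gamma$ to all of $\m P$, exactly as you identify. The only cosmetic difference is at the very end, where the paper computes $\gamma_d(x)=(\gamma(x)\ras d)\ras d=\gamma(x)$ directly instead of verifying $P_\gamma=P_{\gamma_d}$ and invoking the bijective correspondence; the two finishes are interchangeable.
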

 
\begin{proof} Suppose first that $d$ is a cyclic element of $\m P$. In light of Lemma~\ref{lem:specialnuclei}, $\gamma_d$ is a nucleus on $\m P$ and $d =\ut\ras d \in P_{\gamma_d}$. Lastly, it is clear that $d$ is a cyclic dualizing element of $\m P_{\gamma_d}$.
 
Conversely, assume that $\gamma$ is a nucleus on $\m P$ such that the nucleus-system $\m{P}_\gamma$ is an involutive residuated partially ordered monoid. Let $d$ be the cyclic dualizing element of $\m P_\gamma$. We claim that it is a cyclic element of $P$. Indeed, let $x\in P$. Invoking Lemma~\ref{lem:nucleus}, we have $x\ld d = \gamma(x)\ld d = d\rd \gamma(x) = d\rd x$. Making use of Lemma~\ref{lem:nucleus} once more and writing $x\ras d$ for the common value $x\ld d =d\rd x$ for all $x\in P$, we get  
\[
\gamma_d(x) = (x\ras d)\ras d = (\gamma(x)\ras d)\ras d = \gamma(x).
\] 
Thus, $\gamma = \gamma_d$. 
\end{proof}

The next result generalizes Theorem~4.3 of~\cite{Sch77}.  It is a  far reaching generalization of the  Glivenko-Stone Theorem (\cite{Gli29},~\cite{Sto36}), which asserts that the  Dedekind-MacNeille  completion of a Boolean algebra is a Boolean algebra.
 
\begin{theorem}\label{thm:involuted2}
Let $\m P$ be a residuated partially ordered monoid and let $\m{L}$ be  a join-completion of $\m P$ which is a residuated lattice with  respect to a multiplication that extends the multiplication  of $\m P$ (see Theorem~\ref{thm:completions-mainresult}). Then for every cyclic dualizing element $d \in P$,  $\m{L}_{\gamma_d}$ is the Dedekind-MacNeille completion of $\m{P}_{\gamma_d}$.
\end{theorem}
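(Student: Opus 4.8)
The plan is to invoke Banaschewski's characterization recalled earlier in the excerpt: a completion of a poset equals its Dedekind-MacNeille completion precisely when it is \emph{simultaneously} a join-completion and a meet-completion. Since $d$ is a cyclic dualizing element of $\m P$, the nucleus $\gamma_d$ restricts to the identity on $\m P$, so $\m P_{\gamma_d}=\m P$ and $P_{\gamma_d}=\{x\ras d\mid x\in P\}=P$. Hence the goal reduces to showing that the poset $\m L_{\gamma_d}$ is at once a join- and a meet-completion of $\m P$.

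First I would dispatch the preliminaries. Because $\m L$ is a join-completion of $\m P$ whose multiplication extends that of $\m P$, Lemma~\ref{residuals} gives $x\lds{\m L}d=x\lds{\m P}d$ and $d\rds{\m L}x=d\rds{\m P}x$ for every $x\in P$. Writing an arbitrary $y\in L$ as $y=\Join[\m L]Y$ with $Y\subseteq P$ and using that residuals convert joins in the denominator into meets (Lemma~\ref{lem:residuation:joins:meets}), one checks $d\rds{\m L}y=\Meet[\m L]_{p\in Y}(d\rds{\m P}p)=\Meet[\m L]_{p\in Y}(p\lds{\m P}d)=y\lds{\m L}d$, the middle equality being cyclicity of $d$ in $\m P$. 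Thus $d$ is again cyclic in $\m L$, so by Lemma~\ref{lem:specialnuclei} the map $\gamma_d$ is a nucleus on $\m L$ with nucleus-system $L_{\gamma_d}=\{y\ras d\mid y\in L\}$. Moreover, for $x\in P$ the identity $x\ras d=x\lds{\m P}d\in P$ together with the fact that $d$ is dualizing on $\m P$ yields $\gamma_d(x)=(x\ras d)\ras d=x$, so $P=\m P_{\gamma_d}\subseteq L_{\gamma_d}$; as all orders involved are restrictions of that of $\m L$, the structure $\m L_{\gamma_d}$ is an extension of $\m P$ and, being a closure system of the complete lattice $\m L$, is itself a complete lattice.

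For join-density, take $c\in L_{\gamma_d}$ and write $c=\Join[\m L]X$ with $X\subseteq P$. Since joins in a nucleus-system are obtained by applying $\gamma_d$ to joins computed in $\m L$, we get $\Join[\m L_{\gamma_d}]X=\gamma_d(c)=c$, exhibiting $c$ as a join of elements of $P$ in $\m L_{\gamma_d}$. For meet-density, the crux, I would exploit the order-reversing involution $p\mapsto p\ras d$. Writing $c=y\ras d$ with $y=\Join[\m L]Y$, $Y\subseteq P$, the residual turns the join into a meet: $c=\bigl(\Join[\m L]Y\bigr)\lds{\m L}d=\Meet[\m L]_{p\in Y}(p\lds{\m L}d)$, and each $p\lds{\m L}d=p\lds{\m P}d=p\ras d$ lies in $P$. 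Since meets in a closure system agree with those computed in $\m L$ (Lemma~\ref{lem:meet:faithful}), we obtain $c=\Meet[\m L_{\gamma_d}]_{p\in Y}(p\ras d)$, a meet of elements of $P$ in $\m L_{\gamma_d}$. Thus $\m L_{\gamma_d}$ is both a join- and a meet-completion of $\m P_{\gamma_d}$, and Banaschewski's result identifies it with $\McN(\m{P}_{\gamma_d})$.

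The main obstacle is precisely this meet-density step: the whole argument hinges on recognizing that the involution $p\mapsto p\ras d$ maps $P$ bijectively onto itself (because $d$ is dualizing on $\m P$) and thereby converts the \emph{given} join-density of $\m L$ over $\m P$ into the \emph{required} meet-density of $\m L_{\gamma_d}$ over $\m P$. Everything else is bookkeeping on Lemmas~\ref{residuals}, \ref{lem:meet:faithful}, and~\ref{lem:specialnuclei}. Finally, to match not merely the posets but the full algebraic structures, I would note that the multiplication of $\m L_{\gamma_d}$, namely $x\circ y=\gamma_d(xy)$, extends that of $\m P$ (since $\gamma_d$ fixes $\m P$), so by the uniqueness clause of Theorem~\ref{thm:completions-mainresult} it coincides with the unique residuated-lattice multiplication on $\McN(\m{P}_{\gamma_d})$ furnished by Corollary~\ref{Dedekind-MacNeille}; by Lemma~\ref{lem:involuted1} both carry $d$ as cyclic dualizing element, so $\m L_{\gamma_d}$ is the Dedekind-MacNeille completion of $\m P_{\gamma_d}$ as an involutive residuated lattice.
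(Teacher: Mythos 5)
Your proposal is correct and follows essentially the same route as the paper: both establish that $d$ remains cyclic in $\m L$ via Lemma~\ref{residuals} and the join decomposition of elements of $L$, then use the map $x\mapsto x\ras d$ to turn the join-density of $\m L$ over $\m P$ into meet-density of $\m L_{\gamma_d}$ over $\m P_{\gamma_d}$, and conclude by Banaschewski's characterization of the Dedekind-MacNeille completion. The only (harmless) variations are that you verify join-density directly from $P_{\gamma_d}=P$ rather than by appealing to the involution, and you add the concluding remarks identifying the algebraic structures, which the paper leaves implicit.
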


\begin{proof}
Let $d$ be a cyclic element of $\m P$. Note first that $d$ is a cyclic element of $\m L$.  Indeed, let $a \in L$. There exists $X\subseteq P$ such that $a =  \Join[\m{L}]X$. Hence, by Lemma~\ref{residuals},  
\begin{align*}
a\lds{\m{L}} d &= \Big(\Join X\Big)\Biglds{\m L} d = \Meet_{x\in X}(x\lds{\m{L}} d) =  \Meet_{x\in X}(x\lds{\m{P}} d) \\
& = \Meet_{x\in X}(d \rds{\m{P}} x)  = \Meet_{x\in X}(d \rds{\m{L}} x) = d \rds{\m{L}} a.
\end{align*}

We complete the proof by showing that $\m{L}_{\gamma_d}$ is a  join- and meet-completion of $\m{P}_{\gamma_d}$. As  the map $x \mapsto x\ras d$ is an involution of  $\m{L}_{\gamma_d}$  by Lemma~\ref{lem:involuted1}, it will suffice to show that $\m{L}_{\gamma_d}$  is a meet-completion of $\m{P}_{\gamma_d}$. To this end,  let $a \in L_{\gamma_d}$.  By Lemma~\ref{lem:specialnuclei}, there exists  an element $b \in L$ such that $a = b\ras_{\m L} d$. Also,  there exists $X\subseteq P$ such that $b = \Join[\m{L}]X$. Thus,  $a = \Meet_{x\in X}(x\ras_{\m{L}} d) =  \Meet_{x\in X}(x\ras_{\m{P}} d)$.  We have shown that  $\m{L}_{\gamma_d}$ is a meet-completion of $\m{P}_{\gamma_d}$.
\end{proof}

Theorem~\ref{thm:involuted2} subsumes  and illuminates the following construction of the De\-de\-kind-MacNeille completion of an  involutive residuated partially ordered monoid proposed in~\cite{Ros90a} (see also ~\cite{Ros90b}*{p.~147}). 
 
\begin{corollary}\label{construction}
Let $\m P$ be an involutive residuated partially ordered monoid with cyclic  dualizing element $d$. Then  $\Low(\m P)_{\gamma_d}$ is the Dedekind-MacNeille completion  of $\m{P}_{\gamma_d}$.
\end{corollary}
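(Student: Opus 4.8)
The plan is to obtain this corollary as a direct specialization of Theorem~\ref{thm:involuted2}, taking for the ambient join-completion the largest one, namely $\m L = \Low(\m P)$. The entire task then reduces to checking that $\Low(\m P)$ meets the hypotheses of that theorem and to identifying the structure $\m P_{\gamma_d}$ that appears in its conclusion.

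First I would record that $\Low(\m P)$ is admissible as the $\m L$ of Theorem~\ref{thm:involuted2}. It is a join-completion of $\m P$ (after the identification of $\m P$ with its poset of principal order-ideals, as in Proposition~\ref{proposition<<completions}), and by Example~\ref{ex:L(P):RL} it carries its canonical residuated-lattice structure, whose multiplication $\circ_\da$ satisfies $\da x \circ_\da \da y = \da(xy)$ on principal ideals and hence extends the multiplication of $\m P$. Since $\m P$ is involutive, the given $d$ is by definition a cyclic dualizing element of $\m P$; in particular it is a cyclic element of $\m P$, which is what the proof of Theorem~\ref{thm:involuted2} uses.

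With these hypotheses in place, Theorem~\ref{thm:involuted2} immediately yields that $\Low(\m P)_{\gamma_d}$ is the Dedekind-MacNeille completion of $\m P_{\gamma_d}$. It then remains only to note that, because $d$ is dualizing in $\m P$, the nucleus $\gamma_d$ is the identity on $P$, so that $P_{\gamma_d}=P$ and $\m P_{\gamma_d}=\m P$ as involutive residuated partially ordered monoids, giving the stated conclusion. The one point that genuinely deserves attention is that the nucleus $\gamma_d$ forming $\m P_{\gamma_d}$ is computed from the residuals of $\m P$, whereas the one forming $\Low(\m P)_{\gamma_d}$ is computed from those of $\Low(\m P)$; Lemma~\ref{residuals} guarantees that $x\ras d$ takes the same value whether evaluated in $\m P$ or in $\Low(\m P)$ for $x\in P$, so the two usages are consistent. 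I do not expect any real obstacle, since the substantive work---that $d$ remains cyclic in the larger structure and that restricting to $\gamma_d$-closed elements turns the largest join-completion into a meet-completion---has already been carried out inside Theorem~\ref{thm:involuted2}; the only subtlety is the identification $\m P_{\gamma_d}=\m P$, which hinges on $d$ being dualizing and not merely cyclic.
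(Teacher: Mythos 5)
Your proposal is correct and follows the same route the paper intends: Corollary~\ref{construction} is obtained by specializing Theorem~\ref{thm:involuted2} to $\m L=\Low(\m P)$, using Example~\ref{ex:L(P):RL} to see that the canonical residuated-lattice structure on $\Low(\m P)$ extends the multiplication of $\m P$. Your added observations---that $\m P_{\gamma_d}=\m P$ because $d$ is dualizing, and that Lemma~\ref{residuals} reconciles the two possible readings of $x\ras d$---are accurate and only sharpen what the paper leaves implicit.
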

 
We also have the following result as an immediate consequence of Lem\-ma~\ref{lem:involuted1} and Theorem~\ref{thm:involuted2}.
 
\begin{corollary}
Let $\m P$ be an involutive residuated partially ordered monoid.  
\begin{enumerate}[(i)]
\item The Dedekind-MacNeille completion $\McN(\m{P})$ of $\m P$ is an involutive residuated lattice.  
\item The inclusion map $\m{P} \hookrightarrow \McN(\m{P})$ preserves  products, residuals, and all existing meets and joins.  
\end{enumerate}  
\end{corollary}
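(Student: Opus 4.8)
The plan is to realize the Dedekind-MacNeille completion as the particular join-completion to which Theorem~\ref{thm:involuted2} applies, and then to exploit the extra rigidity of $\McN(\m{P})$\,---\,namely that $\m P$ sits inside it \emph{meet}-densely as well as join-densely\,---\,to upgrade ``cyclic'' to ``cyclic dualizing.''

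First I would set $\m L := \McN(\m P)$. Since $\m P$ is a residuated partially ordered monoid, Corollary~\ref{Dedekind-MacNeille} equips $\m L$ with a (unique) residuated-lattice structure whose multiplication extends that of $\m P$, and $\m L$ is a join-completion of $\m P$. Let $d$ be the cyclic dualizing element of $\m P$. As at the start of the proof of Theorem~\ref{thm:involuted2}, $d$ remains cyclic in $\m L$: writing any $a \in L$ as $a = \Join[\m L] X$ with $X \subseteq P$ and using Lemma~\ref{residuals} to evaluate the residuals by $d$ of elements of $P$, one obtains $a \lds{\m L} d = d \rds{\m L} a$. Hence $\gamma_d$ is a nucleus on $\m L$ (Lemma~\ref{lem:specialnuclei}), its nucleus-system $\m L_{\gamma_d}$ is a residuated lattice (Lemma~\ref{lem:nuclear:system}), and by Lemma~\ref{lem:involuted1} it is an involutive residuated lattice with dualizing element $d$.

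The heart of the argument is to show $\m L_{\gamma_d} = \m L$, that is, that $d$ is already dualizing in $\m L$ and not merely cyclic. Via the cited machinery this is Theorem~\ref{thm:involuted2}: because $d$ is dualizing in $\m P$ we have $\gamma_d = \mathrm{id}$ on $P$, so $\m P_{\gamma_d} = \m P$, and the theorem yields $\m L_{\gamma_d} = \McN(\m P_{\gamma_d}) = \McN(\m P) = \m L$. I prefer, however, to run the underlying computation directly, since it pinpoints the essential fact. For $x \in P$, Lemma~\ref{residuals} gives $x \ras_{\m L} d = x \ras_{\m P} d \in P$ and then, applying it once more, $\gamma_d(x) = (x \ras_{\m P} d)\ras_{\m P} d = x$ because $d$ is dualizing in $\m P$; thus $P \subseteq L_{\gamma_d}$. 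As $L_{\gamma_d}$ is a closure system of $\m L$ it is closed under all meets existing in $\m L$, and since $\McN(\m P)$ is a meet-completion of $\m P$ every element of $L$ is such a meet of elements of $P \subseteq L_{\gamma_d}$. Hence $L_{\gamma_d} = L$, and $\McN(\m P)$ together with $d$ is an involutive residuated lattice, establishing (i).

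For (ii) nothing further needs to be computed: preservation of products and of residuals is the ``furthermore'' clause of Theorem~\ref{thm:completions-mainresult} (equivalently Corollary~\ref{Dedekind-MacNeille} together with Lemma~\ref{residuals}); preservation of all existing meets is Lemma~\ref{lem:meet:faithful}; and preservation of all existing joins follows from the dual half of Lemma~\ref{lem:meet:faithful}, since $\McN(\m P)$ is also a meet-completion, hence a meet-extension, of $\m P$. The only genuine obstacle is the step $\m L_{\gamma_d} = \m L$; everything else is a matter of assembling machinery already in place. I expect that step to hinge precisely on the meet-density of $\m P$ in its Dedekind-MacNeille completion, which is exactly the feature a general join-completion in Theorem~\ref{thm:involuted2} lacks and which forces the inclusion $L_{\gamma_d} \subseteq L$ to be an equality.
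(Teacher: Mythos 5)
Your proposal is correct and follows essentially the route the paper intends: the paper derives this corollary as an immediate consequence of Lemma~\ref{lem:involuted1} and Theorem~\ref{thm:involuted2} applied to $\m L=\McN(\m P)$, using that $\m P_{\gamma_d}=\m P$ when $d$ is already dualizing, which is exactly your argument that $L_{\gamma_d}=L$ (and your direct verification via meet-density is a sound unpacking of that step). Part (ii) is likewise handled exactly as the paper does, via the ``furthermore'' clause of Theorem~\ref{thm:completions-mainresult} together with Lemma~\ref{lem:meet:faithful} and its dual.
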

 
It is important to mention that among all join-completions of an involutive residuated partially ordered monoid $\m P$, the Dedekind-MacNeille completion is the only one that is an involutive residuated lattice with respect to a multiplication that extends the multiplication of $\m P$.  This follows from the fact that such a join-completion is also a meet-completion, and as we noted in the previous section, this characterizes the Dedekind-MacNeille completion.

We close this section with a discussion of the Dedekind-MacNeille completion of a partially ordered group. A \emph{partially ordered group} is a partially ordered monoid $\m P$ in which every element $x$ has a two-sided inverse $x^{-1}$. Such a structure is an involutive residuated monoid. The division operations are given by $x\ld y=x^{-1}y$ and $y\rd x=yx^{-1},$ for all $x, y\in P$, while the unit $\ut$ is the unique cyclic dualizing element of $\m P$.
By the preceding results, the Dedekind-MacNeille completion of a partially ordered group is an involutive residuated lattice.  However, this completion is of little interest from the point of view of ordered groups, since the presence of least and  greatest elements prevents ${\McN(\m{P})}$ from being even a partially ordered group. Let $\McN*(\m P)$ denote the involutive partially ordered monoid obtained from $\McN(\m{P})$ by removing its least and greatest elements. Note that $\McN*(\m P)$ is \emph{conditionally complete}. This means that the join (meet) of any upper (lower) bounded subset of $\McN*(\m P)$ exists. Note further that it is a lattice precisely when $\m P$ is directed, that is, when any two elements of $\m P$ have an upper and a lower bound. 

The natural question arises as to when $\McN*(\m P)$ is a partially ordered group. By L.V.~Kantorovitch's  well-known result (see~\cite{Fuc63}*{p.~90}), every conditionally complete partially ordered group is Archimedean. Recall that a partially ordered group $\m P$ is \emph{Archimedean} if for every $x, y \in P$, the inequalities $x^n \leq y$ ($n = 1, 2, \dots$) imply $x\leq \ut$. Thus, if $\McN*(\m P)$ is a partially ordered group, then $\m P$ is Archimedean. The following important result\,---\,due to Krull, Lorenzen, Clifford, Everett, and Ulam (see~\cite{Fuc63}*{p.~95}, for original references)\,---\,shows that the converse is also true. We can use the general theory developed earlier to provide a substantially shorter and conceptually simpler proof than the existing ones in the literature (see, for example, \cite{Fuc63}*{pp.~92-95} or ~\cite{Gla99}*{pp.~191-194}, where the proof in~\cite{Fuc63} is reproduced).
 
\begin{theorem}\label{thm:integrallyclosed}
If $\m P$ is an Archimedean partially ordered group, then $\McN*(\m P)$ is a conditionally complete partially ordered group. If in addition $\m P$ is directed, then $\McN*(\m P)$ is a conditionally complete lattice-ordered group. 
\end{theorem}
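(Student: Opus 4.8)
The plan is to reduce the whole theorem to a single invertibility statement. Conditional completeness of $\McN*(\m P)$ has already been recorded, and order-compatibility of the multiplication is inherited from the residuated lattice $\McN(\m P)$, so everything follows once I show that every $a\in\McN*(\m P)$ is invertible in $\McN(\m P)$, with inverse the involute $a\ras\ut$. Throughout I assume $\m P$ nontrivial, so that $\ut$, and indeed every element of $P$, is non-extremal in $\McN(\m P)$. To fix the ambient structure: a partially ordered group is an involutive residuated partially ordered monoid with cyclic dualizing element $\ut$, so the results of this section show that $\McN(\m P)$ is an involutive residuated lattice whose cyclic dualizing element is again $\ut$. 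In particular $x\mapsto x\ras\ut$ is an order anti-automorphism of the complete lattice $\McN(\m P)$, it interchanges $\top$ and $\bot$, and hence it restricts to $\McN*(\m P)$; by Lemmas~\ref{lem:meet:faithful} and~\ref{residuals} the inclusion $\m P\hookrightarrow\McN(\m P)$ preserves products, residuals, meets and joins, so $x\ras\ut=x^{-1}$ for every $x\in P$.

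Next comes the core computation for a fixed $a\in\McN*(\m P)$. Because $\McN(\m P)$ is simultaneously a join- and a meet-completion of $\m P$, I can write $a=\Join[\McN(\m P)]X=\Meet[\McN(\m P)]Z$ with $X,Z\subseteq P$ nonempty. Applying the anti-automorphism to the meet-representation turns it into a join-representation $a\ras\ut=\Join_{z\in Z}z^{-1}$, and since the product preserves joins in each argument (Lemma~\ref{lem:residuation:joins:meets}) this yields $a\cdot(a\ras\ut)=\Join_{x\in X,\,z\in Z}xz^{-1}$. As $x\le a\le z$ forces $xz^{-1}\le\ut$, the inequality $a\cdot(a\ras\ut)\le\ut$ is immediate.

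The hard part, and the only place the Archimedean hypothesis enters, is the reverse inequality $\ut\le a\cdot(a\ras\ut)$. Since $P$ is meet-dense, any element of $\McN(\m P)$ is the meet of the elements of $P$ above it, so it suffices to prove that every $p\in P$ with $xz^{-1}\le p$ for all $x\in X$, $z\in Z$ satisfies $\ut\le p$. From $xz^{-1}\le p$ I rewrite $x\le pz$, deduce $a=\Join X\le pz$ for each $z$, hence $p^{-1}a\le\Meet Z=a$, and by iterating obtain $p^{-n}a\le a$ for all $n\ge1$. Fixing $x_0\in X$ and $z_0\in Z$, the chain $p^{-n}x_0\le p^{-n}a\le a\le z_0$ gives $(p^{-1})^n\le z_0x_0^{-1}$ for all $n$, whereupon the Archimedean property forces $p^{-1}\le\ut$, i.e. $\ut\le p$, as required. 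This proves $a\cdot(a\ras\ut)=\ut$; applying the same statement to $a\ras\ut\in\McN*(\m P)$ gives $(a\ras\ut)\cdot a=\ut$, so $a\ras\ut$ is a two-sided inverse of $a$.

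Finally I would assemble the algebraic structure. Invertibility makes left multiplication by any $a\in\McN*(\m P)$ an order-automorphism of $\McN(\m P)$, with inverse left multiplication by $a\ras\ut$, and such a map must fix $\top$ and $\bot$; hence products and involutes of non-extremal elements remain non-extremal, so $\McN*(\m P)$ is closed under multiplication and inversion and is a conditionally complete partially ordered group. For the directed case, given $a,b\in\McN*(\m P)$ I choose $p,q\in P$ with $a\le p$ and $b\le q$ (possible since $a,b\ne\top$) and a common upper bound $u\in P$ of $p$ and $q$; then $a\jn b\le u<\top$, and dually $a\mt b>\bot$, so $\McN*(\m P)$ is closed under the lattice operations and is therefore a conditionally complete lattice-ordered group.
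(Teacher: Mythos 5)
Your proof is correct and follows essentially the same route as the paper's: reduce everything to showing $a(a\ras\ut)=\ut$ for $a\in\McN*(\m P)$, establish $\ut\le a(a\ras\ut)$ by meet-density of $P$, derive $p^{-1}a\le a$ for any $p\in P$ above $a(a\ras\ut)$, iterate, trap $p^{-n}$ between fixed group elements, and invoke the Archimedean property. The only cosmetic difference is that the paper obtains the key inequality $x'a\le a$ directly from the involution law of Remark~\ref{rem:char:involutive:RL}, whereas you extract it from the explicit join/meet representations of $a$; you also spell out the closure of $\McN*(\m P)$ under the operations, which the paper leaves to the surrounding discussion.
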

 
\begin{proof}
In view of the preceding discussion, it will suffice to prove  that every element of $\McN*(\m P)$ is  invertible whenever $\m P$ is Archimedean. For each  $x \in \McN*(\m P)$, let $x' = x\ld \ut$.  We have already  observed that $\McN*(\m P)$ is an involutive  residuated partially ordered monoid, that is, the map $x \mapsto x'$ is an  involution.

Assume that $\m P$ is Archimedean  and let $a$ be an arbitrary element of $\McN*(\m P)$.  We need to show that $aa' = \ut$, which is equivalent to $\ut\leq aa'$. As $\m P$ is meet-dense in  $\McN*(\m P)$, it will suffice to prove  that every element of $\ua(aa')\cap P$ exceeds $\ut$. To  this end, let $x \in \ua(aa')\cap P$. The inequality $aa'\leq x$ implies $a' \leq (x'a)'$, by Remark~\ref{rem:char:involutive:RL}, and therefore $x'a\leq a$. It follows that $(x')^2 a\leq x' a\leq a$, and inductively $(x')^n a\leq a$, for all $n\in \Z^+$.  
Next, due to the join-density and meet-density of $\m P$ in $\McN(\m P)$, there exist $u,w\in P$ such that $w\leq a\leq u$. The inequalities $(x')^n a \leq a$ ($n\in \Z^+$) immediately yield the inequalities $(x')^n \leq u\rd w$. As  $\m P$ is Archimedean, we obtain $x'\leq \ut$, and thus $\ut \leq x$. This completes the proof of $\ut\leq a'a$ and so $a'a=\ut$.

Lastly, it is clear that if $\m P$ is directed, then $\McN*(\m P)$ is a conditionally complete lattice-ordered group. 
\end{proof}

\begin{remark}
If $d$ is a cyclic dualizing element of a residuated lattice $\m L$, then $d = \ut \ras d = \ut'$. Thus, in every involutive residuated lattice $\m L=\pair{L, \mt, \jn, \cdot, \ld, \rd, \ut, d}$ we have that
\[
d=d' \quad\iff\quad \ut=\ut' \quad\iff\quad d=\ut.
\]
This can be used to characterize partially ordered groups as the involutive residuated lattices satisfying the equations $d=\ut$ and $x\ld x \eq \ut$. Indeed, every partially ordered group satisfies these equations. For the opposite direction, if $\m L$ satisfies these two equations, then for all $a\in L$, $\ut = a'\ld a' = (aa')'$ and therefore $\ut = \ut' = aa' = a(a\ld \ut)$, and analogously $(\ut\rd a)a = \ut$.

Nonetheless, there are examples of involutive residuated lattices with $d=\ut$ that do not satisfy the equation $x\ld x\eq \ut$. This is, for example, the case for every residuated lattice of the form $\McN*(\m P)$, where $\m P$ is a non-Archimedean partially ordered group. This also shows that the equation $x\ld x\eq \ut$ is not preserved by $\McN*$.
\end{remark}

\section{The finite embeddability property for residuated structures}
\label{sec:FEP:HRL}

A class $\cls K$ of algebras has the finite embeddability property (FEP, for short) if every finite partial subalgebra of any member of $\cls K$ can be embedded into a finite algebra of $\cls K$. This property has received considerable attention in the literature due to the fact that a number of decidability results about classes of algebras are consequences of it. (Refer to Section~\ref{s:fep} for a general discussion of these matters.) The most consequential study of the FEP for classes of residuated lattices is presented in the articles~\cite{BvA02} and~\cite{BvA05}, where it is shown, among other results, that the varieties of commutative integral residuated lattices and integral residuated lattices satisfy the FEP. On the other hand, the FEP is rare among non-integral varieties. For example, it is shown in~\cite{BvA02} that any variety of residuated lattices that contains the $\ell$-group $\Z$ of integers fails the FEP.  Other relevant articles include~\cite{BaNe72, Bus11, GJ13, Far08, MT46, OT99, vA05, vA09} and~\cite{vA11}.

In this section, we apply the results of Section~\ref{section<<joinextensions} to produce refined algebraic proofs of some of the existing results on the FEP. Further, we outline a method for establishing this property for algebras that involve more than one residuated operation. In particular, we prove that the variety $\cls{HRL}$ of Heyting residuated lattices, namely algebras that combine compatible structures of residuated lattices and Heyting algebras, satisfies the FEP. A direct consequence of the latter result is that the variety of distributive integral residuated lattices satisfies the FEP, a result that has been obtained independently in~\cite{Bus11} and~\cite{GaJi} by alternative means.

{In order to be as precise as possible, we devote Subsection~\ref{subsect:preliminaries:FEP} to preliminaries about partial algebras and the FEP. In Subsection~\ref{subsect:residuated:structures}, we set up a language that allows us to handle  at once structures with many residuated operations and their residuals. We also employ ideas due to Blok and van Alten (see~\cite{BvA05}) to prove that the term algebra with two binary operations can be given a divisibility order with respect to which the operations are residuated.  In Subsection~\ref{subsect:HRL:FEP}, the theory of Section~\ref{section<<joinextensions} is put to work to produce a "potentially" finite extension of a finite partial algebra. The method is illustrated for the proof of the FEP for the variety $\cls {HRL}$ of Heyting residuated lattices (see Lemma~\ref{lem:embedding}), but it easily applies to the results in ~\cite{BvA05}. The introduction of the Heyting arrow guarantees that our construction preserves lattice distributivity and, in particular, implies the FEP for the variety of distributive integral  residuated lattices. }

\subsection{Partial algebras, homomorphism, and the finite embeddability property}
\label{subsect:preliminaries:FEP}

An \emph{algebraic language} is a pair $\L = \lan L, \tau \ran$ consisting of a nonempty set $L$ of \emph{operation symbols} and a map $\tau \colon L \to \N$. The image of an operation symbol under $\tau$ is called its \emph{arity}.\footnote{According to our definitions, all the operation symbols are of finite arity. Thus, we will not consider \emph{nonfinitary} languages.} Nullary operation symbols are called \emph{constants}.

Let $P$ be any nonempty set, and let $k\in \N$. A \emph{$k$-ary partial operation} $f$ on $P$ is a map $f \colon R \to P$ from a subset $R$ of $P^k$ to $P$. We refer to $R$ as the \emph{domain} of $f$ and denote it by $\dom f$. If $\pair{p_1,\dots,p_k}\in \dom f$, then we say that $f$ is \emph{defined} at $\pair{p_1,\dots,p_k}$ or that $f(p_1,\dots,p_k)$ exists.\footnote{When we write an equality involving partial operations, we always intent to convey that both sides are defined and are equal.} Observe that for $k = 0$, $P^0 = \{\emptyset\}$ and therefore a partial nullary operation (a partial constant) $f$ on $P$ is either empty or distinguishes exactly one element of $P$. If $\dom f = P^k$, then $f$ is a \emph{total operation} on $P$.

Given a language $\L = \lan L,\tau\ran$, by a \emph{partial $\L$-algebra} $\m P$ we understand an ordered pair $\lan P, \{f^{\m P}\}_{f\in L}\ran$, where $P$ is a nonempty set and $f^{\m P}$ is a $\tau{(f)}$-ary partial operation on $P$ for each $f\in L$. The map $f^{\m P}$ is called the \emph{fundamental operation} of $\m P$ corresponding to $f$. A partial $\L$-algebra $\m P$ is called a \emph{total $\L$-algebra}, or simply, an \emph{$\L$-algebra}, if all its operations are total.

In what follows, we will drop the superscript $\m P$ of a partial operation $f^{\m P}$ whenever there is no danger of confusion. Likewise, we often drop the prefix $\L$ from term $\L$-algebra.

Let $\m P$ and $\m Q$ be two partial $\L$-algebras. A map $\varphi \colon P \to Q$ is called a \emph{homomorphism} from $\m P$ to $\m Q$, in symbols $\varphi \colon \m P \to \m Q$, if for every operation symbol $f\in L$ and every sequence 
$\pair{p_1,\dots,p_{\tau(f)}}\in P^{\tau(f)}$ for which $f^{\m P}(p_1,\dots,p_{\tau(f)})$ exists,  $f^{\m Q}(\varphi(p_1),\dots,\varphi(p_{\tau(f)}))$ exists and
\[
\varphi (f^{\m P} (p_1,\dots,p_{\tau(f)})) = f^{\m Q} (\varphi(p_1),\dots,\varphi(p_{\tau(f)})).
\]
By an \emph{isomorphism} from $\m P$ to $\m Q$ we mean a bijection $\varphi \colon P \to Q$ such that $\varphi \colon \m P \to \m Q$ and $\varphi^{-1}\colon \m Q\to\m P$ are homomorphisms. Clearly, if $\varphi \colon \m P \to \m Q$ is an isomorphism, then so is $\varphi^{-1}\colon \m Q\to\m P$. In this case, we say that $\m P$ and $\m Q$ are \emph{isomorphic} and write ${\m P} \cong {\m Q}$.

An \emph{embedding} of a partial algebra $\m P$ into another partial algebra $\m Q$ is an injective homomorphism. We say that $\m P$ is a \emph{partial subalgebra} of $\m Q$ if $P\subseteq Q$ and the inclusion map $i \colon {\m P} \to {\m Q}$ is a homomorphism, that is, an embedding. A partial subalgebra $\m P$ of $\m Q$ is called \emph{full} if for every $f\in L$ and $p_1,\dots,p_{\tau(f)}\in P$, if $f^{\m Q} (p_1,\dots,p_{\tau(f)})$ is defined and is an element of $P$, then $f^{\m P}(p_1,\dots,p_{\tau(f)}) = f^{\m Q}(p_1,\dots,p_{\tau(f)})$. Therefore, a full partial subalgebra of a partial algebra $\m Q$ is determined by its underlying set $P$, and will be denoted by ${\m Q}\rest P$.

If $\varphi \colon {\m P} \to {\m Q}$ is a homomorphism, we define another partial subalgebra $\varphi[\m P]$ of ${\m Q}$ as follows: the underlying set is $\varphi[P]$; and if $f\in L$ and $p_1,\dots, p_{\tau(f)}\in P$ are such that $f^{\m P}(p_1,\dots,p_{\tau(f)})$ is defined, then we define
\[
f^{\varphi[\m P]}(\varphi(p_1),\dots,\varphi(p_{\tau(f)})) = \varphi(f^{\m P}(p_1,\dots,p_{\tau(f)})).
\]
It is clear that $\varphi[\m P]$ is a partial subalgebra of $\m Q$; we call it the \emph{image} of $\m P$ by $\varphi$. Furthermore, $\varphi \colon {\m P} \to \varphi[\m P]$ is a surjective homomorphism, and it is an isomorphism whenever $\varphi$ is an embedding. An embedding $\varphi\colon \m P\to \m Q$ is called \emph{full} if its image $\varphi[\m P]$ is a full partial subalgebra of $\m Q$. This is the case if and only if $\m P\cong \varphi[\m P] = \m Q\rest \varphi[P]$.

From now on, we suppose that $\cls K$ is a class of $\mathcal{L}$-algebras.

\begin{definition}\label{def:FEP}
An algebra $\m A$ is said to have the \emph{finite embeddability property} in $\cls K$ (FEP, for short) if every finite partial subalgebra of $\m A$ can be embedded into a finite algebra of $\cls K$. The class $\cls K$ is said to have the FEP if every algebra in $\cls K$ has the FEP in $\cls K$.
\end{definition}

The next definition introduces a related version of the FEP, which is often mentioned in the literature under the same name.

\begin{definition}\label{def:full:FEP}
An algebra $\m A$ is said to have the \emph{full finite embeddability property} in $\cls K$ (FEP\textss{+}, for short) if every finite full partial subalgebra of $\m A$ can be fully embedded into a finite algebra of $\cls K$. The class $\cls K$ is said to have the FEP\textss{+} if every algebra in $\cls K$ has the FEP\textss+ in $\cls K$.
\end{definition}

An alternative formulation of the FEP, which states that ``every finite full partial subalgebra of $\m A$ is embeddable in some finite member of $\cls K$" is clearly equivalent to the definition of FEP above. It is also clear that the FEP\textss{+} implies FEP for any class of algebras. While the other implication does not hold in general, see Example~\ref{ex:FEP:notFEPp}, the next lemma shows that the two properties are equivalent when the language is finite.

\begin{lemma}\label{lem:FEP:FEPp}
Let $\cls K$ be a class of $\L$-algebras and $\m A$ an $\L$-algebra.
\begin{enumerate}[(i)]
 \item If $\m A$ has the FEP\textss{+} in $\cls K$, then it has the FEP in $\cls K$.
 \item If $\L$ is finite and $\m A$ has the FEP in $\cls K$, then it has the FEP\textss+ in $\cls K$.
\end{enumerate}
\end{lemma}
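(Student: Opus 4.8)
The plan is to treat the two implications separately, with part~(i) being essentially formal and part~(ii) requiring a saturation of the underlying set before the FEP is invoked. Throughout I would use freely that the composite of two homomorphisms of partial algebras is again a homomorphism, that the composite of embeddings is an embedding, and the characterization recorded just before the lemma that an embedding $\varphi\colon\m P\to\m Q$ is full exactly when $\varphi[\m P]=\m Q\rest\varphi[P]$.

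For part~(i), suppose $\m A$ has the FEP\textss{+} in $\cls K$ and let $\m P$ be a finite partial subalgebra of $\m A$ with underlying set $P$. The idea is to route $\m P$ through the finite full partial subalgebra $\m A\rest P$. First I would check that the identity map $P\to P$ is an embedding $\m P\to\m A\rest P$: if $f^{\m P}(\bar p)$ is defined then, because $\m P$ is a partial subalgebra of $\m A$, we have $f^{\m A}(\bar p)=f^{\m P}(\bar p)\in P$, and fullness of $\m A\rest P$ forces $f^{\m A\rest P}(\bar p)$ to be defined and equal to this common value. Since $\m A\rest P$ is a finite full partial subalgebra of $\m A$, the FEP\textss{+} supplies a full, hence in particular an, embedding of $\m A\rest P$ into a finite member of $\cls K$; composing it with the identity embedding yields an embedding of $\m P$ into a finite member of $\cls K$. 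As $\m P$ was arbitrary, $\m A$ has the FEP, and no hypothesis on $\L$ is used here.

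For part~(ii), assume $\L$ is finite and $\m A$ has the FEP, and let $\m P=\m A\rest P$ be a finite full partial subalgebra. Applying the FEP directly embeds $\m P$ into a finite algebra of $\cls K$ by some $\varphi$, but such an embedding need not be full: the image $\varphi[P]$ may be closed under more operation values of the target than $\m P$ records, precisely when some $f^{\m A}(\bar p)$ lands outside $P$ while $f^{\m B}(\varphi(\bar p))$ accidentally falls back into $\varphi[P]$. The remedy is to enlarge $P$ before applying the FEP. I would form the set $P^{+}=P\cup\{\,f^{\m A}(p_1,\dots,p_{\tau(f)})\mid f\in L,\ p_i\in P\,\}$; this is finite \emph{precisely because} $\L$ is finite and $P$ is finite, and this is the only place the hypothesis on $\L$ enters. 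Then $\m A\rest P^{+}$ is a finite full partial subalgebra, so the FEP yields an embedding $\psi\colon\m A\rest P^{+}\to\m B$ with $\m B$ finite in $\cls K$, and the claim is that $\psi\rest P\colon\m P\to\m B$ is a \emph{full} embedding.

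Fullness is the crux. Since $\m P=(\m A\rest P^{+})\rest P$ is a full partial subalgebra of $\m A\rest P^{+}$, the restriction $\psi\rest P$ is an embedding of $\m P$ into $\m B$. To see it is full, suppose $f^{\m B}(\psi(p_1),\dots,\psi(p_{\tau(f)}))$ lands in $\psi[P]$, say it equals $\psi(q)$ with $q\in P$. By the very construction of $P^{+}$ we have $f^{\m A}(\bar p)\in P^{+}$, so $f^{\m A\rest P^{+}}(\bar p)$ is defined and $\psi$ carries it to $f^{\m B}(\psi(\bar p))=\psi(q)$; injectivity of $\psi$ then gives $f^{\m A}(\bar p)=q\in P$. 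Hence $f^{\m P}(\bar p)=f^{\m A\rest P}(\bar p)$ is defined with value $q$, which is exactly what fullness of the image demands. Thus $\psi\rest P$ is a full embedding into a finite member of $\cls K$, and $\m A$ has the FEP\textss{+}. I expect the only genuine subtlety to be the bookkeeping that the saturation $P^{+}$ eliminates every accidental coincidence, together with the observation that this saturation stays finite only when the language is finite.
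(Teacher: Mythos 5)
Your proposal is correct and follows essentially the same route as the paper: part~(i) factors $\m P$ through the identity embedding into $\m A\rest P$ and composes with the full embedding supplied by the FEP\textsuperscript{+}, and part~(ii) saturates $P$ to the set the paper calls $Q$ (your $P^{+}$), applies the FEP to $\m A\rest Q$, and uses injectivity of the resulting embedding together with fullness of $\m P$ in $\m A$ to verify fullness of the restricted embedding. The only cosmetic difference is notational, and your explicit remark that finiteness of $\L$ enters solely in keeping $P^{+}$ finite matches the paper's intent.
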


\begin{proof}
For the first part, notice that if $\m P$ is a finite partial subalgebra of an algebra $\m A$, then the identity map $i \colon P \to P$ is an embedding of $\m P$ into ${\m A}\rest P$. By hypothesis, there is a full embedding $\varphi \colon {\m A}\rest P \to {\m C}$, where ${\m C}$ is a finite algebra in $\cls K$. Hence, $\varphi\, i$ is an embedding of $\m P$ into ${\m C}$.

With regard to the second part, suppose that $\m P$ is a finite full partial subalgebra of $\m A$ and consider the set $Q = P\cup\{f^{\m A}(p_1,\dots, p_{\tau(f)}) \mid p_1,\dots,p_{\tau(f)} \in P,\ f\in L\}$. As $L$ and $P$ are finite, so is $Q$. Let $\m Q = \m A\rest Q$ be the full partial subalgebra of $\m A$ generated by $Q$. Because $\m A$ has the FEP in $\cls K$, there exists a finite algebra $\m C \in \cls K$ and an embedding $\varphi \colon \m Q \to \m C$. We claim that $\varphi\rest_P \colon \m P\to \m C$ is a full embedding of $\m P$ into $\m C$. Obviously, it is an embedding, and therefore we only need to show that $\varphi\rest_P [\m P]$ is a full partial subalgebra of $\m C$. To this end, let $f\in L$, $c_1,\dots,c_{\tau(f)}\in \varphi[P]$, and $f^{\m C}(c_1,\dots,c_{\tau(f)})\in \varphi[P]$. We need to prove that  $f^{\varphi\rest_P[\m P]}(c_1,\dots,c_{\tau(f)})$ is defined. There exist $p_1,\dots,p_{\tau(f)}, p \in P$ such that $c_i = \varphi(p_i)$ (for all $i$) and $\varphi(p)= f^{\m C}(c_1,\dots,c_{\tau(f)})$. This implies that $f^\m A(p_1,\dots,p_{\tau(f)})\in Q$. Hence, $f^\m Q(p_1,\dots, p_{\tau(f)}) = f^\m A(p_1,\dots, p_{\tau(f)})$, and as $\varphi$ is an embedding of $\m Q$ in $\m C$, we have that $\varphi(f^\m Q (p_1,\dots, p_{\tau(f)})) = f^\m C (\varphi (p_1),\dots, \varphi(p_{\tau(f)})) = f^\m C (c_1,\dots,c_{\tau(f)}) = \varphi(p)$. But then $p = f^\m Q(p_1,\dots,p_{\tau(f)}) = f^\m A(p_1,\dots,p_{\tau(f)})$. As $\m P$ is a full partial subalgebra of $\m A$, $f^\m P(p_1,\dots,p_{\tau(f)})$ is defined and is equal to $p$. Hence $f^{\varphi\rest_P[\m P]}(c_1,\dots,c_{\tau(f)})$ is defined.
\end{proof}

The following example shows that the assumption of the finiteness of the signature is essential for the equivalence of FEP and FEP\textss+.

\begin{example}\label{ex:FEP:notFEPp}
Start with the language $\L$ with $L=\{f_n \mid n\in \N\}$, consisting of unary operation symbols. Let $\m A = \pair{\N,\{f_n^\m A\}_{n\in\N}}$, with each $f_n$ defined by $f_n^{\m A}(k) = k+n$, for all $k\in \N$. Lastly, consider the class $\cls K = \{\m C_m \mid m\in \N\}$, consisting of the finite algebras $C_m =\Z/m\Z$ with $f_n^{\m C_m} ([k]) = [k]+[n]$, for all $m,k,n\in \N$.
It is easy to see that every finite partial subalgebra $\m P$ of $\m A$ can be embedded in an algebra of $\cls K$: in particular, if $k$ is the maximum of $P$, then $\m P$ is embeddable in $\m C_{k+1}$. On the other hand, $\m P$ cannot be fully embedded in any $\m C_m$, since given any embedding $\varphi \colon \m P\to \m C_m$, we can consider $sm > k$, where $k$ is the maximum of $P$, and therefore $f_{sm}^{\m P}$ is not defined for any element of $P$, but $f_{sm}^{\m C_m}$ is be the identity in $C_m$.
\end{example}

\subsection{Residuated structures}
\label{subsect:residuated:structures}

A \emph{residuated structure} is a structure $\m A$ that comprises a partial order $\leq$, and a set of residuated operations on $\pair{A,\leq}$, called \emph{multiplications}, and their residuals among their fundamental operations. The structure might carry other fundamental operations and constants as well. It will be clear from the context which operation symbols we use for the multiplications. Following our previous practice, we use symbols like $\ld$ and $\rd$ to represent the left and right residuals of a multiplication, and symbols like $\ras$ (or $\to$) for the residual of a commutative multiplication. 

A \emph{residual term of depth} 1 is a term of the form $x\ld z$, $z\rd x$, or $x\ras z$, where $x$ and $z$ are different variables; we call $z$ the \emph{central variable} of the term. A \emph{residual term of depth} $n+1$ is a term of the form $x\ld t$, $t\rd x$, or $x\ras t$, where $t$ is a residual term of depth $n$, and $x$ is a variable not appearing in $t$; its \emph{central variable} is the central variable of $t$. A \emph{multiplicative term of depth 1 with central variable $y$} is a term of the form $x\cdot y$, $y\cdot x$, or $x\mt y$. A \emph{multiplicative term of depth $n+1$ with central variable $y$} is a term of the form $x\cdot t$, $t\cdot x$, or $x\mt t$, where $t$ is a multiplicative term of depth $n$ with central variable $y$, and $x$ is a variable not appearing in $t$. We denote by $R_\L$ and $M_\L$ the set of all residual terms and multiplicative terms on a particular language $\L$, respectively.

For distinct variables $x_1,\dots,x_n, y, z$, $\rho(x_1,\dots,x_n,z)$ will denote any residual term of depth $n$ in these variables with central variable $z$, and $\lambda(x_1,\dots,$ $x_n,y)$ will denote any multiplicative term in these variables with central variable $y$. It should be noted that these notations are somewhat ambiguous since the term they represent depends on the choice and order of appearance of the operation symbols. However, this ambiguity will not create any confusion in the ensuing discussion.

It is easy to see that, given a residuated structure $\m A$, any multiplicative term $\lambda(x_1,\dots,x_n,y)$ defines a residuated map on $\pair{A,\leq}$ in each coordinate, and in particular in the central variable. That is to say, for every $a_1,\dots,a_n\in A$, $\lambda^\m A(a_1,\dots,a_n,y)\colon A\to A$ is residuated, and therefore it has a residual. Conversely, every residual term  $\rho(x_1,\dots,x_n,z)$ defines a residual map in its central variable, i.e., for every $a_1,\dots,a_n\in A$, $\rho^\m A(a_1,\dots,a_n,z)\colon A\to A$ is the residual of some residuated map. We make this precise in the next lemma.

\begin{lemma}\label{lem:backandforth}
Let $x_1,\dots, x_n, y, z$ be distinct variables. Given a residual term $\rho(x_1,\dots, x_n, z)$, there exists a multiplicative term $\lambda(x_1,\dots, x_n, y)$, and given a multiplicative term $\lambda(x_1,\dots, x_n, y)$ there is a residual term $\rho(x_1,\dots, x_n, z)$, such that for every residuated structure $\m A$ and $a_1,\dots,a_n,b,c\in A$,
\[
\lambda^\m A (a_1,\dots,a_n, b)\leq c \quad\iff\quad b \leq \rho^\m A (a_1,\dots, a_n, c).
\]
\end{lemma}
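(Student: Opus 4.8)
The plan is to prove both directions of the correspondence by induction on the depth $n$ of the terms, exploiting the recursive definitions of residual and multiplicative terms together with the basic residuation equivalences \eqref{d:rsiduals-eq}. The two constructions---passing from a residual term to a multiplicative term and vice versa---are mirror images of each other, so I would set up a single induction that tracks how the outermost operation symbol of each term is built up, and verify that the outermost layer of one term matches the corresponding residuation of the outermost layer of the other.

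For the \emph{base case} $n=1$, I would simply match each of the three shapes of a residual term with the residuated operation it inverts in its central variable. A depth-$1$ residual term is $x\ld z$, $z\rd x$, or $x\ras z$; the associated multiplicative term is $x\cdot y$, $y\cdot x$, or $x\mt y$, respectively. The three equivalences $xy\leq z\iff y\leq x\ld z\iff x\leq z\rd y$ from \eqref{d:rsiduals-eq}, together with the analogous equivalence for the commutative case $x\mt y\leq z\iff y\leq x\ras z$, give exactly the stated biconditional with $a_1=a$ playing the role of the side variable.

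For the \emph{inductive step}, suppose the claim holds for terms of depth $n$, and let $\rho(x_1,\dots,x_{n+1},z)$ be a residual term of depth $n+1$; by definition it has the form $x_1\ld t$, $t\rd x_1$, or $x_1\ras t$, where $t=\rho'(x_2,\dots,x_{n+1},z)$ is a residual term of depth $n$ (after relabeling). By the induction hypothesis there is a multiplicative term $\lambda'(x_2,\dots,x_{n+1},y)$ with the stated property relative to $\rho'$. I would then define $\lambda$ by prepending the matching multiplicative layer: for $\rho$ of the form $x_1\ld t$ I set $\lambda(x_1,\dots,x_{n+1},y)=x_1\cdot\lambda'(\dots)$, for $t\rd x_1$ I set $\lambda=\lambda'(\dots)\cdot x_1$, and for $x_1\ras t$ I set $\lambda=x_1\mt\lambda'(\dots)$. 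The required equivalence then unwinds by applying \eqref{d:rsiduals-eq} once to strip the outer operation and then the induction hypothesis to handle the inner term; the reverse construction (building $\rho$ from a given $\lambda$) proceeds identically by symmetry, reading the same definitions from right to left.

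The main obstacle---really a matter of bookkeeping rather than genuine difficulty---will be handling the ambiguity in the notation $\rho(x_1,\dots,x_n,z)$ and $\lambda(x_1,\dots,x_n,y)$ flagged right before the lemma: each symbol stands for a whole family of terms depending on the choice and order of the outer operation symbols, so I must make sure the induction correctly pairs up the \emph{specific} outer operation of $\rho$ with the correct outer operation of $\lambda$ in all three cases ($\ld$ with left multiplication, $\rd$ with right multiplication, $\ras$ with $\mt$), and that the side variable not appearing in $t$ is threaded through consistently. Once the case analysis on the outermost symbol is organized cleanly, each case reduces to a single application of a residuation equivalence followed by the induction hypothesis, so I do not anticipate any conceptual difficulty beyond verifying that the three shapes transform correctly at each level.
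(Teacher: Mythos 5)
Your overall strategy---induction on the depth, peeling one layer at a time and applying \eqref{d:rsiduals-eq}---is exactly what the paper intends (its proof is the one-line remark that one proceeds by induction on depth). However, the specific pairing in your inductive step is wrong, and the error is structural rather than a matter of choosing among the three operation symbols. Suppose $\rho = x_1\ld t$ with $t=\rho'$ of depth $n$, and let $\lambda'$ be the multiplicative term corresponding to $\rho'$ by the induction hypothesis. You set $\lambda = x_1\cdot\lambda'$. Stripping the outermost operation from each side of the desired equivalence gives ${\lambda'}^{\m A}(\vec a,b)\leq a_1\ld c$ on the left and $a_1b\leq{\rho'}^{\m A}(\vec a,c)$ on the right; the induction hypothesis relates ${\lambda'}^{\m A}(\vec a,\beta)\leq\gamma$ to $\beta\leq{\rho'}^{\m A}(\vec a,\gamma)$, and no instantiation of it connects these two conditions. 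Concretely, for $\rho(x_1,x_2,z)=x_2\ld(x_1\ld z)$ one has $b\leq a_2\ld(a_1\ld c) \iff a_2b\leq a_1\ld c \iff a_1(a_2 b)\leq c$, so the correct partner is $\lambda=x_1\cdot(x_2\cdot y)$, whereas your recipe outputs $x_2\cdot(x_1\cdot y)$, which corresponds instead to $x_1\ld(x_2\ld z)$; in a noncommutative residuated structure these differ, so your $\lambda$ is not a witness for your $\rho$.

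The underlying reason is that residuation reverses composition: the residual of the map $b\mapsto a\cdot{\lambda'}^{\m A}(\vec a,b)$ is $c\mapsto{\rho'}^{\m A}(\vec a,a\ld c)$, not $c\mapsto a\ld{\rho'}^{\m A}(\vec a,c)$. Hence the new layer must be inserted \emph{innermost}, at the central variable of the partner term: for $\rho=x\ld t$ take $\lambda$ to be $\lambda'$ with $x\cdot y$ substituted for its central variable, for $\rho=t\rd x$ substitute $y\cdot x$, and for $\rho=x\ras t$ substitute $x\mt y$ (and symmetrically when building $\rho$ from a given $\lambda$). One must then add the small observation that substituting a depth-one multiplicative (resp.\ residual) term for the central variable of a depth-$n$ multiplicative (resp.\ residual) term yields a term of depth $n+1$ of the same kind, so the construction stays inside $M_\L$ and $R_\L$. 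With that correction the unwinding you describe does close up, and the argument agrees with the induction the paper leaves to the reader.
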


\begin{proof}
We proceed by induction on the depth of $\rho(x_1,\dots, x_n, z)$ and the depth of $\lambda(x_1,\dots,x_n,y)$, respectively.
\end{proof}

The following result follows immediately from the general theory of residuated maps, and therefore we omit its proof.

\begin{corollary}\label{cor:preservation:joins:meets}
Let $\m A$ be a residuated structure, $x_1,\dots, x_n, y, z$ distinct variables, $\rho(x_1,\dots, x_n, z)$ a residual term and $\lambda(x_1,\dots,x_n,y)$ a multiplicative term. Then,
\begin{enumerate}[(i)]
\item $\lambda^\m A$ respects arbitrary existing joins: for every $\{a_1,\dots,a_n\}\cup B\subseteq A$, if $\Join[\m A] B$ exists then
\[
\lambda^\m A (a_1,\dots,a_n,\Join[\m A] B) = \Join[\m A]_{b\in B} \lambda^\m A (a_1,\dots,a_n,b).
\]
\item $\rho^\m A$ respects arbitrary existing meets in its central variable: for every set of elements $\{a_1,\dots,a_n\}\cup B\subseteq A$, if $\Meet[\m A] B$ exists then
\[
\rho^\m A \Big(a_1,\dots,a_n,\Meet[\m A] B\Big) = \Meet[\m A]_{b\in B} \rho^\m A (a_1,\dots,a_n,b).
\]
\end{enumerate}
\end{corollary}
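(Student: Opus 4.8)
The plan is to reduce both parts to the single-variable adjunction already isolated in Lemma~\ref{lem:backandforth}, and then to invoke the standard fact that in an order adjunction the lower adjoint preserves all existing joins while the upper adjoint preserves all existing meets. Beyond Lemma~\ref{lem:backandforth} no manipulation of the syntactic shape of the terms is required, which is exactly why the result is said to follow immediately from the general theory of residuated maps.

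For part (i), I would fix $a_1,\dots,a_n\in A$ and set $f(b) = \lambda^\m A(a_1,\dots,a_n,b)$, a map $A\to A$. As observed just before Lemma~\ref{lem:backandforth}, $f$ is a residuated map, so it has a residual $g$, and by Lemma~\ref{lem:backandforth} this residual satisfies $f(b)\leq c \iff b\leq g(c)$ for all $b,c\in A$. It then suffices to establish the general statement: if $f$ has residual $g$ and $\Join[\m A] B$ exists, then $\Join[\m A]_{b\in B} f(b)$ exists and equals $f(\Join[\m A] B)$. This is the familiar two-line argument. First, $f$ is order-preserving (immediate from the adjunction), so $f(\Join[\m A] B)$ is an upper bound of $\{f(b)\mid b\in B\}$. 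Second, if $c$ is any upper bound of this set, then $b\leq g(c)$ for every $b\in B$, hence $\Join[\m A] B\leq g(c)$, and applying the adjunction once more gives $f(\Join[\m A] B)\leq c$. Thus $f(\Join[\m A] B)$ is the least upper bound, which simultaneously yields the existence of the right-hand join and the claimed equality. Specializing to the map $f$ above proves (i).

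Part (ii) is the order-dual. Fixing $a_1,\dots,a_n$ and putting $g(c) = \rho^\m A(a_1,\dots,a_n,c)$, Lemma~\ref{lem:backandforth} exhibits $g$ as the residual of a residuated map $f$. The order-dual of the fact used above states that the residual $g$ preserves all existing meets: if $\Meet[\m A] B$ exists, then $\Meet[\m A]_{b\in B} g(b)$ exists and equals $g(\Meet[\m A] B)$. The proof mirrors the one for (i) with joins replaced by meets and the roles of $f$ and $g$ interchanged, using that $g$ is order-preserving and the equivalence $b\leq g(c)\iff f(b)\leq c$. This gives (ii).

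There is essentially no obstacle here; the single point worth flagging is that the statement asserts not only the equality but also the \emph{existence} of the join in (i) and of the meet in (ii), and one should note that both are delivered for free by the least-upper-bound (respectively greatest-lower-bound) verification rather than requiring a separate existence argument.
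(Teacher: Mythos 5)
Your proof is correct and is precisely the argument the paper has in mind: the paper omits the proof, remarking that the corollary ``follows immediately from the general theory of residuated maps,'' and your write-up is exactly that general theory (lower adjoints preserve existing joins, upper adjoints preserve existing meets) instantiated via the adjunction of Lemma~\ref{lem:backandforth}. Your closing remark that the existence of the join/meet on the right-hand side comes for free from the least-upper-bound verification is a worthwhile observation that the paper leaves implicit.
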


Consider now the term algebra $\m{T} = \pair{ T, \cdot, \mt}$ on the language $\{\cdot, \mt\}$ over a nonempty set of variables $X$. Let $F = T\cup\{\ut\}$ with $\ut\notin T$ and let extend to $F^2$ the operations in $\m T$ as follows: $\ut\cdot t = t\cdot \ut = t$, and $\ut\mt t = t\mt \ut = t$, for all $t\in F$. Further, we define the following relation on $F$: $s\leqc t$ if and only if whenever some occurrences of variables in $s$ are replaced by $\ut$, $s$ reduces to $t$ by application of the preceding equalities, and also $\ut \leqc \ut$.

It is immediate to see that $\leqc$ is a partial order on $F$ with top element $\ut$. Furthermore, both operations $\cdot$ and $\land$ preserve $\leqc$ and, in fact, they are residuated. To see this, we first observe the following:

\begin{lemma} \label{l:rieszproperty}
Retaining the notation of the preceding paragraph, let $\ast\in\{\cdot, \mt\}$, and let $r, s, t\in T$ such that $r\ast s\leqc t$. If $r\nleqc t$ and $s\nleqc t$, then there exist unique $r', s'\in T$ such that $r\leqc r'$, $s\leqc s'$, and $t = r'\ast s'$.
\end{lemma}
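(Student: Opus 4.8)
The plan is to make the ``pruning'' semantics behind the relation $\leqc$ precise and then read off the decomposition of $t$ directly from its root. First I would establish a root-level description of $\leqc$: since every variable occurrence of a term $u = u_1 \ast u_2 \in T$ lies either in $u_1$ or in $u_2$, replacing some occurrences by $\ut$ and reducing acts independently on the two subterms, so that $u_1$ reduces to some $w_1 \in F$, $u_2$ to some $w_2 \in F$, and $u$ to the reduction of $w_1 \ast w_2$. Because $\ut$ is an identity for $\ast$, the outcome is governed by which of $w_1, w_2$ equal $\ut$; discarding the case $w_1 = w_2 = \ut$ (which yields $\ut \notin T$), this produces, for $u = u_1 \ast u_2 \in T$ and $v \in T$, the equivalence
\[
u \leqc v \iff u_1 \leqc v \ \text{ or }\ u_2 \leqc v \ \text{ or }\ \bigl(v = v_1 \ast v_2 \text{ with } u_1 \leqc v_1,\ u_2 \leqc v_2\bigr),
\]
the three disjuncts corresponding to pruning all of $u_2$, pruning all of $u_1$, or retaining both sides (in which case the root $\ast$ survives and $v$ has top operation $\ast$).

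Applying this with $u_1 = r$, $u_2 = s$ and $v = t$, the hypothesis $r \ast s \leqc t$ forces one of the three disjuncts. The first two read $r \leqc t$ and $s \leqc t$, both excluded by the assumptions $r \nleqc t$ and $s \nleqc t$. Hence the third holds: $t = r' \ast s'$ for some $r', s' \in T$ with $r \leqc r'$ and $s \leqc s'$, which gives existence.

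For uniqueness, I would observe that it is purely a matter of unique readability of terms: if $t = r' \ast s'$, then $\ast$ is the top operation symbol of $t$ and $r', s'$ are its uniquely determined left and right immediate subterms, so the pair $(r', s')$ does not depend on any choices. The only delicate point is the displayed root-level equivalence, namely verifying that the reduction induced by substituting $\ut$ for variable occurrences is well defined (confluent and terminating) and decomposes over the root operation. Once that is pinned down the lemma is immediate, and I expect this bookkeeping about the reduction to be the main, though essentially routine, obstacle.
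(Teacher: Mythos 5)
Your proposal is correct and follows essentially the same route as the paper's proof: decompose the reduction witnessing $r\ast s\leqc t$ over the root so that $r$ and $s$ reduce independently to some $r',s'\in F$, use $r\nleqc t$ and $s\nleqc t$ to rule out $r'=\ut$ or $s'=\ut$, and derive uniqueness from syntactic equality (unique readability) in the term algebra. Your explicit root-level characterization of $\leqc$ and the remark about well-definedness of the reduction are just a more detailed write-up of the same argument.
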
 

\begin{proof}
Since $r\ast s\leqc t$, the elements $r$ and $s$ reduce to some $r'$ and $s'$ such that $t = r'*s'$. Thus, we have $t = r'\ast s'$, with $r\leqc r'$, $s\leqc s'$, and $t, r', s'\in T$. Since $r\nleqc t$ and $s\nleqc t$, the reductions $r'$ and $s'$ must be different from $\ut$, so they belong to $T$. The elements $r'$ and $s'$ are unique because if $r''$ and $s''$ were two elements with the same properties of $r'$ and $s'$, respectively, then $r'*s' = r''*s''$, and equality holds in the term algebra if and only if the two terms are syntactically equal.
\end{proof}

\begin{proposition}\label{prop:F}
The operations $\cdot$ and $\mt$ are residuated on $\pair{F,\leqc}$. Thus, $\m F = \pair{ F, \leqc, \cdot, \mt, \ld, \rd, \lds\mt, \rds\mt, \ut}$ is a residuated structure.
\end{proposition}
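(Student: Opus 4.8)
The plan is to verify, operation by operation, that each multiplication is residuated on $\pair{F,\leqc}$; the final sentence of the statement is then immediate from the definition of a residuated structure, since $\ld,\rd$ and $\lds\mt,\rds\mt$ are exactly the residuals produced. Because a map on a poset is residuated precisely when it is order preserving and the preimage of every principal lower set is again principal, it suffices to fix $\ast\in\{\cdot,\mt\}$ and $a\in F$ and show that $\max\{x\in F\mid a\ast x\leqc c\}$ exists for every $c\in F$ (and symmetrically for the right translation $x\mapsto x\ast a$). As $\mt$ is commutative its two residuals coincide, so the genuine work is the left residual for each of $\cdot$ and $\mt$ (written generically $a\ld_\ast c$ below), the right case for $\cdot$ being the mirror image.

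First I would dispose of monotonicity: a sequence of replacements witnessing $x\leqc x'$ can be performed verbatim inside the $x$-part of $a\ast x$, yielding $a\ast x\leqc a\ast x'$ (reading $a\ast\ut=a$ when $x'=\ut$), so both operations preserve $\leqc$. The remaining task is to exhibit the residuals explicitly and check the adjunction $a\ast x\leqc c\iff x\leqc(a\ld_\ast c)$. I would define $a\ld_\ast c:=c_2$ whenever $a\in T$ and $c\in T$ has outermost operation $\ast$, say $c=c_1\ast c_2$, with $a\leqc c_1$; and $a\ld_\ast c:=c$ in every other case (in particular when $a=\ut$, when $c=\ut$, or when the top operation of $c$ is not $\ast$). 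The right residual $c\rd_\ast a$ is defined by the mirror clause, using $a\leqc c_2$ and returning $c_1$.

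The easy, backward half of the adjunction follows from monotonicity together with the single computation $a\ast c_2\leqc c$, obtained by reducing $a$ to $c_1$ (legitimate since $a\leqc c_1$) while leaving $c_2$ untouched; the degenerate clauses are trivial because $\ut$ is the top element and every term of $T$ reduces to $\ut$. The forward half is where the real content lies. Given $a\ast x\leqc c$, a reduction to $c$ restricts to a reduction of the $a$-part to some $a'\in F$ and of the $x$-part to some $x'\in F$ with $a'\ast x'$ collapsing to $c$; if $x$ reduces to $\ut$ this forces $a\leqc c$, and if $a$ reduces to $\ut$ it forces $x\leqc c$, both of which keep $x$ inside $\da(a\ld_\ast c)$.

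The hard part, and the precise reason Lemma~\ref{l:rieszproperty} was isolated, is to control the remaining case $a\nleqc c$ and $x\nleqc c$: here I would apply the lemma to $a\ast x\leqc c$ to obtain the unique $a',x'\in T$ with $a\leqc a'$, $x\leqc x'$ and $c=a'\ast x'$. Uniqueness of the top-level syntactic splitting of $c$ forces $a'=c_1$ and $x'=c_2$, so $a\leqc c_1$ (putting us in the first defining clause) and $x\leqc c_2=a\ld_\ast c$. Combined with $c=c_1\ast c_2\leqc c_2$, which gives $\da c\subseteq\da c_2$, this shows that every $x$ with $a\ast x\leqc c$ lies in $\da(a\ld_\ast c)$, i.e.\ the preimage of $\da c$ is exactly $\da(a\ld_\ast c)$; that is residuation. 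The right residuals, and the right translation for $\cdot$, are obtained by the symmetric argument.
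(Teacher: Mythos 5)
Your overall strategy is the paper's: describe $a\ld_\ast c$ explicitly by cases, verify the adjunction, and invoke Lemma~\ref{l:rieszproperty} exactly where the paper does, namely to split a reduction of $a\ast x$ onto $c$ at the top operation. But your explicit formula for the residual is wrong, and the error is not cosmetic. Whenever $a\leqc c$ with $a,c\in T$, the correct value of $a\ld_\ast c$ is $\ut$: indeed $a\ast\ut=a\leqc c$, so $\ut$ belongs to $\{x\in F\mid a\ast x\leqc c\}$, and since $\ut$ is the top of $\pair{F,\leqc}$ the maximum of that set is $\ut$. Your formula never returns $\ut$ for $a,c\in T$; it returns $c_2$ or $c$. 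Concretely, take $\ast=\cdot$ and $a=c=x_1\cdot x_2$. Then $c=c_1\cdot c_2$ with $c_1=x_1$ and $c_2=x_2$, and $a\leqc c_1$ (replace $x_2$ by $\ut$), so your first clause gives $a\ld_\cdot c=x_2$; but $a\cdot\ut=a\leqc c$ while $\ut\nleqc x_2$, so the adjunction fails. The same defect appears in your ``every other case'' clause: for $a=c=x_1$ you get $a\ld_\cdot c=x_1$, although $x_1\cdot s\leqc x_1$ for \emph{every} $s\in F$ (send all variables of $s$ to $\ut$), so the residual must be $\ut$.

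The point where your argument silently breaks is the forward half, in the subcase where the $x$-part of the reduction collapses to $\ut$. You correctly note that this forces $a\leqc c$, but then assert that this ``keeps $x$ inside $\da(a\ld_\ast c)$''. It does not: once $a\leqc c$, the element $x$ is completely unconstrained, so the only value of $a\ld_\ast c$ compatible with the adjunction is $\ut$, which your definition does not produce. The repair is precisely the paper's first case: make ``if $a\leqc c$ then $a\ld_\ast c=\ut$'' an overriding clause that takes priority over the syntactic shape of $c$, and apply Lemma~\ref{l:rieszproperty} only under the additional hypotheses $a\nleqc c$ and $x\nleqc c$; the paper's remaining two cases then return $c_2$ and $c$ respectively, matching your clauses. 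With that correction the rest of your argument (monotonicity, the backward half, and the use of the uniqueness of the top-level syntactic splitting) goes through.
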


\begin{proof}
Let $r, t\in F$ and let $\ast\in\{\cdot, \mt\}$. We wish to describe the residual $r\ld_* t$. Observe that there exists some $s\in F$ such that $r\ast s\leqc t$ (for instance, $r\ast t\leqc t$), and hence we need to determine the largest such $s$. If $r\leqc t$ then $r*\ut \leqc t$, and given that $\ut$ is the maximum of the order $\leqc$, we have that $r\ld_* t = \ut$. If $r\nleqc t$ and there is $s\nleqc t$ such that $ r\ast s\leqc t$, then by Lemma~\ref{l:rieszproperty} there exist unique $r', s'\in T$ such that $r\leqc r'$, $s\leqc s'$, and $t = r'\ast s'$. In this case, it is easy to see that $r\ld_* t = s'$. Lastly, if $r\nleqc t$ and $s\leqc t$ whenever $ r\ast s\leqc t$, then $r\ld_* t = t$.
\end{proof}

Given an arbitrary algebraic language $\L$, a partial order $\leq$ on an $\L$-algebra $\m A$ is said to be a \emph{divisibility order} if for all non constant $f\in\L$ of arity $n\geq 1$,
\begin{enumerate}
\item $f^\m A(a_1,\ldots, a_n)\leq f^\m A(b_1,\ldots, b_n)$, whenever $a_i\leq b_i$ for all $i = 1,\ldots, n$, and
\item $f^\m A(a_1,\ldots, a_n)\leq a$, whenever $a_i\leq a$, for some $i = 1,\ldots, n$.
\end{enumerate}

It is immediate to see that the partial order $\leqc$ on $\m F$ is a divisibility order. By Higman's Lemma (see \cite{Coh81}*{page 123, Theorem 2.9}), any divisibility order is dually well-ordered, that is, it satisfies the Ascending Chain Condition\,---\,every strictly ascending sequence eventually terminates\,---\,and contains no infinite antichains, i.e., there is no infinite set of pairwise incomparable elements. Hence we have the following corollary:

\begin{corollary}\label{c:dualwellorder}
The partially ordered set $\pair{F, \leqc}$ is dually well-ordered.
\end{corollary}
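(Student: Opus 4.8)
The goal is Corollary~\ref{c:dualwellorder}: the partially ordered set $\pair{F,\leqc}$ is dually well-ordered. The plan is to derive this as an immediate consequence of Higman's Lemma, exactly as the surrounding text announces. The key observation, already recorded just before the statement, is that $\leqc$ is a \emph{divisibility order} on the algebra $\m F = \pair{F,\cdot,\mt,\ut}$; so the real task is to organise the invocation of Higman's Lemma correctly and to handle the adjunction of the bottom-like constant $\ut$.

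First I would recall the two defining conditions of a divisibility order and verify them explicitly for $\leqc$ on $\m F$. Monotonicity in each argument follows because the reduction relation defining $\leqc$ is compatible with forming products $r\ast s$: if $r\leqc r'$ and $s\leqc s'$ then the sequence of variable-to-$\ut$ replacements witnessing these reductions can be performed inside $r\ast s$ to reduce it to $r'\ast s'$. The second condition, that $f^\m A(a_1,\dots,a_n)\leqc a_i$, holds because replacing every variable occurrence in the \emph{other} argument by $\ut$ collapses $r\ast s$ to $s$ (respectively to $r$), using the stipulated equalities $\ut\ast t = t\ast\ut = t$. Thus $\leqc$ is a divisibility order, and I would note that the constant $\ut$ is the top element, which is harmless for the ACC/antichain conclusions since adjoining a single top (or bottom) element to a poset preserves both the Ascending Chain Condition and the absence of infinite antichains.

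Next I would state Higman's Lemma in the form cited (\cite{Coh81}), namely that any divisibility order on a free algebra over a well-quasi-ordered generating set is itself a well-quasi-order; here the generating set is the set of variables $X$, which carries the discrete order, and discrete orders on the finitely-referenced variables appearing in any given pair of terms are trivially well-quasi-ordered. Being a well-quasi-order is exactly equivalent to the conjunction of the ACC (no infinite strictly ascending chains, since $\leqc$ has top $\ut$ the relevant direction is descending, but the dual formulation in the text matches the stated ``dually well-ordered'') and the finite-antichain condition, so the conclusion that $\pair{F,\leqc}$ contains no infinite antichain and satisfies the chain condition follows directly.

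The main obstacle is not any deep argument but rather bookkeeping: making sure the orientation of $\leqc$ matches the orientation in the cited version of Higman's Lemma, since $\leqc$ is defined so that reductions make terms \emph{larger}, with $\ut$ at the top, whereas Higman's Lemma is usually phrased with the substructure/divisibility relation pointing toward smaller elements. I would resolve this by passing to the dual order, observing that $\pair{F,\leqc}$ is dually well-ordered precisely when $\pair{F,\succcurlyeq}$ is a well-quasi-order, and it is the latter to which Higman's Lemma applies. With the defining conditions verified and the orientation reconciled, the corollary follows in one line by citation.
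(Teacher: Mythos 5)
Your proposal is correct and follows the same route as the paper: the text immediately preceding the corollary observes that $\leqc$ is a divisibility order on $\m F$ and then cites Higman's Lemma (in the form that any divisibility order is dually well-ordered) to conclude. Your additional care about verifying the two divisibility conditions, the role of the adjoined top element $\ut$, and the orientation of the order is sound but only makes explicit what the paper treats as immediate.
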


\subsection{The finite embeddability property for residuated lattices with a Heyting implication}
\label{subsect:HRL:FEP}

A \emph{residuated lattice with a Heyting implication}, or \emph{Heyting residuated lattice} for short, is an algebra $\m A =\pair{A,\mt,\jn,\cdot,\ld,\rd,\to,\ut}$ that encompasses an integral residuated lattice and a Heyting algebra over the same underlying lattice. The main result of this section establishes that the class $\cls {HRL}$ of Heyting residuated lattices, which is clearly a variety, has the FEP.

Let $\m P$ be an integral partially ordered monoid and $D$ a nonempty subset of $P$.\footnote{Following our standard convention, we think of $P$ as a subset of $\Low(\m P)$.}  Consider $\ol D= \{\Meet X \mid X\subseteq D\}\subseteq \Low(\m P)$. Note that $\ol D$ contains the empty meet, which is $\ut$ because $\m P$ is integral, and furthermore $\ol D$ is closed under arbitrary nonempty meets in $\Low(\m P)$. It is therefore a closure system of $\Low(\m P)$. Moreover, any closure system of $\Low(\m P)$ containing $D$ must also contain $\ol D$. Thus, $\ol D$ is the smallest closure system of $\Low(\m P)$ containing $D$. We refer to $\ol D$ as the \emph{closure system} of $\Low(\m P)$ \emph{generated by} $D$.  The associated closure operator $\gamma_{\ol D}$ is given by $\gamma_{\ol D}(a)=\Meet(\ua a \cap D)$, for all $a\in \Low(\m P)$.

\begin{lemma}\label{l:keylemma}
Let $\m P$ be an integral partially ordered monoid and $D\subseteq P$ a nonempty set such that $a \lds{\Low(\m P)} b \in D \text{ and } b \rds{\Low(\m P)} a \in D$, for all $a \in  P$ and $b \in D$. The closure system $\ol D$ of  $\Low(\m P)$ generated by $D$ is a nucleus-system of $\Low(\m P)$. In particular, it is a residuated lattice with respect to the operations described in Lemma~\ref{lem:nuclear:system}.
\end{lemma}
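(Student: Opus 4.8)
The plan is to verify directly that $\ol D$ satisfies the defining condition of a nucleus-system of $\Low(\m P)$, namely that $c \lds{\Low(\m P)} a \in \ol D$ and $a \rds{\Low(\m P)} c \in \ol D$ for every $c \in \Low(\m P)$ and every $a \in \ol D$. Since the discussion preceding the lemma already establishes that $\ol D$ is a closure system of $\Low(\m P)$, and since $\Low(\m P)$ is itself a residuated lattice (Example~\ref{ex:L(P):RL}), verifying this residual-closure condition yields both assertions at once: that $\ol D$ is a nucleus-system, and, via Lemma~\ref{lem:nuclear:system}, that it inherits the structure of a residuated lattice.

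The key idea is to reduce the residuals of arbitrary elements to residuals of the special form controlled by the hypothesis, using the join-density of $\m P$ in $\Low(\m P)$ together with the preservation properties of residuals (Lemma~\ref{lem:residuation:joins:meets}). First I would fix $c \in \Low(\m P)$ and $a \in \ol D$. By the definition of $\ol D$ there is a subset $Y \subseteq D$ with $a = \Meet[\Low(\m P)] Y$, and since $\Low(\m P)$ is a join-completion of $\m P$ there is a subset $Z \subseteq P$ with $c = \Join[\Low(\m P)] Z$. Invoking Lemma~\ref{lem:residuation:joins:meets}, which asserts that residuals convert joins in the denominator into meets and preserve meets in the numerator, I would compute
\[
c \lds{\Low(\m P)} a = \Big(\Join[\Low(\m P)] Z\Big) \lds{\Low(\m P)} \Big(\Meet[\Low(\m P)] Y\Big) = \Meet[\Low(\m P)]_{p\in Z,\, d\in Y} \big(p \lds{\Low(\m P)} d\big).
\]
Now the hypothesis applies to each factor on the right: since $p \in P$ and $d \in D$, we have $p \lds{\Low(\m P)} d \in D$. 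Hence the right-hand side is the meet of a subset of $D$, and so it belongs to $\ol D$ by the very definition of $\ol D$. The symmetric computation, with the roles of numerator and denominator interchanged, gives $a \rds{\Low(\m P)} c = \Meet[\Low(\m P)]_{d\in Y,\, p\in Z} \big(d \rds{\Low(\m P)} p\big)$, a meet of elements $d \rds{\Low(\m P)} p \in D$, which therefore also lies in $\ol D$.

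I do not expect a serious obstacle here; the substance of the argument is precisely this reduction of a general residual to the special residuals covered by the hypothesis. The one point demanding care is bookkeeping: one must keep the identification of $\m P$ with the principal order-ideals of $\Low(\m P)$ consistent, so that the hypothesis\,---\,stated for $a \in P$ and $b \in D$\,---\,applies verbatim to the factors $p \lds{\Low(\m P)} d$ and $d \rds{\Low(\m P)} p$ produced by the reduction. I would also check the degenerate cases $Y = \emptyset$ or $Z = \emptyset$, in which the displayed meets collapse to the empty meet, equal to $\ut$ by the integrality of $\m P$; since $\ut \in \ol D$, these cases cause no difficulty. Having shown the residual-closure condition, I would conclude that $\ol D$ is a nucleus-system of $\Low(\m P)$ and then quote Lemma~\ref{lem:nuclear:system} for the residuated lattice structure.
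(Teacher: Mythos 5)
Your proof is correct and follows essentially the same route as the paper's: decompose $a \in \ol D$ as a meet of elements of $D$ and $c \in \Low(\m P)$ as a join of elements of $P$, then use Lemma~\ref{lem:residuation:joins:meets} to rewrite each residual as a meet of residuals of the form covered by the hypothesis, which lands in $\ol D$ by construction. The extra attention you give to the empty-index cases is a harmless refinement the paper leaves implicit.
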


\begin{proof} 
Recall that the inclusion map $\m P\hookrightarrow \Low(\m P)$ preserves the multiplication and all existing residuals and meets. Let $a\in\Low(\m P)$ and $b\in \ol D$. In view of Theorem~\ref{thm:completions-mainresult}, we only need to show that $a\lds{\Low(\m P)} b\in\ol D$ and $b\rds{\Low(\m P)} a\in \ol D$. Because our choice of $a$ and $b$, there exist $Y\subseteq P$ and $X\subseteq D$ such that $a=\Join Y$ and $b=\Meet X$. Hence, $a\lds{\Low(\m P)} b = \big(\Join Y\big)\biglds{\Low(\m P)} \big(\Meet X\big)=\Meet_{y\in Y}\Meet_{x\in X}(y\lds{\Low(\m P)} x)\in \ol D$, since $y\lds{\Low(\m P)} x\in D$, for all $y\in Y$ and $x\in X$. Likewise, $b\rds{\Low(\m P)} a\in \ol D$.
\end{proof}

We have the following consequence of Corollary~\ref{completions-RL-Heyting}:

\begin{corollary}\label{c:tokeylemma}
Let $\m P$ be an integral meet-semilattice monoid and $D\subseteq P$ a nonempty set such that $a \lds{\Low(\m P)} b$, $b \rds{\Low(\m P)} a$, and $a \tos{\Low(\m P)} b\in D$, for all $a \in P$ and $b \in D$. Then the closure system $\ol D$ generated by $D$ is a nucleus-system of $\pair{\Low(\m P),\mt,\jn,\cdot,\lds{\Low(\m P)},\rds{\Low(\m P)}, \ut}$ and  $\pair{\Low(\m P),\mt,\jn,\mt,\tos{\Low(\m P)},\ut}$. Equivalently, the associated closure operator $\gamma_{\ol D}$ is a nucleus on $\pair{\Low(\m P),\mt,\jn,\cdot,\lds{\Low(\m P)},\rds{\Low(\m P)}, \ut}$ and also of $ \pair{\Low(\m P),\mt,\jn,\mt,\tos{\Low(\m P)},\ut}$. 
\end{corollary}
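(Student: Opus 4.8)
The plan is to split the statement into its two halves, since the hypotheses package together exactly the data needed for both, and to reduce each half to the nucleus-system property (the equivalence with ``$\gamma_{\ol D}$ is a nucleus'' then being supplied, for each of the two algebras, by Lemma~\ref{lem:nucleus}). The assertion for the residuated lattice $\pair{\Low(\m P),\mt,\jn,\cdot,\lds{\Low(\m P)},\rds{\Low(\m P)}, \ut}$ is not new: the hypotheses here include precisely the residual conditions ($a\lds{\Low(\m P)} b\in D$ and $b\rds{\Low(\m P)} a\in D$ for all $a\in P$, $b\in D$) assumed in Lemma~\ref{l:keylemma}, so this half is immediate from that lemma. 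The real work is therefore the Heyting structure $\pair{\Low(\m P),\mt,\jn,\mt,\tos{\Low(\m P)},\ut}$.

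For the Heyting structure I would argue exactly as in the proof of Lemma~\ref{l:keylemma}, now reading $\mt$ as the monoidal operation and $\tos{\Low(\m P)}$ as its residual (single, since $\mt$ is commutative). Recall first that $\ol D$ is already a closure system of $\Low(\m P)$, as noted in the paragraph preceding Lemma~\ref{l:keylemma}. The nucleus-system condition to be checked is that $x\tos{\Low(\m P)} a\in\ol D$ for every $x\in\Low(\m P)$ and every $a\in\ol D$. Given such $x$ and $a$, use join-density of $\m P$ in $\Low(\m P)$ to write $x=\Join Y$ with $Y\subseteq P$, and the very definition of $\ol D$ to write $a=\Meet X$ with $X\subseteq D$. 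Since $\tos{\Low(\m P)}$ is the residual of $\mt$, it converts joins in its first argument into meets and preserves meets in its second argument (Lemma~\ref{lem:residuation:joins:meets}, or equivalently Corollary~\ref{cor:preservation:joins:meets}); hence $x\tos{\Low(\m P)} a=\Meet_{y\in Y}\Meet_{x'\in X}(y\tos{\Low(\m P)} x')$. By hypothesis each factor $y\tos{\Low(\m P)} x'$ lies in $D$ (as $y\in P$ and $x'\in D$), so this is a meet of elements of $D$ and therefore belongs to $\ol D$. This establishes the nucleus-system property for the Heyting structure, and with it the whole statement.

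The one point requiring care — and the place where I would resist a tempting shortcut — is that although the result is advertised as a consequence of Corollary~\ref{completions-RL-Heyting}, one cannot simply invoke that corollary with $\m L=\ol D$: Corollary~\ref{completions-RL-Heyting} presupposes that $\m L$ is a join-completion of $\m P$, i.e.\ that $P\subseteq L$, whereas $\ol D$ is generated by $D\subseteq P$ and in general does \emph{not} contain all of $P$. The correct move, carried out above and mirroring Lemma~\ref{l:keylemma}, is to verify the defining nucleus-system inequality~\eqref{eq:nuclear} directly from the join/meet decompositions, using only that the relevant residuals of elements of $P$ by elements of $D$ land in $D$. The sole remaining bookkeeping is the harmless observation that both conditions of~\eqref{eq:nuclear} for the meet-monoid collapse, by commutativity of $\mt$, to the single requirement on $\tos{\Low(\m P)}$ treated above.
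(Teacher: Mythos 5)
Your proof is correct, and it supplies an argument where the paper gives essentially none: Corollary~\ref{c:tokeylemma} is stated only as ``a consequence of Corollary~\ref{completions-RL-Heyting}'' with no written proof. Your verification is exactly the intended one, namely the proof of Lemma~\ref{l:keylemma} run a second time with $\mt$ as the monoidal operation and $\tos{\Low(\m P)}$ as its (single, by commutativity) residual: decompose $x=\Join Y$ with $Y\subseteq P$ by join-density, $a=\Meet X$ with $X\subseteq D$ by the definition of $\ol D$, and use Lemma~\ref{lem:residuation:joins:meets} to turn $x\tos{\Low(\m P)} a$ into a meet of elements $y\tos{\Low(\m P)} x'\in D$. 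Your cautionary remark is also well taken and is the one genuinely non-cosmetic point here: Corollary~\ref{completions-RL-Heyting} characterizes join-completions $\m L$ of $\m P$, hence presupposes $P\subseteq L$, whereas $\ol D$ is generated by $D$ alone and in the intended application (Lemma~\ref{lem:embedding}) need not contain $P$; so the corollary cannot be invoked verbatim with $\m L=\ol D$, and the defining condition~\eqref{eq:nuclear} must be checked directly, as you do. The only economy you might add is to note explicitly that the first half (the residuated-lattice structure) is literally Lemma~\ref{l:keylemma}, which you do, so nothing is missing.
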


The next two lemmas will lead us to Theorem~\ref{thm:HRL:FEP}, which establishes that the variety of Heyting residuated lattices enjoys the Finite Embeddability Property.

\begin{lemma}\label{lem:embedding}
Let $\m B$ be an arbitrary (not necessarily finite) partial subalgebra of a Heyting residuated lattice $\m A$. Then $\m B$ can be (order-) embedded into an order-complete algebra in $\cls{HRL}$.
\end{lemma}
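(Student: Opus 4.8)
The plan is to embed the \emph{whole} algebra $\m A$ into an order-complete Heyting residuated lattice; since $\m B$ is a partial subalgebra of $\m A$, restricting such an embedding to $B$ automatically yields an embedding of the partial algebra $\m B$, because an embedding of $\m A$ preserves all of its (total) operations while the partial operations of $\m B$ are merely their restrictions. To build the completion I would pass to the reduct $\m P = \pair{A,\mt,\cdot,\ut}$ of $\m A$, which is an integral meet-semilattice monoid (integrality because $\ut$ is the top element of the lattice underlying $\m A$), so that the theory of Section~\ref{section<<joinextensions} applies. Identifying $P=A$ with its principal order-ideals inside $\Low(\m P)$, I set
\[
\ol A = \{\Meet X \mid X\subseteq A\}\subseteq \Low(\m P),
\]
the closure system generated by $D=A$; concretely $\ol A$ is the set of all intersections of principal order-ideals of $\m A$, i.e.\ (the canonical image of) the Dedekind--MacNeille completion $\McN(\m{P})$. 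Since $D=P$, $\ol A$ contains every principal ideal, hence is a join-completion of $\m P$ by Proposition~\ref{proposition<<completions}.

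The key step is to feed $D=A$ into Corollary~\ref{c:tokeylemma}. Its hypotheses hold: for all $a\in P$ and $b\in A$ the elements $a\lds{\m A} b$, $b\rds{\m A} a$ and $a\tos{\m A} b$ exist in $A$ because $\m A$ is a Heyting residuated lattice, and by Lemma~\ref{residuals}, applied to the two monoid operations $\cdot$ and $\mt$ in turn, they coincide with the residuals $a\lds{\Low(\m P)} b$, $b\rds{\Low(\m P)} a$ and $a\tos{\Low(\m P)} b$ computed in $\Low(\m P)$; hence all three lie in $D=A$. Consequently $\ol A$ is simultaneously a nucleus-system of the residuated-lattice structure and of the Heyting-algebra structure on $\Low(\m P)$, so by Corollary~\ref{completions-RL-Heyting} the single complete lattice $\ol A$ carries, over one and the same lattice order, both a residuated-lattice structure extending $\cdot$ and a Heyting-algebra structure; that is, $\ol A$ is an order-complete member of $\cls{HRL}$, and the inclusion $\m P\hookrightarrow\ol A$ preserves multiplication, all residuals (including the Heyting implication) and all meets.

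It then remains to check that this inclusion preserves \emph{joins}, so that it is a homomorphism for the full signature $\pair{\mt,\jn,\cdot,\ld,\rd,\to,\ut}$ of $\m A$. For $a,b\in A$ the join in $\ol A$ is $\gamma_{\ol A}(\da a\cup\da b)$, the least element of $\ol A$ containing $\da a\cup\da b$. A meet $\Meet X$ of principal ideals contains $\da a\cup\da b$ exactly when $a\jn b\leq x$ for all $x\in X$, so the least such element is the principal ideal $\da(a\jn b)$; thus $\da a\jn_{\ol A}\da b=\da(a\jn b)$ and joins are preserved. (Equivalently, $\ol A=\McN(\m{P})$ is also a meet-completion of $\m P$ and hence preserves all existing joins.) Therefore $\m A\hookrightarrow\ol A$ is an order-embedding preserving every fundamental operation of $\m A$, and its restriction to $B$ is the sought embedding of $\m B$ into the order-complete algebra $\ol A\in\cls{HRL}$.

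The only genuinely delicate point is the simultaneous treatment of the two residuated operations $\cdot$ and $\mt$: one must arrange that the \emph{single} closure operator $\gamma_{\ol A}$ be a nucleus for both, so that the induced residuated-lattice and Heyting structures live on a common complete lattice and combine into an $\cls{HRL}$-algebra. Choosing $D=P=A$ is exactly what makes the hypotheses of Corollary~\ref{c:tokeylemma} hold for both operations at once (using that $\m A$ is already residuated, so the relevant residuals stay inside $A$), and the short join-preservation computation above is the only part that has to be carried out by hand.
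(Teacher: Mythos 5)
Your argument is correct as a proof of the literal statement, but it takes a genuinely different route from the paper's, and the difference matters a great deal for what the lemma is used for. The paper does \emph{not} complete all of $\m A$: it forms the $\{\mt,\cdot,\ut\}$-subreduct $\m P$ generated by $B$ alone, and then closes under the much smaller set $D=\{\rho^{\Low(\m P)}(a_1,\dots,a_n,b)\mid \rho\in R_\L,\ a_1,\dots,a_n\in P,\ b\in B\}$ of values of residual terms with central entry in $B$; the price of this economy is that preservation of the joins of $\m B$ inside $\ol{\m D}$ is no longer automatic and requires the back-and-forth between residual and multiplicative terms (Lemma~\ref{lem:backandforth}). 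Your choice $D=P=A$ collapses the construction to the Dedekind--MacNeille completion of $\m A$, which trivializes the join computation and yields a perfectly valid, more elementary proof of the statement in isolation\,---\,essentially a combination of Corollary~\ref{Dedekind-MacNeille} and Corollary~\ref{completions-heyting} applied to the two monoidal operations at once via Corollary~\ref{c:tokeylemma}. What you lose is everything the lemma is actually for: Lemma~\ref{l:finiteness} asserts, ``with the notation of Lemma~\ref{lem:embedding} in effect,'' that $\ol{\m D}$ is finite whenever $\m B$ is, and that finiteness is a property of the paper's specific $D$, proved via Higman's Lemma on the divisibility order $\leqc$. With $D=A$ the completion is infinite whenever $\m A$ is, so Theorem~\ref{thm:HRL:FEP} could not be derived from your version. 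In short: correct for the stated claim, but not a drop-in replacement; and the genuinely delicate point is not the simultaneous handling of $\cdot$ and $\mt$ (your diagnosis) but the join-preservation argument for the small completion $\ol{\m D}$, which your choice of $D$ quietly sidesteps.
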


\begin{proof}
Let $\m P$ be the $\{\mt, \cdot,\ut\}$-subreduct of $\m A$ generated by $B$, which is therefore an integral partially ordered monoid. Note that, whenever $u,v\in P$ and $u\lds{\m A} v \in P$, then $u\lds{\m A} v$ is the left residual of $v$ by $u$ in $\m P$, and analogously for $v\rds{\m A} u$ and $u\tos{\m A} v$. Thus, we also represent by $\m P$ the full partial subalgebra $\m A\rest P$ of $\m A$ determined by $P$. Thus,  $\m B\leq \m{P}\leq \m A$. Note that even if $B$ is finite, $P$ needs not be so. Consider the join-completion $\Low(\m P)$ of $\m P$ as an integral partially ordered monoid. We view $\m{P}$ as a subpartially ordered set of $\Low(\m P)$, and recall that the inclusion map preserves the multiplication, all existing residuals and Heyting arrows, and all existing meets.

Let $ D = \{\rho^{\Low(\m P)} (a_1,\dots, a_n, b) \mid \rho\in R_\L,\ a_1, \dots, a_n\in P,\ b\in B \}$. Note that $B\subseteq D$, since $\ut\in P$ and $\ut\ld b = b$, for all $b\in B$. Let $\ol D = \{\bigwedge X \mid X\subseteq D\}$ be the closure system of $\Low(\m P)$ generated by $D$. In light of Corollary~\ref{c:tokeylemma}, $\ol{D}$ is a nucleus-system of $\Low(\m P)$ relative to $\cdot\ds{\Low(\m P)}$ and $\mt\ds{\Low(\m P)}$. In particular, $\ol{\m D}$ is a Heyting residuated lattice. Furthermore, residuals, the Heyting implication, and arbitrary meets in $\ol{\m D}$ agree with those in $\Low(\m P)$.

We proceed to show that $\m B$ can be embedded into $\ol{\m D}$. So we prove the following for all $X\cup \{x, y\}\subseteq B$: if $x*\ds{\m A} y\in B$ then $x*\ds{\ol{\m D}} y = x*\ds{\m A} y$, for $*\in\{\cdot,\ld,\rd,\to\}$, if $\Join[\m A] X \in B$, then $\Join[\m A] X = \Join[\ol{\m D}] X$, and analogously for the meets.

Let $u, v\in B$ such that $u \cdot\ds{\m A} v\in B$. Then we have that $u \cdot\ds{\m P} v = u \cdot\ds{\m A} v \in B\subseteq \ol D$, and therefore $u \cdot\ds{\ol{\m D}} v = \gamma\ds{\ol D} (u \cdot\ds{\m P} v) = u \cdot\ds{\m P} v$. Thus, indeed $u \cdot\ds{\ol{\m D}} v = u \cdot\ds{\m A} v\in B$.

Next, let $u, v\in B$ such that $u \lds{\m A} v\in B$. Note that $u \lds{\m P} v = u \lds{\m A} v$ and $u \lds{\m P} v =  u \lds{\Low(\m P)} v = u \lds{\ol{\m D}} v$, since $\Low(\m P)$ is a join-completion of $\m{P}$ and $\ol D$ is a nucleus-system of $\Low(\m P)$. The same argument works for $\rd$ and $\to$.

Consider now $X\subseteq B$ such that $\Meet[\m A] X\in B$. Then $\Meet[\m P] X = \Meet[\m A] X$, since $\m P$ is a subpartially ordered set of $\m A$. But then $\Meet[\m P] X = \Meet[\Low(\m P)] X = \Meet[\ol{\m D}] X$, since $\Low(\m P)$ is a join completion of $\m{P}$, and therefore $\m P$ is meet-faithful in $\Low(\m P)$ (Lemma~\ref{lem:meet:faithful}), and $\ol D$ is a nucleus-system of $\Low(\m P)$.

Lastly, let $X\subseteq B$ such that $\Join[\m A] X\in B$. Note first that $\Join[\m B] X = \Join[\m P] X = \Join[\m A] X $. Thus, $\Join[\ol{\m D}] X = \gamma\ds{\ol D} \big(\Join[\Low(\m P)] X\big) \leq \gamma\ds{\ol D} \big(\Join[\m B] X\big) = \Join[\m B] X$. Therefore, $\Join[\ol{\m D}] X \leq \Join[\m A] X$. To prove the reverse inequality, let $\rho^{\Low(\m P)} (a_1,\dots, a_n, b)$\,---\,with $\rho\in R_\L$, $a_1,\ldots, a_n\in P$, and $b\in B$\,---\,be an upper bound of the elements of $X$ in $\ol{\m D}$, and let $\lambda\in M_\L$ be the corresponding multiplicative term given by Lemma~\ref{lem:backandforth}. Then, all $c\in B$,
\[
\lambda^{\Low(\m P)} (a_1,\dots, a_n, c)\leq b \quad\iff\quad c\leq \rho^{\Low(\m P)} (a_1,\dots, a_n, b).
\]
In particular, $\lambda^{\Low(\m P)} (a_1, \ldots, a_n, u)\leq b$, for all $u\in X$. Now, we know that $\lambda^{\Low(\m P)} \big(a_1, \ldots, a_n, \Join[\m A] X \big)\in P$, and therefore
\[
\lambda^{\Low(\m P)} \big(a_1, \ldots, a_n, \Join[\m A] X \big) = \lambda^{\m P} \big(a_1, \ldots, a_n, \Join[\m A] X \big) = \lambda^{\m A} \big(a_1, \ldots, a_n, \Join[\m A] X \big).
\]
By Corollary~\ref{cor:preservation:joins:meets}, we have
\begin{align*}
\lambda^{\Low(\m P)} \Big(a_1, \ldots, a_n, \Join[\m A] X\Big) &= \lambda^{\m A} \Big(a_1, \ldots, a_n, \Join[\m A] X\Big) \\
 &= \Join[\m A]_{u\in X} \lambda^{\m A} (a_1, \ldots, a_n, u)\\
 &= \Join[\m A]_{u\in X} \lambda^{\m P} (a_1, \ldots, a_n, u) \\
 &=  \Join[\m P]_{u\in X} \lambda^{\m P} (a_1, \ldots, a_n, u) \\
 &= \Join[\m P]_{u\in X} \lambda^{\Low(\m P)} (a_1, \ldots, a_n, u) \leq b.
\end{align*}
This implies that $\Join[\m A] X\leq \rho^{\Low(\m P)} (a_1,\dots, a_n, b)$, and so $\Join[\m A] X\leq \Join[\ol{\m D}] X$.
\end{proof}

The main ideas behind the proof of the next Lemma are due to Blok and van Alten (see~\cite{BvA05}).

\begin{lemma}\label{l:finiteness} 
With the notation of Lemma~\ref{lem:embedding} in effect, $\ol{\m D}$ is finite whenever $\m B$ is finite.
\end{lemma}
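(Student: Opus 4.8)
The plan is to reduce the statement to the finiteness of the generating set $D$ itself. Since every element of $\ol D$ is a meet (an intersection in $\Low(\m P)$) of a subset of $D$, and conversely $D\subseteq\ol D$ because each $d\in D$ is the meet of the singleton $\{d\}$, we have that $\ol D$ is finite if and only if $D$ is finite, in which case $|\ol D|\leq 2^{|D|}$. So it suffices to prove that $D$ is finite.

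First I would record a concrete description of the elements of $D$. Writing the generators of $\m P$ as the elements of $B$, there is a surjective order-homomorphism $h\colon \m F\to \m P$ of $\{\cdot,\mt,\ut\}$-algebras sending each variable to the corresponding element of $B$; it is order-preserving precisely because $\leqc$ is a divisibility order and $\m P$, being an integral meet-semilattice monoid, satisfies the two divisibility inequalities. Consequently $\pair{P,\leq}$, as an order-preserving surjective image of the dually well-ordered $\pair{F,\leqc}$ (Corollary~\ref{c:dualwellorder}), is itself dually well-ordered. Next, invoking Lemma~\ref{lem:backandforth} together with the facts $\da x\circ_\da\da y=\da(xy)$ and $\da x\cap \da y=\da(x\mt y)$, one checks that for a residual term $\rho$ and its associated multiplicative term $\lambda$,
\[
\rho^{\Low(\m P)}(a_1,\dots,a_n,b)=\{p\in P \mid \lambda^{\m P}(a_1,\dots,a_n,p)\leq b\}.
\]
Thus every element of $D$ is an order-ideal of $\m P$, and since $\m P$ is dually well-ordered each such ideal is the down-set generated by its finite antichain of maximal elements.

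The heart of the argument is to confine all these antichains to a single finite subset of $P$, which I would carry out inside $\m F$. Pulling back along the surjection $h$, each $I\in D$ yields the $\leqc$-down-set $h^{-1}(I)=\{t\in F\mid \lambda^{\m F}(s_1,\dots,s_n,t)\in h^{-1}(\da b)\}$, where $a_i=h(s_i)$, and distinct ideals $I$ give distinct pull-backs. Since $B$ is finite and each $h^{-1}(\da b)$ is a $\leqc$-down-set, the dual well-ordering of $\m F$ (no infinite antichains) guarantees that $\bigcup_{b\in B}h^{-1}(\da b)$ has only finitely many $\leqc$-maximal elements, say $w_1,\dots,w_r$. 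Hence $h^{-1}(I)=\bigcup_j\{t\in F\mid \lambda^{\m F}(s_1,\dots,s_n,t)\leqc w_j\}$, and everything reduces to analysing a single set $\{t\mid \lambda^{\m F}(\vec s,t)\leqc w\}$ for fixed $w$. The key claim, which is the Blok--van Alten combinatorial core, is that the $\leqc$-maximal elements of this set lie among the finitely many sub-expressions of $w$, uniformly in $\lambda$ and in the parameters $\vec s$: any material in $\lambda^{\m F}(\vec s,t)$ that gets deleted in a reduction to $w$ may already be deleted from $t$, so a $\leqc$-maximal witness $t$ uses only variables occurring in $w$ and has size at most that of $w$. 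This is exactly where the reduction relation defining $\leqc$ and its Riesz-type decomposition (Lemma~\ref{l:rieszproperty}) are used.

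Granting the claim, the maximal antichains of all the down-sets $h^{-1}(I)$, as $I$ ranges over $D$, are contained in the fixed finite set $S$ consisting of the sub-expressions of $w_1,\dots,w_r$, a set depending only on $B$. There are therefore at most $2^{|S|}$ distinct down-sets $h^{-1}(I)$, whence $D$ is finite and so is $\ol D$. I expect the main obstacle to be precisely the uniform bound in the key claim: one must verify, by induction on the structure of $\lambda$ via Lemma~\ref{l:rieszproperty}, that enlarging the parameters $\vec s$ or increasing the depth of $\lambda$ cannot force the maximal witnesses $t$ outside the finite pool of sub-expressions of $w$. Here the integrality of $\m P$, hence of $\m F$, is what makes the extraneous factors deletable and keeps the argument uniform.
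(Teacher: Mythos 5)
Your argument is correct and follows essentially the same route as the paper: your $h$ is the paper's $\varphi$, your finite antichains $w_1,\dots,w_r$ are the paper's sets $U_b$, and the pullback-plus-surjectivity argument for counting the ideals is identical. The ``key claim'' you defer to an induction is exactly what the paper disposes of in one line: since $\m F$ is residuated (Proposition~\ref{prop:F}), the set $\{t \mid \lambda^{\m F}(\vec s,t)\leqc w\}$ is the principal down-set $\da \rho^{\m F}(\vec s,w)$, and integrality gives $w\leqc \rho^{\m F}(\vec s,w)$, so the generator lies in the finite set $\ua w$ uniformly in $\lambda$ and $\vec s$.
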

\begin{proof}

Let $B = \{b_1, \ldots, b_k\}$ be an enumeration of $B$ and consider $X = \{x_1, \ldots, x_k\}$, a set of $k$ distinct variables. Let also $\m F$ be as in Proposition~\ref{prop:F} and $\m{P}$ as in the proof of Lemma~\ref{lem:embedding}. 
Let $\varphi \colon F \to P$ be the $\{\mt, \cdot,\ut\}$-homomorphism that extends the assignment $x_i\mapsto b_i$. We think of $\varphi$ as a map $\varphi \colon F \to \Low(\m P)$, but keep in mind that $\varphi[F] = P$. It is important to observe that $\varphi$ is an order-homomorphism, because $\m P$ is integral and the multiplications respect the order.

Since $\ol D$ is finite if and only if $D$ is finite, it will suffice to show that, for a fixed $b\in B$, the set $D_b = \{\rho(a_1,\dots,a_n, b) \mid \rho\in R_\L,\ a_1,\ldots, a_n\in P\}$ is finite. Since $\varphi$ is order-preserving, the inverse image of an order-ideal is an order-ideal. Further, since $\leqc$ is a dual well-order by Corollary~\ref{c:dualwellorder}, $\varphi^{-1}[\da b] = \da U_b$, for some finite antichain $U_b\subseteq F$. Fix $a_1,\dots, a_n\in P$ and let $t_1,\ldots, t_n\in F$ such that $\varphi(t_i) = a_i$, for $i = 1, \dots, n$. Fix also $\rho(x_1,\dots,x_n,z)\in R_\L$ and let $\lambda(x_1, \ldots, x_n, y)$ be its corresponding multiplicative term given by Lemma~\ref{lem:backandforth}. Therefore, 
\begin{align*}
s\in \varphi^{-1}[\da\rho^{\Low(\m P)} (a_1,\dots,a_n, b)] &\iff \varphi(s)\leq \rho^{\Low(\m P)} (a_1,\dots,a_n, b) \\
 &\iff \lambda^{\Low(\m P)} (a_1\ldots, a_n, \varphi(s))\leq b\\
 &\iff\varphi(\lambda^{\m F} (t_1\ldots, t_n, s))\leq b \\
 &\iff \lambda^{\m F} (t_1\ldots, t_n, s)\in \varphi^{-1}[\da b]\\ 
 &\iff \lambda^{\m F} (t_1\ldots, t_n, s)\leqc u, \text{ for some } u\in U_b \\
 & \iff s\leqc \rho^{\m F} (t_1,\dots,t_n, u), \text{ for some } u\in U_b 
\end{align*}
Thus, we have shown that given $a_1,\dots,a_n\in P$, there exist $t_1,\dots,t_n \in F$ such that
\[
\varphi^{-1}[\da\rho^{\Low(\m P)} (a_1,\dots,a_n, b)] = \bigcup_{u\in U_b}\da\rho^{\m F} (t_1,\dots,t_n, u).
\]
Now, for every $u\in U_b$, the integrality of $\m F$ implies that $\lambda^\m F (t_1,\dots,t_n,u)\leqc \lambda^\m F (\ut,\dots,\ut,u) = u$, and thus $u\leqc \rho^{\m F} (t_1,\dots,t_n, u)$. Thus, $\rho^{\m F} (t_1,\dots,t_n, u)$ belongs to $\ua U_b = \{t\in F \mid {u\leqc t,} \text{ for some } {u\in U_b}\}$, which is a finite set because $U_b$ is finite and $\leqc$ is a dual partial well-order. It follows that there are only finitely many inverse images of the form $\varphi^{-1}[\da\rho^{\Low(\m P)} (a_1,\dots,a_n, b)]$ as $a_1,\dots,a_n$ range over all the elements of $P$. Also, since $\varphi[F] = P$, $\rho^{\Low(\m P)} (a_1,\dots,a_n, b)\neq \rho^{\Low(\m P)} (c_1,\dots,c_n, b)$ implies $\varphi^{-1}[\da\rho^{\Low(\m P)} (a_1,\dots,a_n, b)]\neq \varphi^{-1}[\da\rho^{\Low(\m P)}  (c_1,\dots,c_n, b)]$. These facts demonstrate the finiteness of $D_b$.
\end{proof}

Combining Lemmas~\ref{lem:embedding} and~\ref{l:finiteness}, we obtain the main result of this section:

\begin{theorem}\label{thm:HRL:FEP}
The variety $\cls{HRL}$ has the finite embeddability property.
\end{theorem}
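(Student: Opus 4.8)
The plan is to read the theorem off directly from the two preceding lemmas, since between them they supply exactly the two ingredients that Definition~\ref{def:FEP} demands: an embedding of a finite partial subalgebra into a member of $\cls{HRL}$, and the finiteness of the target.

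First I would unwind what it means for a variety to have the FEP. By Definition~\ref{def:FEP} it suffices to fix an arbitrary $\m A\in\cls{HRL}$ together with an arbitrary \emph{finite} partial subalgebra $\m B$ of $\m A$, and to exhibit a finite member of $\cls{HRL}$ into which $\m B$ embeds. So the whole argument reduces to producing, for each such $\m B$, a single finite algebra of the variety receiving $\m B$.

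Next I would invoke Lemma~\ref{lem:embedding} for this $\m B\leq\m A$ to obtain an order-complete Heyting residuated lattice $\ol{\m D}$ together with an embedding $\m B\hookrightarrow\ol{\m D}$; here $\ol{\m D}$ is the closure system generated by the set $D$ of residual-term values, and its status as a nucleus-system (hence as a member of $\cls{HRL}$) is guaranteed by Corollary~\ref{c:tokeylemma}. Then, because $\m B$ is finite, Lemma~\ref{l:finiteness} gives that $\ol{\m D}$ is finite. Thus $\ol{\m D}\in\cls{HRL}$ is finite and admits an embedding of $\m B$, so the requirement of Definition~\ref{def:FEP} is met; since this works for every $\m A$ and every finite $\m B$, the variety $\cls{HRL}$ has the FEP.

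The assembly itself is routine: the genuine content has already been discharged inside the two lemmas. In Lemma~\ref{lem:embedding} the delicate point is verifying that \emph{existing joins} of $\m B$ are preserved in $\ol{\m D}$ (the multiplication, residuals, Heyting arrow, and meets being handled by the nucleus-system machinery), and this is where the passage between multiplicative and residual terms of Lemma~\ref{lem:backandforth} is essential. The main obstacle overall, however, is the finiteness claim of Lemma~\ref{l:finiteness}: one must control, for each fixed $b\in B$, the collection of all values $\rho^{\Low(\m P)}(a_1,\dots,a_n,b)$ as $\rho$ ranges over residual terms and the parameters range over the possibly infinite monoid $P$. The decisive idea is to pull these back along the homomorphism $\varphi\colon F\to P$ from the term structure $\m F$ of Proposition~\ref{prop:F} and to exploit that $\pair{F,\leqc}$ is dually well-ordered (Corollary~\ref{c:dualwellorder}, via Higman's Lemma), so that each preimage order-ideal is generated by a finite antichain $U_b$ and only finitely many distinct values can occur.
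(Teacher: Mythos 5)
Your proposal is correct and coincides with the paper's own argument, which likewise obtains Theorem~\ref{thm:HRL:FEP} by combining Lemma~\ref{lem:embedding} (the embedding of a finite partial subalgebra $\m B$ into the order-complete Heyting residuated lattice $\ol{\m D}$) with Lemma~\ref{l:finiteness} (the finiteness of $\ol{\m D}$ when $\m B$ is finite). Your closing remarks on where the real work lies\,---\,the preservation of existing joins via Lemma~\ref{lem:backandforth} and the dual well-ordering of $\pair{F,\leqc}$\,---\,accurately reflect the content of those lemmas.
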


As an application, we present a simple proof of the FEP for the variety of distributive integral residuated lattices, which was independently proved in~\cite{Bus11} and~\cite{GaJi}.

\begin{corollary}\label{DIRL:FEP}
The variety of distributive integral residuated lattices has the finite embeddability property.
\end{corollary}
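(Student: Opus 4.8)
The plan is to derive the result from Theorem~\ref{thm:HRL:FEP} by exploiting the tight relationship between the two varieties. The residuated-lattice reduct (forgetting the Heyting arrow) of any Heyting residuated lattice is a distributive integral residuated lattice: its lattice reduct is a Heyting algebra, hence distributive, and its residuated structure is integral. Conversely, since a finite distributive lattice is automatically a Heyting algebra, the finite distributive integral residuated lattices are \emph{exactly} the residuated-lattice reducts of finite members of $\cls{HRL}$. Consequently, to embed a finite partial subalgebra of a distributive integral residuated lattice into a finite such algebra, it suffices to embed it into a finite Heyting residuated lattice and then forget the Heyting arrow.

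So let $\m A$ be a distributive integral residuated lattice and $\m B$ a finite partial subalgebra of it in the language $\{\mt,\jn,\cdot,\ld,\rd,\ut\}$. The plan is to run the construction in the proofs of Lemma~\ref{lem:embedding} and Lemma~\ref{l:finiteness} essentially verbatim: form the $\{\mt,\cdot,\ut\}$-subreduct $\m P$ of $\m A$ generated by $B$, which is an integral meet-semilattice monoid; pass to $\Low(\m P)$; set $D=\{\rho^{\Low(\m P)}(a_1,\dots,a_n,b)\mid \rho\in R_\L,\ a_1,\dots,a_n\in P,\ b\in B\}$, with $R_\L$ taken over the full $\cls{HRL}$-signature; and let $\ol D$ be the closure system it generates. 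Since $\Low(\m P)$ is simultaneously a residuated lattice and a Heyting algebra by Corollary~\ref{completions-RL-Heyting}, Corollary~\ref{c:tokeylemma} applies and makes $\ol{\m D}$ a Heyting residuated lattice, and Lemma~\ref{l:finiteness}\,---\,whose argument uses only the integral pomonoid structure and Higman's Lemma\,---\,guarantees that $\ol{\m D}$ is finite.

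The point requiring care, and the main obstacle, is that $\m A$ is assumed only to be a distributive integral residuated lattice, not a Heyting residuated lattice, so the verification in Lemma~\ref{lem:embedding} that the inclusion $\m B\hookrightarrow\ol{\m D}$ preserves the operations cannot be quoted as a black box. Two observations resolve this. First, the clauses about preservation of the Heyting arrow are vacuous, because $\to$ is not among the operations of $\m B$. Second, the delicate join-preservation step invokes Corollary~\ref{cor:preservation:joins:meets} to pass a join $\Join[\m A]X$ through a multiplicative term $\lambda^{\m A}$; in the $\cls{HRL}$ setting this relies on $\mt$ being residuated, which can fail in a general distributive integral residuated lattice. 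However, since $\m B$ is finite the set $X$ is finite, so the required identity $\lambda^{\m A}(a_1,\dots,a_n,\Join[\m A]X)=\Join[\m A]_{u\in X}\lambda^{\m A}(a_1,\dots,a_n,u)$ now follows from the finite distributivity of $\m A$ together with the fact that $\cdot$ preserves all joins. With this replacement the embedding goes through, $\ol{\m D}$ is a finite Heyting residuated lattice containing $\m B$ as a partial subalgebra, and its residuated-lattice reduct is the desired finite distributive integral residuated lattice.
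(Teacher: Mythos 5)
Your proof is correct, but it takes a genuinely different route from the paper's. You correctly identify the crux: $\m A$ is only assumed to be a distributive integral residuated lattice, not a Heyting residuated lattice, so Theorem~\ref{thm:HRL:FEP} cannot be quoted as a black box; you resolve this by reopening the proof of Lemma~\ref{lem:embedding} and patching the one step that genuinely uses the residuation of $\mt$ in $\m A$, namely the passage of $\Join[\m A]X$ through a multiplicative term via Corollary~\ref{cor:preservation:joins:meets}, replacing it by finite distributivity\,---\,legitimate, since $X\subseteq B$ is finite, $\cdot$ preserves all existing joins, and the induction on the structure of $\lambda$ goes through. The paper instead sidesteps the obstacle before it arises: it embeds $\m A$ into its ideal completion $\m{I(A)}$, which is a nucleus-system of $\Low(\m A)$ and, being an algebraic distributive lattice, carries a Heyting implication; hence $\m{I(A)}$ is a Heyting residuated lattice having $\m A$ as a residuated-lattice subreduct, $\m B$ is then a finite partial subalgebra of $\m{I(A)}$, and Theorem~\ref{thm:HRL:FEP} applies verbatim. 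The paper's reduction is shorter and keeps the FEP machinery untouched, at the price of verifying the properties of the ideal completion; yours avoids introducing any new completion of $\m A$ but requires re-certifying the internal construction of Lemmas~\ref{lem:embedding} and~\ref{l:finiteness}. Both arguments close in the same way, via the observation that a finite Heyting residuated lattice has a distributive integral residuated-lattice reduct.
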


\begin{proof}
Suppose that $\m B$ is a partial subalgebra of a distributive integral residuated lattice $\m A$. The ideal completion $\m{I(A)}$ of $\m A$ is a Heyting residuated lattice that has $\m A$ as a residuated lattice subreduct. To see this, observe that Corollary~\ref{completions-heyting} can be used to show that $\m{I(A)}$ is a nucleus-system of $\Low(\m{A})$. Further, as $\m{I(A)}$ is an algebraic distributive lattice, it possesses a Heyting implication. Thus, $\m B$ is a partial subalgebra of $\m{I(A)}$, and hence there is a finite Heyting residuated lattice $\m F$ that includes $\m B$ as a subalgebra. The algebra $\m F$, being a  a Heyting residuated lattice, is distributive.
\end{proof}

The method outlined in this section proves the finite embeddability property for other classes of residuated lattices, for example, commutative integral residuated lattices and  integral residuated lattices, which are the main objects of investigation of the articles~\cite{BvA02} and~\cite{BvA05}. The construction of the finite extension follows the proof of Lemmas~\ref{lem:embedding}, but it is simpler as it does not involve the Heyting implication.

The same approach shows that the variety $\cls{S}em\cls{IRL}$ of semilinear integral residuated lattices, namely the variety generated by integral residuated chains, satisfies the finite embeddability property.
The preceding method shows that a finite partial subalgebra $\m B$ of an integral residuated chain $\m A$ is embeddable into a finite integral chain. Indeed, the $\{\cdot,\ut\}$-subreduct $\m P$ of $\m A$ generated by $B$ is a totally ordered monoid (refer to the proof of Lemma~\ref{lem:embedding}), and hence $\Low(\m P)$ is totally ordered. But then the algebra $\m{\ol D}$  is an integral residuated chain.
The finiteness of $\m{\ol D}$ follows from Lemma~\ref{l:finiteness}.
This implies that any finite partial subalgebra of a semilinear integral residuated lattice is embeddable in a finite product of finite integral chains, and so the subalgebra it generates is finite.

Another interesting application of the preceding method is the proof of the finite embeddability property for the variety $\cls{I}nv\cls{IRL}$ of involutive integral residuated lattices, first stablished in~\cite{Wil06} and~\cite{GJKO07}.

\begin{theorem}\label{thm:INVIRL:FEP}
The variety $\cls{I}nv\cls{IRL}$ of involutive integral residuated lattices has the finite embeddability property.
\end{theorem}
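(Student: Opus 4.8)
The plan is to run the two-step scheme behind Theorem~\ref{thm:HRL:FEP}---the embedding of Lemma~\ref{lem:embedding} followed by the finiteness argument of Lemma~\ref{l:finiteness}---with the single modification that the role played there by the Heyting implication (which forced the completion back into $\cls{HRL}$) is now played by the cyclic dualizing element $d$. Let $\m B$ be a finite partial subalgebra of an involutive integral residuated lattice $\m A$. Since embedding a larger finite partial subalgebra embeds $\m B$ as well, I may replace $\m B$ by $\m A\rest\bigl(B\cup\{x\ld d\mid x\in B\}\cup\{d\}\bigr)$, so that from now on $B$ contains $d$ and is closed under the involution $x\mapsto x'=x\ld d=d\rd x$. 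Let $\m P$ be the $\{\cdot,\ut\}$-subreduct of $\m A$ generated by $B$, an integral partially ordered monoid with $\m B\leq \m A\rest P=\m P$. (Unlike the $\cls{HRL}$ case there is no reason to include $\mt$ in $\m P$, since for involutive integral residuated lattices $\mt$ need not be residuated.) Form $\Low(\m P)$ and, exactly as in Lemma~\ref{lem:embedding}, set $D=\{\rho^{\Low(\m P)}(a_1,\dots,a_n,b)\mid \rho\in R_\L,\ a_i\in P,\ b\in B\}$ and let $\ol D$ be the closure system it generates. Using Lemma~\ref{l:keylemma} in place of the Heyting-aware Corollary~\ref{c:tokeylemma}, $\ol D$ is a nucleus-system of $\Low(\m P)$, so $\ol{\m D}$ is an integral residuated lattice; the verbatim argument of Lemma~\ref{lem:embedding} embeds $\m B$ into $\ol{\m D}$ (preserving $\cdot,\ld,\rd$, all relevant meets, and---via Lemma~\ref{lem:backandforth} and Corollary~\ref{cor:preservation:joins:meets}---all relevant joins), while Lemma~\ref{l:finiteness} shows $\ol{\m D}$ is finite.

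The one genuinely new point is that $\ol{\m D}$ lands in $\cls{I}nv\cls{IRL}$, that is, that $d$ is a cyclic dualizing element of $\ol{\m D}$. I would first check that $d$ is cyclic in $\Low(\m P)$. Writing elements of $\Low(\m P)$ as order-ideals, a direct computation of the canonical residuals of Example~\ref{ex:L(P):RL} gives, for $x\in P$,
\[
d\rds{\Low(\m P)} x=\{z\in P\mid zx\leq_{\m A} d\}=\{z\in P\mid z\leq_{\m A} x'\}=\{z\in P\mid xz\leq_{\m A}d\}=x\lds{\Low(\m P)} d,
\]
the middle equalities being exactly the cyclicity of $d$ in $\m A$; passing to arbitrary ideals through Lemma~\ref{lem:residuation:joins:meets} yields $d\rds{\Low(\m P)} a=a\lds{\Low(\m P)} d$ for all $a\in\Low(\m P)$. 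Hence $\gamma_d$ is a nucleus on $\Low(\m P)$ by Lemma~\ref{lem:specialnuclei}.

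The hard part is to show that $\ol D\subseteq \Low(\m P)_{\gamma_d}$, for then $\gamma_d$ fixes $\ol D$ pointwise and, since residuals in the nucleus-system $\ol{\m D}$ agree with those in $\Low(\m P)$ and $x\ld d\in\ol D$ whenever $d\in\ol D$, one obtains $(x\ras d)\ras d=x$ in $\ol{\m D}$ for every $x\in\ol D$; together with cyclicity this makes $d$ a cyclic dualizing element, so $\ol{\m D}$ is an involutive integral residuated lattice by Lemma~\ref{lem:involuted1} (cf.\ Remark~\ref{rem:char:involutive:RL}). To prove the inclusion I would argue in two moves. First, each $b\in B$ is $\gamma_d$-closed in $\Low(\m P)$: because $B$ was closed under the involution, $b'=b\ld_{\m A} d\in P$ and $b''=b$, so Lemma~\ref{residuals} gives $b\lds{\Low(\m P)} d=b'$ and $b'\lds{\Low(\m P)} d=b$, whence $\gamma_d(b)=b$. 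Second, every generator $\rho^{\Low(\m P)}(a_1,\dots,a_n,b)$ is $\gamma_d$-closed: a residual term is an iterated residuation whose innermost argument is the central variable $b$, and by Lemma~\ref{lem:nucleus}(ii) a residual of a $\gamma_d$-closed element on the numerator side is again $\gamma_d$-closed, so closedness propagates outward from $b$ through the whole term. Since $\Low(\m P)_{\gamma_d}$ is closed under arbitrary meets, $\ol D=\{\Meet X\mid X\subseteq D\}\subseteq \Low(\m P)_{\gamma_d}$, as required.

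I expect the main obstacle to be exactly this involutivity step, and in particular the bookkeeping reducing it to the closedness of the generators: the preparatory closure of $B$ under $x\mapsto x\ld d$ is what makes each central element $b$ already fixed by $\gamma_d$, and Lemma~\ref{lem:nucleus}(ii) is what carries this through the residual terms. Everything else---the residuated-lattice structure, the embedding, and the finiteness---is a transcription of Lemmas~\ref{lem:embedding} and~\ref{l:finiteness}, with $\m P$ taken to be the $\{\cdot,\ut\}$-subreduct and with Lemma~\ref{l:keylemma} supplying the nucleus-system. Combining the embedding $\m B\hookrightarrow\ol{\m D}$ with the finiteness and involutivity of $\ol{\m D}$ then yields the finite embeddability property for $\cls{I}nv\cls{IRL}$.
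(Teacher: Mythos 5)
Your proposal is correct, and its overall architecture coincides with the paper's: close $B$ under $x\mapsto x\ras d$, take the $\{\cdot,\ut\}$-subreduct $\m P$, build $D$ and $\ol D$ inside $\Low(\m P)$, verify that $d$ is cyclic in $\Low(\m P)$, and invoke Lemma~\ref{l:finiteness} for finiteness. The one genuine divergence is the involutivity step. The paper does \emph{not} show that $\ol{\m D}$ itself is involutive; it only shows that $d$ is cyclic in $\ol{\m D}$ (via Lemma~\ref{lem:nuclear:system}), applies the further nucleus $\gamma_d$ to $\ol{\m D}$, obtains involutivity of $\ol{\m D}_{\gamma_d}$ from Lemma~\ref{lem:involuted1}, and then only needs the elements of $B$ (not of all of $\ol D$) to be fixed by $\gamma_d$ for the embedding to survive. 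You instead prove the stronger statement $\ol D\subseteq \Low(\m P)_{\gamma_d}$, so that $\gamma_d$ is the identity on $\ol D$ and $\ol{\m D}$ is already an involutive integral residuated lattice; this works because every generator of $D$ is a residual term whose central element $b\in B$ is $\gamma_d$-closed, Lemma~\ref{lem:nucleus}(ii) propagates closedness outward through the term, and $\Low(\m P)_{\gamma_d}$ is meet-closed. Your route costs the extra bookkeeping with residual terms but yields a cleaner conclusion (no second nucleus, and it shows a posteriori that the paper's $\ol{\m D}_{\gamma_d}$ equals $\ol{\m D}$); the paper's route is more economical in what it must verify. A second, minor difference: you establish cyclicity of $d$ in $\Low(\m P)$ for $x\in P$ by a direct computation with the canonical residuals of order-ideals, using only the equivalence $zx\leq d\iff xz\leq d$ in $\m A$, whereas the paper argues by induction on monoid words $x=x_1\cdots x_n$ with $x_i\in B$; both then pass to arbitrary ideals via joins. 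Both arguments are sound.
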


\begin{proof}\text{}

\begin{minipage}{0.1\textwidth}
\begin{tikzcd}
 \m A && \Low(\m P)\ar[->>, bend right=40, d]\\[2ex]
 \m P\ar[hook, u]\ar[hook, urr] & \ol{\m D}_{\gamma_d}\ar[bend right, hook, r] & \ol{\m D}\ar[hook, bend right, u]\ar[->>, bend right, l]\\
\m B\ar[hook, u]\ar[hook, ur]\ar[hook, rr] && D\ar[hook, u]
\end{tikzcd}

\end{minipage}\hfill
\begin{minipage}{0.57\textwidth}
Let $\m A$ be an involutive integral residuated lattice with a cyclic dualizing element $d$. Let $\m B$ be a finite full partial subalgebra of $\m A$. We can assume, without loss of generality, that $d,\ut\in B$ and that $b\ras_{\m A} d=b\ld_{\m A} d=d\rd_{\m A} b\in B$, for each $b\in B$. The fact that $d$ is a cyclic dualizing element implies that every element of $B$ is of the form $b\ras_{\m A} d$, for some $b\in B$; equivalently, $B\subseteq A_{\gamma_d}$ (see Lemma~\ref{lem:specialnuclei}). Let $\m P$ be the $\{\cdot,\ut\}$-subreduct of $\m A$ generated by  $B$. We
\end{minipage}

\noindent  again use $\m P$ to denote the full partial subalgebra of $\m A$ on $P$.   We know, in view of the preceding assumptions on $\m B$ and Lemma~\ref{residuals}, that $b\ras_{\m B} d=b\ras_{\m A} d=b\ras_{\m P} d=b\ras_{\Low(\m P)} d\in B\subseteq P$. We claim that $d$ is a cyclic element of $\Low(\m P)$ (with respect to the residuals of $\Low(\m P)$). A word of caution is necessary here. While $d$ is a cyclic element of $\m A$, there is no guarantee that the residuals $x\ld_{\m A} d$ and $d\rd_{\m A} x$ are in $P$, for $x\in P$. We claim, however, that  $x\ld_{\Low(\m P)} d=d\rd_{\Low(\m P)} x$, for all $x\in P$. Indeed, given an arbitrary element $x\in P$, there exist elements $x_1, \dots, x_n\in B$ such that $x=x_1\dots x_n.$ We prove inductively that $x\ld_{\Low(\m P)} d=d\rd_{\Low(\m P)} x$. To simplify the notation in the computation below, we will use $\ld$ and $\rd$ in the place of  $\ld_{\Low(\m P)}$ and $\rd_{\Low(\m P)}$, respectively. Setting $y=x_1\dots x_{n-1},$ assume  that  $y\ld d=d\rd y$.  Then $x\ld d=yx_n\ld d=x_n\ld(y\ld d)=x_n\ld(d\rd y)=(x_n\ld d)\rd y=(d\rd x_n)\rd y=d\rd yx_n=d\rd x.$ Finally, if $z$ is an arbitrary element of $\Low(\m P)$, there exists a subset $W$ of $P$ such that $z=\Join[{\Low(\m P)}]W$.  Hence, $z\ld d=\Meet\{w\ld d \mid w\in W\}=\Meet\{d\rd w \mid w\in W\}=d\rd z.$ We have shown that $d$ is a cyclic element of $\Low(\m P)$.

As $\ol{\m D}$ is a nucleus system of $\Low(\m P)$ (see Lemma~\ref{lem:embedding}), $d$ is a cyclic element of $\ol{\m D}$ (see Lemma~\ref{lem:nuclear:system}). Consider the nucleus $\gamma_d$ on $\ol{\m D}$. By Lemma~\ref{lem:involuted1}, 
$\ol{\m D}_{\gamma_d}$ is an involutive integral residuated lattice. Now $\ol{\m D}$ is finite by Lemma~\ref{l:finiteness}, and hence so is $\ol{\m D}_{\gamma_d}$. What is left to observe is that every element of $B$ is fixed by $\gamma_d$, and hence the inclusion $\m B\mapsto \ol{\m D}_{\gamma_d}$ is an embedding.
\end{proof}

\begin{corollary}\label{thm:INVCIRL:FEP}
The variety $\cls{I}nv\cls{CIRL}$ of involutive, commutative, integral residuated lattices has the finite embeddability property.
\end{corollary}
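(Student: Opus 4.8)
The plan is to re-run verbatim the construction from the proof of Theorem~\ref{thm:INVIRL:FEP}, keeping track of commutativity at each stage. Since the finite embeddability property is not in general inherited by subvarieties, it is not enough to cite Theorem~\ref{thm:INVIRL:FEP}; we must verify that, when the ambient algebra is commutative, the finite involutive integral residuated lattice produced by that proof lands inside $\cls{I}nv\cls{CIRL}$, i.e.\ is itself commutative. Throughout we use the standard equivalent formulation of the FEP recorded above: it suffices that every finite full partial subalgebra $\m B$ of a member of $\cls{I}nv\cls{CIRL}$ embed into some finite member of $\cls{I}nv\cls{CIRL}$.

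First I would start, exactly as in Theorem~\ref{thm:INVIRL:FEP}, with an involutive commutative integral residuated lattice $\m A$ with cyclic dualizing element $d$ and a finite full partial subalgebra $\m B$, arranged so that $d,\ut\in B$ and $B$ is closed under $b\mapsto b\ras_{\m A} d$. The monoid reduct of $\m A$ is now commutative, so the $\{\cdot,\ut\}$-subreduct $\m P$ of $\m A$ generated by $B$ is a commutative integral pomonoid. Commutativity then transfers to the canonical residuated lattice $\Low(\m P)$: for $X,Y\in\wp(P)$ one has $X\cdot Y=\{xy \mid x\in X,\ y\in Y\}=\{yx \mid x\in X,\ y\in Y\}=Y\cdot X$, whence $X\circ_{\da}Y=\da(X\cdot Y)=\da(Y\cdot X)=Y\circ_{\da}X$. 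In particular $\Low(\m P)$ is a commutative residuated lattice.

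Next I would push commutativity through the two nucleus-system steps, where the key is the formula of Lemma~\ref{lem:nuclear:system} for the induced product. The product of the nucleus-system $\ol{\m D}$ of $\Low(\m P)$ is given by $\gamma_{\ol D}(x\cdot y)$, and since $\cdot$ is commutative we get $\gamma_{\ol D}(x\cdot y)=\gamma_{\ol D}(y\cdot x)$; hence $\ol{\m D}$ is again commutative. The same reasoning applies to the involutive quotient: the product of $\ol{\m D}_{\gamma_d}$ is $\gamma_d$ applied to the product of $\ol{\m D}$, so commutativity of the latter forces commutativity of $\ol{\m D}_{\gamma_d}$. By the proof of Theorem~\ref{thm:INVIRL:FEP}, $\ol{\m D}_{\gamma_d}$ is a finite involutive integral residuated lattice into which $\m B$ embeds; by the above it is moreover commutative, hence a finite member of $\cls{I}nv\cls{CIRL}$, and the FEP follows. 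The only point requiring care is this bookkeeping, that commutativity genuinely survives the nucleus-system and dualizing steps, but each instance follows immediately from the identity $\gamma(x\cdot y)=\gamma(y\cdot x)$ for the induced product, so no estimate beyond those already in Theorem~\ref{thm:INVIRL:FEP} is needed.
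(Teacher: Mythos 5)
Your proposal is correct and follows exactly the paper's route: the paper's proof consists of the single observation that the construction of Lemma~\ref{lem:embedding} preserves commutativity, and your argument simply makes explicit why this is so at each stage (the subreduct $\m P$, the lattice $\Low(\m P)$, and the nucleus-system products $\gamma(x\cdot y)$). The extra bookkeeping you supply is a legitimate expansion of the same one-line idea, not a different method.
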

\begin{proof}
The construction of Lemma~\ref{lem:embedding} preserves commutativity.
\end{proof}

\section{A general discussion of the finite embeddability property and its implications.}
\label{s:fep}

The finite embeddability property (FEP) for general algebras was first introduced and  studied systematically by T. Evans (see~\cite{Eva51, Eva53, Eva69}).  Additional relevant references include \cite{BaNe72, Boo59, Coh81, Dek95} and the ones listed in Section~\ref{sec:FEP:HRL}. 

The aim of this section is to provide a survey of FEP by clarifying relationships among several related notions and reviewing general theorems with detailed proofs that remedy some gaps in the literature. For notions not defined here, we refer the reader to~\cite{BS81}, ~\cite{Coh81}, ~\cite{Jez08}, or~\cite{Bur86}.\footnote{An electronic version of this book can be found in~\url{http://www.mathematik.tu-darmstadt.de/Math-Net/Lehrveranstaltungen/Lehrmaterial/SS2002/AllgemeineAlgebra/}.}

\subsection{Preliminaries}

For any set of variables\footnote{If necessary, we can assume that we have an infinite supply of variables. Formally, we fix a class $\X$ containing \emph{all the variables}, and thus, a set of variables is just a subset $X\subseteq\X$.} $X$, let ${\m T}(X)$ be the term algebra on the language $\L$ with variables in $X$. By an \emph{equation} (on the language $\L$ with variables in $X$) we mean an ordered pair $\pair{t, t'}$ of terms of $T(X)$, written as $t\eq t'$, and a \emph{quasi-equation} is a formula of the form
\[
{t_1\eq t'_1} \conj \dots \conj {t_m\eq t'_m} \Ra t\eq t'
\]
where $t\eq t'$ and $t_{i}\eq t'_{i}$ are equations for $i = 1,\dots,m$.

For any partial $\L$-algebra $\m P$, an \emph{assignment} of $X$ in $\m P$ is a map $v \colon X \to P$. Any such assignment $v$ can be extended uniquely to a \emph{valuation} $\tilde{v}$, which is a partial map\footnote{For every assignment $v\colon X\to P$ on a partial algebra $\m P$, the map $\tilde v$ is a homomorphism from the full partial subalgebra $\m T(X)\rest\dom(\tilde v)$ to $\m P$, and it is the \emph{largest one} satisfying that $\tilde v(x) = v(x)$, for every $x\in X$.} on $T(X)$ defined recursively as follows:
\begin{itemize}
\item For each variable $x\in X$, $\tilde{v}(x) = v(x)$.
\item If $f\in L$ is an operation symbol, and if $t_1,\dots,t_{\tau{(f)}}$ are terms of $T(X)$, for which $\tilde{v}{(t_{i})}$ is defined, say $\tilde{v}(t_{i}) = p_{i}$ $(1\leq i\leq \tau(f))$, and such that $f^{\m P}(p_1,\dots,p_{\tau(f)})$ is also defined, then we define
\[
\tilde{v}(f(t_1,\dots,t_{\tau(f)}))=f^{\m P}(p_1,\dots,p_{\tau(f)}).
\]
Otherwise, it is undefined.
\end{itemize}

A partial $\L$-algebra $\m P$ \emph{satisfies} an equation $t\eq t'$ with respect to an assignment $v \colon X \to P$,
in symbols ${\m P}\models_v {t\eq t'}$, if $\tilde v(t)$ and $\tilde v(t')$ are defined and $\tilde v(t) = \tilde v(t')$. We say that $\m P$ \emph{satisfies} the equation $t\eq t'$ if $\m P$ satisfies it with respect to every assignment. A class of algebras $\cls K$ satisfies an equation if every algebra in $\cls K$ satisfies it. A partial $\L$-algebra $\m P$ \emph{satisfies} a quasi-equation $q = {t_1\eq t'_1} \conj \dots \conj {t_m\eq t'_m} \Ra t\eq t'$ with respect to an assignment $v \colon X \to P$, in symbols ${\m P}\models_v q$, if ${\m P}\models_v {t\eq t'}$, whenever ${\m P}\models_v {t_i\eq t'_i}$, for every $i = 1,\dots, m$. The partial algebra $\m P$ \emph{satisfies} a quasi-equation if $\m P$ satisfies it with respect to every assignment. A class of algebras $\cls K$ satisfies a quasi-equation if every algebra in $\cls K$ satisfies it. 

A \emph{variety} or \emph{equational class} is a class of $\L$-algebras defined by a set of equations.  Analogously, a \emph{quasi-variety} is a class of $\L$-algebras defined by a set of quasi-equations. If $\cls K$ is a class of $\L$-algebras, the \emph{(quasi\nbd-)variety generated} by $\cls K$ is the class $\mathbb V(\cls K)$ (resp.~$\mathbb Q(\cls K)$) of all the algebras satisfying all the (quasi\nbd-)equations that are satisfied by the members of $\cls K$.

\subsection{Finitely presented algebras}
\label{subsection:finitely-presented-algebras}

Given a variety $\cls{V}$ of $\L$-algebras and a nonempty set $X$, we denote by $\m{F}_{\cls{V}}(X)$, or simply $\m{F}(X)$, the  $\cls{V}$-\emph{free algebra over $X$}. The homomorphism $\varphi_{\cls{V}}^{X}\colon \m{T} (X) \to \m{F}_{\cls{V}}(X)$ that extends the identity on $X$ will play an important role in the ensuing considerations. We will denote its value at $t \in T (X)$ by $\bar{t}$, that is, $\varphi_{\cls{V}}^X(t)=\bar{t}$, and likewise write $\bar{\Sigma} = \{\pair{\bar t , \bar s} \mid t\eq s \in \Sigma\}$ for a set $\Sigma$ of equations  with variables in $X$. The \emph{congruence lattice} of an algebra $\m{A}$ will be denoted by $\Con(\m{A})$. For $S \subseteq A^2$, we write $\cg{\m{A}} (S)$ for the congruence relation on $\m{A}$ generated by $S$, abbreviating to $\cg{\m{A}}(a,b)$ for the principal congruence on $\m{A}$ generated by a pair $\pair{a,b} \in A^2$. For $\theta\in \Con(\m{A})$  and $a\in A$, we denote the equivalence class of $a$ relative to $\theta$ by $[a]_\theta$ or simply $[a]$.

Let $\cls V$ be a variety, $X$ an arbitrary set (of variables), and  $\Sigma$ a set of equations in these variables. An algebra $\m A\in \cls V$ is said to be defined by \emph{generators} $X$ and \emph{relations} $\Sigma$, and write $\m A=\cls V\pair{X\mid \Sigma}$ or simply $\m A=\pair{X\mid \Sigma}$, in case $\m A=\F_{\cls V}(X) / \cg{\F_{\cls V}(X)} (\bar \Sigma)$. In view of the preceding discussion, $\m A\cong{{\m T}(X)/ [\theta_{\cls V}^{X}} \jn {\cg{\m T(X)} (\Sigma)}]$, where $\theta_{\cls V}^X$ is the kernel of the aforementioned homomorphism $\varphi_{\cls V}^X$. We refer to $\cls V\pair{X\mid \Sigma}$ as a \emph{presentation} of $\m A$. The algebra $\m A$ is called \emph{finitely presented} provided $X$ and $\Sigma$ are finite. Thus,  $\m A\in \cls V$ is finitely presented if and only if it is the quotient algebra of a finitely generated $\cls V$-free algebra by a compact congruence.
An algebra $\m A$ is (\emph{finitely}) \emph{presentable} if it is isomorphic to a (finitely) presented algebra. As usual, we will just write $\pair{X\mid \Sigma}$ for $\cls V\pair{X\mid \Sigma}$ if $\cls V$ is clear from the context. 

Given two sets of equations $\Sigma,\Delta$ in the set of variables $X$, we say that $\Sigma$ implies $\Delta$ in the variety ${\cls V}$, and write $\Sigma\models_{\cls V} \Delta$, if for every algebra $\m A$ in ${\cls V}$ and every homomorphism $\varphi \colon {\m T}(X) \to {\m A}$, $\Delta\subseteq\ker\varphi$ whenever $\Sigma\subseteq\ker\varphi$. It can be readily seen that $\models_{\cls V}$ is a structural consequence relation (if $\Delta\subseteq \Sigma$ then $\Sigma\models_{\cls V} \Delta$, it is transitive, and for every substitution $\sigma\in\End({\m T}(X))$, $\Sigma\models_{\cls V}\Delta$ implies $\sigma[\Sigma]\models_{\cls V}\sigma[\Delta]$), and it can be characterized in the following way (see~\cite{MMT14}).

\begin{lemma}
If ${\cls V}$ is a variety, $X$ is a set of variables and $\Sigma,\Delta\subseteq{\m T}(X)^2$, then the following conditions are equivalent:
\begin{enumerate}[(i)]
\item $\Sigma\models_{\cls V}\Delta$,
\item $\Delta\subseteq \theta_{\cls V}^{X}\jn \cg{{\m T}(X)} (\Sigma)$,
\item $\bar\Delta\subseteq\cg{\F_{\cls V}(X)}(\bar\Sigma)$.
\end{enumerate}
\end{lemma}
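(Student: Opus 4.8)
The plan is to prove the cycle of implications (i)$\Ra$(ii)$\Ra$(iii)$\Ra$(i), exploiting the tight correspondence between the three syntactic layers: the consequence relation $\models_{\cls V}$, the free algebra $\F_{\cls V}(X)$ on the one hand, and the term algebra $\m T(X)$ together with the fully invariant congruence $\theta_{\cls V}^X = \ker\varphi_{\cls V}^X$ on the other. The central bridge is the fact, recalled just before the statement, that $\F_{\cls V}(X)\cong \m T(X)/\theta_{\cls V}^X$ and that $\bar t = \varphi_{\cls V}^X(t) = [t]_{\theta_{\cls V}^X}$, so that applying $\varphi_{\cls V}^X$ turns statements about $\cg{\m T(X)}(\Sigma)$ into statements about $\cg{\F_{\cls V}(X)}(\bar\Sigma)$.

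First I would establish (ii)$\iff$(iii), which is the purely computational heart and follows from the correspondence theorem for congruences. Since $\varphi_{\cls V}^X$ is surjective with kernel $\theta_{\cls V}^X$, the lattice $\Con(\F_{\cls V}(X))$ is isomorphic to the interval $[\theta_{\cls V}^X,\nabla]$ in $\Con(\m T(X))$, and under this isomorphism the principal (indeed, compactly generated) congruence $\cg{\F_{\cls V}(X)}(\bar\Sigma)$ corresponds to $\theta_{\cls V}^X\jn\cg{\m T(X)}(\Sigma)$. Consequently, for any pair $\pair{\bar t,\bar s}$ we have $\bar\Delta\subseteq\cg{\F_{\cls V}(X)}(\bar\Sigma)$ if and only if $\Delta\subseteq\theta_{\cls V}^X\jn\cg{\m T(X)}(\Sigma)$; here one uses that $\pair{\bar t,\bar s}\in\cg{\F_{\cls V}(X)}(\bar\Sigma)$ exactly when $\pair{t,s}$ lands in the preimage congruence, i.e.\ in $\theta_{\cls V}^X\jn\cg{\m T(X)}(\Sigma)$.

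Next I would prove (i)$\iff$(ii). The direction (ii)$\Ra$(i) is the routine one: if $\Delta\subseteq\theta_{\cls V}^X\jn\cg{\m T(X)}(\Sigma)$, then for any $\m A\in\cls V$ and any homomorphism $\varphi\colon\m T(X)\to\m A$ with $\Sigma\subseteq\ker\varphi$, we have $\theta_{\cls V}^X\subseteq\ker\varphi$ (because $\m A\in\cls V$ forces $\varphi$ to factor through $\F_{\cls V}(X)$, so its kernel contains $\theta_{\cls V}^X$) and $\cg{\m T(X)}(\Sigma)\subseteq\ker\varphi$ (since $\ker\varphi$ is a congruence containing $\Sigma$), whence the join is below $\ker\varphi$ and so $\Delta\subseteq\ker\varphi$. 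For the converse (i)$\Ra$(ii), I would take the canonical witness: apply the hypothesis $\Sigma\models_{\cls V}\Delta$ to the particular algebra $\m A = \F_{\cls V}(X)$ together with the homomorphism $\varphi = \varphi_{\cls V}^X$. One checks $\Sigma\subseteq\ker\varphi_{\cls V}^X = \theta_{\cls V}^X$ need not hold, so instead I would argue at the level of $\m T(X)$ directly, using the quotient map $\pi\colon\m T(X)\to\m T(X)/(\theta_{\cls V}^X\jn\cg{\m T(X)}(\Sigma))$, noting that this quotient lies in $\cls V$ and that $\Sigma\subseteq\ker\pi$ by construction; the definition of $\models_{\cls V}$ then forces $\Delta\subseteq\ker\pi = \theta_{\cls V}^X\jn\cg{\m T(X)}(\Sigma)$, which is exactly (ii).

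The step I expect to be the main obstacle is verifying that the quotient $\m T(X)/(\theta_{\cls V}^X\jn\cg{\m T(X)}(\Sigma))$ genuinely belongs to $\cls V$, since this is what licenses applying the definition of $\models_{\cls V}$ to it. The point is that any quotient of $\m T(X)$ by a congruence above $\theta_{\cls V}^X$ is a quotient of $\F_{\cls V}(X)$, and $\cls V$ is closed under homomorphic images; so membership in $\cls V$ reduces to the containment $\theta_{\cls V}^X\subseteq\theta_{\cls V}^X\jn\cg{\m T(X)}(\Sigma)$, which is immediate. Once this is in place, and once the correspondence-theorem identity $\varphi_{\cls V}^X$-image of $\theta_{\cls V}^X\jn\cg{\m T(X)}(\Sigma)$ equals $\cg{\F_{\cls V}(X)}(\bar\Sigma)$ is confirmed, the three equivalences close up. I would present the argument as the single chain (i)$\Ra$(ii)$\Ra$(iii)$\Ra$(i), folding the correspondence-theorem computation into the (ii)$\Ra$(iii) and (iii)$\Ra$(i) passages rather than proving the biconditionals separately.
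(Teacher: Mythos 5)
Your argument is correct. Note that the paper itself offers no proof of this lemma---it is stated with a citation to the amalgamation paper of Metcalfe, Montagna, and Tsinakis---so there is no in-text proof to compare against; but your route is the standard one and all three steps check out: the correspondence theorem identifies $\cg{\F_{\cls V}(X)}(\bar\Sigma)$ with the congruence $\theta_{\cls V}^X\jn\cg{\m T(X)}(\Sigma)$ above the kernel, giving (ii)$\iff$(iii); the factorization of any $\varphi\colon\m T(X)\to\m A$ with $\m A\in\cls V$ through $\F_{\cls V}(X)$ gives (ii)$\Ra$(i); and the quotient $\m T(X)/(\theta_{\cls V}^X\jn\cg{\m T(X)}(\Sigma))$, which lies in $\cls V$ as a homomorphic image of $\F_{\cls V}(X)$, is exactly the right witness for (i)$\Ra$(ii). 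Your self-correction about the unsuitable witness $\varphi_{\cls V}^X$ is harmless, though in a final write-up you would simply present the correct quotient map directly.
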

\noindent In what follows, we drop the subscript of $\models_{\cls V}$ whenever there is no danger of confusion. 

The next two lemmas are slight modifications of ~\cite[Theorem III.8.4]{Coh81}. They relate two different presentations of an algebra and describe a process for obtaining each of the presentations from the other. Corollary~\ref{cor:flat:presentation} provides a typical application of these results.

\begin{lemma}\label{Cohn:operations:one}
Let ${\cls V}$ be a variety and let $\pair{X\mid \Sigma}$ be a presentation of an algebra $\m A\in\cls V$. Let $Z$ and $\Gamma$ be sets obtained from $X$ and $\Sigma$ by applying the operations below or their inverses:
\begin{enumerate}[(i)]
\item If $\Delta$ is any set of equations with variables in $X$ such that $\Sigma\models \Delta$, set $Z=X$ and $\Gamma=\Sigma\cup \Delta$.
\item If $Y$ is a set disjoint from $X$ and $\alpha \colon Y \to T(X)$ is any map, set $Z=X\cup Y$ and $\Gamma=\Sigma\cup\{\pair{y,\alpha(y)} \mid y\in Y\}$.
\end{enumerate}
Then $\pair{Z\mid \Gamma}$ is an alternative presentation of $\m A$.
\end{lemma}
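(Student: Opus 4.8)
The plan is to verify each of the two operations separately, working directly over $\m{T}(X)$ for operation (i) and via a pair of mutually inverse homomorphisms for operation (ii); the case of the inverse operations will then follow by symmetry. For operation (i) we have $Z=X$, so both presentations are quotients of the same term algebra and it suffices to compare the generated congruences. Recall that $\m{A}\cong\m{T}(X)/[\theta_{\cls V}^X\jn\cg{\m{T}(X)}(\Sigma)]$. Since $\Gamma=\Sigma\cup\Delta$, we have $\cg{\m{T}(X)}(\Gamma)=\cg{\m{T}(X)}(\Sigma)\jn\cg{\m{T}(X)}(\Delta)$; and the hypothesis $\Sigma\models\Delta$ gives, by the preceding Lemma, $\Delta\subseteq\theta_{\cls V}^X\jn\cg{\m{T}(X)}(\Sigma)$, hence $\cg{\m{T}(X)}(\Delta)\subseteq\theta_{\cls V}^X\jn\cg{\m{T}(X)}(\Sigma)$. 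Therefore $\theta_{\cls V}^X\jn\cg{\m{T}(X)}(\Gamma)=\theta_{\cls V}^X\jn\cg{\m{T}(X)}(\Sigma)$, the two quotients coincide, and $\pair{Z\mid\Gamma}$ is again a presentation of $\m{A}$.

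For operation (ii) I would use the universal property of presentations: a homomorphism from $\pair{X\mid\Sigma}$ into a $\cls V$-algebra $\m B$ is precisely an assignment of the generators $X$ into $B$ under which every equation of $\Sigma$ holds, and likewise for $\pair{Z\mid\Gamma}$. Write $a_x$ and $a'_z$ for the images of $x\in X$ and $z\in Z$ in $\m{A}=\pair{X\mid\Sigma}$ and $\m{A}'=\pair{Z\mid\Gamma}$, respectively, and for a term $t\in T(X)$ let $t^{\m{A}}(a)$ denote its value in $\m{A}$ under $x\mapsto a_x$. First I would define $\phi\colon\m{A}'\to\m{A}$ by $a'_x\mapsto a_x$ (for $x\in X$) and $a'_y\mapsto\alpha(y)^{\m{A}}(a)$ (for $y\in Y$); this assignment respects $\Gamma$ because the $a_x$ satisfy $\Sigma$ in $\m{A}$ and because the value assigned to $a'_y$ is by construction that of $\alpha(y)$, so the relations $y\eq\alpha(y)$ hold. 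Conversely, since $\Sigma\subseteq\Gamma$, the generators $a'_x$ ($x\in X$) satisfy $\Sigma$ in $\m{A}'$, so there is a homomorphism $\psi\colon\m{A}\to\m{A}'$ with $a_x\mapsto a'_x$.

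It then remains to check that $\phi$ and $\psi$ are mutually inverse. On the generators of $\m{A}$ we have $\phi\psi(a_x)=a_x$, so $\phi\psi=\mathrm{id}_{\m{A}}$. For the other composite, $\psi\phi(a'_x)=a'_x$, while for $y\in Y$, using that $\psi$ is a homomorphism, $\psi\phi(a'_y)=\psi(\alpha(y)^{\m{A}}(a))=\alpha(y)^{\m{A}'}(a')=a'_y$, the last equality holding because the relation $y\eq\alpha(y)$ belongs to $\Gamma$ and hence holds in $\m{A}'$. As the $a'_z$ generate $\m{A}'$, we get $\psi\phi=\mathrm{id}_{\m{A}'}$, whence $\m{A}'\cong\m{A}$. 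Finally, in both cases we have shown that $\pair{X\mid\Sigma}$ and $\pair{Z\mid\Gamma}$ present isomorphic algebras, and reading this equivalence backwards shows that the inverse operations likewise carry a presentation of $\m{A}$ to a presentation of $\m{A}$. I expect the only delicate point to be the bookkeeping in (ii): one must keep careful track of the algebra in which each term $\alpha(y)$ is evaluated, the crux being that $\psi\phi$ recovers each new generator $a'_y$ precisely through the defining relation $y\eq\alpha(y)$ that was added to $\Gamma$.
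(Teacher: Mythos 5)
Your proof is correct. For operation (i) you argue exactly as the paper does, merely phrased at the level of $\m{T}(X)$ and the congruence $\theta_{\cls V}^X\jn\cg{\m{T}(X)}(\Sigma)$ rather than at the level of $\F(X)$ and $\cg{\F(X)}(\bar\Sigma)$; by the lemma characterizing $\models_{\cls V}$ these are interchangeable. For operation (ii) you take a genuinely different route: where the paper builds the substitution homomorphism $\rho\colon\m{T}(Z)\to\m{T}(X)$, lifts it to $\widetilde\rho\colon\F(Z)\to\F(X)$, and verifies by a two-sided inclusion that $\ker(\pi_X\widetilde\rho)=\theta_Z$ before invoking the homomorphism theorem, you instead use the universal property of presentations (maps out of $\pair{X\mid\Sigma}$ correspond to assignments of the generators satisfying $\Sigma$) to produce two homomorphisms $\phi$ and $\psi$ and check on generators that they are mutually inverse, the key step being that the added relation $y\eq\alpha(y)$ forces $\psi\phi(a'_y)=\alpha(y)^{\m{A}'}(a')=a'_y$. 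Your argument is shorter and avoids the congruence chase, at the cost of taking the universal property as given (it is standard, following from freeness of $\F(X)$ plus the homomorphism theorem, though the paper never states it explicitly). The paper's more explicit construction pays off later: the description of $\widetilde\rho$ is what makes Remark~\ref{rem:isomorphism} (that the isomorphism fixes the classes $[\bar t\,]$ for $t\in T(X)$) immediate, and that remark is used in the proof of Lemma~\ref{Cohn:operations:two}; your $\psi$ has the same property, visible from its action on generators, so nothing is lost.
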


\begin{proof}
Let ${\m A} = \pair{X\mid \Sigma}$ and let $\Delta$ be a set equations such that $\Sigma\models \Delta$. Then $\bar \Delta\subseteq \cg{\F(X)}(\bar \Sigma)$, and hence
\[
\pair{X\mid \Sigma} = \F(X)/\cg{\F(X)}(\bar \Sigma) = \F(X)/\cg{\F(X)}(\bar \Sigma\cup\bar \Delta) = \pair{X\mid \Sigma\cup \Delta}.
\]

To prove (ii), consider a set $Y$ of variables disjoint from $X$ and let $\alpha \colon Y \to T(X)$ be any map. Set $Z=X\cup Y$ and define a surjective homomorphism $\rho \colon {\m T}(Z) \to {\m T}(X)$ such that $\rho(x) = x$, for every $x\in X$ and $\rho(y) = \alpha(y)$, for every $y\in Y$. Let $\widetilde\rho \colon \F(Z) \to \F(X)$ be the unique homomorphism determined by $\widetilde\rho(\bar z) = \overline{\rho(z)}$, for every $z\in Z$. 
{
Therefore, we obtain the solid part of the diagram below, where  $T_\alpha = \{\pair{y,\alpha(y)} \mid y\in Y\}$, $\Gamma=\Sigma\cup T_\alpha$, $\pi_X\colon \m F(X)\to\pair{X\mid \Sigma}$ and $\pi_Z\colon \m F(Z)\to\pair{Z\mid \Gamma}$ are the canonical projections, and $\pi$ is the composition $\pi = \pi_{X}\widetilde\rho$. Consider also $\theta_X=\cg{\F(X)}(\bar \Sigma)$, and $\theta_Z = \cg{\F(Z)}{(\bar\Sigma\cup\bar T_\alpha})$.} To prove that  $\pair{X\mid \Sigma}\cong\pair{Z \mid \Gamma}$, it will suffice to show that $\ker\pi = \theta_Z$.
\[
\begin{tikzcd}
{\m T(X)}\ar[d] & {\m T(Z)}\ar[d]\ar[l, "\rho"'] \\
{\F(X)}\ar[d, two heads, "\pi_{X}"'] & {\F(Z)}\ar[l, two heads, "\widetilde\rho"'] \ar[d, two heads, "\pi_{Z}"] \ar[dl, two heads, "\pi"']\\
{\pair{X\mid \Sigma}}\ar[r, dashed, "i"]	   & 	{\pair{Z \mid \Gamma}}
\end{tikzcd}
\]

Note that $\ker\pi = \ker(\pi_{X}\widetilde\rho\,) = \widetilde\rho\,^{-1}[\theta_{X}]$. Further, $\theta_{Z}\subseteq \widetilde\rho\,^{-1}[\theta_{X}]$ if and only if $\bar\Gamma=\bar \Sigma\cup \bar T_\alpha \subseteq \widetilde\rho\,^{-1}[\theta_{X}]$ if and only if $\widetilde\rho\,[\bar\Gamma]\subseteq\theta_{X}$. The latter condition is true. Indeed, by the definition of $\widetilde\rho$, we have that $\widetilde\rho\,[\bar \Sigma] = \bar \Sigma\subseteq \theta_{X}$ and $\widetilde\rho\,[\bar T_\alpha]$ is a subset of the identity congruence of $\F(X)$.
 
 For the reverse inclusion, suppose that $t\eq s$ is an equation such that $\pair{\bar t,\bar s} \in\ker\pi$. Then $\pair{\widetilde\rho(\bar t\,),\widetilde\rho(\bar s)}\in\theta_{X}$, and therefore $\pair{\widetilde\rho(\bar t\,),\widetilde\rho(\bar s)}\in\theta_{Z}$, since $\F(X)$ is a subalgebra of  $\F(Z)$. As $\rho$ is a homomorphism satisfying that $\bar T_\alpha\subseteq\theta_{Z}$ and for every $z\in Z$, $\pair{z,\rho(z)}\in T_\alpha$, it follows that for every $w\in F(Z)$, $\pair{w,\widetilde\rho(w)}\in\theta_{Z}$. In particular, $\pair{\bar t,\widetilde\rho(\bar t\,)}, \pair{\bar s,\widetilde\rho(\bar s)}\in\theta_{Z}$, showing that $\pair{\bar t,\bar s}\in\theta_{Z}$. We have shown that 
 $\ker\pi = \theta_Z$. Hence, the general homomorphism theorem implies that there is a unique isomorphism $i\colon \pair{X\mid \Sigma} \to \pair{Z \mid \Gamma}$ such that $i\pi = \pi_Z$, as was to be shown.
\end{proof}

\begin{remark}\label{rem:isomorphism}
We note for future reference that the aforementioned isomorphism $i\colon \pair{X\mid \Sigma} \to \pair{Z \mid \Gamma}$ satisfies  $i([x]_{\theta_X}) = [x]_{\theta_Z}$, for every $x\in X$. This implies that $i([\bar t\,]_{\theta_X}) = [\bar t\,]_{\theta_Z}$ for every $t\in T(X)$.
\end{remark}

\begin{lemma}\label{Cohn:operations:two}
Let ${\cls V}$ be a variety. Given two presentations $\pair{X\mid \Sigma}$ and $\pair{Y\mid \Delta}$ of isomorphic algebras in ${\cls V}$, each can be obtained from the other by applying operations of type (i) and (ii) and their inverses, as described in Lemma~\ref{Cohn:operations:one}.
\end{lemma}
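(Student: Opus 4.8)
The plan is to run a Tietze-transformation argument: I would use a \emph{single} isomorphism between the two presented algebras to build presentations on a common generating set, through which both of the given presentations can be threaded by the operations of Lemma~\ref{Cohn:operations:one}.

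First I would fix an isomorphism $\phi\colon \m A\to\m B$, where $\m A=\pair{X\mid\Sigma}$ and $\m B=\pair{Y\mid\Delta}$, and arrange (renaming if necessary) that $X\cap Y=\emptyset$. Since $\m A$ is generated by the classes of the $x\in X$ and $\m B$ by the classes of the $y\in Y$, I can choose for each $y\in Y$ a term $\alpha(y)\in T(X)$ representing $\phi^{-1}([y])$ in $\m A$, and symmetrically for each $x\in X$ a term $\beta(x)\in T(Y)$ representing $\phi([x])$ in $\m B$. Writing $T_\alpha=\{\pair{y,\alpha(y)}\mid y\in Y\}$ and $T_\beta=\{\pair{x,\beta(x)}\mid x\in X\}$, operation (ii) of Lemma~\ref{Cohn:operations:one} adjoins the generators $Y$ to $\pair{X\mid\Sigma}$ to produce a presentation $\pair{X\cup Y\mid \Sigma\cup T_\alpha}$ of $\m A$, and by Remark~\ref{rem:isomorphism} the accompanying isomorphism fixes the classes of the $x\in X$. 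Symmetrically, $\pair{Y\mid\Delta}$ is recovered from $\pair{X\cup Y\mid \Delta\cup T_\beta}$ by the inverse of operation (ii). Thus both given presentations are already linked, by admissible operations, to presentations on the common generating set $X\cup Y$.

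The heart of the matter is then to show that these two intermediate presentations have $\cls V$-equivalent relation sets, that is, $\Sigma\cup T_\alpha\models_{\cls V}\Delta\cup T_\beta$ and conversely. Granting this, operation (i) lets me enlarge $\Sigma\cup T_\alpha$ by $\Delta\cup T_\beta$, and its inverse then lets me discard $\Sigma\cup T_\alpha$, carrying $\pair{X\cup Y\mid\Sigma\cup T_\alpha}$ to $\pair{X\cup Y\mid\Delta\cup T_\beta}$; concatenating with the two applications of operation (ii) above produces the desired chain from $\pair{X\mid\Sigma}$ to $\pair{Y\mid\Delta}$. To verify one half of the equivalence I would work in the algebra $\pair{X\cup Y\mid\Sigma\cup T_\alpha}\cong\m A$, under the evaluation that fixes each $[x]$ and sends $[y]$ to $[\alpha(y)]=\phi^{-1}([y])$: every relation $s\eq t$ of $\Delta$ holds there because, under this evaluation, both sides become $\phi^{-1}$ of their values in $\m B$, where $s\eq t$ holds by the very definition of $\m B=\pair{Y\mid\Delta}$; and every relation $x\eq\beta(x)$ of $T_\beta$ holds because the term $\beta(x)$ evaluates to $\phi^{-1}([\beta(x)]_{\m B})=\phi^{-1}(\phi([x]))=[x]$. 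The reverse implication $\Delta\cup T_\beta\models_{\cls V}\Sigma\cup T_\alpha$ follows by the identical reasoning with the roles of $\m A,\m B$ and of $\alpha,\beta$ interchanged.

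I expect the main obstacle to be precisely this mutual consequence of the two relation sets: it is the only point at which the isomorphism $\phi$ itself, rather than mere isomorphism of the presented algebras, enters, and it requires careful bookkeeping\,---\,via the characterisation of $\models_{\cls V}$ and of operation (i)\,---\,of how substituting $\alpha(Y)$ for $Y$ and $\beta(X)$ for $X$ interacts with the defining relations. A minor preliminary point to dispatch is the reduction to $X\cap Y=\emptyset$, which can itself be effected by operations of type (ii) and their inverses.
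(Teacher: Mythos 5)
Your proposal is correct and follows essentially the same route as the paper: both run a Tietze-transformation argument through presentations on $X\cup Y$ with the relation sets $T_\alpha$ and $T_\beta$ extracted from a fixed isomorphism, and both reduce the problem to the mutual $\models_{\cls V}$-consequence of $\Sigma\cup T_\alpha$ and $\Delta\cup T_\beta$, verified by tracking the canonical evaluation through the isomorphism exactly as in the paper's computation with $i\varphi$. The only (cosmetic) difference is that the paper has both given presentations climb to the common presentation $\pair{X\cup Y\mid \Sigma\cup T_\alpha\cup\Delta\cup T_\beta}$, whereas you pass through it and descend to $\pair{X\cup Y\mid\Delta\cup T_\beta}$ on the other side.
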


\begin{proof}
Let $\theta_{X}=\cg{\F(X)} (\bar \Sigma)$, $\theta_{Y}=\cg{\F(Y)} (\bar \Delta)$, and $Z=X\cup Y$. Without loss of generality, we may assume  that $X\cap Y = \emptyset$. The definition of a presentation  yields $\pair{X\mid \Sigma}=\F(X) / \theta_{X}$ and $\pair{Y\mid \Delta}=\F(Y) / \theta_{Y}$. 

Let us start with an isomorphism $\varphi \colon \pair{Y\mid \Delta} \to \pair{X\mid \Sigma}$.   For every $y\in Y$, there exists a term $\alpha(y)\in T(X)$ such that $\varphi([\bar{y}]_{\theta_{Y}}) = \big[\,\overline{\alpha(y)}\,\big]_{\theta_{X}}$. Let $T_\alpha = \{\pair{y,\alpha(y)} \mid y\in Y\}$ and $\theta_{Z}= \cg{\F(Z)} (\bar \Sigma\cup \bar{T}_\alpha)$. In view of Lemma~\ref{Cohn:operations:one}, there is an isomorphism $i \colon \pair{X\mid \Sigma} \to \pair{Z\mid \Sigma\cup T_\alpha}=\F(Z) / \theta_{Z}$. By Remark~\ref{rem:isomorphism}, it follows in particular that $i$ maps $[\bar t\,]_{\theta_{X}}$ to $[\bar t\,]_{\theta_{Z}}$, for every $t\in T(X)$.
 Now for every $y \in Y$, $i\varphi([\bar y]_{\theta_{Y}}) = i \big(\big[\,\ol{\alpha(y)}\,\big]_{\theta_{X}}\big) = \big[\,\ol{\alpha(y)}\,\big]_{\theta_{Z}} = [\bar y]_{\theta_Z}$, because of the choice of the map $\alpha$ and the definition of the set $T_\alpha$. Hence, $i\varphi([\bar t\,]_{\theta_Y}) = [\bar t\,]_{\theta_Z}$, for every $t\in T(Y)$. Therefore, if $\pair{t,s}\in \Delta$, then $[\bar s]_{\theta_{Z}} = i\varphi([\bar s]_{\theta_{Y}}) = i\varphi([\bar t\,]_{\theta_{Y}}) = [\bar t\,]_{\theta_{Z}}$, which means that $\Sigma\cup T_\alpha\models \Delta$. Thus, again by Lemma~\ref{Cohn:operations:one}, $\pair{Z\mid \Sigma\cup T_\alpha} \cong \pair{Z\mid \Sigma\cup T_\alpha\cup \Delta}$.

Lastly, for every $x\in X$, there exists $\beta(x)\in T(Y)$ such that $\big[\,\overline{\beta(x)}\,\big]_{\theta_{Y}} = \varphi^{-1}([\bar x]_{\theta_{X}})$. Set $T_\beta = \{\pair{x,\beta(x)} \mid x\in X\}$. We have for $x\in X$, $[\bar x]_{\theta_{Z}} = i([\bar x]_{\theta_{X}}) = i\varphi\big(\big[\,\overline{\beta(x)}\,\big]_{\theta_{Y}}\big) = \big[\,\overline{\beta(x)}\,\big]_{\theta_{Z}}$, that is $\Sigma\cup T_\alpha\models T_\beta$, and hence also $\Sigma\cup T_\alpha\cup \Delta\models T_\beta$. This yields $ \pair{Z\mid \Sigma\cup T_\alpha\cup \Delta} \cong \pair{Z\mid \Sigma\cup T_\alpha\cup \Delta\cup T_\beta}$.

The preceding considerations demonstrate that $\pair{Z\mid \Sigma\cup T_\alpha\cup \Delta\cup T_\beta}$ is derived from $\pair{X\mid \Sigma}$ with the use of operations of  type~(i) and~(ii). Analogously,  $\pair{Z\mid \Sigma\cup T_\alpha\cup \Delta\cup T_\beta}$ is derived from $\pair{Y\mid \Delta}$ with the use of the same operations. In conclusion, $\pair{Y\mid \Delta}$ is derived from $\pair{X\mid \Sigma}$ with the use of type~(i) and~(ii) operations and their inverses, and vice versa.
\end{proof}

\begin{definition}
A presentation $\pair{X\mid \Sigma}$ of an algebra is said to be \emph{flat} if the equations in $\Sigma$ are of the form $f(x_1,\dots,x_{\tau(f)})\eq x$, for operation symbols $f\in L$ and $x_1,\dots,x_{\tau(f)}, x\in X$. 
\end{definition}

We prove below that any presentable algebra admits a flat presentation. 

\begin{corollary}\label{cor:flat:presentation}
Every finitely presentable algebra in a finite language admits a finite flat presentation.
\end{corollary}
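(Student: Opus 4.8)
The plan is to start from a finite presentation of $\m A$ and transform it into a flat one by repeatedly applying the operations of type~(i) and~(ii) of Lemma~\ref{Cohn:operations:one} together with their inverses; by Lemma~\ref{Cohn:operations:two} each such step preserves the isomorphism type of the presented algebra, and being finitary it keeps the presentation finite. Since the conclusion asserts only the existence of a presentation up to isomorphism, I may assume $\m A = \pair{X\mid \Sigma}$ with $X$ and $\Sigma = \{t_i \eq t_i' : 1\le i\le m\}$ finite. Let $U$ be the (finite) set of all \emph{compound} subterms --- those of the form $f(s_1,\dots,s_k)$, constants included --- occurring in the $t_i, t_i'$.

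First I would introduce a bookkeeping variable for every compound subterm. Choosing fresh distinct variables $y_u$ $(u\in U)$ disjoint from $X$ and applying a type~(ii) operation with $\alpha(y_u) = u\in T(X)$, I pass to $\pair{Z\mid \Gamma}$, where $Z = X\cup\{y_u : u\in U\}$ and $\Gamma = \Sigma\cup\{y_u \eq u : u\in U\}$. For a subterm $s$ of some $t_i$ or $t_i'$, write $\hat s = y_s$ if $s\in U$ and $\hat s = s$ if $s$ is a variable. Then for each $u = f(s_1,\dots,s_k)\in U$ the congruence generated by $\Gamma$ contains the flat equation $y_u \eq f(\hat s_1,\dots,\hat s_k)$ (replace each compound $s_j$ by $y_{s_j}$ via its defining equation), and for each $i$ it contains the equation $\hat t_i \eq \hat t_i'$ between two variables. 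Collecting these in the finite sets
\[
\Phi_1 = \{\, y_u \eq f(\hat s_1,\dots,\hat s_k) : u = f(s_1,\dots,s_k)\in U \,\},\qquad
\Phi_2 = \{\, \hat t_i \eq \hat t_i' : 1\le i\le m \,\},
\]
I adjoin them to $\Gamma$ by a type~(i) operation. A straightforward induction on the height of $u$ shows $\Phi_1\models y_u \eq u$ for every $u\in U$, and combining this with $\Phi_2$ gives $\Phi_1\cup\Phi_2\models t_i\eq t_i'$ for each $i$. Hence $\Phi_1\cup\Phi_2$ implies every equation of $\Gamma$ outside $\Phi_1\cup\Phi_2$, so inverting type~(i) operations I delete all of them, arriving at $\pair{Z\mid \Phi_1\cup\Phi_2}$, whose equations are flat except for the variable-variable equations of $\Phi_2$.

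It remains to eliminate $\Phi_2$, which I do one variable at a time. Given a nontrivial equation $v\eq v'$ in the current presentation, I first use type~(i) to adjoin the consequences obtained by substituting $v$ for $v'$ in every other equation; such a substitution sends a flat equation to a flat equation and a variable-variable equation to one (possibly trivial). After this, each equation mentioning $v'$ other than $v\eq v'$ is a consequence of the remaining equations together with $v\eq v'$, so by inverse type~(i) operations I delete them all; now $v'$ occurs only in $v\eq v'$, and an inverse type~(ii) operation with $\alpha(v')=v$ removes both $v'$ and this equation. Each such round strictly decreases the number of variables, so after finitely many rounds no nontrivial variable-variable equation survives (trivial equations $v\eq v$ being discarded by inverse type~(i)), and every surviving equation is flat. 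This yields the desired finite flat presentation.

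The main obstacle is the handling of $\Phi_2$: the relations of the original presentation inevitably produce equations between variables, which are not flat, and removing them is not a single move --- it requires the preparatory type~(i) step to make the variable being eliminated occur in a \emph{single} defining equation before the inverse type~(ii) operation becomes applicable. Finiteness is maintained throughout, since $U$ is finite (only finitely many subterms occur in the finitely many equations of $\Sigma$), only finitely many variables and equations are ever adjoined, and the eliminations only remove material; this is exactly where the finiteness of the signature and of $X,\Sigma$ is used.
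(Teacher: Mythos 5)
Your proof is correct and follows essentially the same route as the paper's: introduce a fresh variable for each compound subterm via a type~(ii) operation, convert the relations into flat defining equations plus variable-variable equations via type~(i) moves, and then eliminate the variable-variable equations by identifying variables. If anything, your write-up is slightly more careful than the paper's, both in assigning variables to constants (so that an equation like $f(c,x)\eq y$ cannot survive as a non-flat relic) and in justifying the final variable-elimination round explicitly as a sequence of type~(i) and type~(ii) operations and their inverses rather than as a bare substitution step.
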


\begin{proof}
Let $\m A=\pair{X\mid \Sigma}$ be a finitely presented $\L$-algebra and let $S$ be the set of all subterms appearing in the equations of $\Sigma$. Given $w\in S$, set $\hat w = w$, if $w$ is a constant of $\L$ or $w\in X$; otherwise, let $\hat w$ be a new variable. That is to say, the set $Y = \{\hat w \mid w\in S,\ w \text{ is not a constant and } w\not\in X\}$ is disjoint from $X$ and $\hat w \neq\hat w'$ if $w\neq w'$. Let $\alpha \colon Y \to T(X)$ be defined by $\alpha(\hat w) = w$, and $T_\alpha = \{\pair{\hat w, w} \mid \hat w\in Y\}$. In view of Lemma~\ref{Cohn:operations:one}, $\pair{X\mid \Sigma} \cong \pair{X\cup Y\mid \Sigma\cup T_\alpha}$. Consider $\Sigma_1 = \{f(\hat w_1,\dots,\hat w_{\tau(f)})\eq\hat w \mid w = f(w_1,\dots, w_{\tau(f)}) \in S\}$ and $\Sigma_2 = \{\hat t \eq \hat r \mid t\eq r \in \Sigma\}$. It is easy to see that for every $w = f(w_1,\dots, w_{\tau(f)})\in S$, $T_\alpha\models f(\hat w_1,\dots,\hat w_{\tau(f)})\eq\hat w$, and that $\Sigma_1\models \hat w\eq w$, by induction in the complexity of $w$. This shows that $T_\alpha\models \Sigma_1$, and conversely $\Sigma_1\models T_\alpha$. But then $\Sigma\cup T_\alpha \models \Sigma_1\cup \Sigma_2$ and $\Sigma_1\cup \Sigma_2\models \Sigma\cup T_\alpha$. Hence, we obtain
\[
\pair{X\cup Y\mid \Sigma} \cong \pair{X\cup Y\mid \Sigma\cup T_\alpha} \cong \pair{X\cup Y\mid \Sigma\cup T_\alpha\cup \Sigma_1\cup \Sigma_2} \cong \pair{X\cup Y\mid \Sigma_1\cup \Sigma_2}.
\]
Finally, let $Z=X\cup Y$ and $\Gamma = \Sigma_1\cup \Sigma_2$. If  $\Gamma$ contains an equation of the form $z_1\eq z_2$, with $z_1,z_2\in Z$, we remove $z_2$ from $Z$ and $z_1\eq z_2$ from $\Gamma$, and we substitute uniformly all occurrences of the variable $z_2$ in the terms of $\Gamma$ by $z_1$. We repeat the process until we obtain a flat presentation $\pair{Z\mid \Gamma}$.
\end{proof}

\subsection{The finite embeddability property and the strong finite model property}

In this section we introduce two semantic properties of classes of algebras, the Finite Model Property and the Strong Finite Model Property, and study their relationship with the FEP.

\begin{definition}\label{def:FMP}\label{def:SFMP}
A class of algebras $\cls K$ is said to have the \emph{finite model property} (FMP, for short) if every equation that fails in $\cls K$ fails in a finite member of $\cls K$. The class $\cls K$ is said to have the \emph{strong finite model property} (SFMP, for short) if every quasi-equation that fails in $\cls K$ fails in a finite member of $\cls K$.
\end{definition}

It is easy to deduce directly by the definitions that $\cls K$ has the FMP if and only if $\cls K\subseteq \mathbb V(\cls K_F)$, where $\cls K_F$ denotes the class of finite algebras in $\cls K$. Likewise, $\cls K$ has the SFMP if and only if $\cls K\subseteq \mathbb Q(\cls K_F)$. Thus, in the case that $\cls V$ is a variety, $\cls V$ has the FMP if and only if it is generated (as a variety) by its finite members, and likewise, a quasi-variety $\cls Q$ has the SFMP if and only if it is generated (as a quasi-variety) by its finite members.

Theorem~\ref{thm:FEP:SFMP} below describes the relationship of the FEP and the SFMP. We start by stating and proving two technical lemmas. Recall that every assignment $v\colon X\to P$, where $\m P$ is a partial algebra, can be extended uniquely to a valuation $\tilde v$.

\begin{lemma}\label{valuations:and:homomorph}
If $\varphi \colon {\m P} \to {\m Q}$ is a homomorphism of partial algebras, $v \colon X \to P$ is an assignment, and $u = \varphi v\colon X\to Q$, then for every term $s$ for which $\tilde v(s)$ is defined, $\tilde u(s)$ is also defined in $\m Q$ and $\tilde u(s) = \varphi(\tilde v(s))$.
\end{lemma}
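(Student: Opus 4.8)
The plan is to argue by structural induction on the term $s$, following the recursive definition of the valuation $\tilde v$ given in the preliminaries. The property I would carry through the induction, for each $s$ with $\tilde v(s)$ defined, is the conjunction ``$\tilde u(s)$ is defined'' together with ``$\tilde u(s) = \varphi(\tilde v(s))$''. Bundling both parts into a single inductive statement is what makes the argument go through smoothly, since the recursive clause for $\tilde u$ requires definedness of the images of the subterms before one can even speak of the value of $\tilde u$ on $s$.

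For the base case I would take $s$ to be a variable $x\in X$. Here $\tilde v(x) = v(x)$ is defined by hypothesis, and directly from the definitions $\tilde u(x) = u(x) = \varphi(v(x)) = \varphi(\tilde v(x))$, so both parts hold. The case of a partial constant $f$ is subsumed by the inductive step below taken with $\tau(f)=0$, where the induction hypothesis is applied vacuously and one appeals directly to the homomorphism condition on nullary operations.

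For the inductive step I would write $s = f(t_1,\dots,t_{\tau(f)})$ and unwind the definition of $\tilde v(s)$: its being defined means each $\tilde v(t_i)$ is defined, say $\tilde v(t_i) = p_i$, that $f^{\m P}(p_1,\dots,p_{\tau(f)})$ is defined, and that $\tilde v(s)$ equals this value. Applying the induction hypothesis to each $t_i$ gives that $\tilde u(t_i)$ is defined with $\tilde u(t_i) = \varphi(p_i)$. At this point I would invoke the defining property of a homomorphism of partial algebras: since $f^{\m P}(p_1,\dots,p_{\tau(f)})$ is defined, $f^{\m Q}(\varphi(p_1),\dots,\varphi(p_{\tau(f)}))$ is also defined and equals $\varphi(f^{\m P}(p_1,\dots,p_{\tau(f)}))$. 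Reading this through the recursive clause for $\tilde u$ then yields that $\tilde u(s)$ is defined and equals $\varphi(\tilde v(s))$, closing the induction.

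There is no genuine obstacle here; the only delicate point is bookkeeping. One must keep definedness and the equality bundled together throughout, since one cannot assert $\tilde u(t_i) = \varphi(p_i)$ without first knowing $\tilde u(t_i)$ exists, and the homomorphism axiom is precisely the tool that converts ``$f^{\m P}$ defined on the $p_i$'' into ``$f^{\m Q}$ defined on the $\varphi(p_i)$''. Notably, no step uses totality of $\varphi$ or of the fundamental operations, so the argument applies verbatim to genuinely partial algebras.
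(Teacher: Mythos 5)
Your proposal is correct and follows essentially the same argument as the paper: structural induction on the term, with the variable base case handled directly and the inductive step combining the induction hypothesis with the definedness-transfer clause in the definition of a homomorphism of partial algebras. The bundling of definedness with the equality that you emphasize is implicit in the paper's proof as well, so the two arguments coincide.
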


\begin{proof}
If $s$ is a variable, then it is evident. Otherwise, suppose that $s = f(t_1,\dots,t_{\tau(f)})$ and $\tilde v(s)$ is defined. Hence $\tilde v (t_1)$, \dots, $\tilde v(t_{\tau(f)})$, and $f^{\m P}(\tilde v(t_1),$ $\dots, \tilde v(t_{\tau(f)}))$ are defined and, by the induction hypothesis, $\tilde u(t_i) = \varphi (\tilde v(t_i))$, for every $i = 1,\dots, \tau(f)$. Since $\varphi$ is a homomorphism, $f^{\m Q}(\varphi(\tilde v(t_1)),\dots,$ $\varphi(\tilde v(t_{\tau(f)})))$ is defined and this implies that $\tilde u(s)$ is also defined and
\begin{align*}
\tilde u(s) &= f^{\m Q}(\tilde u(t_1),\dots,\tilde u(t_{\tau(f)})) 
  = f^{\m Q}(\varphi(\tilde v(t_1)),\dots,\varphi(\tilde v(t_{\tau(f)}))) \\
&= \varphi(f^{\m P}(\tilde v(t_1),\dots, \tilde v(t_{\tau(f)}))) 
  = \varphi(\tilde v(s)).\qedhere
\end{align*}
\end{proof}

Given a family $\{{\m Q}_i \mid i\in I\}$ of partial $\L$-algebras, the \emph{direct product} $\m Q = \prod_I {\m Q}_i$ is defined as usual: for every $f\in L$ and $a_1,\dots, a_{\tau(f)} \in \prod_I Q_i$,  we know that $f^{\m Q}(a_1,\dots, a_{\tau(f)})$ is defined if and only if for every $i\in I$, $f^{\m Q_i}(a_1(i),\dots, a_{\tau(f)}(i))$ is defined, and, in this case, 
\[
f^{\m Q}(a_1,\dots, a_{\tau(f)})(i) = f^{{\m Q}_i}(a_1(i),\dots, a_{\tau(f)}(i)).
\]
It can be readily proven that the projection maps $\pi_j \colon \prod_I {\m Q}_i \to {\m Q}_j$ are homomorphisms, and that $\prod_I {\m Q}_i$ satisfies the universal property of the product in the class of all partial $\L$-algebras, which is stated in the next lemma. The proof is straightforward.

\begin{lemma}
If $\{\varphi_i \colon {\m P} \to {\m Q}_i \mid i\in I\}$ is a family of homomorphisms of partial $\L$-algebras, then there exists a unique homomorphism $\varphi \colon {\m P} \to \prod_I {\m Q}_i$ satisfying $\varphi_i = \pi_i \varphi$, for all $i\in I$.
\end{lemma}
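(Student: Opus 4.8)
The plan is to exhibit the only possible candidate for $\varphi$ and then verify, coordinate by coordinate, that it respects the partial operations; the one delicate point is the definedness clause peculiar to partial algebras, and this is exactly what makes the argument work.

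First I would define $\varphi \colon P \to \prod_I Q_i$ by setting $\varphi(p)(i) = \varphi_i(p)$ for every $p\in P$ and every $i\in I$. Any homomorphism $\psi$ satisfying $\pi_i\psi = \varphi_i$ for all $i$ must obey $\psi(p)(i) = \pi_i(\psi(p)) = \varphi_i(p)$, so necessarily $\psi = \varphi$; this settles uniqueness at once, and simultaneously shows that the required identity $\varphi_i = \pi_i\varphi$ holds by construction. It thus remains only to verify that $\varphi$ is a homomorphism of partial $\L$-algebras.

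Next I would check the homomorphism condition directly. Fix $f\in L$ and a tuple $\pair{p_1,\dots,p_{\tau(f)}}\in P^{\tau(f)}$ for which $f^{\m P}(p_1,\dots,p_{\tau(f)})$ is defined. Since each $\varphi_i$ is a homomorphism, $f^{\m Q_i}(\varphi_i(p_1),\dots,\varphi_i(p_{\tau(f)}))$ is defined and equals $\varphi_i(f^{\m P}(p_1,\dots,p_{\tau(f)}))$ for every $i\in I$. Rewriting $\varphi_i(p_j) = \varphi(p_j)(i)$, this says that $f^{\m Q_i}(\varphi(p_1)(i),\dots,\varphi(p_{\tau(f)})(i))$ is defined in each coordinate $i$, which is precisely the condition in the definition of $f^{\m Q}$ recalled just above the lemma that guarantees $f^{\m Q}(\varphi(p_1),\dots,\varphi(p_{\tau(f)}))$ is defined. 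Evaluating at an arbitrary coordinate $i$ then gives
\begin{align*}
f^{\m Q}(\varphi(p_1),\dots,\varphi(p_{\tau(f)}))(i)
&= f^{\m Q_i}(\varphi(p_1)(i),\dots,\varphi(p_{\tau(f)})(i)) \\
&= \varphi_i(f^{\m P}(p_1,\dots,p_{\tau(f)})) \\
&= \varphi(f^{\m P}(p_1,\dots,p_{\tau(f)}))(i),
\end{align*}
and since $i$ was arbitrary the two elements of $\prod_I Q_i$ coincide.

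Finally I would observe that the nullary case $\tau(f)=0$ is subsumed by the same reasoning: if the partial constant $f^{\m P}$ is defined, then each $\varphi_i$ being a homomorphism forces $f^{\m Q_i}$ to be defined, whence $f^{\m Q}$ is defined and agrees with $\varphi$ on the distinguished element. There is no genuine obstacle here; the entire content of the lemma is the fact that definedness in the product factors as a conjunction over coordinates, so that the pointwise homomorphism conditions assemble into one. This is exactly why the proof can be described as straightforward.
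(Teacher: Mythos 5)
Your proof is correct and is exactly the standard argument the paper has in mind when it declares the proof straightforward and omits it: uniqueness is forced by the projection equations, and existence reduces to the observation that definedness of $f^{\m Q}$ in the product is by definition the conjunction of the coordinatewise definedness conditions, each of which is supplied by the corresponding $\varphi_i$. No gaps; the nullary case is handled correctly as well.
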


\begin{definition}
Given a finite partial algebra $\m P$, we will consider a fixed injective map $\hat\ \colon P \to X$ assigning one variable $\hat p$ to every element $p\in P$. The \emph{diagram} of $\m P$ (with respect to the map $\hat\ $\,) is the set $\Diag{\m P}$ defined as
\[
\{f(\hat p_1,\dots,\hat p_{\tau(f)}) \eq \hat p \mid f\in L,\ p_1,\dots, p_{\tau(f)},p\in P, \ f^{\m P}(p_1,\dots,p_{\tau(f)}) = p\}.
\]
\end{definition}
That is, the diagram of $\m P$ is a syntactic description of the partial algebra $\m P$, an ``operation table" for $\m P$. It is worth noticing that the usual definition of the diagram of an algebra uses ``new constants'' instead of variables. Nonetheless, we find that the use of variables fits better our purposes.

The next result, due to Blok and van Alten (see~\cite{BvA02}), states that the three aforementioned properties are equivalent under very general conditions.

\begin{theorem}\label{thm:FEP:SFMP}
Let $\cls K$ be a class of algebras of language $\L$. Then,  (i) implies~(ii) and~(ii) implies~(iii) below. If the language $\L$ is finite, then (ii) implies~(i). If $\cls K$ is closed under finite products of its finite members, then (iii) implies~(ii), making all three statements equivalent.
\begin{enumerate}[(i)]
\item $\cls K$ has the FEP.
\item $\cls K$ has the FEP\textss+.
\item $\cls K$ has the SFMP.
\end{enumerate}
\end{theorem}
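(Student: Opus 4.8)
The plan is to prove the four implications separately, invoking the already-established Lemma~\ref{lem:FEP:FEPp} for the two that only compare the FEP and the FEP\textss{+}, and supplying fresh arguments for the two that involve the SFMP. Since a class has any of these properties exactly when each of its members does, every implication may be checked member-by-member. The comparison of (i) and (ii) is precisely Lemma~\ref{lem:FEP:FEPp}: its first part gives FEP\textss{+}${}\Ra{}$FEP with no restriction on $\L$, and its second part gives FEP${}\Ra{}$FEP\textss{+} under the hypothesis that $\L$ is finite. Thus the real work lies in the two links with the SFMP.

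For FEP\textss{+}${}\Ra{}$SFMP, I would start from a quasi-equation $q=(t_1\eq t_1'\conj\dots\conj t_m\eq t_m')\Ra t\eq t'$ that fails in $\cls K$, witnessed by some $\m A\in\cls K$ and an assignment $v$ with $\m A\models_v t_i\eq t_i'$ for all $i$ but $\m A\not\models_v t\eq t'$. I collect the finitely many values $\tilde v(s)$, as $s$ ranges over all subterms of $t_1,t_1',\dots,t,t'$, into a finite set $P_0\subseteq A$, and let $\m P=\m A\rest P_0$ be the corresponding finite full partial subalgebra. By the closure of $P_0$ under these subterm evaluations together with the fullness of $\m P$ in $\m A$, the valuation $\tilde v$ is already defined inside $\m P$ on all the terms of $q$ and agrees there with its value in $\m A$; hence $q$ fails in $\m P$ under $v$. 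Now the FEP\textss{+} supplies a (full) embedding $\varphi\colon\m P\to\m C$ into a finite $\m C\in\cls K$. Transporting the valuation along $\varphi$ by Lemma~\ref{valuations:and:homomorph} (with $u=\varphi v$) keeps $\tilde u$ defined on these terms with $\tilde u=\varphi\tilde v$; the equalities $\tilde v(t_i)=\tilde v(t_i')$ survive, while injectivity of $\varphi$ preserves $\tilde v(t)\neq\tilde v(t')$. So $q$ fails in the finite algebra $\m C$, as required.

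For SFMP${}\Ra{}$FEP\textss{+}, assuming $\cls K$ is closed under finite products of its finite members, I would argue through the diagram. Given a finite full partial subalgebra $\m P$ of some $\m A\in\cls K$, consider $\Diag\m P$ in the variables $\hat p$ ($p\in P$) together with the assignment $\hat p\mapsto p$ in $\m A$, which satisfies every equation of $\Diag\m P$ because $\m P\hookrightarrow\m A$ is a homomorphism. For each pair $p\neq q$ in $P$ I form the quasi-equation $\bigwedge\Diag\m P\Ra\hat p\eq\hat q$, and for each operation symbol $f$, each tuple $p_1,\dots,p_k$ with $f^{\m P}(p_1,\dots,p_k)$ undefined, and each $p\in P$, the quasi-equation $\bigwedge\Diag\m P\Ra f(\hat p_1,\dots,\hat p_k)\eq\hat p$. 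Each of these fails in $\cls K$, witnessed by $\m A$ under $\hat p\mapsto p$: the premises hold, while in the first case the conclusion fails since $p\neq q$, and in the second case it fails because fullness of $\m P$ in $\m A$ forces $f^{\m A}(p_1,\dots,p_k)\notin P$, hence $\neq p$. By the SFMP each such quasi-equation fails in some finite member $\m D\in\cls K$, and restricting the refuting assignment to $\{\hat p\mid p\in P\}$ yields a homomorphism $\psi\colon\m P\to\m D$ with, respectively, $\psi(p)\neq\psi(q)$ or $f^{\m D}(\psi(p_1),\dots,\psi(p_k))\neq\psi(p)$. Taking the product of this finite family of finite members\,---\,which lies in $\cls K$ by hypothesis\,---\,and combining the homomorphisms by the universal property of the product of partial algebras yields $\varphi\colon\m P\to\m C$; the factors of the first kind make $\varphi$ injective, and those of the second kind guarantee $f^{\m C}(\varphi(p_1),\dots,\varphi(p_k))\notin\varphi[P]$ whenever $f^{\m P}(p_1,\dots,p_k)$ is undefined. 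Together with the preservation of the defined operations, this is exactly the statement that $\varphi$ is a full embedding.

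The main obstacle is this last direction: one must encode each separation requirement as a quasi-equation whose \emph{failure} in $\cls K$ is witnessed by the ambient algebra $\m A$, and it is here that the fullness of $\m P$ in $\m A$ is indispensable\,---\,it is what guarantees that an operation undefined in $\m P$ genuinely escapes $P$ in $\m A$, so that the second family of quasi-equations really fails. One must also check that only finitely many separation requirements arise, so that the resulting product is finite and hence remains in $\cls K$.
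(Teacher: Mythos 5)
Your handling of the (i)/(ii) comparison and of FEP\textss{+}${}\Ra{}$SFMP matches the paper's proof: the former is exactly Lemma~\ref{lem:FEP:FEPp} (and you read its two halves correctly\,---\,it is FEP\textss{+}${}\Ra{}$FEP that is unconditional and FEP${}\Ra{}$FEP\textss{+} that needs $\L$ finite, so the unconditional and conditional roles of (i) and (ii) in the theorem's preamble appear to be stated the wrong way round), and the latter is the same subterm-closure argument: form the full partial subalgebra on the values $\tilde v(s)$ and transport the refuting valuation along the embedding via Lemma~\ref{valuations:and:homomorph}. Where you genuinely diverge is the last direction. The paper proves SFMP${}\Ra{}$FEP using only the separating quasi-equations $\&\Diag{\m B}\Ra \hat b\eq\hat b'$ and then recovers FEP\textss{+} from the equivalence of (i) and (ii); you prove SFMP${}\Ra{}$FEP\textss{+} directly by adjoining the second family $\&\Diag{\m P}\Ra f(\hat p_1,\dots,\hat p_k)\eq\hat p$ for undefined instances, so that the product embedding automatically has a full image. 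Your verification that these extra quasi-equations fail in $\m A$ (totality of $\m A$ plus fullness of $\m P$ force $f^{\m A}(p_1,\dots,p_k)\notin P$) and that the resulting $\varphi[\m P]$ is full in the product is correct, and this buys a self-contained proof of (iii)${}\Ra{}$(ii) that does not route through Lemma~\ref{lem:FEP:FEPp}. The one obligation you flag but do not discharge\,---\,that only finitely many quasi-equations arise, so the product is a \emph{finite} product of finite members\,---\,is a real one: your second family is indexed over all $f\in L$ and all undefined tuples, and $\Diag{\m P}$ itself must be finite for $\&\Diag{\m P}$ to be a legitimate quasi-equation antecedent; both points need $\L$ to be finite. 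The paper's own proof quietly makes the same assumption (``$\Diag{\m B}$ \dots is a finite set of equations because $\L$ is finite''), so your argument is complete under exactly the finite-language hypothesis that is in force whenever the full three-way equivalence is asserted, but you should say so explicitly rather than leave it as an unresolved ``obstacle.''
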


\begin{proof}\
\begin{itemproof}
\item[(i)${}\iff{}$(ii):] This equivalence is an immediate consequence of Lemma~\ref{lem:FEP:FEPp}.

\item[(i)${}\Ra{}$(iii):] Suppose  that $\cls K$ has the FEP, and let $q$ be a quasi-equation that fails in $\cls K$. Let ${\m A}$ be an algebra in $\cls K$ and let $v$ be an assignment in $\m A$ that witnesses the failure of $q$. Let $B = \{\tilde v(w) \mid w \text{\ is a subterm appearing in\ } q\}$ and consider the full partial subalgebra ${\m P} = {\m A}\rest B$ of $\m A$. By assumption, there exist a finite algebra ${\m C}$ in $\cls K$ and an embedding $\varphi \colon {\m P} \to {\m C}$. We proceed to  show that $q$ fails in ${\m C}$, and therefore $\cls K$ has the SFMP. Without loss of generality, we may assume that $v \colon X \to B$, and therefore $u = \varphi v$ is an assignment in ${\m C}$. Suppose that $q = {t_1\eq t'_1} \conj \dots \conj {t_m\eq t'_m} \Ra t\eq t'$. Then, $\tilde v(t_i) = \tilde v(t'_i)$ for all $i$ but $\tilde v(t)\neq \tilde v(t')$, because ${\m A}\not\models_v q$. By Lemma~\ref{valuations:and:homomorph}, $\tilde u(t_i) = \varphi (\tilde v(t_i)) = \varphi (\tilde v(t'_i)) = \tilde u(t'_i)$, and by the injectivity of $\varphi$, $\tilde u(t) = \varphi (\tilde v(t)) \neq \varphi (\tilde v(t')) = \tilde u(t')$.

\item[(iii)${}\Ra{}$(i):] Suppose that $\cls K$ has the SFMP and is closed under finite products of its finite members. Let ${\m B}$ be a finite partial subalgebra of ${\m A}\in \cls K$. We fix a set of variables $X$ and an injective map $\hat\ \colon B \to X$ and consider the diagram $\Diag{\m B}$ of $\m B$, which is a finite set of equations because $\L$ is finite. Let $\&\Diag{\m B}$  denote the conjunction of all these equations. For any pair $b,b'$ of distinct elements in $B$, consider the following quasi-equation $q_{b,b'}$:
\[
\&\Diag{\m B} \Ra \hat b\eq \hat b'.
\]
Let $v \colon X \to A$ be any assignment in $\m A$ such that $v(\hat b) = b$, for all $b\in B$. We note that:
\begin{enumerate}[(a)]
\item ${\m A}\models_v\Diag{\m B}$. Indeed, if $f(\hat b_1,\dots, \hat b_{\tau(f)}) \eq \hat b$ is in $\Diag{\m B}$, then we have that $f^{\m B}(b_1,\dots, b_{\tau(f)}) = b$, and so
\begin{align*}
 \tilde v(f(\hat b_1,\dots, \hat b_{\tau(f)})) &= f^{\m A}(v(\hat b_1),\dots v(\hat b_{\tau(f)})) = f^{\m A}(b_1,\dots, b_{\tau(f)}) \\ 
  &= f^{\m B}(b_1,\dots, b_{\tau(f)}) = b = \tilde v (\hat b).
\end{align*}

\item ${\m A}\not\models_v \hat b\eq\hat b'$ for any two different elements $b,b'\in B$, because $v(\hat b) = b \neq b' = v(\hat b')$.
\end{enumerate}
Therefore, the quasi-equations $q_{b,b'}$ fail in $\m A\in \cls K$, for all $b\neq b'\in B$. 
Since $\cls K$ has the SFMP, it follows that\,---for each such pair  $b, b'$---\,there exists a finite algebra ${\m C}_{b,b'}\in \cls K$ in which $q_{b,b'}$ fails. Let ${\m C}$ be the direct product of the algebras ${\m C}_{b,b'}$ for all $b\neq b'\in B$. Obviously, $\m C$ is finite and as $\cls K$ is closed under finite products of its finite members, ${\m C}\in \cls K$.

We are going to define a homomorphism $\varphi \colon {\m B} \to {\m C}$ and prove that it is an embedding. First, for each $b\neq b'\in B$ consider an assignment\footnote{Of course, the assignment $w$ depends on $b$ and $b'$, although it is not reflected in the notation.} $w \colon X \to C_{b,b'}$ such that ${\m C}_{b,b'}\not\models_w q_{b,b'}$. We define $\varphi_{b,b'} \colon {\m B} \to {\m C}_{b,b'}$ by $\varphi_{b,b'}(a) = w(\hat a)$, for every $a\in B$. The map $\varphi_{b,b'}$ is a homomorphism. Indeed, if $f\in L$ and $b_1,\dots, b_{\tau(f)},a\in B$ are such that $f^{\m B}(b_1,\dots, b_{\tau(f)}) = a$, then $f(\hat b_1,\dots, \hat b_{\tau(f)})\eq \hat a$ is in $\Diag{\m B}$, and hence
\begin{align*}
 \varphi_{b,b'}(f^{\m B}(b_1,\dots, b_{\tau(f)})) &= \varphi_{b,b'}(a) = w(\hat a) = \tilde w(f(\hat b_1,\dots, \hat b_{\tau(f)}))
 \\ &= f^{{\m C}_{b,b'}}(\tilde w(\hat b_1),\dots, \tilde w(\hat b_{\tau(f)}))
 \\ &= f^{{\m C}_{b,b'}}(w(\hat b_1),\dots, w(\hat b_{\tau(f)}))
 \\ &= f^{{\m C}_{b,b'}}(\varphi_{b,b'} (b_1),\dots, \varphi_{b,b'}( b_{\tau(f)})).
\end{align*}

We consider now the unique homomorphism $\varphi \colon {\m B} \to {\m C}$ such that for every projection $\pi_{b,b'} \colon {\m C} \to {\m C}_{b,b'}$, $\varphi_{b,b'} = \pi_{b,b'}\varphi$. Notice that if $b\neq b'$, then $\varphi_{b,b'}(b) = w(\hat b) \neq w(\hat b') = \varphi_{b,b'}(b')$, because ${\m C}_{b,b'}\not\models_w \hat b\eq\hat b'$. Therefore, $\varphi(b)\neq\varphi(b')$. Hence, $\varphi$ is injective, as we wanted to show.\qedhere
\end{itemproof}
\end{proof}

\subsection{Residual finiteness, free extensions, and the finite embeddability property}

The notion of a free algebra over a partial algebra is a natural generalization of that of a free algebra.

\begin{definition}
A \emph{$\cls K$-free algebra} over a partial $\L$-algebra $\m P$, in symbols  $\F_{\cls K}(\m P)$ or simply $\F(\m P)$, is an algebra ${\m A}\in \cls K$ together with a homomorphism $\eta \colon {\m P} \to {\m A}$ such that for any algebra ${\m B}\in \cls K$ and any homomorphism $\varphi \colon {\m P} \to {\m B}$ there exists a unique homomorphism $\overline\varphi \colon {\m A} \to {\m B}$ with the property that
$\varphi = \overline\varphi\eta$.
\end{definition}

Clearly, a $\cls K$-free algebra over a partial algebra $\m P$ is unique, up to isomorphism, whenever it exists. We prove below that $\F_{\cls V}(\m P)$ exists whenever ${\cls V}$ is a variety, and provide a general method for obtaining it. 

\begin{lemma}\label{lem:k-free algebra}
Let ${\cls V}$ be a variety of $\L$-algebras, $X$ a set of variables, $\m P$ a partial $\L$-algebra, and $\hat\ \colon P \to X$ an injective map. Then the algebra ${\m A} = {\cls V}\pair{\hat P \mid \Diag{\m P}}$, together with the map $\eta \colon {\m P} \to {\m A}$ defined by $\eta(p) = [\hat p]$, is the ${\cls V}$-free algebra over $\m P$.
\end{lemma}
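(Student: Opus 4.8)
The plan is to verify directly that the pair $(\m A,\eta)$ satisfies the two requirements of a $\cls V$-free algebra over $\m P$: that $\eta$ is a homomorphism of partial algebras, and that it enjoys the stated universal property. Throughout, write $\theta = \cg{\F_{\cls V}(\hat P)}(\overline{\Diag{\m P}})$ and let $\pi\colon \F_{\cls V}(\hat P)\to\m A$ be the canonical projection, so that $\m A = \F_{\cls V}(\hat P)/\theta$ and $\eta(p) = [\hat p] = \pi(\overline{\hat p})$ for every $p\in P$.

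First I would check that $\eta$ is a homomorphism. Suppose $f\in L$ and $p_1,\dots,p_{\tau(f)}\in P$ are such that $f^{\m P}(p_1,\dots,p_{\tau(f)})$ is defined, say equal to $p$. By the very definition of the diagram, the equation $f(\hat p_1,\dots,\hat p_{\tau(f)})\eq \hat p$ belongs to $\Diag{\m P}$, so the corresponding pair of images lies in $\theta$; applying $\pi$ then gives $f^{\m A}(\eta(p_1),\dots,\eta(p_{\tau(f)})) = \eta(p)$, which is exactly the homomorphism condition. (The case $\tau(f) = 0$ of a partial constant is subsumed.)

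For the universal property, let $\m B\in\cls V$ and let $\varphi\colon \m P\to\m B$ be a homomorphism. Since $\m B\in\cls V$, the assignment $\hat p\mapsto\varphi(p)$ on the generators extends to a unique homomorphism $\psi\colon \F_{\cls V}(\hat P)\to\m B$ with $\psi(\overline{\hat p}) = \varphi(p)$. The crux of the argument is to show $\theta\subseteq\ker\psi$; as $\theta$ is generated by $\overline{\Diag{\m P}}$, it suffices to check that $\psi$ identifies the two sides of each diagram equation. For $f(\hat p_1,\dots,\hat p_{\tau(f)})\eq\hat p$, the map $\psi$ sends the left-hand side to $f^{\m B}(\varphi(p_1),\dots,\varphi(p_{\tau(f)}))$, and since $\varphi$ is a homomorphism of partial algebras and $f^{\m P}(p_1,\dots,p_{\tau(f)}) = p$ is defined, this equals $\varphi(p) = \psi(\overline{\hat p})$. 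Hence $\theta\subseteq\ker\psi$, and the homomorphism theorem produces a unique homomorphism $\overline\varphi\colon\m A\to\m B$ with $\overline\varphi\,\pi = \psi$. Evaluating on a generator gives $\overline\varphi(\eta(p)) = \psi(\overline{\hat p}) = \varphi(p)$, i.e.\ $\overline\varphi\eta = \varphi$.

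It remains to settle uniqueness, which I would deduce from the fact that $\{\eta(p)\mid p\in P\}$ generates $\m A$: indeed $\hat P$ generates $\F_{\cls V}(\hat P)$ and $\pi$ is surjective, so $\m A$ is generated by $\pi[\{\overline{\hat p}\mid p\in P\}] = \{\eta(p)\mid p\in P\}$. Any homomorphism $\overline\varphi'$ with $\overline\varphi'\eta = \varphi$ agrees with $\overline\varphi$ on this generating set and hence everywhere. The only genuinely substantive point in the whole proof is the factorization $\theta\subseteq\ker\psi$; this is precisely the place where the hypothesis that $\varphi$ respects the partial operations of $\m P$ is used, and everything else is routine application of the machinery of presentations recalled in Subsection~\ref{subsection:finitely-presented-algebras}.
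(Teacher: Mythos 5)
Your proposal is correct and follows essentially the same route as the paper's proof: verify that $\eta$ is a homomorphism via the diagram equations, extend the assignment $\hat p\mapsto\varphi(p)$ to the free algebra $\F_{\cls V}(\hat P)$, show the defining congruence is contained in the kernel by checking the generators (which is exactly where the hypothesis that $\varphi$ respects the partial operations enters), and factor through the quotient. The only cosmetic difference is that you derive uniqueness of $\overline\varphi$ from the fact that $\{\eta(p)\mid p\in P\}$ generates $\m A$, whereas the paper deduces it from the uniqueness of the lifted map $\widetilde\varphi$; these are interchangeable.
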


\begin{proof}
We first prove that $\eta$ is a homomorphism. Suppose that $f\in L$ and $p_1,\dots,p_{\tau(f)}\in P$ are such that $f^{\m P}(p_1,\dots,p_{\tau(f)})$ is defined in $\m P$ and such that $f^{\m P}(p_1,\dots,p_{\tau(f)}) = p$. Hence, $f(\hat p_1,\dots,\hat p_{\tau(f)})\eq \hat p$ is in $\Diag{\m P}$, and therefore
\begin{align*}
\eta(f^{\m P}(p_1,\dots,p_{\tau(f)})) &= \eta(p) = [\hat p] = [f(\hat p_1,\dots, \hat p_{\tau(f)})] 
= f^{\m A}([\hat p_1],\dots, [\hat p_{\tau(f)}]) \\
&= f^{\m A}(\eta(p_1),\dots,\eta(p_{\tau(f)})).
\end{align*}

To complete the proof, we show that $\m A$ and $\eta$ satisfy the required universal property. Suppose that ${\m B}\in\cls V$ and $\varphi \colon {\m P} \to {\m B}$ is a homomorphism. Let  $\F(\hat P)$ be the $\cls V$-free algebra over the set $\hat P$, $i\colon P \to \F(\hat P)$ the injective map sending $p\in P$ to $\hat p\in \hat P$,  and $\pi_{\m A} \colon \F(\hat P) \to {\m A}$ be the projection homomorphism.  

Consider the injective map $i\colon P \to \F(\hat P)$, sending $p\in P$ to $\hat p\in \hat P$, and the projection homomorphism $\pi_{\m A} \colon \F(\hat P) \to {\m A}$. Note that $\pi_{\m A}i=\eta$. Since $\F(\hat P)$ is the free algebra over $\hat P$, then there exists a unique homomorphism $\widetilde \varphi \colon \F(\hat P) \to {\m B}$ rendering commutative the exterior part of the diagram: 
\[
\begin{tikzcd}[row sep = small]
{\F(\hat P)}\ar[dr, "\pi_{\m A}"] \ar[rrrdd, bend left, "\widetilde\varphi"] &&& 
\\   {} & {\m A}\ar[drr, dashed, "\overline\varphi"] && 
\\   {\m P}\ar[rrr, "\varphi"] \ar[uu, "i"] \ar[ru, "\eta"] &&& {\m B}
\end{tikzcd}
\]

We next prove that $\Diag{\m P}\subseteq \ker\widetilde\varphi$. Indeed, if $f(\hat p_1,\dots,\hat p_{\tau(f)})\eq \hat p$ is in $\Diag{\m P}$, then $f^{\m P}(p_1,\dots,p_{\tau(f)}) = p$ in $\m P$, and since $\varphi$ is a homomorphism, 
\begin{align*}
\widetilde\varphi(f(\hat p_1,\dots,\hat p_{\tau(f)})) & = f^{\m B}(\widetilde\varphi(\hat p_1),\dots,\widetilde\varphi(\hat p_{\tau(f)}))
 = f^{\m B}(\varphi(p_1),\dots,\varphi(p_{\tau(f)}))
\\  & = \varphi(f^{\m P}(p_1,\dots,p_{\tau(f)})) = \varphi(p) = \widetilde\varphi(\hat p).
\end{align*}

That is, $\pair{ f(\hat p_1,\dots,\hat p_{\tau(f)}), \hat p}\in\ker\widetilde\varphi$, as was to be shown. Hence, there is a unique homomorphism $\overline\varphi \colon {\m A} \to {\m B}$ making the upper triangle of the diagram commutative, that is, $\overline\varphi\pi_{\m A}=\widetilde\varphi$. But then $\varphi=\overline\varphi\eta$. The uniqueness of $\widetilde\varphi$ implies the uniqueness of $\overline\varphi$.
\end{proof}

\begin{proposition}\label{cor:embeddability}
Let ${\cls V}$ be a variety of $\L$-algebras. A partial $\L$-algebra $\m P$ can be embedded into an algebra in ${\cls V}$ if and only if the homomorphism $\eta \colon {\m P} \to \F_{\cls V}(\m P)$ is an embedding.
\end{proposition}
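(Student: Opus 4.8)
The plan is to prove both implications directly from the universal property of the $\cls V$-free algebra $\F_{\cls V}(\m P)$ obtained in Lemma~\ref{lem:k-free algebra}. Since an embedding is by definition an injective homomorphism, and $\eta$ is already a homomorphism by construction, the assertion that $\eta$ is an embedding amounts to the single claim that $\eta$ is injective. The entire proof therefore reduces to transferring injectivity back and forth across $\eta$ by means of the factorization provided by the universal property.

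The backward implication is immediate: if $\eta \colon \m P \to \F_{\cls V}(\m P)$ is an embedding, then, as $\F_{\cls V}(\m P) \in \cls V$, the map $\eta$ itself exhibits $\m P$ as embedded into a member of $\cls V$, so there is nothing further to check.

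For the forward implication, I would start from an arbitrary embedding $\varphi \colon \m P \to \m B$ with $\m B \in \cls V$. The universal property then supplies the (unique) homomorphism $\overline\varphi \colon \F_{\cls V}(\m P) \to \m B$ satisfying $\varphi = \overline\varphi\,\eta$. Given $p, q \in P$ with $\eta(p) = \eta(q)$, applying $\overline\varphi$ yields $\varphi(p) = \overline\varphi(\eta(p)) = \overline\varphi(\eta(q)) = \varphi(q)$, and the injectivity of $\varphi$ forces $p = q$. Hence $\eta$ is injective, and therefore an embedding.

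I do not anticipate a genuine obstacle: the statement is a purely formal consequence of the factorization $\varphi = \overline\varphi\,\eta$, so the whole content is already packaged in the universal property. The only points requiring care are the conventions that an embedding is an injective homomorphism and that $\eta$ is a homomorphism (both established earlier in the excerpt); in particular, no further properties of partial algebras beyond the existence and universal property of $\F_{\cls V}(\m P)$ are needed.
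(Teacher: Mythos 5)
Your proposal is correct and follows exactly the paper's argument: the backward direction is trivial because $\F_{\cls V}(\m P)\in\cls V$, and the forward direction extracts the injectivity of $\eta$ from the factorization $\varphi=\overline\varphi\,\eta$ supplied by the universal property. The only point of care you flag --- that ``embedding'' means injective homomorphism and that $\eta$ is already known to be a homomorphism from Lemma~\ref{lem:k-free algebra} --- is handled the same way in the paper.
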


\begin{proof}
One implication is trivial, and the other follows directly from the definition of free algebra over a partial algebra. Indeed, if $\varphi \colon {\m P} \to {\m A}$ is an embedding for some ${\m A}\in{\cls V}$, then the injectivity of $\eta$ follows from the injectivity of $\varphi$ in the diagram below:
\[
\begin{tikzcd}
 {} & {\F_{\cls V}(\m P)} \ar[rd, "\overline\varphi"] &
\\  {\m P} \ar[rr, hook, "\varphi"] \ar[ur, "\eta"] && {\m A}
\end{tikzcd}
\]
This completes the proof of the proposition.
\end{proof}

Before stating the relationship between FEP and residual finiteness, we need a lemma, which may be viewed as a converse of Lemma~\ref{lem:k-free algebra}.

\begin{proposition}\label{prop:finitely:presented:are:Vfree:algebras}
Let ${\cls V}$ be a variety of algebras of finite language and ${\m A}\in{\cls V}$. The following are equivalent:
\begin{enumerate}[(i)]
\item $\m A$ is finitely presentable in ${\cls V}$.
\item $\m A$ is the ${\cls V}$-free algebra over a finite partial algebra.
\end{enumerate}
\end{proposition}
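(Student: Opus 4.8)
The plan is to prove the two implications separately, leaning on Lemma~\ref{lem:k-free algebra}, which identifies the $\cls V$-free algebra over a partial algebra $\m P$ with the presented algebra $\cls V\pair{\hat P \mid \Diag \m P}$, and on Corollary~\ref{cor:flat:presentation}, which supplies a finite flat presentation for any finitely presentable algebra. The point is that the two quoted results, read together, say that the $\cls V$-free algebras over finite partial algebras are exactly the algebras admitting a finite \emph{functional} flat presentation, so the whole content reduces to turning ``flat'' into ``flat and functional.''

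For the implication (ii)$\Ra$(i), suppose $\m A$ is the $\cls V$-free algebra over a finite partial algebra $\m P$. Fixing an injective map $\hat\ \colon P \to X$, Lemma~\ref{lem:k-free algebra} gives $\m A \cong \cls V\pair{\hat P \mid \Diag \m P}$. Since $P$ is finite, so is $\hat P$; and since the language $\L$ is finite while each $f^{\m P}$ has domain a finite set of tuples over the finite set $P$, the diagram $\Diag \m P$ is a finite set of equations. Hence the displayed presentation is finite, so $\m A$ is finitely presentable. This direction is essentially bookkeeping.

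The substantive direction is (i)$\Ra$(ii). First I would invoke Corollary~\ref{cor:flat:presentation} to replace the given presentation by a finite flat presentation $\pair{Z \mid \Gamma}$ of $\m A$, so that every equation of $\Gamma$ has the shape $f(z_1,\ldots,z_{\tau(f)})\eq z$ with $z_1,\ldots,z_{\tau(f)},z\in Z$. The aim is to read $\Gamma$ as the operation table, i.e.\ the diagram, of a finite partial algebra on the set $Z$; for this to make sense I need $\Gamma$ to be \emph{functional}, meaning that for each $f\in L$ and each tuple $(z_1,\ldots,z_{\tau(f)})$ there is at most one $z$ with $f(z_1,\ldots,z_{\tau(f)})\eq z$ in $\Gamma$. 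I would enforce functionality by the variable-merging move used at the end of the proof of Corollary~\ref{cor:flat:presentation}: whenever two equations $f(z_1,\ldots,z_{\tau(f)})\eq z$ and $f(z_1,\ldots,z_{\tau(f)})\eq z'$ with $z\neq z'$ occur, note that $\Gamma\models z\eq z'$, adjoin $z\eq z'$ by an operation of type~(i) of Lemma~\ref{Cohn:operations:one}, and then eliminate $z'$ by substituting $z$ for it throughout and deleting it from the generators. As $Z$ is finite this process terminates and yields an isomorphic, functional, finite flat presentation $\pair{Z' \mid \Gamma'}$.

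Finally, with $\Gamma'$ functional I would define a finite partial algebra $\m P$ with underlying set $P=Z'$ by declaring $f^{\m P}(z_1,\ldots,z_{\tau(f)})=z$ precisely when $f(z_1,\ldots,z_{\tau(f)})\eq z\in\Gamma'$; functionality guarantees that each $f^{\m P}$ is a well-defined partial operation. Taking $X=Z'$ and $\hat\ $ the identity map, one checks directly that $\Diag \m P=\Gamma'$, so Lemma~\ref{lem:k-free algebra} identifies $\cls V\pair{Z'\mid \Gamma'}\cong \m A$ as the $\cls V$-free algebra over the finite partial algebra $\m P$, completing the argument. I expect the merging step that converts an arbitrary finite flat presentation into a functional one to be the only real obstacle, since everything else is a direct application of the two quoted results.
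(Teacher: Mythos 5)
Your proof is correct, and your (ii)$\Ra$(i) direction coincides with the paper's. For (i)$\Ra$(ii), however, you take a genuinely different, more syntactic route. The paper also begins by passing to a finite flat presentation $\pair{X\mid \Sigma}$ via Corollary~\ref{cor:flat:presentation}, but it does not modify the presentation further: it takes $\m P$ to be a finite \emph{full partial subalgebra} ${\m A}\rest P$ of $\m A$ itself with $\{[x]\mid x\in X\}\subseteq P$, and verifies the universal property of $\F_{\cls V}(\m P)$ directly --- flatness guarantees that each relation $f(x_1,\dots,x_{\tau(f)})\eq x$ of $\Sigma$ becomes a defined operation $f^{\m P}([x_1],\dots,[x_{\tau(f)}])=[x]$ that any homomorphism out of $\m P$ must preserve, so every $\varphi\colon \m P\to\m B$ lifts uniquely through $\pi_{\m A}$. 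You instead normalize the flat presentation into a \emph{functional} one by the variable-merging move and then read it off as the diagram of an abstract finite partial algebra on the generator set, so that Lemma~\ref{lem:k-free algebra} applies verbatim and no universal property has to be re-verified; your merging step is sound and sits at the same level of informality as the end of the proof of Corollary~\ref{cor:flat:presentation}, with termination clear because the generator set strictly shrinks. The trade-off is this: your $\m P$ lives on the variables and the canonical map $\eta\colon\m P\to\m A$ need not be injective (distinct generators can be identified in $\m A$ for reasons invisible to functionality), whereas the paper's construction yields the stronger fact recorded in Remark~\ref{remark:Vfree:over:partial:algebras} --- that $\m A$ is free over \emph{every} finite full partial subalgebra of itself containing the classes $[x]$ --- which is precisely what is invoked later in the proof of Theorem~\ref{thm:FEP = RF}. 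So your argument establishes the proposition as stated, but the paper's version (or the content of that Remark) would still be needed for the downstream application.
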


\begin{proof}
By Lemma~\ref{lem:k-free algebra}, every ${\cls V}$-free algebra over partial algebra $\m P$ is of the form $\F_{\cls V}(\m P) = \pair{\hat P\mid \Diag{\m P}}$. Moreover, $\Diag{\m P}$ is finite whenever $\m P$ is finite, and hence $F_{\cls V}(\m P)$ is finitely presentable. 

Conversely, suppose that ${\m A} = \pair{X\mid \Sigma}=\F_{\cls V}(X) / \cg{\F_{\cls V}(X)} (\bar \Sigma)$ is a finitely presented algebra. In view of Corollary~\ref{cor:flat:presentation}, it can be assumed that $\pair{X\mid \Sigma}$ is a flat presentation. Let $\pi_{\m A}\colon \F_{\cls V}(X)\to\m A$ be the associated homomorphism, and let ${\m P} = {\m A}\rest P$ be the full partial subalgebra on a subset  $P$ of $A$ such that $\{[x]\mid x\in X\}\subseteq P$. (Here, and in the remainder of the proof, we write $[x]$ instead of $[x]_{\ker(\pi_{\m A})}$.) We claim that $\m A$, together with the inclusion homomorphism $i \colon {\m P} \to {\m A}$, is the ${\cls V}$-free algebra over $\m P$. Indeed,  let ${\m B}$ be an arbitrary algebra in ${\cls V}$ and $\varphi \colon {\m P} \to {\m B}$ a homomorphism. We need to prove that $\varphi$ can be uniquely extended to a homomorphism $\overline\varphi \colon {\m A} \to {\m B}$. Consider the unique homomorphism $\widetilde\varphi \colon \F(X) \to {\m B}$ such that $\widetilde\varphi(x) = \varphi ([x])$, for every $x\in X$.
\[
\begin{tikzcd}[row sep = small]
{\F(X)}\ar[dr, "\pi_{\m A}"] \ar[rrrdd, bend left, "\widetilde\varphi"] &&& 
\\   {} & {\m A}\ar[drr, dashed, "\overline\varphi"] && 
\\   {\m P}\ar[rrr, "\varphi"]  \ar[ru, "i"] &&& {\m B}
\end{tikzcd}
\]
We prove now that $\ker\pi_{\m A} \subseteq\ker\widetilde\varphi$ or equivalently that $\Sigma\subseteq\ker\widetilde\varphi$. To this end, let $f(x_1,\dots x_{\tau(f)})\eq x$ be an arbitrary equation of $\Sigma$, with $x_1,\dots,x_{\tau(f)},x\in X$. Then, $f^{\m A}([x_1],\dots,[x_{\tau(f)}]) = [x]$. As $[x_1],\dots,[x_{\tau(f)}],[x]\in P$ and $\m P$ is a full partial subalgebra of $\m A$, we also have $f^{\m P}([x_1],\dots,[x_{\tau(f)}]) = [x]$. Hence, using the fact that $\varphi$ and $\widetilde\varphi$ are homomorphisms, we obtain
\begin{align*}
\widetilde\varphi(f(x_1,\dots,x_{\tau(f)})) & = f^{\m B}(\widetilde\varphi(x_1),\dots,\widetilde\varphi(x_{\tau(f)})) = f^{\m B}(\varphi([x_1]),\dots,\varphi([x_{\tau(f)}]))
\\  & = \varphi(f^{\m P}([x_1],\dots,[x_{\tau(f)}])) = \varphi([x]) = \widetilde\varphi(x).
\end{align*}
It follows that $\pair{f(x_1,\dots,x_{\tau(f)}),x}\in\ker\widetilde\varphi$. Thus, $\ker\pi_{\m A} \subseteq\ker\widetilde\varphi$, and therefore there exists a unique homomorphism $\overline\varphi \colon {\m A} \to {\m B}$ rendering commutative the upper triangle of the diagram. As $\varphi\colon\ P\to\m B$ is a homomorphism, a simple inductive argument shows that $\widetilde\varphi(t) = \varphi ([t])$ for all  $[t]\in P$. It follows that $\varphi=\overline\varphi i$. Finally, the uniqueness of  $\overline\varphi$ follows from the fact that $\{[x]\mid x\in X\}$ generates $\m A$.
\end{proof}

\begin{remark}\label{remark:Vfree:over:partial:algebras}
It is important to note that the proof of the previous proposition shows that ${\m A} = \pair{X\mid \Sigma}$ is the ${\cls V}$-free algebra over every finite full partial subalgebra containing $\{[x]_{\ker(\pi_{\m A})}\mid x\in X\}$.
\end{remark}

An algebra $\m A$ is said to be a \emph{subdirect product} of a family of algebras $\{{\m A}_i \mid i\in I\}$ provided there exists an embedding $\varphi \colon {\m A} \to \prod_I{\m A}_i$ such that all homomorphisms $\pi_i\varphi$ are surjective, where  $\pi\colon \m A\to\m A_i$ is the $i$-th projection of the product, for all $i\in I$.

\begin{definition}\label{def:RF}
An algebra $\m A$ in a variety ${\cls V}$ is said to be \emph{residually finite} if it is a subdirect product of a family of finite algebras in ${\cls V}$. 
\end{definition}

\begin{remark}\label{remark:residually:finite}
Note that $\m A$ is residually finite in ${\cls V}$ if and only if for every pair of distinct elements $a,b\in A$, there exist a finite algebra ${\m C}\in{\cls V}$ and a homomorphism $\varphi \colon {\m A} \to {\m C}$ such that $\varphi(a)\neq\varphi(b)$.
\end{remark}

The next proposition is a very straightforward result. For the first part of the proof, we refer the reader to Theorem~10.12 of~\cite{BS81}.

\begin{proposition}\label{prop:FMPandRF}
For a variety ${\cls V}$, the following statements are equivalent:
\begin{enumerate}[(i)]
\item ${\cls V}$ has the FMP.
\item All free algebras in ${\cls V}$ are residually finite.
\item The free algebra $\F_{\cls V}(X)$ over a countable set $X$ is residually finite.
\item All finitely generated free algebras in ${\cls V}$ are residually finite.
\end{enumerate}
\end{proposition}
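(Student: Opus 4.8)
The plan is to take the equivalence (i)${}\iff{}$(ii) as supplied by \cite{BS81}*{Theorem~10.12}, and then weave statements (iii) and (iv) into the cycle by exploiting that both are special cases of (ii). Since (iii) and (iv) are obtained from (ii) merely by restricting attention to a subclass of free algebras, the implications (ii)${}\Ra{}$(iii) and (ii)${}\Ra{}$(iv) are immediate. Thus the only work left is to establish (iii)${}\Ra{}$(i) and (iv)${}\Ra{}$(i), after which the full equivalence follows from the two chains (i)${}\Ra{}$(ii)${}\Ra{}$(iii)${}\Ra{}$(i) and (ii)${}\Ra{}$(iv)${}\Ra{}$(i).

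The single idea behind both remaining implications is that a failing equation of $\cls V$ is already detected in a free algebra on the (finitely many) variables it mentions. I would first record the following observation: if $s\eq t$ fails in $\cls V$, then there are an algebra $\m A\in\cls V$ and an assignment $v$ with $\tilde v(s)\neq\tilde v(t)$; and for \emph{any} set $W$ of variables containing those occurring in $s$ and $t$, the universal property of $\F_{\cls V}(W)$ turns an assignment extending $v$ into a homomorphism $\varphi\colon\F_{\cls V}(W)\to\m A$ satisfying $\varphi(\bar s)=\tilde v(s)\neq\tilde v(t)=\varphi(\bar t)$. Hence $\bar s\neq\bar t$ already in $\F_{\cls V}(W)$.

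For (iv)${}\Ra{}$(i), given a failing equation $s\eq t$, take $W=Y$ to be its finite variable set, so that $\F_{\cls V}(Y)$ is finitely generated and therefore residually finite by hypothesis. By Remark~\ref{remark:residually:finite} there is a finite $\m C\in\cls V$ and a homomorphism $\psi\colon\F_{\cls V}(Y)\to\m C$ separating $\bar s$ and $\bar t$; reading off the assignment $y\mapsto\psi(\bar y)$ shows that $s\eq t$ fails in the finite algebra $\m C$, which is precisely the FMP. For (iii)${}\Ra{}$(i) the argument is identical, except that I first rename the variables of $s$ and $t$ by an injective substitution so that they lie inside the fixed countable set $X$ (this preserves failure, since $\models_{\cls V}$ is a structural consequence relation); applying the observation with $W=X$ gives $\bar s\neq\bar t$ in $\F_{\cls V}(X)$, and residual finiteness of $\F_{\cls V}(X)$ again supplies the finite separating algebra.

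I do not expect a genuine obstacle: the whole content is the interplay between the universal property of free algebras and the semantic description of residual finiteness in Remark~\ref{remark:residually:finite}. The one point requiring care is purely bookkeeping, namely matching the variables of a failing equation with the generators of the relevant free algebra\,---\,finitely many for (iv), and inside the countable generating set for (iii)\,---\,so that the inequality $\bar s\neq\bar t$ is transferred to the correct free algebra. Once this is in place, each remaining implication reduces to the single short separation described above.
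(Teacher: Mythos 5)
Your proposal is correct and follows essentially the same route as the paper: the (i)$\Ra$(ii) direction is delegated to the Birkhoff-style subdirect representation of free algebras, (iii) and (iv) are noted to be specializations of (ii), and the return to (i) rests on the observation that a failing equation already fails in the free algebra over (a set containing) its finitely many variables, whence residual finiteness supplies the finite counterexample. The only cosmetic difference is that you close the cycle with two arrows (iii)$\Ra$(i) and (iv)$\Ra$(i), whereas the paper routes (iii)$\Ra$(iv) through the embedding $\F_{\cls V}(Y)\hookrightarrow\F_{\cls V}(X)$ and then proves (iv)$\Ra$(i) exactly as you do.
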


\begin{proof}\
\begin{itemproof}
\item[(i)${}\Ra{}$(ii):] It is well known that if a variety ${\cls V}$ is generated by a class of algebras $\cls K$, then for every $X$, the free algebra $\F(X)$ is a subdirect product of elements of $\cls K$. Thus, the implication follows from the observation that ${\cls V}$ has the FMP if, and only if, ${\cls V}$ is generated by its finite members.

\item[(ii)${}\Ra{}$(iii):] It is trivial.

\item[(iii)${}\Ra{}$(iv):] It follows from the fact that for any finite set $Y\subseteq X$, $\F_{\cls V}(Y)$ is embeddable in $\F_{\cls V} (X)$.

\item[(iv)${}\Ra{}$(i):] If ${\m A}\not\models t\eq s$, then $\F(Y)\not\models t\eq s$, where $Y$ is the set of the variables of $t$ and $s$. Therefore, there exists a finite ${\m C}\in {\cls V}$ and a homomorphism $\varphi \colon \m F_{\cls V}(Y) \to {\m C}$ such that $\varphi(\bar t)\neq\varphi(\bar s)$, and hence ${\m C}\not\models t\eq s$.\qedhere
\end{itemproof}
\end{proof}

We have already seen that the FEP and the SFMP are equivalent properties for a variety $\cls V$ and imply the FMP. In view of Proposition~\ref{prop:FMPandRF}, the latter property can be characterized in terms of the residual finiteness of the finitely generated free algebras. The next result {due to Evans~\cite{Eva69} (see also~\cite{BaNe72} and~\cite{Eva72})} shows that there is an analogous characterization of the FEP in terms of the residual finiteness of the finitely presentable algebras in $\cls V$.

\begin{theorem}\label{thm:FEP = RF}
Let $\cls V$ be a variety of algebras in a finite language. The following statements are equivalent:
\begin{enumerate}[(i)]
\item $\cls V$ has the FEP.
\item Every finite partial subalgebra of a finitely presentable algebra in $\cls V$ can be embedded into a finite member of $\cls V$.
\item All finitely presentable algebras in $\cls V$ are residually finite.
\end{enumerate}
\end{theorem}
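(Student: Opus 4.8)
The plan is to prove the cycle of implications (i)$\Ra$(ii)$\Ra$(iii)$\Ra$(i), relying throughout on the machinery of $\cls V$-free algebras over partial algebras developed above. The implication (i)$\Ra$(ii) is immediate: a finitely presentable algebra is in particular a (total) member of $\cls V$, so the FEP, applied to such an algebra, specializes at once to statement (ii).

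For (ii)$\Ra$(iii), I would fix a finitely presentable $\m A = \pair{X\mid \Sigma}$ with $X$ and $\Sigma$ finite and, invoking Remark~\ref{remark:residually:finite}, reduce residual finiteness to separating an arbitrary pair of distinct elements $a\neq b$ of $A$ by a homomorphism into a finite member of $\cls V$. The key step is to choose $P = \{[x]\mid x\in X\}\cup\{a,b\}$, a finite set containing a generating set of $\m A$, and to form the finite full partial subalgebra $\m P = \m A\rest P$. By Remark~\ref{remark:Vfree:over:partial:algebras}, $\m A$ is the $\cls V$-free algebra over $\m P$, with structure map the inclusion $i\colon \m P\to\m A$. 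Statement (ii) now supplies an embedding $\varphi\colon \m P\to\m C$ into a finite $\m C\in\cls V$, and the universal property of $\F_{\cls V}(\m P)=\m A$ yields a homomorphism $\overline\varphi\colon \m A\to\m C$ with $\overline\varphi\, i = \varphi$. Since $\varphi$ is injective and $a,b\in P$, we obtain $\overline\varphi(a)=\varphi(a)\neq\varphi(b)=\overline\varphi(b)$, separating $a$ from $b$ as required.

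For (iii)$\Ra$(i), I would take an arbitrary finite partial subalgebra $\m B$ of some $\m A\in\cls V$ and aim to embed it into a finite member of $\cls V$. By Lemma~\ref{lem:k-free algebra}, $\F_{\cls V}(\m B)=\pair{\hat B\mid \Diag{\m B}}$, which is finitely presentable because $\m B$ is finite and the language is finite, so that $\Diag{\m B}$ is finite. As $\m B$ embeds into $\m A\in\cls V$, Proposition~\ref{cor:embeddability} guarantees that the canonical map $\eta\colon \m B\to\F_{\cls V}(\m B)$ is an embedding. By (iii), $\F_{\cls V}(\m B)$ is residually finite. Since $\eta[B]$ is finite, there are only finitely many pairs of distinct elements in it; for each such pair Remark~\ref{remark:residually:finite} furnishes a finite algebra in $\cls V$ together with a homomorphism separating the two elements. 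Taking the product $\m C$ of these finitely many finite algebras, which is finite and lies in $\cls V$ because varieties are closed under products, the induced homomorphism $\psi\colon \F_{\cls V}(\m B)\to\m C$ separates all pairs of $\eta[B]$ and is therefore injective on $\eta[B]$. Consequently $\psi\,\eta\colon \m B\to\m C$ is an injective homomorphism, that is, an embedding of $\m B$ into the finite member $\m C\in\cls V$, establishing the FEP.

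I expect the main obstacle to be the (ii)$\Ra$(iii) step, whose force rests on the somewhat delicate fact recorded in Remark~\ref{remark:Vfree:over:partial:algebras} that a finitely presentable algebra is free over \emph{any} of its finite full partial subalgebras containing a generating set; choosing the right finite partial subalgebra, namely one containing both the generators and the two points to be separated, is exactly what makes the extension argument go through. The (iii)$\Ra$(i) direction is then largely bookkeeping, the only subtlety being the passage from a possibly infinite subdirect representation to a single finite separating product, which is legitimate precisely because $\m B$, and hence $\eta[B]$, is finite.
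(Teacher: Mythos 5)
Your proposal is correct and follows essentially the same route as the paper's proof: (i)$\Ra$(ii) by specialization, (ii)$\Ra$(iii) by realizing the finitely presentable algebra as the $\cls V$-free algebra over a finite full partial subalgebra containing the generators and the two points to be separated (Proposition~\ref{prop:finitely:presented:are:Vfree:algebras} and Remark~\ref{remark:Vfree:over:partial:algebras}), and (iii)$\Ra$(i) by embedding the finite partial subalgebra into its $\cls V$-free extension via Proposition~\ref{cor:embeddability} and separating the finitely many pairs of its elements inside a finite product. The only cosmetic difference is that you make the finiteness of the separating product and the choice of the partial subalgebra fully explicit, which the paper leaves slightly more implicit.
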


\begin{proof}\
\begin{itemproof}
\item[(i)${}\Ra{}$(ii):] This implication follows by specialization.

\item[(ii)${}\Ra{}$(iii):] Let $\m A$ be a finitely presentable algebra and let $a\not=b\in{\m A}$. In view of Proposition~\ref{prop:finitely:presented:are:Vfree:algebras} and Remark~\ref{remark:Vfree:over:partial:algebras}, there is a finite partial subalgebra $\m P$ of $\m A$ containing $a$ and $b$ such that $\m A$, together with the inclusion $i \colon {\m P} \to {\m A}$, is the ${\cls V}$-free algebra over $\m P$. Condition (ii) implies that there exists an embedding $\varphi \colon {\m P} \to {\m C}$ of $\m P$ into a finite algebra ${\m C}\in{\cls V}$ is finite. As $\m A$ is the ${\cls V}$-free algebra over $\m P$, the homomorphism $\varphi$ can be extended to a homomorphism $\overline\varphi \colon {\m A} \to {\m C}$ such that $\overline\varphi i = \varphi$. Thus, $\overline\varphi(a)\neq\overline\varphi(b)$, and therefore, by Remark~\ref{remark:residually:finite}, $\m A$ is residually finite.

\item[(iii)${}\Ra{}$(i):] Let $\m P$ be a partial subalgebra of an algebra ${\m A}\in{\cls V}$. Then $\m P$ is a partial subalgebra of $\F_{\cls V}(\m P)$, by Proposition~\ref{cor:embeddability}, which is finitely presentable by Proposition~\ref{prop:finitely:presented:are:Vfree:algebras}. Let us call $i$ the inclusion of $\m P$ into $\F_\cls V(\m P)$. For every pair of distinct elements $a,b\in {\m P}$, there exist a finite algebra ${\m C}_{a,b}$ in ${\cls V}$ and a homomorphism $\varphi_{a,b} \colon \F_{\cls V}(\m P) \to {\m C}_{a,b}$ such that $\varphi_{a,b}(a)\neq\varphi_{a,b}(b)$. These homomorphisms induce a homomorphism $\varphi \colon \F_{\cls V}(\m P) \to \prod_{a\neq b}{\m C}_{a,b}$. Clearly, the composition $\varphi i \colon {\m P} \to \prod_{a\neq b}{\m C}_{a,b}$ is an embedding. Moreover, $\prod_{a\neq b}{\m C}_{a,b}$ is a finite product of finite algebras and is therefore finite.\qedhere
\end{itemproof}
\end{proof}

\begin{remark}
Note that both restrictions on the language of ${\cls V}$ to be finite and finitary are necessary in Theorem~\ref{thm:FEP = RF} as it was shown by Banaschewski and Nelson~\cite{BaNe72} and Evans~\cite{Eva72}.
\end{remark}

\subsection{The word problem}

A finitely presented algebra ${\m A} = \pair{X\mid \Sigma}$ in a variety ${\cls V}$ is said to have a \emph{solvable word problem} provided there is an effective procedure for deciding whether the images in $\m A$ of a pair of terms are equal. More precisely, using the notation introduced in the first paragraph of Subsection~\ref{subsection:finitely-presented-algebras}, $\m A$ has a solvable word problem provided there exists an effective procedure for deciding whether 
$\pair{\bar t,\bar s}\in\cg{\F_{\cls V}(X)}(\bar \Sigma)$, for any pair $t, s$ of terms in the variables $X$. The ${\cls V}$ is said to have a \emph{solvable word problem} if every finitely presented algebra in ${\cls V}$ does. {The next theorem is due to Evans~\cite{Eva69}.}

\begin{theorem}
Let ${\cls V}$ be a finitely based variety, that is a variety defined by a finite number of equations. If every finitely presented algebra in ${\cls V}$ is residually finite, then ${\cls V}$ has a solvable word problem.
\end{theorem}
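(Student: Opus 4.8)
The plan is to prove that, for each finitely presented $\m A=\cls V\pair{X\mid\Sigma}$, the word problem --- deciding for terms $t,s\in T(X)$ whether $\pair{\bar t,\bar s}\in\cg{\F_{\cls V}(X)}(\bar\Sigma)$, equivalently whether $\Sigma\models_{\cls V} t\eq s$ --- is decidable, by exhibiting both this relation and its complement as recursively enumerable and then invoking the classical fact that a relation which is both r.e.\ and co\nbd-r.e.\ is recursive. Concretely, one runs the two semi\nbd-decision procedures below in parallel (dovetailing); the hypothesis of residual finiteness will guarantee that on every input exactly one of them halts.

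First (the positive side) I would show that $\{\pair{t,s}:\Sigma\models_{\cls V} t\eq s\}$ is recursively enumerable. Here the hypothesis that $\cls V$ is \emph{finitely based} is essential. Fixing a finite set $E$ of defining equations for $\cls V$, the relation $\Sigma\models_{\cls V} t\eq s$ coincides, by Birkhoff's completeness theorem for equational logic, with formal derivability of $t\eq s$ from $E\cup\Sigma$, substitution being permitted in the axioms of $E$ while the variables of $X$ occurring in $\Sigma$ are frozen, i.e.\ treated as constants. Since $E$ and $\Sigma$ are finite, the derivations in the equational calculus can be effectively enumerated, so the set of provable equations --- equivalently, by the characterization lemma relating $\Sigma\models_{\cls V}\Delta$ to $\bar\Delta\subseteq\cg{\F_{\cls V}(X)}(\bar\Sigma)$, the set of pairs with $\bar t=\bar s$ --- is r.e.

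Second (the negative side) I would use residual finiteness to show that $\{\pair{t,s}:\bar t\neq\bar s\}$ is also r.e. One enumerates all finite $\L$-algebras $\m C$ (working in the finite sublanguage of the operation symbols occurring in $E$, $\Sigma$, $t$, and $s$, there are only finitely many of each cardinality). For each such $\m C$ one can \emph{decide} whether $\m C\in\cls V$, since this amounts to checking the finitely many equations of $E$ against the finitely many assignments into the finite set $C$; and a homomorphism $\m A\to\m C$ is exactly an assignment $v\colon X\to C$ with $\m C\models_v\Sigma$, of which there are only finitely many to test. The procedure outputs ``$\bar t\neq\bar s$'' as soon as it finds such $\m C$ and $v$ with $\tilde v(t)\neq\tilde v(s)$. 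If indeed $\bar t\neq\bar s$, then $\m A$, being finitely presented, is residually finite by hypothesis, so by Remark~\ref{remark:residually:finite} there exist a finite $\m C\in\cls V$ and a homomorphism $\varphi\colon\m A\to\m C$ separating $\bar t$ from $\bar s$; the corresponding $\m C$ and assignment are eventually reached and the procedure halts. Conversely, it never halts when $\bar t=\bar s$.

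Combining the two, both the equality relation on $\m A$ and its complement are recursively enumerable, hence the equality relation is recursive, i.e.\ $\m A$ has a solvable word problem; since $\m A$ was an arbitrary finitely presented member of $\cls V$, the variety $\cls V$ has a solvable word problem. The main obstacle --- and essentially the only delicate point --- is the positive side: one must argue cleanly that the semantic consequence relation $\Sigma\models_{\cls V}$ is captured by an effectively enumerable proof system, which is precisely where finite basedness enters, and one must keep careful track of the bookkeeping that treats the generators in $X$ as frozen constants while still allowing substitution in the axioms $E$. The negative side is comparatively routine, resting on the observation that membership of a \emph{finite} algebra in a finitely based variety is decidable.
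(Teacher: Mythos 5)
Your proposal is correct and follows essentially the same strategy as the paper's proof: a positive semi-decision procedure that enumerates equational derivations from the finite basis together with $\Sigma$ (your ``frozen variables'' device is exactly the paper's expansion of the language by constants $c_x$ and passage to the auxiliary variety $\cls V'$), and a negative semi-decision procedure that enumerates finite members of $\cls V$ and assignments satisfying $\Sigma$, with residual finiteness guaranteeing termination. The only cosmetic difference is that you phrase the conclusion via ``r.e.\ and co\nbd-r.e.\ implies recursive,'' while the paper simply observes that one of the two parallel processes must halt.
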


\begin{proof}[Sketch of the proof]
Let us suppose that we are given two terms $s$ and $t$ in $\m T(X)$ and let ${\m A} = {\cls V}\pair{X\mid \Sigma}$ be a finitely presented algebra in ${\cls V}$. We are going to run  two processes in parallel whose combination will provide a positive or a negative answer to the question of whether $s$ and $t$ stand for the same element in $\m A$.

For the first process, we expand the language of ${\cls V}$ by a set of constants $\{c_x \mid x\in X\}$, one new constant for every element of $X$, and, for every term $r\in T(X)$, define the term $\tilde r$ by replacing every variable of $r$ by the corresponding constant. Let ${\cls V}'$ be the variety whose equational basis consists of the equational basis of ${\cls V}$, the equations $\{\tilde r\eq\tilde w \mid r\eq w\in \Sigma\}$, and the equations $\{w\eq c_a \mid a=w^\m A,\ w \text{ a constant in }\L\}$. Consider the equational calculus associated with ${\cls V}'$ (see~\cite{BS81}). This is the consequence relation on the set of equations in the language of ${\cls V}'$ whose axioms are the equations of the basis of ${\cls V}'$, and whose rules state that the relation $\eq$ is a fully invariant congruence. Note that $t$ and $s$ stand for the same element in $\m A$, that is, $\pair{\bar t,\bar s}\in\cg{\F(X)}(\bar \Sigma)$, if and only if $\tilde t\eq\tilde s$ is provable in this calculus. As the set of theorems of the equational calculus is recursively enumerable, a search on the theorems will eventually find $\tilde t \eq\tilde s$,  whenever it is a theorem.

To describe the second process, let us suppose that $t$ and $s$ stand for distinct elements of $\m A$. Then there exists an epimorphism  $\varphi \colon {\m A} \to {\m B}$ from $\m A$ into a finite algebra $\m B\in \cls V$ such that $\varphi([t])\neq \varphi([s])$. This is equivalent to saying that there are a finite algebra ${\m B}$ and an $n$-tuple $b_1,\dots, b_n$ of elements in ${\m B}$ such that for every $r\eq w\in \Sigma$, $r^{\m B}(b_1,\dots,b_n) = w^{\m B}(b_1,\dots,b_n)$ and $t^{\m B}(b_1,\dots,b_n)\neq s^{\m B}(b_1,\dots,b_n)$. The second process runs with the intention of verifying that $s$ and $t$ stand for distinct elements of $\m A$: for $k = 1,2,\dots$ we make a list of all ${\cls V}$-algebras over a fixed set of size $k$. (We can do this because ${\cls V}$ is finitely based.) For each algebra ${\m B}$ on the list, we find all $n$-tuples $b_1,\dots,b_n$ in $B$ such that for every $r\eq w\in \Sigma$, $r^{\m B}(b_1,\dots,b_n) = w^{\m B}(b_1,\dots,b_n)$, and for each such tuple we check whether $s^{\m B}(b_1,\dots,b_n)\neq t^{\m B}(b_1,\dots,b_n)$. If $s$ and $t$ do not determine the same element in $\m A$, such an algebra ${\m B}$ and tuple $b_1,\dots, b_n$ will eventually be found by this process.

Thus, one of the two processes will eventually stop and give a positive or negative answer to the question of whether $s$ is equal to $t$ in $\m A$.
\end{proof}

\begin{remark}
The hypothesis of ${\cls V}$ being a finitely based variety is essential for the last theorem, as Banaschewski and Nelson proved in~\cite{BaNe72}. Likewise the finiteness of the language of ${\cls V}$ is essential, as shown by Evans in~\cite{Eva72}.
\end{remark}

\begin{corollary}
The FEP implies the solvability of the word problem in any finitely based variety on a finite language.
\end{corollary}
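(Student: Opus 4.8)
The plan is to obtain the statement by chaining the two results that immediately precede it; no fresh argument is required. The hypothesis furnishes a variety $\cls V$ that is simultaneously finitely based, defined over a finite language, and in possession of the FEP, and the whole point is that these assumptions are exactly what the two preceding theorems ask for.

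First I would apply Theorem~\ref{thm:FEP = RF}. That theorem asserts, for a variety in a \emph{finite language}, the equivalence of the FEP with the residual finiteness of all finitely presentable algebras; concretely, it is the implication (i)$\Ra$(iii) of that result. Since $\cls V$ has a finite language and enjoys the FEP, this at once delivers that every finitely presentable algebra in $\cls V$ is residually finite.

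Next I would invoke the preceding word-problem theorem, whose hypothesis is that $\cls V$ be \emph{finitely based} and that every finitely presented algebra in $\cls V$ be residually finite. The second condition is precisely what the first step has just produced, and the first condition is part of the standing assumption. Hence $\cls V$ has a solvable word problem, and the composite implication FEP $\Ra$ (all finitely presentable algebras residually finite) $\Ra$ (solvable word problem) finishes the proof.

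The only subtlety worth flagging is the bookkeeping of hypotheses: Theorem~\ref{thm:FEP = RF} requires finiteness of the \emph{language}, whereas the word-problem theorem requires finiteness of the \emph{equational basis}. The combined assumption of a finitely based variety on a finite language supplies both at once, so the two implications compose without friction; there is no genuine obstacle here, only the need to verify that each invoked theorem's premises are met.
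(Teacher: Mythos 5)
Your proposal is correct and is precisely the argument the paper intends (the corollary is left without an explicit proof because it follows by chaining Theorem~\ref{thm:FEP = RF}, implication (i)$\Ra$(iii), with the preceding word-problem theorem, exactly as you do). Your remark on the hypothesis bookkeeping --- finite language for the first step, finitely based for the second --- correctly identifies the only point that needs checking.
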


\section*{Acknowledgements}
The second and the third named author acknowledge that this research project has received funding from the European Union’s Horizon 2020 research and innovation programme under the Marie Skłodowska-Curie grant agreement No 689176. The fourth-named author acknowledges the support of the National Natural Science Foundation of China under the Grant No.~61473336.

\begin{bibdiv}
\begin{biblist}

\bibselect{amsrefs-biblio}


\bib{Ban56}{article}{
  author={Banaschewski, B.},
  title={H\"ullensysteme und Erweiterung von Quasi-Ordnungen},
  language={German},
  journal={Z. Math. Logik Grundlagen Math.},
  volume={2},
  date={1956},
  pages={117--130},
  review={\MR {0082447 (18,551a)}},
}

\bib{BaNe72}{article}{
  author={Banaschewski, B.},
  author={Nelson, E.},
  title={On residual finiteness and finite embeddability},
  date={1972},
  issn={0002-5240},
  journal={Algebra Universalis},
  volume={2},
  pages={361--364},
  review={\MR {0318041 (47 \#6590)}},
}

\bib{BvA02}{article}{
  author={Blok, W. J.},
  author={van Alten, C. J.},
  title={The finite embeddability property for residuated lattices, pocrims and {BCK}-algebras},
  date={2002},
  issn={0002-5240},
  journal={Algebra Universalis},
  volume={48},
  number={3},
  pages={253--271},
  url={http://dx.doi.org/10.1007/s000120200000},
  doi={10.1007/s000120200000},
  review={\MR {1954775 (2003j:06017)}},
}

\bib{BvA05}{article}{
  author={Blok, W. J.},
  author={van Alten, C. J.},
  title={On the finite embeddability property for residuated ordered groupoids},
  date={2005},
  issn={0002-9947},
  journal={Trans. Amer. Math. Soc.},
  volume={357},
  number={10},
  pages={4141--4157},
  url={http://dx.doi.org/10.1090/S0002-9947-04-03654-2},
  doi={10.1090/S0002-9947-04-03654-2},
  review={\MR {2159703 (2007b:06019)}},
}

\bib{Boo59}{article}{
  author={Boone, W. W.},
  title={The word problem},
  journal={Ann. of Math. (2)},
  volume={70},
  date={1959},
  pages={207--265},
  issn={0003-486X},
  review={\MR {0179237 (31 \#3485)}},
}

\bib{Buc52}{article}{
  author={B{\"u}chi, J. R.},
  title={Representation of complete lattices by sets},
  journal={Portugaliae Math.},
  volume={11},
  date={1952},
  pages={151--167},
  review={\MR {0054565 (14,940d)}},
}

\bib{Bur86}{book}{
  author={Burmeister, P.},
  title={A model theoretic oriented approach to partial algebras},
  series={Mathematical Research},
  publisher={Akademie-Verlag},
  address={Berlin},
  date={1986},
  volume={32},
  isbn={3-05-500176-1},
  note={Introduction to theory and application of partial algebras. Part I},
  review={\MR {854861 (87m:08007a)}},
}

\bib{BS81}{book}{
  author={Burris, S.},
  author={Sankappanavar, H. P.},
  title={A Course in Universal Algebra},
  series={Graduate Texts in Mathematics},
  publisher={Springer-Verlag},
  address={New York},
  date={1981},
  number={78},
  isbn={0-387-90578-2},
  status={available online},
  eprint={http://www.math.uwaterloo.ca/~snburris/htdocs/ualg.html},
  review={\MR {648287 (83k:08001)}},
}

\bib{Bus11}{article}{
  author={Buszkowski, W.},
  title={Interpolation and FEP for logics of residuated algebras},
  journal={Log. J. IGPL},
  volume={19},
  date={2011},
  number={3},
  pages={437--454},
  issn={1367-0751},
  review={\MR {2802870 (2012g:03177)}},
  doi={10.1093/jigpal/jzp094},
}

\bib{Coh81}{book}{
  author={Cohn, P. M.},
  title={Universal algebra},
  series={Mathematics and its Applications},
  volume={6},
  edition={2},
  publisher={D. Reidel Publishing Co., Dordrecht-Boston, Mass.},
  date={1981},
  pages={xv+412},
  isbn={90-277-1213-1},
  isbn={90-277-1254-9},
  review={\MR {620952 (82j:08001)}},
}

\bib{Dek95}{article}{
  author={Dekov, D. V.},
  title={Embeddability and the word problem},
  journal={J. Symbolic Logic},
  volume={60},
  date={1995},
  number={4},
  pages={1194--1198},
  issn={0022-4812},
  review={\MR {1367204 (97c:08005)}},
  doi={10.2307/2275882},
}

\bib{Eva51}{article}{
  author={Evans, T.},
  title={The word problem for abstract algebras},
  date={1951},
  issn={0024-6107},
  journal={J. London Math. Soc.},
  volume={26},
  pages={64--71},
  review={\MR {0038958 (12,475c)}},
}

\bib{Eva53}{article}{
  author={Evans, T.},
  title={Embeddability and the word problem},
  date={1953},
  issn={0024-6107},
  journal={J. London Math. Soc.},
  volume={28},
  pages={76--80},
  review={\MR {0053915 (14,839d)}},
}

\bib{Eva69}{article}{
  author={Evans, T.},
  title={Some connections between residual finiteness, finite embeddability and the word problem},
  journal={Journal of London Mathematics Society (2)},
  date={1969},
  issn={0024-6107},
  volume={1},
  pages={399--403},
  review={\MR {0249344 (40 \#2589)}},
}

\bib{Eva72}{article}{
  author={Evans, T.},
  title={Residual finiteness and finite embeddability. A remark on a paper by Banaschewski and Nelson},
  date={1972},
  issn={0002-5240},
  journal={Algebra Universalis},
  volume={2},
  pages={397},
  review={\MR {0360419 (50 \#12869)}},
}

\bib{Far08}{article}{
  author={Farulewski, M.},
  title={Finite embeddability property for residuated groupoids},
  journal={Rep. Math. Logic},
  number={43},
  date={2008},
  pages={25--42},
  issn={0137-2904},
  review={\MR {2417720 (2009e:03066)}},
}

\bib{Fri62}{article}{
  author={Frink, O.},
  title={Pseudo-complements in semi-lattices},
  journal={Duke Math. J.},
  volume={29},
  date={1962},
  pages={505--514},
  issn={0012-7094},
  review={\MR {0140449 (25 \#3869)}},
}

\bib{Fuc63}{book}{
  author={Fuchs, L.},
  title={Partially ordered algebraic systems},
  publisher={Pergamon Press, Oxford-London-New York-Paris; Addison-Wesley Publishing Co., Inc., Reading, Mass.-Palo Alto, Calif.-London},
  date={1963},
  pages={ix+229},
  review={\MR {0171864 (30 \#2090)}},
}

\bib{GJ13}{article}{
  author={Galatos, N.},
  author={Jipsen, P.},
  title={Residuated frames with applications to decidability},
  journal={Trans. Amer. Math. Soc.},
  volume={365},
  date={2013},
  number={3},
  pages={1219--1249},
  issn={0002-9947},
  review={\MR {3003263}},
  doi={10.1090/S0002-9947-2012-05573-5},
}

\bib{GaJi}{article}{
  author={Galatos, N.},
  author={Jipsen, P.},
  title={Distributivity residuated frames and generalized bunched implications algebras},
  journal={preprint},
  volume={},
  date={},
  number={},
  pages={},
  issn={},
  review={},
  doi={},
}

\bib{GJKO07}{book}{
  author={Galatos, N.},
  author={Jipsen, P.},
  author={Kowalski, T.},
  author={Ono, H.},
  title={Residuated lattices: an algebraic glimpse at substructural logics},
  series={Studies in Logic and the Foundations of Mathematics},
  number={151},
  publisher={Elsevier},
  address={Amsterdam},
  date={2007},
}

\bib{GT05}{article}{
  author={Galatos, N.},
  author={Tsinakis, C.},
  title={Generalized MV-algebras},
  journal={J. Algebra},
  volume={283},
  date={2005},
  pages={254-291},
}

\bib{Gla99}{book}{
  author={Glass, A. M. W.},
  title={Partially ordered groups},
  series={Series in Algebra},
  volume={7},
  publisher={World Scientific Publishing Co., Inc., River Edge, NJ},
  date={1999},
  pages={xiv+307},
  isbn={981-02-3493-7},
  review={\MR {1791008 (2001g:06002)}},
}

\bib{Gli29}{article}{
  author={Glivenko, V.},
  title={Sur quelques points de la logique de M. Brouwer},
  journal={Bulletin Acad{\'e}mie des Sciences de Belgique},
  volume={15},
  date={1929},
  pages={183--188},
}

\bib{Jez08}{book}{
  author={Je\u {z}ek, J.},
  title={Universal algebra},
  publisher={},
  status={available online},
  url={http://www.karlin.mff.cuni.cz/~jezek/ua.pdf},
  eprint={http://www.karlin.mff.cuni.cz/~jezek/ua.pdf},
  date={2008},
}

\bib{MT46}{article}{
  author={McKinsey, J. C. C.},
  author={Tarski, Alfred},
  title={On closed elements in closure algebras},
  journal={Ann. of Math. (2)},
  volume={47},
  date={1946},
  pages={122--162},
  issn={0003-486X},
  review={\MR {0015037 (7,359e)}},
}

\bib{MMT14}{article}{
  author={Metcalfe, G.},
  author={Montagna, F.},
  author={Tsinakis, C.},
  title={Amalgamation and interpolation in ordered algebras},
  date={2014},
  issn={0021-8693},
  journal={J. Algebra},
  volume={402},
  pages={21--82},
  url={http://dx.doi.org/10.1016/j.jalgebra.2013.11.019},
  review={\MR {3160414}},
}

\bib{OT99}{article}{
  author={Okada, M.},
  author={Terui, K.},
  title={The finite model property for various fragments of intuitionistic linear logic},
  journal={J. Symbolic Logic},
  volume={64},
  date={1999},
  pages={790--802},
}

\bib{Ros90a}{article}{
  author={Rosenthal, K. I.},
  title={A note on Girard quantales},
  language={English, with French summary},
  journal={Cahiers Topologie G\'eom. Diff\'erentielle Cat\'eg.},
  volume={31},
  date={1990},
  number={1},
  pages={3--11},
  issn={0008-0004},
  review={\MR {1060604 (91h:06030)}},
}

\bib{Ros90b}{book}{
  author={Rosenthal, K. I.},
  title={Quantales and their applications},
  series={Pitman Research Notes in Mathematics Series},
  volume={234},
  publisher={Longman Scientific \& Technical, Harlow; copublished in the United States with John Wiley \& Sons, Inc., New York},
  date={1990},
  pages={x+165},
  isbn={0-582-06423-6},
  review={\MR {1088258 (92e:06028)}},
}

\bib{Sch72a}{article}{
  author={Schmidt, J.},
  title={Universal and internal properties of some extensions of partially ordered sets},
  journal={J. Reine Angew. Math.},
  volume={253},
  date={1972},
  pages={28--42},
  issn={0075-4102},
  review={\MR {0300945 (46 \#105)}},
}

\bib{Sch72b}{article}{
  author={Schmidt, J.},
  title={Universal and internal properties of some completions of $k$-join-semilattices and $k$-join-distributive partially ordered sets},
  journal={J. Reine Angew. Math.},
  volume={255},
  date={1972},
  pages={8--22},
  issn={0075-4102},
  review={\MR {0297654 (45 \#6708)}},
}

\bib{Sch74}{article}{
  author={Schmidt, J.},
  title={Each join-completion of a partially ordered set is the solution of a universal problem},
  journal={J. Austral. Math. Soc.},
  volume={17},
  date={1974},
  pages={406-419},
}

\bib{Sch77}{article}{
  author={Schmidt, J.},
  author={Tsinakis, C.},
  title={Relative pseudo-complements, join-extensions, and meet-retractions},
  journal={Math. Z.},
  volume={157},
  date={1977},
  number={3},
  pages={271--284},
  issn={0025-5874},
  review={\MR {0472618 (57 \#12314)}},
}

\bib{Sto36}{article}{
  author={Stone, M. H.},
  title={The theory of representations for Boolean algebras},
  journal={Trans. Amer. Math. Soc.},
  volume={40},
  date={1936},
  number={1},
  pages={37--111},
  issn={0002-9947},
  review={\MR {1501865}},
  doi={10.2307/1989664},
}

\bib{vA05}{article}{
  author={van Alten, C. J.},
  title={The finite model property for knotted extensions of propositional linear logic},
  journal={J. Symbolic Logic},
  volume={70},
  date={2005},
  number={1},
  pages={84--98},
}

\bib{vA09}{article}{
  author={van Alten, C. J.},
  title={Completion and finite embeddability property for residuated ordered algebras},
  journal={Algebra Universalis},
  volume={62},
  date={2009},
  number={4},
  pages={419--451},
  issn={0002-5240},
  review={\MR {2670174 (2011m:06006)}},
  doi={10.1007/s00012-010-0060-9},
}

\bib{vA11}{article}{
  author={van Alten, C. J.},
  title={Preservation theorem for MTL-chains},
  journal={Logic Journal of the IGPL},
  volume={19},
  date={2011},
  number={3},
  pages={490--511},
  issn={0002-5240},
  review={},
  doi={},
}

\bib{Wil06}{book}{
  author={Wille, A.M.},
  title={Residuated Structures with Involution},
  publisher={Shaker Verlag},
  address={Aachen},
  date={2006},
}

\end{biblist}
\end{bibdiv}


\end{document}